\newtheorem{theorem}{Theorem}[section]
\newtheorem{lemma}[theorem]{Lemma}
\newtheorem{proposition}[theorem]{Proposition}
\newtheorem{corollary}[theorem]{Corollary}
\newtheorem{itheorem}{Theorem}
\newtheorem{definition}[theorem]{Definition}
\theoremstyle{remark}
\newtheorem{example}[theorem]{Example}
\newtheorem{remark}[theorem]{Remark}
\newcommand{\nc}{\newcommand}
\nc{\func}{\EuScript{T}}
\begin{document}
\numberwithin{equation}{section}
\renewcommand{\theequation}{\arabic{section}.\arabic{equation}}
\newcounter{subeqn}
\renewcommand{\thesubeqn}{\theequation\alph{subeqn}}
\newcommand{\subeqn}{%
  \refstepcounter{subeqn}
  \tag{\thesubeqn}
}\newcommand{\newseq}{%
  \refstepcounter{equation}
  \setcounter{subeqn}{0}
}

\makeatletter
\@addtoreset{subeqn}{equation}

\nc{\cat}{\mathcal{V}}
\nc{\res}{\operatorname{res}}
\newcommand{\Spec}{\operatorname{Spec}}
\newcommand{\Fun}{\operatorname{Fun}}
\newcommand{\stab}{\operatorname{stab}}
\newcommand{\Vol}{\operatorname{Vol}}
\renewcommand{\Re}{\operatorname{Re}}
\renewcommand{\Im}{\operatorname{Im}}
\newcommand{\im}{\operatorname{im}}
\newcommand{\id}{\operatorname{id}}
\newcommand{\Coker}{\operatorname{Coker}}
\newcommand{\Ker}{\operatorname{Ker}}
\newcommand{\codim}{\operatorname{codim}}
\renewcommand{\dim}{\operatorname{dim}}
\newcommand{\Ann}{\operatorname{Ann}}
\newcommand{\Sym}{\operatorname{Sym}}
\newcommand{\rk}{\operatorname{rk}}
\newcommand{\crk}{\operatorname{crk}}

\newcommand{\norm}[1]{\Vert #1 \Vert}
\newcommand{\grad}{\nabla}
\newcommand{\I}{\mathbf{i}}
\newcommand{\Bi}{\mathbf{i}}
\newcommand{\Bj}{\mathbf{j}}

\nc{\slehat}{\mathfrak{\widehat{sl}}_e}
\nc{\sllhat}{\mathfrak{\widehat{sl}}_\ell}
\nc{\glehat}{\mathfrak{\widehat{gl}}_e}
\nc{\slnhat}{\mathfrak{\widehat{sl}}_n}
\nc{\glnhat}{\mathfrak{\widehat{gl}}_n}
\nc{\eE}{\EuScript{E}}
\newcommand{\arxiv}[1]{\href{http://arxiv.org/abs/#1}{\tt arXiv:\nolinkurl{#1}}}

\nc{\eF}{\EuScript{F}}
\nc{\fF}{\mathfrak{F}}
\nc{\fE}{\mathfrak{E}}

\newcommand{\K}{\mathbbm{k}}
\newcommand{\ig}{\int_{\gamma}}
\newcommand{\bin}[2]{
 \left(
   \begin{array}{@{}c@{}}
      #1 \\ #2
  \end{array}
 \right)  }
\newcommand{\cq}{/\!\!/}
\newcommand{\td}[1]{\tilde{#1}}
\newcommand{\into}{\hookrightarrow}
\newcommand{\Z}{\mathbb{Z}}
\newcommand{\Q}{\mathbb{Q}}
\nc{\Qlb}{\mathbb{\bar \Q}_\ell}
\nc{\Fq}{\mathbb{F}_q}
\nc{\Fqb}{\mathbb{\bar F}_q}
\newcommand{\Qt}{\widehat{\mathbb{Q}}}
\newcommand{\om}{\omega}
\newcommand{\bc}{\mathbf{c}}
\newcommand{\N}{\mathbb{N}}
\newcommand{\R}{\mathbb{R}}
\newcommand{\Rt}{\widehat{\mathbb{R}}}

\newcommand{\C}{\mathbb{C}}
\newcommand{\m}{\item}
\newcommand{\wt}{\operatorname{wt}}
\newcommand{\bd}{\partial}
\newcommand{\pf}{\pitchfork}
\newcommand{\ra}{\rightarrow}
\newcommand{\la}{\leftarrow}
\newcommand{\Ra}{\Rightarrow}
\newcommand{\La}{\Leftarrow}
\newcommand{\rla}{\RightLeftarrow}
\newcommand{\g}{\widehat{\mathfrak{g}}}
\newcommand{\h}{\widehat{\mathfrak{h}}}
\newcommand{\fh}{{\mathfrak{h}}}
\newcommand{\sS}{\mathsf{S}} 
\newcommand{\sJ}{\mathsf{J}} 
\newcommand{\sT}{\mathsf{T}}
\newcommand{\M}{\mathfrak{M}}
\newcommand{\fQ}{\mathfrak{Q}}
\nc{\Bv}{\mathbf{v}}
  \nc{\Bw}{\mathbf{w}}
  \nc{\Bb}{\mathbf{b}}
\nc{\tU}{\mathcal{U}}
\nc{\tQ}{\tilde{Q}}
\nc{\Bu}{\mathbf{u}}
 \nc{\Fl}{\operatorname{Fl}}
\nc{\Tr}{\operatorname{Tr}}
\nc{\sheafK}{\EuScript{K}}
\nc{\bmu}{\boldsymbol{\mu}}
\nc{\bpi}{\boldsymbol{\pi}}
\newcommand{\Tmu}{T_\mu}
\newcommand{\Mlm}{M_{\la\mu}}
\newcommand{\Nlm}{N_{\la\mu}}
\newcommand{\Nblm}{\bar N_{\la\mu}}
\newcommand{\Nnm}{N_{\nu\mu}}
\renewcommand{\la}{\lambda}
\newcommand{\cs}{\C^\times}
\newcommand{\subs}{\subseteq}
\newcommand{\htlm}{H^*_{\Tmu}(\Mlm)}
\newcommand{\ihtlm}{I\! H^*_{\Tmu}(\Nblm)}
\newcommand{\hzlm}{H(Z_{\la\mu})}
\newcommand{\Alm}{A_{\la\mu}}
\newcommand{\Atlm}{\tilde A_{\la\mu}}
\newcommand{\Blm}{B_{\la\mu}}
\newcommand{\Btlm}{\tilde B_{\la\mu}}
\newcommand{\Rmu}{R_{\mu}}
\newcommand{\vln}{V_{\la\nu}}
\newcommand{\vlm}{V_{\la\mu}}
\newcommand{\al}{\alpha}
\renewcommand{\b}{\beta}
\renewcommand{\g}{\gamma}
\renewcommand{\d}{\delta}
\newcommand{\surj}{\twoheadrightarrow}
\newcommand{\Hom}{\operatorname{Hom}}
\newcommand{\A}{\mathcal A}
\newcommand{\tmu}{\mathfrak t_{\mu}}
\newcommand{\impl}{\Rightarrow}

\newcommand{\IH}{I\! H}
\newcommand{\IC}{\operatorname{IC}^{\scriptscriptstyle{\bullet}}}
\newcommand{\hookto}{{\hookrightarrow}}
\renewcommand{\(}{\left(}
\renewcommand{\)}{\right)}
\newcommand{\eps}{\epsilon}

\newcommand{\walg}{W}
\newcommand{\dwalg}{\check{W}}

\newcommand{\ab}{{\a\b}}
\newcommand{\bg}{{\b\g}}
\newcommand{\gd}{{\g\d}}
\newcommand{\ag}{{\a\g}}
\renewcommand{\bd}{{\mathbf{d}}}
\newcommand{\ad}{{\a\d}}
\renewcommand{\bg}{{\b\g}}
\newcommand{\abg}{{\a\b\g}}
\newcommand{\abd}{{\a\b\d}}
\newcommand{\agd}{{\a\g\d}}
\newcommand{\bgd}{{\b\g\d}}
\newcommand{\abgd}{{\a\b\g\d}}
\newcommand{\badg}{{\b\a\d\g}}

\newcommand{\cP}{\mathcal{P}}
\newcommand{\Ob}{\operatorname{Ob}}
\newcommand{\cO}{\mathcal{O}}
\renewcommand{\cL}{\mathcal{L}}
\newcommand{\cT}{\mathcal{T}}
\newcommand{\cI}{\mathcal{I}}
\newcommand{\tcO}{\tilde{\cQ}^+}

\newcommand{\CST}{C_{\sS,\sT}}
\newcommand{\av}{\a^\vee}
\newcommand{\bv}{\b^\vee}
\newcommand{\dv}{\d^\vee}
\newcommand{\gv}{\g^\vee}
\newcommand{\becircled}{\mathaccent "7017}
\newcommand{\Aff}{\operatorname{Aff}}
\newcommand{\Ext}{\operatorname{Ext}}
\newcommand{\cS}{\mathcal{S}}
\newcommand{\cF}{\mathcal{F}}
\renewcommand{\cD}{\mathcal{D}}
\newcommand{\bS}{\mathbb{S}}
\newcommand{\bT}{\mathbb{T}}
\newcommand{\gr}{\operatorname{gr}}
\newcommand{\Sp}{\operatorname{Sp}}\newcommand{\smp}{\mathfrak{sp}}
\newcommand{\cQ}{\mathcal{Q}}
\newcommand{\tcQ}{\tilde\cQ}
\newcommand{\hV}{\widehat V}
\newcommand{\cB}{\mathcal{B}}
\newcommand{\cM}{\mathcal{M}}
\newcommand{\cC}{\mathcal{C}}
\newcommand{\cN}{\mathcal{N}}

\newcommand{\Sec}{\operatorname{Sec}}
\newcommand{\Loc}{\operatorname{Loc}}
\newcommand{\MLoc}{\mathfrak{Loc}}
\newcommand{\Rees}{\operatorname{R}}
\newcommand{\Diff}{\cD}
\newcommand{\excise}[1]{}
\newcommand{\MDiff}{\mathfrak{Diff}}
\newcommand{\fS}{\mathfrak{S}}
\newcommand{\cE}{\mathcal{E}}
\newcommand{\cW}{\mathcal{W}}
\newcommand{\End}{\operatorname{End}}
\newcommand{\Aut}{\operatorname{Aut}}
\newcommand{\Res}{\operatorname{Res}}
\newcommand{\Ind}{\operatorname{Ind}}

\newcommand{\fM}{\mathfrak{M}}
\newcommand{\cZ}{\mathcal{Z}}
\newcommand{\fZ}{\mathfrak{Z}}
\newcommand{\fg}{\mathfrak{g}}
\newcommand{\mmod}{\operatorname{-mod}}
\newcommand{\pmmod}{\operatorname{-pmod}}
\newcommand{\red}{\mathfrak{r}}
\newcommand{\cOg}{\mathcal{O}_{\!\operatorname{g}}}
\newcommand{\dOg}{D_{\cOg}}
\newcommand{\preO}{p\cOg}
\newcommand{\dpreO}{D_{p\cOg}}
\newcommand{\HCg}{{}^{\mbox{}}\mathbf{HC}^{\operatorname{g}}}
\newcommand{\preHC}{p\HCg}
\newcommand{\alg}{T}
\newcommand{\si}{\sigma}
\newcommand{\talg}{\tilde{T}}
\newcommand{\bla}{{\underline{\boldsymbol{\la}}}}
\newcommand{\cOa}{\mathcal{O}_{\!\operatorname{a}}}
\newcommand{\Lotimes}{\overset{L}{\otimes}}
\newcommand{\thetitle}{Weighted Khovanov-Lauda-Rouquier algebras}

\usetikzlibrary{decorations.pathreplacing,backgrounds,decorations.markings,calc}
\tikzset{wei/.style={draw=red,double=red!40!white,double distance=1.5pt,thin}}

\renewcommand{\theitheorem}{\Alph{itheorem}}

\noindent {\Large \bf 
\thetitle}
\bigskip\\
{\bf Ben Webster}\footnote{Supported by the NSF under Grant DMS-1151473 and  by the NSA under Grant H98230-10-1-0199.}\\
Department of Mathematics,  University of Virginia, Charlottesville, VA
\bigskip\\
{\small
\begin{quote}
\noindent {\em Abstract.}
In this paper, we define a generalization of Khovanov-Lauda-Rouquier
algebras which we call {\bf weighted Khovanov-Lauda-Rouquier
  algebras}.  We show that these algebras carry many of the same
structures as the original Khovanov-Lauda-Rouquier algebras, including
induction and restriction functors which induce a twisted bialgebra
structure on their Gro\-then\-dieck groups.  

We also define natural {\bf steadied quotients} of these algebras,
which in an important special cases give categorical actions of an
associated Lie algebra.  These include the algebras categorifying
tensor products and Fock spaces defined by the author and Stroppel in
\cite{Webmerged,SWschur}.

For symmetric Cartan matrices, weighted KLR algebras also have a
natural geometric interpretation as convolution algebras, generalizing
that for the original KLR algebras by Varagnolo and Vasserot
\cite{VV}; this result has positivity consequences important in the
theory of crystal bases.   In this case, we can also relate the
Grothendieck group and its bialgebra structure to the Hall algebra of
the associated quiver.
\end{quote}
}
\bigskip

\section{Introduction}
\label{sec:introduction}

In this paper, we introduce a generalization of 
Khovanov-Lauda-Rouquier algebras \cite{KLI,Rou2KM}, which we call {\bf
  weighted Khovanov-Lauda-Rouquier algebras}.  The original KLR
algebras are finite dimensional algebras associated to a quiver, or
more generally a symmetrizable Cartan datum.  To define the weighted
generalization of these algebras, one must choose in addition a {\bf
  weighting} on the graph $\Gamma$ underlying the Cartan datum; this is simply
an assignment of a real number $\vartheta_e$ to each oriented edge of
$\Gamma$.  

This extra datum allows us to modify the relations of the KLR algebra in
a way which is simple, but will probably initially look strange even to
experts in the subject.  The essential paradigm shift is that instead
of beginning with idempotents indexed by sequences of nodes from the
Dynkin diagram $\Gamma$, one should assign an idempotent to a sequence
enriched with a position on the real number
line for each element of the sequence, remembering the distance between
points.  We call such an object a {\bf
  loading}.  The elements of our algebra will be
linear combinations of diagrams much like those of the KLR algebra, but unlike
the original relations, interesting relations can occur when strands
come within a fixed distance of each other; we call this phenomenon
``action  at a distance.''

If there is a single node and no loops, then there are no changes and
we arrive at the nilHecke algebra exactly as in the KLR case.  Let us
consider the next easiest case, where $\Gamma$ is a $A_2$ Dynkin
diagram.  As in the original KLR algebra (in Rouquier's presentation from
\cite[\S 3.2]{Rou2KM},
or as described in \cite{Webmerged,CaLa}), one must choose a polynomial
$Q_{12}(u,v)=au+bv$ that describes the interaction of these two strands via the
relation
 \[ 
\begin{tikzpicture}[very thick,xscale=1.6]
 \draw (1,0) to[in=-90,out=90]  node[below, at start]{$1$} (1.5,1) to[in=-90,out=90] (1,2)
;
  \draw (1.5,0) to[in=-90,out=90] node[below, at start]{$2$} (1,1) to[in=-90,out=90] (1.5,2);
\node at (2,1) {=};
\node at (2.4,1) {$a$};
  \draw (2.7,0) -- node[below, at start]{$1$} node[midway,fill,inner
  sep=2.5pt,circle]{} (2.7,2) ;
  \draw (3.2,0) to node[below, at start]{$2$} (3.2,2);
\node at (3.6,1) {$+$};
\node at (3.9,1) {$b$};
  \draw (4.2,0) -- (4.2,2) node[below, at start]{$1$};
  \draw  (4.7,0)-- node[midway,fill,inner sep=2.5pt,circle]{} (4.7,2) node[below, at start]{$2$};
\end{tikzpicture}
\]
If the weighting on the unique edge $e$ is $k<0$, then we will see
this relation not when a strand labeled $1$ crosses one labeled $2$ and
then crosses back, but when it passes the line $k$ units left of the
strand labeled $2$ and crosses back.  In order to aid with visualizing
this, we draw a dashed line $k$ units left of each strand labeled
$2$.  We will refer to these dashed lines as {\bf ghosts} throughout
the paper; in general, we must draw one for each pair consisting of a
strand labeled with some node $k$, and an edge whose head is $k$.  In
this case, we will arrive at the relation:

 \[ 
\begin{tikzpicture}[very thick,xscale=1.6]
 \draw (1,0) to[in=-90,out=90]  node[below, at start]{$1$} (1.5,1) to[in=-90,out=90] (1,2)
;
  \draw[dashed] (1.5,0) to[in=-90,out=90] (1,1) to[in=-90,out=90] (1.5,2);
  \draw (2.5,0) to[in=-90,out=90]  node[below, at start]{$2$} (2,1) to[in=-90,out=90] (2.5,2);
\node at (3,1) {=};
\node at (3.4,1) {$a$};
  \draw (3.7,0) -- (3.7,2)  node[midway,fill,inner sep=2.5pt,circle]{} node[below, at start]{$1$}
 ;
  \draw[dashed] (4.2,0) to (4.2,2);
  \draw (5.2,0) -- (5.2,2) node[below, at start]{$2$};
\node at (5.6,1) {$+$};
\node at (5.9,1) {$b$};
  \draw (6.2,0) -- (6.2,2) node[below, at start]{$1$};
  \draw[dashed] (6.7,0)-- (6.7,2);
  \draw (7.7,0) --   node[midway,fill,inner sep=2.5pt,circle]{} (7.7,2) node[below, at start]{$2$};
\end{tikzpicture}
\]
This case produces no interesting new algebras: we can recover the
original KLR relations by shifting all strands with label $2$ to the
left by $k$ units.  In general, we can always find such a fix when
$\Gamma$ is a tree.  However, when the graph $\Gamma$ has cycles,
interesting new algebras can appear.  For example, for the Jordan
quiver and the dimension vector $(n)$, we arrive at the smash product $\K[S_n]\# \K[x_1,\cdots, x_n]$.

Many properties of the original KLR algebras carry over: the weighted
KLR algebra has a permutation type basis and a faithful representation
representation on a sum of polynomials.  Its category of
representations is endowed with monoidal and co-monoidal structures
given by induction and restriction, generalizing those structures for
the KLR algebra.  Furthermore, its Grothendieck
group has a twisted bialgebra structure (or alternatively, Hopf
structure for a particular braided monoidal category) induced by these
functors.

This definition was motivated in large part by the desire to unify
generalizations of the KLR algebras that have appeared in the author's
previous work.  In order to develop these, we associate to a quiver
$\Gamma$ and dominant weight $\la$ a new quiver $\Gamma_\la$, which we
call its {\bf Crawley-Boevey quiver} (see Section \ref{sec:examples}).
These quivers appear naturally in the theory of Nakajima quiver
varieties.  The weighted KLR algebras attached to any weighting have a
natural quotient we call their {\bf steadied quotient} (see Section
\ref{sec:steadied-quotients}); these generalize the cyclotomic
quotients of usual KLR algebras and always carry a categorical
representation of the Kac-Moody algebra $\fg$ (see Theorem
\ref{th:categorical-action}).

These allow us to interpret the
tensor product algebras $T^\bla$ and $\tilde{T}^\bla$ defined in
\cite[\S 4]{Webmerged} and the (extended) quiver Schur algebras $A,
A^\bla$ and $\tilde{A}^\bla$ from \cite[\S 2\& \!4]{SWschur} in terms of
a single construction.
\begin{itheorem}
For each Cartan datum, and list of dominant weights
$\bla=(\la_1,\dots,\la_\ell)$, there is a weighting on the
Crawley-Boevey quiver of $\la=\la_1+\cdots +\la_\ell$ whose weighted
KLR algebra $\walg^\vartheta_\nu$ is isomorphic to $\tilde{T}^\bla_{\la-\nu}\otimes_\K\K[t]$.
The steadied quotient $\walg^\vartheta_\nu(c)$ of this KLR algebra is isomorphic to ${T}^\bla_{\la-\nu}\otimes_\K\K[t]$.

For $\Gamma$ a cycle, the weighted KLR algebra $\walg^\vartheta_\nu$ is either Morita
equivalent to the original KLR algebra or to the quiver Schur algebra
$A_\nu$, depending on whether the sum of weights on an oriented cycle is zero
or not.  In this case, there is also a weighting on the Crawley-Boevey
quiver for $\la$ and a fixed set of loadings  whose weighted
KLR algebra is Morita equivalent to
$\tilde{A}^\bla_{\la-\nu}\otimes_\K\K[t]$ with steadied quotient Morita
equivalent to ${A}^\bla_{\la-\nu}\otimes_\K\K[t]$.
\end{itheorem}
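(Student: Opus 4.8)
The plan is to prove the four assertions in turn, in each case writing down the weighting explicitly, setting up a dictionary between weighted KLR diagrams and the diagrams of the target algebra, and then checking that the weighted KLR relations go over to the defining relations of that algebra. Throughout, the extra tensor factor $\K[t]$ is the central subalgebra generated by a dot on the unique strand coloured by the added vertex $\infty$ of the Crawley-Boevey quiver $\Gamma_\la$: since $\infty$ carries no loop and the dimension there is $1$, this dot meets only strands of other colours and never a strand of its own, so by the relations governing dots it slides freely through every crossing and is therefore central, and it has no analogue in $\tilde T^\bla$ or $A^\bla$, where red strands carry no dots. For the first assertion I would take $\vartheta$ to vanish on every edge of $\Gamma$ itself — so that piece of the calculus is the ordinary KLR calculus of $\Gamma$ — and on the $\langle\la,\alpha_i^\vee\rangle=\sum_j\langle\la_j,\alpha_i^\vee\rangle$ edges from $\infty$ to a node $i$ I would fix a partition into $\ell$ blocks, the $j$-th block of size $\langle\la_j,\alpha_i^\vee\rangle$, and give all edges of the $j$-th block the single common weight $\vartheta_j=p-r_j$, where $p$ is the (fixed) position of the $\infty$-strand and $r_1<\cdots<r_\ell$ are the positions of the red strands of $\tilde T^\bla$. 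With this choice, as a black strand labelled $i$ moves to the left past the fixed $\infty$-strand, its $\ell$ ghost-bundles cross the $\infty$-strand one after another exactly at the loci of the $\ell$ red strands, and the $j$-th such crossing contributes $\langle\la_j,\alpha_i^\vee\rangle$ dots, matching the effect of crossing the red strand labelled $\la_j$.

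The dictionary is then the obvious one: black strands to black strands, a dot on the $\infty$-strand to the central variable $t$, and the $j$-th ghost-bundle of a black strand recording its passage through the $j$-th red strand. The work is to verify that under this dictionary every weighted KLR relation — the triple-point (braid-like) relations together with their ghost-supported error terms, the dot-slide relations, and the ``action at a distance'' relations that fire when a strand meets a ghost — becomes, term by term, a defining relation of $\tilde T^\bla_{\la-\nu}$. The only real subtlety is the bookkeeping of ghost positions, arranged by the choice of the $\vartheta_j$ so that the error terms on the two sides are supported over the same regions of the strip and carry matching coefficients $Q_{i\infty}$. That the resulting homomorphism is an isomorphism then follows by comparing it with the faithful polynomial representations of the two algebras and with their permutation-type bases, which are indexed by the same combinatorial data (order type of the black strands relative to the red strands, a symmetric-group-type element, and a monomial in the dots).

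For the steadied quotient, I would check that the ``unsteady'' loadings of $\walg^\vartheta_\nu$ in the sense of Section~\ref{sec:steadied-quotients} — those in which some black strand lies to the left of every ghost-bundle of the $\infty$-strand — correspond under the dictionary precisely to the loadings having a black strand to the left of all $\ell$ red strands, so that the two-sided ideal they span is exactly the cyclotomic-type ideal presenting $T^\bla$ as a quotient of $\tilde T^\bla$. Passing to quotients in the first part, and noting that the central $t$ survives since the $\infty$-strand is never deleted, gives $\walg^\vartheta_\nu(c)\cong T^\bla_{\la-\nu}\otimes_\K\K[t]$.

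It remains to treat the case of $\Gamma$ a cycle, where $\Gamma$ itself carries a nontrivial weighting $\vartheta$. If $\sum_e\vartheta_e$ around the oriented cycle is zero, then $\vartheta$ is a coboundary: one can solve $\vartheta_e=c_j-c_i$ for an edge $e$ from $i$ to $j$ with real numbers $c_i$ attached to the vertices, and shifting every strand coloured $i$ by $c_i$ puts each ghost back onto its own strand, identifying $\walg^\vartheta_\nu$ with the ordinary KLR algebra up to Morita equivalence, just as in the $A_2$ discussion of the introduction. If the cyclic sum is nonzero, I would match $\walg^\vartheta_\nu$ with the quiver Schur algebra $A_\nu$ of \cite{SWschur}: using the presentation of $A_\nu$ by merge and split diagrams (equivalently, its realization as a convolution algebra on a space of partial flags in a representation of the cyclic quiver), I would show that the idempotent of $\walg^\vartheta_\nu$ attached to a loading in general position with respect to the ghosts cuts out a subalgebra isomorphic to $A_\nu$, and that the two-sided ideal it generates is the whole algebra. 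This last point — fullness of the idempotent — is the step I expect to be the main obstacle: it requires showing that every loading can be connected, through the weighted KLR relations, to one in general position, which I would establish using the permutation basis together with a straightening argument that pushes ghosts across strands one bad crossing at a time. Finally, the $\bla$-version for a cycle combines the two constructions: the relevant weighting on $\Gamma_\la$ is the nonzero cyclic weighting on $\Gamma$ together with the block weighting at $\infty$ described above, the ``fixed set of loadings'' is the idempotent realizing general position, and transporting the isomorphisms of the previous paragraphs along that idempotent — using the description of $\tilde A^\bla$ and $A^\bla$ in \cite{SWschur} as enlargements of $A$ by red strands, which the first two paragraphs recast via the Crawley-Boevey quiver — yields the stated Morita equivalences with $\tilde A^\bla_{\la-\nu}\otimes_\K\K[t]$ and, after steadying, with $A^\bla_{\la-\nu}\otimes_\K\K[t]$.
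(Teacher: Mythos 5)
Your proposal tracks the paper's own proof quite closely at every step: the paper also takes the weighting to vanish on old edges of $\Gamma$ and to equal the desired red-strand displacement on the new edges (grouped into blocks of size $\langle\la_j,\alpha_i^\vee\rangle$ with a common value $\varpi_j$, then merged into one edge with $Q=(u-v)^{\langle\la_j,\alpha_i^\vee\rangle}$ by Definition~\ref{def:arbitrary}); it sends ghosts of the $0$-labelled strand to red strands and kills dots on the $0$-strand to land in $\tilde T^\bla$, verifies the bigon and triple-point error terms reduce to the relations of \cite[(3.2)]{Webmerged} after setting $y_m=0$, and concludes via the diagram bases (your appeal to the polynomial representation together with the permutation basis is equivalent); and its identification of the steadied ideal with the violating ideal of $T^\bla$ uses the same observation you make, that for $\tilde\nu=\nu+\al_0$ the preorder $>_c$ singles out precisely the compositions where the $0$-component sits on the right. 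The $\K[t]$ factor as the central polynomial subalgebra generated by the dot on the $0$-strand, with freeness coming from Theorem~\ref{th:basis}, is the right explanation of the tensor factor, and the coboundary argument for a cyclic weighting summing to zero is exactly what the paper does.

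The one place to be careful is the last clause of your cycle argument. For the Morita equivalence with $A_\nu$ (and likewise with $\tilde A^\bla$, $A^\bla$), the paper only proves the \emph{isomorphism} $e_{QS}\walg^\vartheta_\nu e_{QS}\cong A_\bd$ by comparing merges and splits with \cite[(23)]{SWschur} and matching graded dimensions; it then explicitly states that the induced inclusion is a Morita equivalence ``but we omit a proof of this fact,'' referring to the cellular basis construction in \cite[\S 3]{WebRou}, which shows that $e_{QS}$ kills no simple. Your proposed ``straightening argument that pushes ghosts across strands one bad crossing at a time'' does not obviously address the actual obstruction: every loading in $B(\nu)$ is already generic, and what has to be shown is not that loadings can be moved into some general position, but that the symmetrizer idempotents $e'_{\Bj(\hat{\bmu})}$ (the nilHecke projectors onto divided powers in each cluster) generate the unit ideal --- equivalently, that every indecomposable projective $\walg^\vartheta_\nu e_{\Bi}$ occurs as a summand of some $\walg^\vartheta_\nu e'_{\Bj(\hat{\bmu})}$. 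You correctly flag this as the main obstacle, but the sketch as written would not close the gap; to be faithful to what this theorem actually establishes, you should either import the cellular-basis argument of \cite[\S 3]{WebRou} or weaken the cycle claim to an isomorphism of idempotent truncations as the paper does in its Theorem~\ref{th:schur-1}.
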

Another significant motivation is that more general steadied quotients
in the affine case are equivalent to category $\cO$ for a rational
Cherednik algebra of the group $G(r,1,\ell)$, as we prove in
\cite{WebRou}.  Numerous constructions from this paper, including
steadied quotients and canonical deformations play a key role in that work.

While this construction is purely algebraic in nature, it has a geometric inspiration: for a quiver
$\Gamma$ with vertex set $I$ and a dimension vector $d\colon \Gamma\to \Z_{\geq 0}$, an
integral weighting $\vartheta$ will define
a $\C^*$-action on \[E_\Gamma=\bigoplus_{i\to
  j}\Hom(\C^{d_i},\C^{d_j})\] by letting $t\cdot
(f_e)=(t^{\vartheta_e}f_e)$.  Varagnolo and Vasserot \cite{VV} have given an
interpretation of some KLR algebras as Ext-algebras of complexes of
constructible sheaves on the moduli stack $E_\nu/G_\nu$ of representations of the
quiver $\Gamma$ which appeared in work of Lusztig \cite{Lus91}; we can
generalize this construction to give an analogous constructible complex $Y$ of 
sheaves which is well-behaved with respect to the $\C^*$-action.
\begin{itheorem}
  The weighted KLR $\walg^\vartheta_\nu$ associated to a quiver
  $\Gamma$ with integral weighting is the Ext algebra $\Ext_{E_\nu/G_\nu}(Y,Y)$.  If
  $\operatorname{char}(\K)=0$ then $Y$ is semi-simple. 

  The map sending the class of a projective module $[P]$ to an
  appropriate Frobenius trace of $Y\otimes_{\walg^\vartheta_\nu}P$ on the
  $\mathbb{F}_p$ points of $E_\nu$ is a bialgebra map from
  $K^0_q(\walg^\vartheta_\nu)$ to the Hall algebra of the quiver $\Gamma$. 
\end{itheorem}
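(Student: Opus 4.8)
The plan is to adapt the geometric construction of Varagnolo--Vasserot \cite{VV}, itself a refinement of Lusztig's \cite{Lus91}, so that it becomes compatible with the $\cs$-action on $E_\nu$ determined by the integral weighting $\vartheta$. To define $Y$, I would attach to each loading $\Bi$ of total dimension vector $\nu$ a proper map $\pi_\Bi\colon \td{E}_\Bi\to E_\nu$ whose source parametrizes a representation $x\in E_\nu$ together with a complete filtration of the underlying $I$-graded vector space whose successive subquotients are the one-dimensional pieces prescribed by $\Bi$, required to satisfy the incidence conditions obtained by comparing the real positions recorded in $\Bi$ with the ghost positions produced by $\vartheta$ (so that for an edge $e\colon k\to k'$ the map $x_e$ sends the step at a point $p$ into the span of the steps at points past $p+\vartheta_e$, reducing to $x$-stability of the flag in the unweighted case). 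Each $\td{E}_\Bi$ is smooth, being an affine bundle over a partial flag variety, and the $\cs$-action lifts to it acting trivially on the flag coordinates, so $L_\Bi:=(\pi_\Bi)_{!}\,\underline{\K}[\dim\td{E}_\Bi]$ is $G_\nu\times\cs$-equivariant; set $Y=\bigoplus_\Bi L_\Bi$ over an appropriate finite set of loadings. Semisimplicity of $Y$ when $\operatorname{char}(\K)=0$ is then immediate: $\pi_\Bi$ is proper with smooth source, so the decomposition theorem applies.

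Next I would equip $\Ext^{\bullet}_{E_\nu/G_\nu}(Y,Y)=\bigoplus_{\Bi,\Bj}\Ext^{\bullet}(L_\Bi,L_\Bj)$ with its convolution structure via proper base change and the Künneth formula, which express these groups as the $G_\nu$-equivariant Borel--Moore homology of the fibered products $Z_{\Bi\Bj}=\td{E}_\Bi\times_{E_\nu}\td{E}_\Bj$. The combinatorial core is a $G_\nu$-equivariant stratification of $Z_{\Bi\Bj}$: a point records two filtrations of one representation, hence a relative-position permutation, and over each stratum (again an affine bundle over a flag variety) the equivariant cohomology supplies the polynomial/dot part, so the fundamental classes of stratum closures, multiplied by equivariant Chern classes, give a basis of $\Ext^{\bullet}_{E_\nu/G_\nu}(Y,Y)$ in bijection with the permutation-type basis of $\walg^\vartheta_\nu$ established earlier in the paper, with the cohomological degrees and $\cs$-weights recovering the grading. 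One then writes down the homomorphism $\walg^\vartheta_\nu\to\Ext^{\bullet}_{E_\nu/G_\nu}(Y,Y)$ sending a loading's idempotent to the identity of its summand, a dot to multiplication by the relevant equivariant Chern class, and a crossing to the fundamental class of the relevant correspondence in $Z_{\Bi\Bj}$; the defining relations, including the "action at a distance" relations governed by $\vartheta$, are checked by restricting to the one- and two-strand subdiagrams and computing in the resulting rank $\le 2$ geometry, which is precisely where the ghost incidence conditions in $\td{E}_\Bi$ generate the extra terms. Since both sides act faithfully and compatibly on $\bigoplus_\Bi H^{*}_{G_\nu}(\td{E}_\Bi)$ --- the faithful polynomial representation on the algebraic side, the localization theorem on the geometric side --- the basis count forces this map to be an isomorphism.

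For the second assertion, pass to a model over $\Fq$ and use the sheaf--function correspondence: the trace-of-Frobenius function of a $\cs$-equivariant complex on $E_\nu$ is $\cs(\Fq)$-invariant, so $Y$ and each of its summands define elements of the Hall algebra $H(\Gamma)$ of $\Fq$-representations of $\Gamma$. The indecomposable summands of $Y$ up to shift correspond to the indecomposable projective $\walg^\vartheta_\nu$-modules, so $[P]\mapsto \Tr(\mathrm{Frob}\mid Y\otimes_{\walg^\vartheta_\nu}P)$ is well defined on $K^0_q(\walg^\vartheta_\nu)$, and becomes $\Z[v,v^{-1}]$-linear once $v$ is identified with $q^{1/2}$ via the $\cs$-grading. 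That it is an algebra map is the analogue of Lusztig's computation in this setting: the induction functor on weighted KLR modules is realized by pullback--pushforward along the correspondence relating the $\td{E}_\Bi$ of a product of loadings to those of its factors, and proper base change together with point-counting turns this into the Hall product; dually, the restriction functor is computed by the subrepresentation correspondence (equivalently, a hyperbolic localization), whose effect on Frobenius traces is Green's coproduct formula. Hence the map intertwines the twisted bialgebra structures, the weighting entering only through the powers of $q$ coming from the $\cs$-weights.

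The main obstacle is the stratification-and-relations step of the middle paragraph: one must show that the ghost incidence conditions cut out exactly the strata of $Z_{\Bi\Bj}$ matching the diagrammatic basis and producing the correct coefficients in the weighted relations, with the $G_\nu\times\cs$-equivariant degree shifts consistent throughout. The remaining ingredients --- semisimplicity, the existence of the algebra map, and the Hall-algebra computation --- then follow by transcribing \cite{VV} and \cite{Lus91}.
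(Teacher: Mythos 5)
Your proposal follows essentially the same route as the paper: loaded flag varieties $X_\Bi$ with weighting-dependent incidence conditions $f_e(F_a)\subset F_{a-\vartheta_e}$, sheaves $Y_\Bi = p_*\K[\Bu(\Bi)]$ with semisimplicity from the decomposition theorem, identification of $\Ext(Y,Y)$ with the convolution algebra on equivariant Borel--Moore homology of the fibered products via relative-position strata together with a faithfulness/basis argument on $\bigoplus_\Bi H^*_{G_\nu}(\Fl_\Bi)$, and then the Hall algebra map via the sheaf--function dictionary and the Grothendieck trace formula. The one small difference in emphasis is that the paper works purely $G_\nu$-equivariantly rather than $G_\nu\times\cs$-equivariantly: the weighting is built directly into the incidence conditions and into the shift $\Bu(\Bi)=\dim X_\Bi/G_\nu$, so the grading on the Ext algebra and the $q$-powers (Tate twists) in the Hall algebra map both come from cohomological degree rather than from a separate $\cs$-weight.
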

This theorem has
important positivity consequences; it is a key step in matching the
bases defined by projective objects with their canonical
bases in the sense of Lusztig (see \cite[\S 6]{WebCB} and \cite[\S
4.7]{WebRou}).  It will also play an important role in understanding generalizations
of category $\cO$ in forthcoming work on the representation theory of quantizations of
quiver varieties \cite{Webqui}.

\section{Basic properties}
\label{sec:basic}

\subsection{Weighted algebras defined}
\label{sec:weight-algebr-defin}

Consider a graph $\Gamma$ with vertex set $I$ and oriented edge set $\Omega$; we allow these edges to have multiplicities $c_e, c_{\bar e}\in \Z_{\geq 0}$ for $e\in \Omega$.
Let $h,t\colon \Omega\cup \bar\Omega\to I$ be the head and tail maps. 
 We assume these multiplicities are symmetrizable, in the sense that
 there exist $d_i$ such that $d_{h(e)}c_e=d_{t(e)}c_{\bar e}$. 

There are two important examples to keep in mind:
\begin{itemize}
\item If $C$ is a symmetrizable generalized Cartan matrix, then we
  have the associated Dynkin diagram $\Gamma$, with the multiplicities
  $c_e$ given by the negative of
  the entries $-c_{ij}$ of the Cartan matrix.  More
  generally, if $\fg$ has no loops, then there is an associated
  symmetrizable Kac-Moody algebra.
\item We can also take {\it any} locally finite graph $\Gamma$ with all $c_e=c_{\bar e}=1$.
\end{itemize}

Throughout, we will let a {\bf weighting} on a quiver mean simply a
map  $\vartheta\colon \Omega \to \R$; that is an attachment of a real
number to each edge.  By convention, we extend this function to $\bar
\Omega$ by $\vartheta_{\bar e}=-\vartheta_e$.  Note that we can also
think of this an $\R$-valued 1-cocycle on the underlying CW complex of $\Gamma$.

Fix a commutative ring $\K$.  For each edge, we choose a polynomial
$Q_e(u,v)\in \K[u,v]$ which is homogeneous
of degree $d_{h(e)}c_e=d_{t(e)}c_{\bar e}$ when
$u$ is given degree $d_{h(e)}$ and $v$ degree $d_{t(e)}$.   We will always
assume that $Q_e$ has coefficients before the pure monomials in $u$ and $v$ which are units, and 
set $Q_{\bar e}(u,v)=Q_e(v,u)$.  In particular, if $(\Gamma, c_*)$
arises from a symmetrizable Cartan matrix, the polynomials $Q_{ij}=Q_e$ satisfy the
properties we desire to define a KLR algebra (as in
\cite[\S 2]{Webmerged}). Furthermore, we assume that if $e$ is a loop of degree
0, then $Q_{e}(u,v)=(u-v)P_e(u,v)$ for some symmetric polynomial $P_e(u,v)$.

\begin{definition}
 A {\bf loading} $\Bi$ is a function from $\R$ to $I\cup\{0\}$ which is
  only non-zero at finitely many points.  We can also think a loading as choosing a finite subset of the real line and labeling its elements with simple roots.

A loading is called {\bf generic} if there is no real number such that $\Bi(a)=t(e), \Bi(a-\vartheta_e)=h(e)$ for some edge $e\in \Omega$, or such that $\Bi(a-\vartheta_e)=h(e), \Bi(a-\vartheta_{e'})=h(e')$ and $\vartheta_e\neq \vartheta_{e'}$. 
\end{definition}
If we think of our loading as a set of labeled points, we can
visualize this as adding a ``ghost'' of each point labeled $h(e)$ for
each edge $e\in \Omega$ which is $\vartheta_e$ units to the right of
the point, and require that none of these coincide with each other or with
points of the loading when it can be avoided. We let $|\Bi|=\sum_{r\in
  \R}\Bi(a)$, and let $d$ be the number of points in $\Bi$.

\begin{remark}
  The reader familiar with KLR algebras will be used to thinking of
  $\Bi$ as a sequence of simple roots which has an order, but no
  distance information.  From now on, the distance between these
  elements will be essential, in a way that will be clear
  momentarily.  We can always obtain a simple ordered list of nodes
  $\becircled{\Bi}$ by forgetting the positions of the points; we call
  this the {\bf unloading} of $\Bi$.
\end{remark}

Assume for now that
\begin{itemize}
\item[$(\dagger)$] $\Gamma$ is a graph such that no two edges of the same
  weight have matching tail and head, and there are no cyclically oriented bigons with opposite weights.
\end{itemize}
We now define the {\bf weighted KLR algebra} $\walg^\vartheta_B$ on a finite set of
loadings $B$.
\begin{definition}\label{diagram-def}
  A {\bf weighted KLR} (wKLR) {\bf diagram} is a collection of
  finitely many oriented smooth curves in
  $\R\times [0,1]$ with each oriented in the negative direction.  That
  is, each curve's projection to the $y$-coordinate must be a
  diffeomorphism to $[0,1]$. Each curve must have
one endpoint on $y=0$ and one on $y=1$, at distinct points from the other curves.
Curves are allowed to carry a finite number of dots.

Furthermore, for 
every edge with $h(e)=i$ we add a ``ghost'' of each strand labeled $i$
shifted $\vartheta_e$ units to the right (or left if $\vartheta_e$ is
negative).  We require that there are no tangencies or triple
intersection points between any combination of strands and ghosts, and
no dots on intersection points.  Note that by our assumption $(\dagger)$, at a generic horizontal slice of the diagram, no two ghosts, two strands, or pair of ghost and strand coincide, except for those strands and ghosts that coincide because of edges of weight 0.

We'll consider these diagrams up to isotopy which preserves all these conditions.
\end{definition}
For example, if we have an edge $i\to j$, then the diagram $a$ is a
wKLR diagram, whereas $b$ is not since it has a tangency between a
strand and a ghost: \[a=
\begin{tikzpicture}[baseline,very thick]
  \draw (-.5,-1) to[out=90,in=-90] node[below,at start]{$i$} (-.6,0) to[out=90,in=-90](.5,1);
   \draw  (1,-1) to[out=90,in=-90] node[below,at start]{$i$}
  node[midway,circle,fill=black,inner sep=2pt]{} (0,1);
  \draw[dashed] (.2,-1) to[out=90,in=-90] (.1,0) to[out=90,in=-90](1.2,1);
   \draw[dashed]  (1.7,-1) to[out=90,in=-90]   (.7,1);
  \draw(0,-1) to[out=90,in=-90]node[below,at start]{$j$} (-.5,1);
\end{tikzpicture}\qquad \qquad 
b=
\begin{tikzpicture}[baseline,very thick]
  \draw (-.4,-1) to[out=90,in=-90] node[below,at start]{$i$} (-.6,0) to[out=90,in=-90](.5,1);
   \draw  (1,-1) to[out=90,in=-90] node[below,at start]{$i$}
  node[midway,circle,fill=black,inner sep=2pt]{} (0,1);
  \draw[dashed] (.3,-1) to[out=90,in=-90] (.1,0) to[out=90,in=-90](1.2,1);
   \draw[dashed]  (1.7,-1) to[out=90,in=-90]   (.7,1);
  \draw(-.1,-1) to[out=90,in=-90] node[below,at start]{$j$}(.1,0) to[out=90,in=-90] (-.5,1);
\end{tikzpicture}
\]

Reading along the lines $y=0,1$, we obtain loadings, which we call the
{\bf top} and {\bf bottom} of the diagram.   There is a notion of {\bf
  composition}  $ab$ of wKLR diagrams $a$ and $b$: this is
  given by stacking $a$ on top of $b$ and attempting to join the
  bottom of $a$ and top of $b$. If the loadings
  from the bottom of $a$ and top of $b$ don't match, then the
  composition is not defined and by convention is 0, which is not a
  wKLR diagram, just a formal symbol.
This composition rule makes the formal
span of all wKLR diagrams over $\K$ into an algebra
{\it $\doubletilde{W}^\vartheta$.}  For any finite set $B$ of
loadings, we let {\it
  $\doubletilde{W}^\vartheta_B$} be the subalgebra where we fix the
top and bottom of the diagram to lie in the set $B$.  For each loading
$\Bi\in B$, we have a straight line diagram $e_{\Bi}$ where every
horizontal slice is $\Bi$, and there are no dots.

We can define a {\bf degree} function on KL diagrams.  The degrees are
given on elementary diagrams by 
\begin{multline*}
  \deg\tikz[baseline,very thick,scale=1.5]{\draw (.2,.3) -- (-.2,-.1)
    node[at end,below,
    scale=.8]{$i$}; \draw (.2,-.1) -- (-.2,.3) node[at
    start,below,scale=.8]{$j$};}
  =-\delta_{i,j}\langle\al_i,\al_i\rangle \qquad
  \deg\tikz[baseline,very thick,scale=1.5]{\draw (0,.3) -- (0,-.1)
    node[at
    end,below,scale=.8]{$i$} node[midway,circle,fill=black,inner
    sep=2pt]{};}=\langle\al_i,\al_i\rangle\\
  \deg\tikz[baseline,very thick,scale=1.5]{\draw (.2,.3) -- (-.2,-.1)
    node[at
    end,below,scale=.8]{$i$}; \draw[dashed] (.2,-.1) -- (-.2,.3)
    node[at
    start,below,scale=.8]{$j$};} = \deg\tikz[baseline,very
  thick,scale=1.5]{\draw[dashed] (.2,.3) -- (-.2,-.1) node[at
    end,below,scale=.8]{$i$}; \draw (.2,-.1) -- (-.2,.3) node[at
    start,below,scale=.8]{$j$};}
  =-\frac{1}{2}\langle\al_i,\al_j\rangle (1-\delta_{i,j})
\end{multline*}

For a general diagram, we sum together the degrees of the elementary
diagrams it is constructed from.  

  \begin{definition}\label{def-wKLR}
    The {\bf weighted KLR algebra} $\walg^\vartheta_B$ is the quotient
    of {\it
  $\doubletilde{W}^\vartheta_B$} by relations similar to the original
KLR relations, but with interactions between differently labelled strands
turned into relations between strands and ghosts of others. 
 If there
is a loop of weight $0$ at $i$ (there can be at most one), we let
$P_i(u,v)$ be the polynomial $Q_e(u,v)/(u-v)$ attached to this loop earlier; if there
is no such loop, we let $P_i(u,v)=0$.

We give the list of local relations below. 
Some care must be used when understanding what it means to apply these
relations locally.  In each case, the LHS and RHS have a dominant term
which are related to each other via an isotopy through a disallowed
diagram with a tangency, triple point or a dot on a crossing.  You can only apply the
relations if this isotopy avoids tangencies, triple points and dots on crossings
everywhere else in the diagram; one can always choose isotopy
representatives sufficiently generic for this to hold.
\begin{enumerate}
\item The relations for passing dots through crossings are exactly as in the KLR algebra. \begin{equation*}
    \begin{tikzpicture}[scale=.6,baseline]
      \draw[very thick](-4,0) +(-1,-1) -- +(1,1) node[below,at start]
      {$i$}; \draw[very thick](-4,0) +(1,-1) -- +(-1,1) node[below,at
      start] {$j$}; \fill (-4.5,.5) circle (5pt);
      \node at (-2,0){=}; \draw[very thick](0,0) +(-1,-1) -- +(1,1)
      node[below,at start] {$i$}; \draw[very thick](0,0) +(1,-1) --
      +(-1,1) node[below,at start] {$j$}; \fill (.5,-.5) circle (5pt);
      \node at (4,0){for $i\neq j$};
    \end{tikzpicture}
\end{equation*}
\begin{equation*}
    \begin{tikzpicture}[scale=.6,baseline]
      \draw[very thick](-4,0) +(-1,-1) -- +(1,1) node[below,at start]
      {$i$}; \draw[very thick](-4,0) +(1,-1) -- +(-1,1) node[below,at
      start] {$i$}; \fill (-4.5,.5) circle (5pt);
      \node at (-2,0){=}; \draw[very thick](0,0) +(-1,-1) -- +(1,1)
      node[below,at start] {$i$}; \draw[very thick](0,0) +(1,-1) --
      +(-1,1) node[below,at start] {$i$}; \fill (.5,-.5) circle (5pt);
      \node at (2,0){+}; \draw[very thick](4,0) +(-1,-1) -- +(-1,1)
      node[below,at start] {$i$}; \draw[very thick](4,0) +(0,-1) --
      +(0,1) node[below,at start] {$i$};
    \end{tikzpicture}\qquad \qquad
    \begin{tikzpicture}[scale=.6,baseline]
      \draw[very thick](-4,0) +(-1,-1) -- +(1,1) node[below,at start]
      {$i$}; \draw[very thick](-4,0) +(1,-1) -- +(-1,1) node[below,at
      start] {$i$}; \fill (-4.5,-.5) circle (5pt);
      \node at (-2,0){=}; \draw[very thick](0,0) +(-1,-1) -- +(1,1)
      node[below,at start] {$i$}; \draw[very thick](0,0) +(1,-1) --
      +(-1,1) node[below,at start] {$i$}; \fill (.5,.5) circle (5pt);
      \node at (2,0){+}; \draw[very thick](4,0) +(-1,-1) -- +(-1,1)
      node[below,at start] {$i$}; \draw[very thick](4,0) +(0,-1) --
      +(0,1) node[below,at start] {$i$};
    \end{tikzpicture}
  \end{equation*}
  \item If we undo a bigon formed by the $m$th strand and the ghost of
    the $n$th coming from the edge $e$ (assuming $e$ is not a loop with
    $\vartheta_e= 0$), then we separate the strands and multiply by
    $Q_e(y_n,y_m)$.  This is a bit harder to draw in complete
    generality, but for example, if there is an edge $e\colon i\to j$
    with $\vartheta_e<0$ 
    and $Q_e(u,v)=au+bv$, then we have 
 \[ 
\begin{tikzpicture}[very thick,xscale=1.6]
 \draw (1,0) to[in=-90,out=90]  node[below, at start]{$i$} (1.5,1) to[in=-90,out=90] (1,2)
;
  \draw[dashed] (1.5,0) to[in=-90,out=90] (1,1) to[in=-90,out=90] (1.5,2);
  \draw (2.5,0) to[in=-90,out=90]  node[below, at start]{$j$} (2,1) to[in=-90,out=90] (2.5,2);
\node at (3,1) {=};
\node at (3.4,1) {$a$};
  \draw (3.7,0) -- (3.7,2) node[below, at start]{$i$}
 ;
  \draw[dashed] (4.2,0) to (4.2,2);
  \draw (5.2,0) -- (5.2,2) node[below, at start]{$j$} node[midway,fill,inner sep=2.5pt,circle]{};
\node at (5.6,1) {$+$};
\node at (5.9,1) {$b$};
  \draw (6.2,0) -- (6.2,2) node[below, at start]{$i$} node[midway,fill,inner sep=2.5pt,circle]{};
  \draw[dashed] (6.7,0)-- (6.7,2);
  \draw (7.7,0) -- (7.7,2) node[below, at start]{$j$};
\end{tikzpicture}
\]
  \item If we undo a bigon formed by the $k$th strand and the $k+1$st
    strand, we simply separate the strands if they have different
    labels.  If they are both labelled with $i$, then then the result
    is a single crossing of the strands times $2P_i(y_k,y_{k+1})$.
\[  \begin{tikzpicture}[very thick,xscale=1.6,baseline]
 \draw (1,-1) to[in=-90,out=90]  node[below, at start]{$i$} (1.5,0) to[in=-90,out=90] (1,1)
;
  \draw (1.5,-1) to[in=-90,out=90] node[below, at start]{$j$} (1,0) to[in=-90,out=90] (1.5,1);
\end{tikzpicture}=
\begin{cases}\qquad \qquad
   \begin{tikzpicture}[very thick,xscale=1.6,baseline]
 \draw (1,-.5) to[in=-90,out=90]  node[below, at start]{$i$} (1,.5)
;
  \draw (1.5,-.5) to[in=-90,out=90] node[below, at start]{$j$} (1.5,.5);
\end{tikzpicture} & i\neq j\\ \big(2 P_i(y_k,y_{k+1}) \big)
 \begin{tikzpicture}[very thick,xscale=1.6,baseline=-3pt]
 \draw (1,-.5) to[in=-90,out=90]  node[below, at start]{$i$} (1.5,.5)
;
  \draw (1.5,-.5) to[in=-90,out=90] node[below, at start]{$i$} (1,.5);
\end{tikzpicture}& i=j
\end{cases}
\]
\item strands can move through triple points without effect, except
  \begin{enumerate}
  \item when a ghost for an edge $e\colon i\to j$ which is $\vartheta_e$ to the right of the $m$th strand
    (which is labelled $j$) passes
    through a crossing of the $n$th and $n+1$st strands and these both have
label $i$. In  this case the diagrams where the
    strand is at the left differs from the one where it is at the right
    by 
\[
\partial_{n,n+1}Q_{e}(y_m,y_{n})=\frac{Q_e(y_m,y_n)-Q_e(y_m,y_{n+1})}{y_n-y_{n+1}}.\]
  \item the $m$th strand (which is labelled $i$) passes through the
    ghosts attached to $e\colon i\to j$ attached to the
    of the $n$th and $n+1$st strands, which are both labelled $j$.
 In  this case the diagrams where the
    strand is at the left differs from the one where it is at the right
    by 
\[
\partial_{n,n+1}Q_{e}(y_{n},y_m)=\frac{Q_e(y_n,y_m)-Q_e(y_{n+1},y_m)}{y_n-y_{n+1}}.\]
As before, we will not try to draw a completely general picture, but
given an example when there is an edge $e\colon i\to j$, $\vartheta_e<0$ and
$Q_e(u,v)=au+bv$, then we have 
\begin{equation*}
    \begin{tikzpicture}[very thick,xscale=1.7]
      \draw[dashed] (-3,0) +(.4,-1) -- +(-.4,1);
 \draw[dashed]      (-3,0) +(-.4,-1) -- +(.4,1); 
    \draw (-2,0) +(.4,-1) -- +(-.4,1) node[below,at start]{$j$}; \draw
      (-2,0) +(-.4,-1) -- +(.4,1) node[below,at start]{$j$}; 
 \draw (-3,0) +(0,-1) .. controls (-3.5,0) ..  +(0,1) node[below,at
      start]{$i$};\node at (-1,0) {=};  \draw[dashed] (0,0) +(.4,-1) -- +(-.4,1);
 \draw[dashed]      (0,0) +(-.4,-1) -- +(.4,1); 
    \draw (1,0) +(.4,-1) -- +(-.4,1) node[below,at start]{$j$}; \draw
      (1,0) +(-.4,-1) -- +(.4,1) node[below,at start]{$j$}; 
 \draw (0,0) +(0,-1) .. controls (.5,0) ..  +(0,1) node[below,at
      start]{$i$};
\node at (1.9,0)
      {$-$};   
\node at (2.2,0) {$b$};
     \draw (4,0)
      +(.4,-1) -- +(.4,1) node[below,at start]{$j$}; \draw (4,0)
      +(-.4,-1) -- +(-.4,1) node[below,at start]{$j$}; 
 \draw[dashed] (3,0)
      +(.4,-1) -- +(.4,1); \draw[dashed] (3,0)
      +(-.4,-1) -- +(-.4,1); 
\draw (3,0)
      +(0,-1) -- +(0,1) node[below,at start]{$i$};
    \end{tikzpicture}
  \end{equation*}
\item the triple point involves the $m$th, $m+1$st and $m+2$nd
  strands, all labelled $i$ and there is a loop of weight $0$ joining
  $i$ to itself. In  this case the diagrams where the
    strand is at the left differs from the one where it is at the right
    by  \begin{multline*} \big(P_i(y_k,y_{k+1})P_i(y_{k+1},y_{k+2})+P_i(y_k,y_{k+2})P_i(y_{k+1},y_k)-P_i(y_k,y_{k+2})P_i(y_{k+1},y_{k+2})\big)\psi_k\\-\big(P_i(y_k,y_{k+1})P_i(y_{k+1},y_{k+2})+P_i(y_k,y_{k+2})P_i(y_{k+2},y_{k+1})-P_i(y_k,y_{k+2})P_i(y_{k},y_{k+1})\big)\psi_{k+1}\label{eq:1}
\end{multline*}
\end{enumerate} 

\end{enumerate}
  \end{definition}

\begin{proposition}\label{orientation-reverse}
  If we reverse the orientation of an edge $e\mapsto e'$, and set
  $\vartheta_{e'}'=-\vartheta_e$ and $Q_{e'}'(u,v)=Q_e(v,u)$, then
  $\walg^{\vartheta'}\cong \walg^{\vartheta}$ via the obvious isomorphism
  leaving strands unchanged.   
\end{proposition}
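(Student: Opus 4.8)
The plan is to produce the isomorphism directly on diagrams and then check that it intertwines the defining relations, with bijectivity coming for free by symmetry. The first point to record is that reorienting $e\to e'$ while setting $\vartheta'_{e'}=-\vartheta_e$ and $Q'_{e'}(u,v)=Q_e(v,u)$ is nothing but interchanging $e\in\Omega$ with $\bar e\in\bar\Omega$: the conventions $\vartheta_{\bar e}=-\vartheta_e$, $Q_{\bar e}(u,v)=Q_e(v,u)$, the symmetrizability identity $d_{h(e)}c_e=d_{t(e)}c_{\bar e}$, and the homogeneity condition imposed on $Q_e$ are all symmetric under this swap, while hypothesis $(\dagger)$ is a property of the underlying weighted graph that is untouched by reorienting an edge. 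So $\walg^{\vartheta'}$ is again a wKLR algebra of the permitted type, built on exactly the same set of loadings (a loading records neither orientation nor weighting). The map itself is defined on diagrams by: delete the ghosts, keep the strands and dots, and re-install the ghosts according to $\vartheta'$, perturbing the strands slightly if this creates a forbidden tangency or triple point (possible since generic configurations are dense). This fixes the top and bottom loadings and is compatible with the stacking that defines composition, so it induces an algebra map on the spans of diagrams, with inverse given by the analogous construction for the reverse reorientation $e'\mapsto e$ (the two composites leave all strands unchanged and restore the original ghosts).

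It then remains to see that this map carries the relations of Definition \ref{def-wKLR} for $\vartheta$ to those for $\vartheta'$, so that it descends to an isomorphism $\walg^\vartheta\xrightarrow{\ \sim\ }\walg^{\vartheta'}$. The dot-slide relations and the relations attached to a loop of weight $0$ involve only the strands and the edges we have not touched, so they are preserved verbatim. For the bigon relation, a bigon between a $t(e)$-strand at $x_1$ and the $e$-ghost (at $x_2+\vartheta_e$) of an $h(e)$-strand at $x_2$ occurs precisely at the heights where $x_1=x_2+\vartheta_e$; after the map this is exactly where the $e'$-ghost (at $x_1-\vartheta_e=x_1+\vartheta'_{e'}$) of that same strand, now an $h(e')$-strand, meets the $t(e')$-strand at $x_2$, i.e. a bigon of the kind governing the relation for $\vartheta'$, and undoing it costs $Q_e(y_2,y_1)$ on one side versus $Q_{e'}(y_1,y_2)=Q_e(y_2,y_1)$ on the other. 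The three exceptional triple-point configurations for $\vartheta$ match the three for $\vartheta'$ in the same fashion, with the correction term $\partial_{k,k+1}Q_e$ going over to $\partial_{k,k+1}Q_{e'}$ after the induced permutation of arguments, again by $Q_{e'}(u,v)=Q_e(v,u)$.

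The step I expect to require the most care is the well-definedness on the quotient, namely that the construction is independent of the perturbations used to re-genericize the ghosts: one argues that two choices of perturbation differ by pushing strands or ghosts through triple points, which by the triple-point relation either has no effect or contributes a correction lying in the image, and this, together with the index bookkeeping identifying precisely which strand or ghost passes which crossing after reorientation, closes the gap. Everything else is a formal consequence of the single identity $Q_{e'}(u,v)=Q_e(v,u)$.
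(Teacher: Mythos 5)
The paper states this proposition without proof (it is followed immediately by a remark naming the isomorphism and by Definition~\ref{def:arbitrary} which invokes it), so there is no paper argument to compare against; I'll evaluate the proposal on its own terms. Your governing observation---that the data $(\vartheta'_{e'},Q'_{e'})$ is nothing but $(\vartheta_{\bar e},Q_{\bar e})$, so all the symmetry hypotheses are manifestly preserved---is correct and is the right place to start, and the verification that bigons go to bigons with the same factor, via $Q_{e'}(u,v)=Q_e(v,u)$, is also correct (likewise the exchange of the two strand/ghost triple-point cases under the swap). This is the natural approach.

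The genuine gap is in the well-definedness argument of the last paragraph. You observe, correctly, that deleting the old $e$-ghosts and installing the $e'$-ghosts can create forbidden triple points, and you propose to perturb; but then you assert that different perturbations ``either [have] no effect or [contribute] a correction lying in the image,'' which, as stated, does not establish independence of the perturbation --- a correction term ``lying in the image'' is still a nonzero element, and if two perturbations really differed by a nonzero correction the map would not be well-defined on $\walg^\vartheta$. What one actually needs (and what is true, but not checked in your proposal) is that none of the new triple points created by $\phi$ are of the \emph{exceptional} type, so that Definition~\ref{def-wKLR}(4) allows them to be slid through with no correction at all. Concretely: if in $\phi(D)$ the $e'$-ghost of an $h(e')$-strand $a$ passes through a crossing of two $t(e')$-strands $b,c$, then at that moment $x_a-\vartheta_e=x_b=x_c$, and back in $D$ the $a$-strand and the $e$-ghosts of both $b$ and $c$ all sit at $x_a$ --- a triple point already forbidden in $D$; similarly for a $t(e')$-strand passing through two $e'$-ghosts. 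Thus the only perturbation ambiguity involves harmless triple points, and the map descends; but this lemma is the actual content of ``closing the gap'' and needs to be stated and proved rather than gestured at. (A related point worth noting explicitly: the \emph{direction} of each strand/ghost crossing reverses under $\phi$, since the ghost jumps to the other side; this is immaterial for the bigon and triple-point relations, which is what saves the argument, but it does mean that $\phi$ does \emph{not} intertwine the polynomial representations of Proposition~\ref{prop:action}, so that route to well-definedness is unavailable without modifying the sign convention on the $e$-crossings.)
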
 
By analogy with the geometry of Section \ref{sec:geometry-quivers}, we
call this isomorphism {\bf Fourier transform}.

\begin{definition} \label{def:arbitrary}
  If $\Gamma$ is an arbitrary choice of graph with multiplicities, $\vartheta_e$ and 
  $Q_e$ associated polynomials, then the {\bf weighted KLR algebra}
  $\walg^\vartheta_B$ for a set of loadings $B$ is the weight KLR
  algebra for the graph where we replace all bigons where the weights match (perhaps after reversing the orientation and negating the weight) with single edges of that weight, with $Q_{\operatorname{new}}=\prod Q_{\operatorname{old}}$.  Proposition \ref{orientation-reverse} shows that this does not depend on how one chooses to reverse orientations.
\end{definition}
We note that this algebra has a natural anti-automorphism where $a^*$
is the reflection of a diagram $a$ through a horizontal line.

Of course, many readers used to more categorical language will prefer
to think that there is a category where the objects are loadings, and
the morphism spaces are the spaces $e_\Bi \walg^\vartheta_B e_{\Bj}$
described above.  We will freely switch between these two formalisms
throughout the paper.

\subsection{A permutation type basis}
\label{sec:perm-type-basis}

\begin{proposition}\label{prop:action}
  This algebra $W^\vartheta_B$ acts on a sum of polynomial
  rings $\oplus_{B}\K[y_1,\dots, y_d]$, one for each loading, via the rule 
  \begin{itemize} 
\item when a strand passes from right of a ghost to
    left, we take the identity. 
\item when the $j$th strand passes
    from left of the ghost for $e$ of the $k$th strand to right of it,
    we multiply by $Q_e(y_k,y_j)$. 
\item when the $j$ and $j+1$
    strands cross and have the different labels, we just apply the
    permutation $s_j$. 
\item when the $j$ and $j+1$ strands cross and
    have the same label $i$, we act with the Demazure operator $ \partial_{j,j+1}=\frac{s_j-1}{y_{j+1}-y_{j}}$ if there is no loop of weight 0 at $i$ and if there is such a loop $e$, we act by 
    $Q_e(y_j,y_{j+1})\cdot \partial_{j,j+1}=P_e(y_j,y_{j+1})\cdot(1-s_j).$
  \end{itemize} \end{proposition}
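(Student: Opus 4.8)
The plan is to proceed in two stages: first verify that the formulas given do define an action of the free algebra $\doubletilde{W}^\vartheta_B$ that descends through the defining relations of $\walg^\vartheta_B$, and then establish faithfulness. For the first stage, I would observe that each elementary diagram (a single crossing, a single dot, a strand passing a ghost from one side, or a ghost passing a strand) gets assigned an operator on $\bigoplus_B \K[y_1,\dots,y_d]$: dots act by multiplication by the appropriate $y$-variable, crossings of unequal labels by the permutation $s_j$, crossings of equal labels by the Demazure operator $\partial_{j,j+1}$ (or $P_e(y_j,y_{j+1})(s_j-1)$ in the loop-of-weight-$0$ case), and strand/ghost interactions by multiplication by $1$ or by $Q_e(y_k,y_j)$ depending on direction. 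Since a general wKLR diagram factors (non-uniquely) as a composite of such elementary pieces, the assignment extends to all of $\doubletilde{W}^\vartheta_B$ provided it is well defined, i.e.\ independent of the factorization; this independence is precisely the statement that the relations of Definition \ref{def-wKLR} are respected, so the two claims collapse into one: \emph{every defining relation of $\walg^\vartheta_B$ is satisfied by these operators.}

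For that, I would check the relations one at a time, mirroring the enumeration in Definition \ref{def-wKLR}. Relation (1), moving dots through crossings, reduces to the familiar nilHecke/KLR identities $\partial_{j,j+1}\, y_j = y_{j+1}\,\partial_{j,j+1} + 1$ and $s_j y_j = y_{j+1} s_j$, which are standard. Relation (2), undoing a strand–ghost bigon, amounts to computing that a strand crossing a ghost from the left and then back to the right multiplies by $Q_e(y_k,y_j)\cdot 1 = Q_e(y_k,y_j)$ — immediate from the definition — while the other orientation contributes $1\cdot Q_{\bar e}(\dots)$; one must match this against the right-hand side, using $Q_{\bar e}(u,v)=Q_e(v,u)$. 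Relation (3), undoing a same-label bigon, is the identity $\partial_{j,j+1}^2 = 0$ (so two crossings give zero) in the no-loop case, and in the loop case one computes $\bigl(P_e(y_j,y_{j+1})(s_j-1)\bigr)^2$ and checks it equals $2P_i(y_j,y_{j+1})$ times a single crossing; this is a short symmetric-function computation using that $P_e$ is symmetric. Relation (4), the triple-point moves, is the most involved: here one compares the composite of three transpositions/Demazure operators read in the two cyclic orders, and the discrepancy is exactly the correction term stated. The genuinely new content beyond KLR is the interaction of a ghost with a same-label crossing (the operator $\partial_{k,k+1}$ does not commute with multiplication by $Q_e(y_j,\cdot)$, and the defect is $\partial_{k,k+1}Q_e(y_j,y_k)$ by the Leibniz-type rule $\partial_{k,k+1}\bigl(f\cdot g\bigr) = (\partial_{k,k+1}f)\,s_{k,k+1}(g) + f\,\partial_{k,k+1}(g)$), and the three-strand loop case, where the bracketed degree-$2$-in-$P$ expression is obtained by expanding $\partial_k\partial_{k+1}\partial_k - \partial_{k+1}\partial_k\partial_{k+1}$ with the $P$-decorations inserted. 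I expect this triple-point verification, and in particular bookkeeping which ghosts and strands are involved and on which side, to be the main obstacle; everything else is a direct transcription of the corresponding KLR computation.

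For faithfulness — the second stage — I would argue as follows. By the (yet-to-be-proved, but stated earlier as a claimed feature) permutation-type basis of $\walg^\vartheta_B$, it suffices to show that the images of the basis diagrams are linearly independent as operators. One orders the basis by the underlying permutation $w$ and its length, and for a diagram representing $w$ with dots encoding a monomial $\mathbf{y}^{\mathbf a}$, one shows its image, applied to a suitable monomial in the polynomial ring attached to the bottom loading, has "leading term" $w(\mathbf{y}^{\mathbf a})$ in an appropriate filtration (the strand/ghost crossings of unequal type contribute only permutations and invertible scalars $Q_e$ with unit leading coefficient, by the standing hypothesis that the pure-monomial coefficients of each $Q_e$ are units, and the same-label Demazure operators strictly drop the length filtration except for their leading permutation part). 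A triangularity argument with respect to the Bruhat order and polynomial degree then yields linear independence, hence faithfulness. This last step runs exactly parallel to the faithfulness proof for KLR algebras in \cite[\S 2]{Webmerged}, with the only modification being that the polynomial multipliers are now the $Q_e$'s indexed by ghost crossings rather than sitting at honest same-color crossings; I would cite that argument and indicate the modification rather than repeat it in full.
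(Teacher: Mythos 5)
Your proposal is correct and follows essentially the same approach as the paper: defer the routine relation checks to the Khovanov--Lauda computation, and verify the genuinely new cases by hand (a ghost passing through a same-colour crossing, and the three-strand configuration in the presence of a loop of weight zero), the latter being the one the paper computes out explicitly by expanding $P_{12}\circ(s_k-1)\circ P_{23}\circ(s_{k+1}-1)\circ P_{12}\circ(s_k-1)$. One organizational note: the paper's proof of \emph{this} proposition actually only addresses well-definedness; the ``faithfully'' clause is tacitly deferred to the proof of Theorem~\ref{th:basis}, which is exactly the triangularity-over-$\K[y_1,\dots,y_m]$ argument you sketch in your second stage, and the circularity you flag there is harmless --- faithfulness needs only the spanning half of the basis claim (proved by crossing-reduction independently of the action), with linear independence of the images being precisely what the triangularity furnishes.
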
 
\begin{proof} The confirmation of
  the relations is an easy modification of the proof of Khovanov and
  Lauda \cite{KLI}. The relations (1) follow from the usual Leibnitz
  rule for Demazure operators:
  \begin{equation}
    \label{eq:demazure}
    \partial_{j,j+1}(fg)=\frac{f^{s_i}g^{s_i} -fg}{y_{j+1}-y_{j}}=f^{s_i}\partial_{j,j+1}(g)+\partial_{j,j+1}(f)g.
  \end{equation}
The relation (2) is simply follows from the fact that one of the
crossings introduces a factor of $Q_e(y_k,y_j)$, and the other a
factor of $1$.  The relation (3) is just $s_k^2=1$ if $i\neq j$, and
if $i=j$, then for the no loop case, this is just $\partial_k ^2=0$
and in the case where there is a loop, we have
\[P_i(y_k,y_{k+1})(1-s_k) P_i(y_k,y_{k+1})(1-s_k) =P_i(y_k,y_{k+1})^2
(1-s_k)^2=2 P_i(y_k,y_{k+1})^2 (1-s_k).\]
The relations  (4a) and (4b) follows immediately from \eqref{eq:demazure}.

The only really different relation to check is (4c); in this case, we
  use the notation $P_{ij}=P_e(y_{k+i-1},y_{k+k-1})$. The action we
  check is \begin{multline*} \tikz[very thick,scale=.5,baseline]{\draw
      (-1,1) to [out=-45,in=90] (1,-1); \draw (-1,-1) to
      [in=-90,out=45] (1,1); \draw (0,-1) to [out=135,in=-90] (-.7,0)
      to[out=90,in=-135] (0,1); }= P_{12}\circ (s_k-1) \circ
    P_{23}\circ (s_{k+1}-1) \circ P_{12}\circ (s_k-1)
= P_{12}P_{13}P_{23}s_ks_{k+1}s_k-  P_{12}P_{13}P_{23}s_ks_{k+1}\\- P_{12}P_{13}P_{23}s_{k+1}s_{k}+P_{12}P_{13}P_{23}s_{k+1} +(P_{12}P_{23}+P_{21}P_{13})s_k - (P_{12}P_{23}+P_{21}P_{13})P_{12}
 \end{multline*}
Comparing with the mirror image, we arrive at the desired relations.
\end{proof}
Fix a pair of loadings $\Bi,\Bj$.  For each permutation $\pi$ such that the order of labels appearing in the loadings $\Bi,\Bj$ differ by $\pi$, we fix an diagram $b_\pi$ which wires together $\Bi$ and $\Bj$ according to that permutation.  

Note that now even for a transposition of adjacent elements, this is not uniquely determined, since we may have a ghost that passes between both the pairs of elements which we wire in opposite order, and the element depends on whether we cross our strands to the left or right of this ghost; we let $\psi_k$ denote the diagram in which we cross to the left of all possible ghosts. Obviously, these generate the algebra together with the dots $y_i$.
\begin{theorem}\label{th:basis}
 The space $e_{\Bi}\walg^\vartheta e_{\Bj}$ is a free module over $\K[y_1,\dots, y_m]$, and the diagrams $b_\pi$ are a free basis.
\end{theorem}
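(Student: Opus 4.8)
The plan is to argue in two stages, exactly as for the original KLR basis theorem: first that the diagrams $b_\pi$ span $e_{\Bi}\walg^\vartheta e_{\Bj}$ over $\K[y_1,\dots,y_m]$, and then that they are linearly independent. For spanning, I would take an arbitrary wKLR diagram with top $\Bi$ and bottom $\Bj$ and show by induction on the number of crossings (between strands, and between strands and ghosts) that it can be rewritten, modulo the relations of Definition \ref{def-wKLR}, as a $\K[y_1,\dots,y_m]$-linear combination of the chosen representatives $b_\pi$. The relations for sliding dots through crossings let us push all dots to the bottom of the diagram (picking up lower-order error terms when strands are equally labeled, which is fine for an induction on crossing number together with total dot count). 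The bigon relations (2) and (3) let us remove any pair of strands, or strand and ghost, that cross and then immediately cross back, at the cost of multiplying by a polynomial in the $y$'s or, in the equal-label case, replacing the bigon by a single crossing times $2P_i$ — again a strict decrease in crossing number. The triple-point relations (4) let us move crossings and ghost-crossings past each other, with correction terms that always have strictly fewer crossings. A standard argument then shows that any two diagrams inducing the same permutation $\pi$ of the labels differ, modulo these relations and lower-crossing terms, by an element of $\K[y_1,\dots,y_m]\cdot b_\pi$; this uses that the genericity of the loadings makes the combinatorial data (the cyclic/linear order in which a strand meets the various ghosts) resolvable, so $\psi_k$ — crossing to the left of all ghosts — is a well-defined canonical choice. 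Hence the $b_\pi$ span.

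For linear independence I would invoke Proposition \ref{prop:action}: the algebra acts faithfully on $\bigoplus_B \K[y_1,\dots,y_d]$, so it suffices to show the images of the $b_\pi$ under this action are $\K[y_1,\dots,y_m]$-linearly independent operators $\K[y_1,\dots,y_m]\to\K[y_1,\dots,y_m]$ (more precisely, as maps from the summand indexed by $\Bj$ to that indexed by $\Bi$). Suppose $\sum_\pi f_\pi(\mathbf y)\, b_\pi$ acts as zero. Order the permutations $\pi$ by length and look at a term of minimal length in the hypothetical relation. The action of $b_\pi$ is a composite of Demazure operators $\partial_{j,j+1}$ (at equal-label crossings), honest transpositions $s_j$ (at distinct-label crossings), and multiplication operators $Q_e(y_k,y_j)$ (at ghost crossings). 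Writing each $\partial_{j,j+1}$ as $\frac{s_j-1}{y_j-y_{j+1}}$ and expanding, the operator $b_\pi$ equals $c_\pi(\mathbf y)\, w_\pi + (\text{shorter Weyl-group elements})$, where $w_\pi$ is the permutation operator corresponding to $\pi$ and $c_\pi$ is a nonzero rational function — nonzero because the coefficients of the pure monomials in each $Q_e$ are units and the Demazure leading terms $\frac{1}{y_j-y_{j+1}}$ never vanish. The "leading term" $w_\pi$ is distinct for distinct $\pi$, so collecting coefficients of $w_\pi$ in $\bigoplus \K(\mathbf y)\cdot(\text{Weyl group})$ forces $f_\pi c_\pi = 0$, hence $f_\pi = 0$, contradicting minimality. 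This both gives independence and shows the $b_\pi$ are a basis.

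The main obstacle is the spanning step, and specifically making the induction genuinely terminate in the presence of ghosts and the weight-$0$ loop. Unlike the classical KLR case, ``crossing number'' must be taken to count strand–ghost intersections as well, and one must check that every relation in Definition \ref{def-wKLR} — in particular the length-three triple-point relation with its complicated $P_i$-correction, and the fact that reordering two ghosts of the same strand costs nothing only when the weights differ — strictly decreases the chosen complexity statistic (a suitable lexicographic combination of crossing number and, as a tie-breaker, something like the number of ghost-crossings to the wrong side of a strand). One also has to verify that the genericity hypothesis on loadings, together with assumption $(\dagger)$, guarantees that the isotopy classes of diagrams realizing a given permutation $\pi$ form a connected set under these moves, so that $b_\pi$ is well-defined up to the relations and a polynomial multiple; this is where the ``action at a distance'' phenomenon genuinely complicates the bookkeeping relative to \cite{KLI}. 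Once the complexity statistic is set up correctly, each individual relation check is routine, and the independence half goes through essentially verbatim from the faithful polynomial representation.
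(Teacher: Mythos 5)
Your proposal takes essentially the same approach as the paper: spanning by induction on crossing number using the bigon and triple-point relations of Definition~\ref{def-wKLR}(2--3), and linear independence via the faithful polynomial representation of Proposition~\ref{prop:action}, where each $b_\pi$ acts as a nonzero rational function times the permutation $\pi$ plus strictly shorter Weyl-group elements. Your worries about termination of the crossing-number induction and well-definedness of $b_\pi$ are reasonable, but the paper's (terse) argument addresses exactly the two points you flag — bigons reduce crossing count via relations (2)--(3), and the triple-point moves show any two representatives of $\pi$ differ by lower-crossing terms — so this is elaboration rather than a different route.
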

\begin{proof}
Proof that these span is much like that of \cite[Lemma 4.11]{Webmerged}.  If
the strands of a diagram ever cross each other twice, or cross a ghost
twice, we can rewrite them as a sum of diagrams
with fewer crossings between pairs of strands or strands and ghosts
using the relations of Definition \ref{def-wKLR}(2-4).
Thus, we need only consider diagrams that we could have chosen for
$b_\pi$.  Furthermore, we can use the triple-point moves to show that the
difference between any two such diagrams for $\pi$ has fewer crossings
by Definition \ref{def-wKLR}(4).
  Thus, the $b_\pi$'s must span and we
  need only show they are linearly independent.    

On the rational
  functions in the polynomial
  representation, the element $b_\pi$ acts as a product of operators
  which are of the form $s_i$ times a rational function plus a
  rational function times 1.  The operator $s_i$ commutes past multiplying by
  a rational function just by acting on it (the smash product rule);
  thus the product of these terms is $\pi$ times a rational function,
  plus of a sum of shorter elements of $S_n$ times rational
  functions.  Thus, the linear independence over $\K[y_1,\dots, y_m]$
  of the action of the elements of $S_n$ guarantees the linear
  independence of the $b_\pi$'s.
\end{proof}
Note that in the course of this proof, we've also shown that the
action of Proposition \ref{prop:action} is faithful.

\subsection{Dependence on choice of loadings}
\label{sec:depend-choice-load}

\begin{definition}\label{def:equivalent}
    Call two loadings $\Bi,\Bi'$ {\bf equivalent} if for every edge
    $e:i\to j$, and each pair of integers $(f,g)$ the ghost of the $f$th strand labeled with $h(e)$ is either to the left of the $g$th strand labeled $t(e)$ in both $\Bi, \Bi'$ or to the right in both.
\end{definition}

 \begin{example}
  Let $\Gamma$ be the Kronecker quiver \[\tikz[thick,
   baseline=-3.5pt]{\node[fill=black,circle,inner sep=1.5pt,outer
     sep=2pt,label=above:$0$] (a) at (0,0){}; \node[fill=black,circle,inner
     sep=1.5pt,outer sep=2pt,label=above:$1$] (b) at (2,0){}; \draw[->] (a)
     to[out=30,in=150] node [above,midway,scale=.7]{$1$} (b); \draw[->] (a) to[out=-30,in=-150] node [below,midway,scale=.7]{$-1$} (b);},\]
   with the two edges are given weights $1$ and $-1$. For
   $\nu=\al_0+\al_1$, a loading is determined the $x$-coordinates
   $x_0$ and $x_1$ of the points labeled with $0$ and $1$.  There are
   3 equivalence classes of loadings determined by the
   inequalities
   \begin{equation}
\tikz[baseline]{\node at (-5,0) [label=above:{$x_0<x_1-1$}]{\tikz[baseline=-2pt,very thick, xscale=2] { 
\draw (1.25,-.5) -- node[below, at start]{$1$} (1.25,.5);
\draw[dashed] (.65,-.5) -- (.65,.5);
\draw[dashed] (-.1,-.5) -- (-.1,.5);
\draw (.5,-.5) -- node[below, at start]{$0$} (.5,.5);
 }};
\node at (0,0) [label=above:{$x_1-1< x_0 < x_1+1$}]{ \tikz[baseline=-2pt,very thick, xscale=2] { 
\draw (.05,-.5) -- node[below, at start]{$1$} (0.05,.5);
\draw[dashed] (-.55,-.5) -- (-.55,.5);
\draw[dashed] (-.1,-.5) -- (-.1,.5);
\draw (.5,-.5) -- node[below, at start]{$0$} (.5,.5);}};
 \node at (5,0) [label=above:{$x_1+1<x_0.$}]{\tikz[baseline=-2pt,very thick, xscale=2] { 
\draw (1.25,-.5) -- node[below, at start]{$0$} (1.25,.5);
\draw[dashed] (.65,-.5) -- (.65,.5);
\draw[dashed] (-.1,-.5) -- (-.1,.5);
\draw (.5,-.5) -- node[below, at start]{$1$} (.5,.5);}};
 }\label{eq:3}
\end{equation}

 \end{example}

\begin{proposition}
In the algebra on any set $B$ of loadings containing equivalent
loadings $\Bi,\Bi'\in B$, the projective modules $\walg^\vartheta
e_{\Bi}$ and $\walg^\vartheta e_{\Bi'}$ are isomorphic.  That is, the
original algebra  is Morita equivalent to that with either loading
excluded.

 In terms of the category of loadings mentioned earlier, these
 loadings are isomorphic.
\end{proposition}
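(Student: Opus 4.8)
The plan is to exhibit mutually inverse morphisms between the objects $\Bi$ and $\Bi'$ of the category of loadings: a wKLR diagram $a$ with bottom $\Bi$ and top $\Bi'$, so that $a\in e_{\Bi'}\walg^\vartheta e_{\Bi}$, together with its horizontal reflection $a^\ast\in e_{\Bi}\walg^\vartheta e_{\Bi'}$, and to prove $a^\ast a=e_{\Bi}$ and $aa^\ast=e_{\Bi'}$. Right multiplication by $a$ and by $a^\ast$ then gives inverse isomorphisms of left modules $\walg^\vartheta e_{\Bi'}\leftrightarrow \walg^\vartheta e_{\Bi}$; and since then $\walg^\vartheta e_{\Bi}\cong\walg^\vartheta e_{\Bi'}$ with $\Bi'\in B\setminus\{\Bi\}$, the idempotent $e=\sum_{\Bj\in B,\ \Bj\neq\Bi}e_{\Bj}$ is full, whence $\walg^\vartheta_B$ is Morita equivalent to $e\walg^\vartheta_B e=\walg^\vartheta_{B\setminus\{\Bi\}}$. (Throughout, $\Bi$ and $\Bi'$ are assumed to have the same content, as the definition of equivalence presupposes.)

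First I would fix, for each label $i$, the order-preserving bijection between the $i$-labelled points of $\Bi$ and those of $\Bi'$, and take the straight-line homotopy $\Bi_\tau$, $\tau\in[0,1]$, moving each point at constant speed to its partner; a small generic perturbation of its interior turns it into an honest wKLR diagram $a$ (no tangencies, no triple points, endpoints unchanged). The crux is a lemma: in $\Bi_\tau$ no two like-labelled strands ever cross, and for every edge $e$ no $t(e)$-labelled strand ever crosses a ghost coming from $e$. This is where equivalence enters. For $i$-labelled points $p_f<p_{f'}$ of $\Bi$ with partners $p'_f<p'_{f'}$ of $\Bi'$, the separation $(1-\tau)(p_f-p_{f'})+\tau(p'_f-p'_{f'})$ is a convex combination of two negative numbers, hence stays negative; and for an edge $e\colon i\to j$, the displacement $(1-\tau)(p_f+\vartheta_e-q_g)+\tau(p'_f+\vartheta_e-q'_g)$ between the ghost of the $f$-th $i$-strand and the $g$-th $j$-strand is a convex combination of two numbers of the same sign — the same by equivalence, nonzero by genericity of $\Bi$ and $\Bi'$ — hence never vanishes. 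These are uniform strict inequalities over the compact interval, so they persist under the perturbation. In particular every crossing appearing in $a$ joins strands with distinct labels.

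Next I would compute $a^\ast a$. Listing the crossings of $a$ from bottom to top as $c_1,\dots,c_N$, the diagram $a^\ast a$ reads $c_1,\dots,c_N,c_N,\dots,c_1$, so the middle pair $c_N c_N$ is a bigon; as its two strands carry distinct labels, Definition \ref{def-wKLR}(3) removes it with no correction term, after which $c_{N-1}c_{N-1}$ is the new innermost bigon, and so on down to the crossingless, dotless diagram $e_{\Bi}$; symmetrically $aa^\ast=e_{\Bi'}$. When a triple point must be traversed to expose the next bigon we invoke Definition \ref{def-wKLR}(4), which is free of correction terms here: every exceptional case there requires either two like-labelled strands crossing, or a $t(e)$-strand passing through a coincidence of ghosts of like-labelled $i$-strands (so, again, two like-labelled strands crossing), and the lemma rules both out. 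The same remark disposes of the $2P_i$ term of Definition \ref{def-wKLR}(3) and the $Q_e$ term of Definition \ref{def-wKLR}(2), which can only come from a like-labelled bigon or a strand–ghost bigon respectively.

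I expect the main obstacle to be the bookkeeping in the lemma: verifying that the generic perturbation can be carried out without disturbing the two sign conditions, and checking carefully that these conditions really exclude \emph{every} route by which a $Q_e$-factor, a $2P_i$-factor, or a triple-point correction term from Definition \ref{def-wKLR}(2)--(4) could enter — in particular handling multi-edges, loops of weight $0$, and the reduction of weight-matched bigons of $\Gamma$ to single edges via Definition \ref{def:arbitrary}. Once that is in place, the passage from $a^\ast a$ to $e_{\Bi}$ is just the bigon-removal already used in the spanning half of the proof of Theorem \ref{th:basis}.
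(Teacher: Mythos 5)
Your proposal is correct and follows essentially the same approach as the paper, which gives only the one-line proof ``The straight-line path from $\Bi$ to $\Bi'$ gives an isomorphism between these projectives.'' You have simply made the argument explicit: the straight-line diagram $a$ and its reflection $a^*$ are shown to be mutually inverse by observing that equivalence of $\Bi$ and $\Bi'$ forces all relevant separations (like-labelled strand pairs, and $t(e)$-strand vs.\ $e$-ghost pairs) to be convex combinations of same-signed numbers, so the only bigon relations invoked in simplifying $a^*a$ and $aa^*$ are the coefficient-$1$ cases of Definition \ref{def-wKLR}(2)--(4). One small slip: for an edge $e\colon i\to j$ the ghost is attached to the $h(e)=j$-labelled strand, not the $t(e)=i$-strand as your formula $p_f+\vartheta_e-q_g$ suggests; the relevant displacement is $(q_g+\vartheta_e)-p_f$, but this does not affect the convexity argument or the conclusion.
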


\begin{proof}
  The straight-line path from $\Bi$ to $\Bi'$ gives an isomorphism between these projectives.
\end{proof}
In particular, if we simultaneously translate all points in a loading, we will obtain an equivalent one. 

  Consider the dominant cone $D_n=\{x_1< \cdots <x_n\}\subset
  \R^n$. For each $\nu=\sum v_i\al_i$, the set of loadings with
  $|\Bi|=\nu$ is naturally identified with the product of the dominant
  cones $D_{v_1}\times \cdots \times D_{v_m}\subset \R^{v_1}\times
  \cdots \times \R^{v_m}$ minus finitely many affine hyperplanes.
  It's clear from the definition that:

  \begin{proposition}
    The sets of equivalence classes are precisely the connected components
    of the complement in  $D_{v_1}\times \cdots \times D_{v_m}$ of affine hyperplanes associated to each edge
    $e\colon i\to j$ and $1\leq m\leq v_i, 1\leq n\leq v_j$: 
\[H_{e,m,n}=\{x_m^{(i)}-x_n^{(j)}=\vartheta_e\}.\] 
  \end{proposition}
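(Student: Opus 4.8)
The plan is to recognize this as the standard fact that the connected components of the complement of a finite arrangement of hyperplanes inside a convex open set are exactly its ``sign cells'', and then to match that sign data with Definition~\ref{def:equivalent}. First I would fix notation: writing $r=|I|$ and $U=D_{v_1}\times\cdots\times D_{v_r}$ for the product of open dominant cones, I would note that $U$ is an open \emph{convex} cone (each $D_{v_i}=\{x_1<\cdots<x_{v_i}\}$ is convex), and that, via the identification recalled just above, the loadings with $|\Bi|=\nu$ are the points of $U$ lying off the finitely many affine hyperplanes $H_{e,m,n}$. For an edge $e\colon i\to j$ and indices $1\le m\le v_i$, $1\le n\le v_j$, let $\phi_{e,m,n}(x)=x_m^{(i)}-x_n^{(j)}-\vartheta_e$; this affine functional cuts out $H_{e,m,n}$, and it is nonconstant except in the degenerate cases (a weight-$0$ loop at $i$ with $m=n$) where $H_{e,m,n}$ is empty or all of $U$ and contributes nothing, which I would dispose of in a sentence.

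Next I would translate Definition~\ref{def:equivalent} into sign conditions: the ghost of the $n$th strand labeled $h(e)=j$ produced by $e$ sits at $x_n^{(j)}+\vartheta_e$, so it lies to the left of the $m$th strand labeled $t(e)=i$ exactly when $\phi_{e,m,n}(x)>0$ and to the right exactly when $\phi_{e,m,n}(x)<0$. Hence two loadings in $U\setminus\bigcup H_{e,m,n}$ are equivalent if and only if $\operatorname{sign}\phi_{e,m,n}$ agrees at them for every triple $(e,m,n)$; equivalently, they lie in the same cell $C_\varepsilon=\{x\in U:\operatorname{sign}\phi_{e,m,n}(x)=\varepsilon_{e,m,n}\}$, where $\varepsilon$ ranges over choices of a sign $\varepsilon_{e,m,n}\in\{+,-\}$ for each triple. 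So the equivalence classes are precisely the nonempty cells $C_\varepsilon$.

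Finally I would establish the chamber description. Each $C_\varepsilon$ is the intersection of the convex set $U$ with the open half-spaces $\{\varepsilon_{e,m,n}\phi_{e,m,n}>0\}$, hence is convex, in particular connected, as well as open and contained in the complement of the arrangement. The cells $C_\varepsilon$ partition $U\setminus\bigcup_{e,m,n} H_{e,m,n}$, so each nonempty one is a union of connected components of that complement and, being connected, is a single connected component. Conversely, any connected component $K$ of the complement lies inside one $C_\varepsilon$: each $\phi_{e,m,n}$ is continuous and nowhere zero on $K$, hence of constant sign there by the intermediate value theorem. Together with the previous paragraph, this identifies the connected components of $U\setminus\bigcup H_{e,m,n}$ with the equivalence classes of loadings.

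I do not expect a real obstacle; as the text says, this is ``clear from the definition.'' The points that genuinely need care are the convexity of the ambient region $D_{v_1}\times\cdots\times D_{v_r}$ --- it is precisely this that forces each sign cell to be connected rather than merely a union of components --- and the bookkeeping matching the ``left/right of a ghost'' language of Definition~\ref{def:equivalent} with the two sides of the hyperplanes $H_{e,m,n}$.
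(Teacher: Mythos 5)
Your argument is correct and is exactly the one the paper has in mind; the paper dismisses the statement with ``It's clear from the definition that:'' and offers no proof, so you have supplied the routine verification it omits. The one spot worth tightening is the degenerate case: when $e$ is a loop of weight $0$ at $i$ and $m=n$, the set $H_{e,m,m}$ is not ``empty or all of $U$ and contributes nothing'' --- it is all of $U$, so \emph{including} it would make the complement empty; the right move is to note that the proposition (like the surrounding text on generic loadings and assumption~$(\dagger)$) implicitly excludes these trivial triples from the arrangement, just as the diagrammatic definitions explicitly exempt weight-$0$ loop coincidences. Apart from that wording, the three ingredients you isolate --- convexity of $D_{v_1}\times\cdots\times D_{v_r}$, the sign-to-left/right dictionary matching Definition~\ref{def:equivalent}, and the fact that the sign cells both partition the complement and are connected so they coincide with components --- are precisely what the claim rests on.
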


 In particular, there are only finitely many equivalence classes for each fixed $\nu$.  
  \begin{definition}
Let $B(\nu)$ denote a fixed choice of a set of loadings containing one
from each equivalence class with $|\Bi|=\nu$.

    From now on, when we say ``the weighted KLR algebra'' $\walg^\vartheta_\nu$ we mean using that attached to the set $B(\nu)$ of loadings; this algebra is unique up to canonical isomorphism, and if we add any new generic loadings with $|\Bi|=\nu$ to this algebra, we will always obtain a Morita equivalent algebra.  Generally, we will not carefully distinguish between equivalent loadings and will freely replace inconvenient loadings with equivalent ones.
  \end{definition}
In terms of the category of loadings, we have simply chosen a set of
objects such that any object is isomorphic to one of the collection;
this is almost the skeleton of the category, but we have not accounted
for the fact that sometimes non-equivalent loadings will be isomorphic.
Thus, the weighted KLR algebra can be thought of really as an
equivalence class of linear categories, and from this perspective, it
is manifestly well-defined.

 For simplicity, we fix a real number $s>|\vartheta_e|$ for all $e$.
 Let $B_s$ be the set of loadings where the points of the
 loading are spaced exactly $s$
 units apart and the first point is at $x=0$.  Such loadings are in
 canonical bijection with sequences of elements in $I$.  For the Kronecker quiver weighted as in the example above, we must have $s>1$, and only
 the first and third loadings of \eqref{eq:3} are included in $B_s$.

\begin{proposition}
  If the graph $\Gamma$ has no loops, then the algebra $\walg^\vartheta_{B_s}$ is isomorphic to the original KLR algebra, with \[Q_{ij}(u,v)=\prod_{\substack{i=h(e)\\j=t(e)}}Q_{e}(u,v).\]  

In particular, if $\vartheta_e=0$ for all $e$, we obtain the usual KLR algebra.
\end{proposition}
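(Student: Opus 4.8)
The plan is to compare the weighted KLR algebra $\walg^\vartheta_{B_s}$ with the original KLR algebra presented diagrammatically, exploiting that the loadings in $B_s$ are ``maximally spread out'' so that ghosts never interfere. First I would observe that, since $s>|\vartheta_e|$ for every edge $e$, in any loading of $B_s$ the ghost of a strand labeled $h(e)$ sits strictly between that strand and its left or right neighbor; in particular no ghost ever lies to the left of two distinct strands, and a ghost coincides with another strand only transiently during an isotopy. This means that, reading a wKLR diagram with top and bottom in $B_s$, the combinatorial data is exactly a KLR diagram: a permutation braid decorated with dots, with strands labeled by elements of $I$. I would make this precise by setting up a bijection on the level of the permutation-type bases $b_\pi$ of Theorem~\ref{th:basis} (on the wKLR side) and the analogous PBW-type basis of the KLR algebra, noting that both are free over $\K[y_1,\dots,y_d]$ with the same index set.

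The second step is to check that this bijection is an algebra map, i.e.\ that each wKLR relation in Definition~\ref{def-wKLR} becomes the corresponding KLR relation under the identification. The dot-slide relations (1) are literally identical. For relation (2): when a strand labeled $i=t(e)$ crosses the ghost of a strand labeled $j=h(e)$ and crosses back, the wKLR algebra multiplies by $Q_e(y_k,y_j)$; since in $B_s$ the only way a strand labeled $i$ and a strand labeled $j$ have their relative position interact is through exactly these ghost-crossings, one per edge $e$ with $h(e)=j,t(e)=i$, undoing a bigon of the two actual strands in the KLR picture accumulates the product $\prod_{i=h(e),j=t(e)}Q_e(y_k,y_j)$, which is exactly the claimed $Q_{ij}$. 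Relation (3), the like-labeled bigon, needs $P_i=0$ because there are no loops, so it matches the KLR relation with no correction term. For the triple-point relations (4): since $\Gamma$ has no loops, the third bullet never occurs; the first two bullets describe a ghost (resp. strand) passing through a like-labeled crossing, and the correction term is $\partial_{k,k+1}Q_e$, which under the identification of the ghost-crossing of $j$ past the $ij$-crossing with the honest $ij\ell$ triple point in the KLR picture is precisely the KLR $R2$/$R3$-type correction $\partial Q_{ij}$, again multiplied out over edges. I would verify these via the faithful polynomial representation of Proposition~\ref{prop:action}, where both sides are explicit operators on $\bigoplus_B\K[y_1,\dots,y_d]$ and the computation reduces to matching Demazure operators and multiplication operators.

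Finally, I would note the last sentence is immediate: if $\vartheta_e=0$ for all $e$, then every ghost lies exactly on top of its strand, the loadings in $B_s$ with spacing $s$ are literally sequences in $I$ with no extra data, and the relations above reduce verbatim to Khovanov--Lauda--Rouquier's, recovering the usual KLR algebra with $Q_{ij}=\prod_{i=h(e),j=t(e)}Q_e$. The main obstacle I anticipate is the bookkeeping in step two: one must be careful that in $B_s$ a wKLR diagram really has a well-defined underlying KLR diagram up to isotopy — i.e.\ that no genuinely new isotopy classes appear because a ghost could be dragged across a distant strand — and that the correction terms in relation (4), which in the wKLR setting are indexed by individual edges $e$, assemble correctly into the single polynomial $Q_{ij}$ and its Demazure derivative rather than producing spurious cross-terms. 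This is where the hypothesis $s>|\vartheta_e|$ (together with $(\dagger)$, or rather its consequence after the bigon-merging of Definition~\ref{def:arbitrary}) does the real work, and I would state a short lemma isolating exactly that ``ghosts in $B_s$ stay local'' fact before invoking it.
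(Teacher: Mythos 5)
Your proposal is correct and takes essentially the same approach as the paper: identify that for $B_s$-loadings the ghosts stay local (the paper phrases this as ``after pulling taut, the $j$th strand crosses the $k$th if and only if it crosses all its ghosts''), match $e_\Bi$, $y_k$, $\psi_k$ to the corresponding KLR generators, and check that the relations accumulate $Q_{ij}=\prod Q_e$ over the edges joining $i$ and $j$. The paper's proof is considerably terser and leaves implicit the relation-by-relation verification and the cross-term bookkeeping in the triple-point relation that you correctly flag as the delicate step.
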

\begin{proof}
  This isomorphism matches $e_\Bi$ to an idempotent in the KLR algebra
  for the corresponding sequence in $I$; the dot $y_k$ and crossing
  $\psi_k$ correspond to the similarly named elements as well.
  Our condition on loadings forces that (after ``pulling taut'') the $j$th strand crosses the
  $k$th if and only if it crosses all its ghosts; the relations
  induced between such crossings are exactly the original KLR relations.
\end{proof}

This does not fully exhaust the cases where actually only obtain the original algebra.  This is
easier to see once we consider a symmetry of our definition.  We can view the weighting $\vartheta$ as a 1-chain on $\Gamma$.  If $\eta\colon I\to \R$ is a 0-chain, then we can consider the cohomologous 1-chain $(\vartheta+d\eta)_e=\vartheta_e+\eta_{h(e)}-\eta_{t(e)}$.
\begin{proposition}
 The map $\walg^{\vartheta}_B\to \walg^{\vartheta+d\eta}_B$ moving each $i$-labelled strand $\eta_i$ units right is an isomorphism.
\end{proposition}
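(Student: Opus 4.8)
Write $\vartheta' = \vartheta + d\eta$, so that $\vartheta'_e = \vartheta_e + \eta_{h(e)} - \eta_{t(e)}$ for every edge $e$; note that $\vartheta'_e = \vartheta_e$ whenever $e$ is a loop, and that $d(-\eta) = -d\eta$, so $\vartheta' + d(-\eta) = \vartheta$. The plan is to write the isomorphism down explicitly on diagrams, check it is a well-defined homomorphism of algebras, and then invoke the same construction for $-\eta$ as a two-sided inverse. First I would record the bookkeeping on loadings: translating the point labeled $i$ to the right by $\eta_i$, for each $i\in I$, sends a loading $\Bi$ to a loading $\Bi^\eta$; this is a bijection on loadings of a fixed weight $\nu$, with inverse $\Bj\mapsto\Bj^{-\eta}$, and — being a block-wise translation that preserves each dominant cone and takes the hyperplanes $H_{e,m,n}$ for $\vartheta$ to those for $\vartheta'$ — it carries equivalence classes to equivalence classes, hence a choice of $B(\nu)$ for $\vartheta$ to a legitimate choice of $B(\nu)$ for $\vartheta'$. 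Applying the same translation strandwise turns a wKLR diagram $a$ for $\vartheta$ into a picture $\Phi(a)$ whose bottom and top loadings are $\Bi^\eta,\Bj^\eta$.

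The heart of the argument is the claim that $\Phi(a)$, after a harmless isotopy putting it in general position, is again a wKLR diagram — now for $\vartheta'$ — and that its strand–ghost crossings, its crossings of like-labeled strands, and its dots occur in the same vertical order, with the same labels, as those of $a$. This is exactly what the coboundary identity delivers: a ghost attached to an edge $e$ is only relevant to its crossings with strands carrying the other endpoint of $e$, and if at some height the strand bearing the ghost is at $x$ and the strand it can meet is at $y$, then in the weighting $\vartheta$ they cross precisely when $x-y$ equals a fixed value read off from $\vartheta_e$ (and from $0$ when $e$ is a loop relating two like-labeled strands); under $\Phi$ the two strands move by $\eta_{h(e)}$ and $\eta_{t(e)}$, so their separation changes by $\eta_{h(e)}-\eta_{t(e)}$, while the ghost is redrawn with the displacement $\vartheta'_e = \vartheta_e + (\eta_{h(e)}-\eta_{t(e)})$, and the two changes cancel. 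Granting this, no tangency or triple point among strands and ghosts is produced, so $\Phi(a)$ is a valid diagram and $\Phi$ sends generic loadings to generic loadings; applying $\Phi$ at each stage of an isotopy shows it is well defined on isotopy classes; and since strandwise translation commutes with vertical stacking and respects the condition for diagrams to be composable, $\Phi$ is a homomorphism of algebras. To see it descends to the weighted KLR quotient I would note that every relation in Definition \ref{def-wKLR} is a local identity rewriting a bounded tangle of strands and ghosts, with coefficients assembled from the polynomials $Q_e$ and $P_i$; since $\Phi$ preserves the combinatorial type of each such local configuration and leaves every $Q_e$ and $P_i$ untouched, it carries the relations of $\walg^\vartheta_B$ to those of $\walg^{\vartheta'}_B$ and so induces a homomorphism $\walg^\vartheta_B\to\walg^{\vartheta'}_B$ sending $e_\Bi$ to $e_{\Bi^\eta}$.

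For invertibility I would simply run the whole construction again with $-\eta$ in place of $\eta$, obtaining a homomorphism $\walg^{\vartheta'}_B\to\walg^\vartheta_B$; the two composites translate every $i$-labeled strand by $\eta_i$ and then by $-\eta_i$ (or the reverse), hence are the identity, so $\Phi$ is an isomorphism. (One could instead deduce invertibility from Theorem \ref{th:basis}, which $\Phi$ manifestly respects.) The step I expect to be the real obstacle is the cancellation claim above — that ``move each $i$-strand right by $\eta_i$'' is precisely the translation under which every ghost tracks the strands it can interact with; getting this right amounts to being careful with the conventions for $h$, $t$, and the side on which ghosts sit. The only other point to keep an eye on is that $\Phi$ may change the number of crossings between differently labeled strands not joined by an edge, but such crossings are topologically free — removed and slid at will by Definition \ref{def-wKLR}(3)--(4) — so this has no effect on the element represented.
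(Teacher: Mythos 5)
Your proposal takes the same route as the paper: define the map by translating each $i$-labelled strand, argue that it preserves all relevant relative positions (like-labelled strand crossings and ghost--strand crossings), conclude that it therefore respects isotopy, composition, and the defining local relations, and get invertibility by running the construction with $-\eta$. The paper compresses this into a single sentence; you usefully spell out the bookkeeping on loadings and equivalence classes and the reason the local relations are untouched.

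The one step that is not merely informal but incorrect as written is your claimed cancellation. With the paper's convention $(\vartheta+d\eta)_e=\vartheta_e+\eta_{h(e)}-\eta_{t(e)}$ and the shift ``each $i$-strand moves $\eta_i$ units right,'' take an $h(e)$-labelled strand at position $p$, so its $e$-ghost is at $p+\vartheta_e$, and a $t(e)$-labelled strand at position $a$. After the shift the ghost sits at $(p+\eta_{h(e)})+\vartheta'_e=p+\vartheta_e+2\eta_{h(e)}-\eta_{t(e)}$ while the $t(e)$-strand sits at $a+\eta_{t(e)}$; the ghost--strand separation changes by $2(\eta_{h(e)}-\eta_{t(e)})$, which is not zero in general, so the two changes compound rather than cancel and the map does change crossing patterns. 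What does work is the opposite sign: either shift each $i$-strand $\eta_i$ units to the \emph{left}, or use $(\vartheta+d\eta)_e=\vartheta_e-\eta_{h(e)}+\eta_{t(e)}$, under which the ghost moves right by exactly $\eta_{t(e)}$ and tracks the $t(e)$-labelled strands. The paper's own one-line proof has the same sign hiccup (its claim that the ghost moves right by $\eta_{t(e)}$ is the conclusion under the flipped sign, not the stated one); the proposition is of course true regardless, since $\eta$ is an arbitrary 0-chain and can be negated, but the argument needs a consistent pair of signs. You explicitly flagged this as the step most likely to go wrong, and you were right; it needs to be resolved rather than left as a caveat.
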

\begin{proof}
  This map moves the ghost attached to an edge $e$ to the right by $\eta_{t(e)}$, so this map maintains all crossings between strands of the same color and between ghosts and strands labelled with the tail of the associated edges.
\end{proof}
\begin{corollary}\label{cor:tree}
  If $\Gamma$ is a tree, $\walg^\vartheta_\nu$ is Morita equivalent to the original KLR algebra.
\end{corollary}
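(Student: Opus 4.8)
The plan is to combine the two propositions immediately preceding the corollary: the tree hypothesis lets us cohomologically trivialize the weighting, and once $\vartheta\equiv 0$ (with no loops) the weighted KLR algebra has already been identified with the ordinary one. For the cohomological input, note that a tree has no loops and, viewed as a $1$-complex, is contractible, so the $1$-chain $\vartheta$ is a coboundary: root the tree at a vertex $i_0$, set $\eta_{i_0}=0$, and for any other vertex $i$ let $\eta_i$ be the signed sum of the $\vartheta_e$ over the edges on the unique path from $i_0$ to $i$ (sign according to orientation). Acyclicity makes $\eta\colon I\to\R$ well defined, and by construction $(\vartheta+d\eta)_e=\vartheta_e+\eta_{h(e)}-\eta_{t(e)}=0$ for every edge $e$. (If $\Gamma$ is a forest, do this on each component.)

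Next I would apply the isomorphism $\walg^{\vartheta}_B\cong \walg^{\vartheta+d\eta}_B=\walg^0_{B'}$ given by the proposition on cohomologous weightings, which moves each $i$-labelled strand $\eta_i$ units to the right. This carries the loading set $B$ to a finite set $B'$ of loadings, which is automatically generic for the zero weighting (for $\vartheta\equiv 0$ the genericity conditions are vacuous once the points are distinct, and an accidental collision of differently labelled points is removed by an arbitrarily small perturbation to an equivalent loading). Moreover the strand shift does not change the relative position of a ghost and a connected strand, so it respects the equivalence relation of Definition \ref{def:equivalent}; hence $B'$ meets every equivalence class for the zero weighting, and $\walg^\vartheta_\nu$ is canonically identified with the weighted KLR algebra $\walg^0_\nu$ for the zero weighting.

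Finally, at $\vartheta\equiv 0$ every ghost lies exactly on the strand it shadows (there are no loops to worry about), so two loadings are equivalent in the sense of Definition \ref{def:equivalent} as soon as they have the same unloading. In particular every loading with $|\Bi|=\nu$ is equivalent to the corresponding $s$-spaced loading, so $\walg^0_\nu$ is Morita equivalent to $\walg^0_{B_s}$; and by the preceding Proposition $\walg^0_{B_s}$ is isomorphic to the original KLR algebra (with $Q_{ij}=\prod_{i=h(e),\,j=t(e)}Q_e$). This proves the corollary.

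The only substantive step is the first: cohomological triviality of $\vartheta$ is exactly what is available on a tree, and it is precisely what fails for a cycle whose edge weights do not sum to zero — consistent with the dichotomy stated in the introduction. Everything afterwards is the loading bookkeeping already set up in this section; the one place where a little care is needed is checking that trivializing $\vartheta$ does not silently alter the set of equivalence classes, which is taken care of by the compatibility of the strand shift with Definition \ref{def:equivalent}.
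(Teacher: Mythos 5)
Your proposal is correct and follows exactly the route the paper intends: use contractibility of the tree to write $\vartheta$ as a coboundary $-d\eta$, apply the cohomologous-weighting isomorphism to reduce to $\vartheta\equiv 0$, and then invoke the identification of $\walg^0_{B_s}$ with the ordinary KLR algebra, together with the fact that every loading for the zero weighting is equivalent to an $s$-spaced one. The paper leaves this chain of reasoning implicit (no proof is printed for the corollary), but it is precisely the argument you have spelled out, including the correct downgrade from isomorphism to Morita equivalence to account for redundant loadings in $B_s$.
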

Note that we say ``Morita equivalent'' here, since the set
${B_s}$ may actually contain redundant loadings which are equivalent
to each other (since equivalence is insensitive to the relative
ordering of nodes with no edge connecting them).

\subsection{Induction and restriction}
\label{sec:induct-restr}

For each decomposition $\nu=\nu'+\nu''$, we have a map
$\iota_{\nu';\nu''}\colon \walg^\vartheta_{\nu'}\otimes
\walg^\vartheta_{\nu''}\to \walg^\vartheta_\nu$, where we send a tensor
product of diagrams $a\otimes b$ to the diagram where they are placed
next to each other with $s$ units of separation between them.  Note
that this map is not unital, but sends $1\otimes 1$ to an idempotent
$e_{\nu';\nu''}$.  Up to the isomorphism induced by changing a loading
in its equivalence class, this isomorphism is unchanged by adjusting
the distance between the diagrams, as long as it is sufficiently
large.  This can be thought of as an induction operation on loadings
themselves: $\iota_{\nu';\nu''}(e_{\Bi}\otimes e_{\Bj})=e_{\Bi\circ \Bj}$.

\begin{definition}
  Define the functor of {\bf induction} by \[\Ind^{\nu}_{\nu';\nu''}(M,N)=M\circ N:=\walg^\vartheta_\nu\otimes_{\walg^\vartheta_{\nu'}\otimes \walg^\vartheta_{\nu''}}M\boxtimes N\]
and {\bf restriction} by 
\[\Res^{\nu}_{\nu';\nu''}(L):=e_{\nu';\nu''}L.\]
\end{definition}

\begin{proposition}
  The operation $\circ$ makes the sum $\oplus_\nu
  \walg^\vartheta_\nu\mmod$ into a monoidal category, and $\Res_{*,*}$
  makes this sum into a comonoidal category.  The subcategory
  $\oplus_\nu
  R_\nu\mmod$ is monoidally generated by
  $\walg^\vartheta_{\al_i}\mmod$.
\end{proposition}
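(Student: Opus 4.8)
The statement has three parts, and I would organize the proof around them in order. The first claim — that $\circ$ makes $\oplus_\nu \walg^\vartheta_\nu\mmod$ into a monoidal category — requires checking that induction is associative up to natural isomorphism and unital (with unit object the module over $\walg^\vartheta_0 = \K$). For associativity, I would observe that both $\Ind^{\nu}_{\nu';\nu'+\nu'''}(\Ind^{\nu'+\nu''}_{\nu';\nu''}(M,N),P)$ and the other bracketing are computed by tensoring $M\boxtimes N\boxtimes P$ up along the obvious inclusion $\walg^\vartheta_{\nu'}\otimes\walg^\vartheta_{\nu''}\otimes\walg^\vartheta_{\nu'''}\to\walg^\vartheta_\nu$, since $\iota$ is (visibly, from placing diagrams side by side with $s$ units of separation) associative on the level of algebra maps. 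The key input is Theorem~\ref{th:basis}: because $e_{\Bi}\walg^\vartheta e_{\Bj}$ is free over the polynomial ring with the $b_\pi$ as a basis, the idempotent $e_{\nu';\nu''}$ together with its multi-part analogues project onto the expected sub-bimodules, so the comparison map of the two triple inductions is an isomorphism of $(\walg^\vartheta_\nu, \walg^\vartheta_{\nu'}\otimes\walg^\vartheta_{\nu''}\otimes\walg^\vartheta_{\nu'''})$-bimodules. Unitality is immediate since placing a diagram next to the empty diagram changes nothing up to isotopy and the equivalence-class identification.

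For the comonoidal structure, I would dualize: $\Res^{\nu}_{\nu';\nu''}(L) = e_{\nu';\nu''}L$ is manifestly functorial, and coassociativity follows because $e_{\nu';\nu'';\nu'''} = e_{\nu';\nu''+\nu'''}\cdot e_{\nu'+\nu'';\nu'''}$ (the two ways of cutting a diagram into three horizontally-separated blocks agree, once one passes to equivalent loadings spaced far enough apart). Again Theorem~\ref{th:basis} guarantees that $e_{\nu';\nu''}\walg^\vartheta_\nu$ is, as a right $\walg^\vartheta_{\nu'}\otimes\walg^\vartheta_{\nu''}$-module, freely spanned by the diagrams in which the two blocks are separated and no crossing returns — so restriction of restriction really is restriction along the finer decomposition. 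The counit is restriction to $\nu = \nu + 0$, which is the identity.

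For the last sentence — that $\oplus_\nu R_\nu\mmod$ is monoidally generated by the $\walg^\vartheta_{\al_i}\mmod$ — I would argue by induction on the number of points $d$ in a loading. For $|\Bi| = \nu$ with $d\geq 2$ points, pick a decomposition $\nu = \nu' + \al_i$ splitting off the rightmost point; then the projective $\walg^\vartheta e_{\Bi}$ is a summand of $\Ind^{\nu}_{\nu';\al_i}(\walg^\vartheta_{\nu'}e_{\Bi'}, \walg^\vartheta_{\al_i})$ up to the Morita/equivalence identifications of the previous subsection, because $\iota_{\nu';\al_i}(e_{\Bi'}\otimes e_{\al_i}) = e_{\Bi'\circ \al_i}$ and $\Bi'\circ\al_i$ is equivalent to $\Bi$ once we slide the last point in far enough. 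By induction $\walg^\vartheta_{\nu'}e_{\Bi'}$ lies in the monoidal subcategory generated by the $\walg^\vartheta_{\al_i}\mmod$, hence so does $\walg^\vartheta e_{\Bi}$; since these projectives generate $R_\nu\mmod$, we are done.

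The main obstacle is the bookkeeping around equivalence of loadings: $\iota_{\nu';\nu''}$ is only defined up to the isomorphisms induced by changing a loading within its equivalence class, so all the associativity and coassociativity isomorphisms have to be tracked as 2-morphisms in the appropriate way, and one must check that the resulting coherence diagrams (pentagon, etc.) commute. In practice this reduces to the statement that sliding points around within an equivalence class is functorial and that far-separated blocks behave as a genuine direct product — both of which follow from the explicit basis in Theorem~\ref{th:basis} — but writing it cleanly is the delicate part; everything else is a diagrammatic verification that mirrors the analogous statements for ordinary KLR algebras.
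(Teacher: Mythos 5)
The paper does not supply a proof of this Proposition, so I will evaluate your argument on its own terms.

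Your treatment of associativity, unitality, and co(associativity/unitality) is sound in outline: the observation that $\iota_{\nu';\nu''}$ is associative as an algebra map (placing diagrams side by side with $s$-separation), that Theorem~\ref{th:basis} identifies the relevant sub-bimodules, and that coherence ultimately reduces to sliding far-separated blocks past each other, is all the right shape of argument, and the explicit basis is indeed what makes the bookkeeping go through.

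The gap is in the last paragraph. You assert that $\Bi'\circ\al_i$ is \emph{equivalent} to $\Bi$ ``once we slide the last point in far enough.'' That is false in general: the loadings $\Bi$ and $\Bi'\circ\al_i$ differ by moving a single point across the real line, and Definition~\ref{def:equivalent} makes equivalence a condition about which side of the hyperplanes $H_{e,m,n}$ a loading sits on. Moving a point ``far enough'' to the right will generically cross such hyperplanes, changing the equivalence class. Already for the Jordan quiver with $\nu=2\al_0$ and $\vartheta_e=k>0$ there are two equivalence classes ($x_2-x_1<k$ versus $x_2-x_1>k$), and only the far-separated one arises as $\Bi'\circ\al_0$; so your induction step does not produce the close-class projective. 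What you would actually need is that every $\walg^\vartheta_\nu e_{\Bi}$ is a \emph{summand} of some $\walg^\vartheta_\nu e_{\Bj}$ with $\Bj$ far-separated, and this does not follow from the change-of-loading isomorphisms (those only apply within an equivalence class). It may well be true, but it would require a separate argument --- e.g.\ by analyzing the straight-line morphisms between non-equivalent loadings and showing that, in the given setting, a close projective is a retract of a far one --- and you have not given it. (It is also worth noting that the paper's subsequent Theorem on the twisted bialgebra structure only constructs a map $U_q^+(\fg_\Gamma)\to K$ and does not rely on surjectivity, so nothing downstream forces the generation claim to hold; some hypothesis or qualification on this last sentence is likely intended.)
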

Recall that the Grothendieck group $ K^0(\walg^\vartheta_\nu)$ is
the span of formal symbols corresponding to finitely generated
projective $\walg^\vartheta_\nu$-modules subject to the relation that
$[M\oplus N]= [M]+[N]$; we can think of the sum $K=\oplus_\nu
K^0(\walg^\vartheta_\nu)$ as an abelian group graded by $\Z[I]$.
Furthermore, we endow $\Z[I]$ with a pairing where \[i\cdot
j=2d_i\delta_{ij}-d_{i}\big(\sum_{j\overset{e}{\to} i} c_e+\sum_{i\overset{e}{\to}
  j}c_{\bar e}\big)=2d_j\delta_{ij}-d_j \big(\sum_{j\overset{e}{\to} i} c_{\bar
  e}+\sum_{i\overset{e}{\to} j}c_{ e}\big).\]  We will sometimes view
this as the symmetrization of the bilinear form \[\langle
j,i\rangle=d_i\delta_{ij}-\sum_{j\overset{e}{\to} i} d_ic_e \qquad i\cdot j=\langle
i,j\rangle+\langle
j,i\rangle.\]
This allows us to define a twisted product structure on $A\otimes A$
for any $\Z[I]$-graded algebra $A$ by $q^{\deg(b)\cdot
  \deg(c)}(a\otimes b)(c\otimes d)$. As noted by Walker \cite{Walker}, we can
think of this as the natural product in the braided 
monoidal category of $\Z[I]$-graded vector spaces, where the braiding
map on a tensor product of spaces $V$ of pure degree $\mu$ and $V'$ of
degree $\mu'$ is the switch map $V\otimes V'\to V'\otimes V$ times
$q^{\mu\cdot \mu'}$.
\begin{theorem}
  The Grothendieck group $K=\oplus_\nu K^0(\walg^\vartheta_\nu)$ endowed
  with \[\text{the product } [M][N]=[M\circ N]\text{ and coproduct }
  \Delta([L])=\sum_{\nu'+\nu''=\nu}[\Res^{\nu}_{\nu';\nu''}(L)]\] is a
  twisted bialgebra with a natural map $U^+_q(\fg_\Gamma)\to K$; in
  fact, it is a Hopf algebra in the braided category of $\Z[I]$-graded vector spaces.
\end{theorem}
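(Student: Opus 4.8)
The plan is to adapt the arguments of Khovanov--Lauda \cite{KLI} and of \cite[\S3]{Webmerged}. Associativity of the product $[M][N]=[M\circ N]$ and coassociativity of $\Delta$ are immediate from the monoidal and comonoidal structures of the preceding Proposition, together with the transitivity of induction and the evident nesting relations among the idempotents $e_{\nu';\nu''}$, once one knows that $\Ind^\nu_{\nu';\nu''}$ and $\Res^\nu_{\nu';\nu''}$ descend to $K^0$. For this, note that $\Res^\nu_{\nu';\nu''}=e_{\nu';\nu''}(-)$ is exact, being multiplication by an idempotent, and that by Theorem \ref{th:basis} the module $\walg^\vartheta_\nu e_{\nu';\nu''}$ is free on the diagrams $b_\pi$, with $\pi$ ranging over minimal-length representatives of $S_\nu/(S_{\nu'}\times S_{\nu''})$, as a right $\walg^\vartheta_{\nu'}\otimes\walg^\vartheta_{\nu''}$--module, and dually $e_{\nu';\nu''}\walg^\vartheta_\nu$ is free as a left module; hence $\Ind^\nu_{\nu';\nu''}$ is exact and both functors send finitely generated projectives to finitely generated projectives. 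The unit is $[\walg^\vartheta_0]$ (recall $\walg^\vartheta_0=\K$, so $K_0=\Z[q,q^{-1}]$), the counit is projection onto the $\nu=0$ summand, and their compatibility with everything is formal.

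The substance of the theorem is the bialgebra compatibility, i.e.\ the shuffle (``Mackey'') identity $\Delta([M]\cdot[N])=\Delta([M])\cdot\Delta([N])$ evaluated with the twisted product on $K\otimes K$. Fix projective $M\in\walg^\vartheta_{\nu'}\mmod$, $N\in\walg^\vartheta_{\nu''}\mmod$, and $\nu=\nu'+\nu''=\mu'+\mu''$. I would exhibit a filtration of the $(\walg^\vartheta_{\mu'}\otimes\walg^\vartheta_{\mu''},\,\walg^\vartheta_{\nu'}\otimes\walg^\vartheta_{\nu''})$--bimodule $e_{\mu';\mu''}\walg^\vartheta_\nu e_{\nu';\nu''}$ indexed by the quadruples $(\al,\b,\g,\d)$ with $\nu'=\al+\g$, $\nu''=\b+\d$, $\mu'=\al+\b$, $\mu''=\g+\d$, whose associated graded summand for $(\al,\b,\g,\d)$ is, up to an internal grading shift, \[\big(\walg^\vartheta_{\mu'}e_{\al;\b}\otimes\walg^\vartheta_{\mu''}e_{\g;\d}\big)\otimes_{\walg^\vartheta_\al\otimes\walg^\vartheta_\b\otimes\walg^\vartheta_\g\otimes\walg^\vartheta_\d}\big(e_{\al;\g}\walg^\vartheta_{\nu'}\otimes e_{\b;\d}\walg^\vartheta_{\nu''}\big).\] Using Theorem \ref{th:basis}, this bimodule has basis the diagrams $b_\pi$ with $\pi$ a minimal double-coset representative in $(S_{\mu'}\times S_{\mu''})\backslash S_\nu/(S_{\nu'}\times S_{\nu''})$; each such $\pi$ records how the $\nu'$-- and $\nu''$--blocks of strands distribute among the $\mu'$-- and $\mu''$--blocks, which is precisely the datum $(\al,\b,\g,\d)$. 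The span of the $b_\pi$ whose quadruple lies below a fixed one in the natural order is a sub-bimodule, because multiplying such a $b_\pi$ by a generator of either algebra produces at most one double crossing (of two strands, or of a strand and a ghost) which, by the triangularity already used in the proof of Theorem \ref{th:basis}, resolves into diagrams with quadruple no larger. Passing to $K^0$ collapses the filtration and yields the identity; the required power $q^{i\cdot j}$ (for the appropriate pair of sub-degrees) is exactly the internal grading shift of the $(\al,\b,\g,\d)$--piece, which one reads off by counting the strand/ghost crossings of the chosen representative and comparing with the bilinear form of the Introduction, as in the unweighted case.

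I expect this filtration to be the main obstacle. Compared with ordinary KLR algebras, two points demand care: one must choose, for each double coset, a representative $b_\pi$ whose top and bottom loadings are split far enough apart --- using the freedom of Section \ref{sec:depend-choice-load} to move a loading within its equivalence class --- so that the associated graded genuinely factors through the idempotents $e_{\al;\b},e_{\g;\d},e_{\al;\g},e_{\b;\d}$; and the grading-shift computation is subtler here, because in the weighted algebra the degree is carried by strand--ghost crossings rather than by strand--strand crossings. Verifying that the associated graded is as stated, that the filtration is by genuine \emph{sub}-bimodules, and that the shift equals the claimed twist is the heart of the work. Once this is done, no separate antipode argument is needed: $K$ is $\N[I]$--graded, connected, with $K_0=\Z[q,q^{-1}]$, so the braided bialgebra $K$ is automatically a Hopf algebra in the braided category of $\Z[I]$-graded vector spaces.

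Finally, for the map $U^+_q(\fg_\Gamma)\to K$ (defined whenever $\fg_\Gamma$ is, i.e.\ when $\Gamma$ has no loops), send $E_i\mapsto[\walg^\vartheta_{\al_i}]$ --- and, in the integral form, the divided power $E_i^{(n)}$ to the class of the unique indecomposable projective over the nilHecke algebra $\walg^\vartheta_{n\al_i}$, exactly as in the KLR case. Each $[\walg^\vartheta_{\al_i}]$ is primitive since $\al_i=\al_i+0=0+\al_i$ are its only decompositions, so the assignment respects $\Delta$ on generators. To see it is well defined it is enough to check the quantum Serre relations hold among the $[\walg^\vartheta_{\al_i}]$; the relation between $i$ and $j$ is an identity in $K^0(\walg^\vartheta_\nu)$ for $\nu=(1-a_{ij})\al_i+\al_j$, and as $\nu$ involves only $i$ and $j$ only the edges joining them intervene, so after collapsing parallel edges (Definition \ref{def:arbitrary}) the relevant graph is a tree and Corollary \ref{cor:tree} identifies $\walg^\vartheta_\nu$ --- compatibly with the induction products --- with an ordinary KLR algebra, where the relation in $K^0$ is due to Khovanov--Lauda \cite{KLI} and Rouquier \cite{Rou2KM}. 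Since $\Delta$ is an algebra map and the generators map to primitive elements, the resulting algebra map $U^+_q(\fg_\Gamma)\to K$ is automatically a morphism of braided bialgebras, hence of Hopf algebras.
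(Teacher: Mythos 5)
Your route to the bialgebra compatibility is the paper's route: a Mackey-type filtration of the $(\walg^\vartheta_{\mu'}\otimes\walg^\vartheta_{\mu''},\walg^\vartheta_{\nu'}\otimes\walg^\vartheta_{\nu''})$-bimodule $e_{\mu';\mu''}\walg^\vartheta_\nu e_{\nu';\nu''}$ by the quadruple recording how strands redistribute between blocks, with associated graded given by an outer tensor of inductions/restrictions over the four ``corner'' algebras, and shift read off from the crossing count; the paper simply cites the argument of \cite[2.18, 3.2]{KLI} and records the shift as $-\langle\mu,\mu'\rangle$. Your observations about the extra care needed in the weighted case --- choosing double-coset representatives compatible with loadings pulled far enough apart, and that the degree is concentrated on strand--ghost rather than strand--strand crossings --- are exactly the right cautions, and fleshing them out is a genuine improvement over the paper's one-line citation. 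Your handling of the antipode is the same as the paper's in substance: the paper writes out Xiao's inductive formula $S([M])=-\sum(1\otimes S)[\Res(M)]$, which is precisely the standard proof that a connected $\N[I]$-graded bialgebra with $K_0=\Z[q,q^{-1}]$ is automatically Hopf; invoking the general fact is just a restatement.

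Where your argument has a real gap is the existence of the map $U^+_q(\fg_\Gamma)\to K$, which the paper's proof is actually silent on, so you were right to try to fill it in. The reduction of the quantum Serre relation between $i$ and $j$ to the subalgebra supported on $\{i,j\}$ is fine, but the next step fails: Definition \ref{def:arbitrary} does \emph{not} collapse arbitrary parallel edges --- it only collapses bigons whose weights match (possibly after reversal and negation), i.e.\ precisely the bigons forbidden by $(\dagger)$. If $\Gamma$ has two or more edges between $i$ and $j$ with genuinely different weights (the Kronecker quiver with weights $1,-1$ is the basic example, and is explicitly one of the cases the paper cares about), the subgraph on $\{i,j\}$ remains a cycle, so Corollary \ref{cor:tree} does not apply and $\walg^\vartheta_{(1-a_{ij})\al_i+\al_j}$ is not an ordinary KLR algebra --- indeed the paper's own Kronecker example shows it has strictly more idempotent classes. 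The Serre relation does still hold, but one has to argue for it differently: either by adapting the explicit projective-resolution argument of \cite{KLI,Rou2KM} to the weighted two-vertex algebra, or (at least over a field of characteristic $0$) by pushing the relation through the injective bialgebra map $K\to\prod_n\mathcal{H}_{\Gamma;q^n}$ to the Hall algebra constructed in Section \ref{sec:hall-algebras}, where the classes of the simples $S_i$ satisfy the Serre relations by Ringel's theorem; the latter is clean but relies on results proved later in the paper.
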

\begin{proof}
For a decomposition $\nu=\nu_1+\nu_2=\nu_1'+\nu_2'$, we consider the
  restriction of $\walg^\vartheta_\nu$ to $\walg^\vartheta_{\nu_1}\otimes
  \walg^\vartheta_{\nu_2}$ on the left and $\walg^\vartheta_{\nu_1'}\otimes
  \walg^\vartheta_{\nu_2'}$ on the right.  We can filter $\walg^\vartheta_\nu$
  as a bimodule by the sum  $\mu$ of the labels on the strands that pass from
  left to right, so the sum of the labels passing right to left
  is $\mu'=\nu_1'-\nu_1+\mu$. By the same argument as
  \cite[2.18]{KLI}, the successive quotients of this
  filtration are \[(\walg^\vartheta_{\nu_1-\mu;\mu}\otimes
  \walg^\vartheta_{\mu';\nu_2-\mu'})\otimes
  _{\walg^\vartheta_{\nu_1-\mu}\otimes \walg^\vartheta_{\mu}\otimes
  \walg^\vartheta_{\mu'}\otimes \walg^\vartheta_{\nu_2-\mu'}}(\walg^\vartheta_{\nu_1-\mu;\mu'}\otimes
  \walg^\vartheta_{\mu;\nu_2-\mu'})\] shifted upwards by the inner product
  $-\langle \mu,\mu'\rangle$.  As noted in \cite[3.2]{KLI}, this suffices to
  prove that the coproduct $\Delta$ is an algebra map $K\to K\otimes
  K$ for the twisted product structure.  

The counit $\epsilon$ just kills $K^0(\walg^\vartheta_\nu)$ for $\nu\neq
0$, and the antipode $S$, as in the work of Xiao \cite{Xiao}, can be constructed inductively by the formula
\[S([M])=-\sum_{\substack{\nu=\nu'+\nu''\\ \nu''\neq 0}}(1\otimes S)[\Res^{\nu}_{\nu';\nu''}(M)]\qedhere\]
\end{proof}

\subsection{The twisted algebra}
\label{sec:twisted-algebra}

There is a larger category $\mathcal{P}$ whose objects are pairs $(\Bi;\vartheta)$
of loadings and weights.  Morphisms $(\Bi_0;\vartheta_0)$ and
$(\Bi_1;\vartheta_1)$ between two such pairs is very
much like in the category of loadings for a fixed weight, but the distance from each
ghosts to the strand 
it haunts is not a constant: instead at the horizontal slice $y=a$, the distance of
a ghost for $e:i\to j$ from the corresponding $j$ labeled strand is $a\vartheta_1(e)+(1-a)\vartheta_0(e)$.  All the same local relations between
morphisms apply without change.  

\begin{proposition}
This category has a representation that associates a polynomial ring
to each pair $(\Bi;\vartheta)$ with the action given by formulas as in
Proposition \ref{prop:action}.
  The morphism space between any two pairs in $\mathcal{P}$ is spanned
  by a basis given by the product of monomials in the dots with a
  fixed stringing up of each permutation.
\end{proposition}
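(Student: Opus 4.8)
The plan is to run the proofs of Proposition \ref{prop:action} and Theorem \ref{th:basis} again, checking at each step that neither argument used the rigidity of ghosts at a fixed distance from the strands they shadow. The one observation that makes everything work is that all the relations in Definition \ref{def-wKLR} are \emph{local}: each is supported in a small box containing a single dot, crossing, or triple point, and inside such a box the interpolated distance $a\vartheta_1(e)+(1-a)\vartheta_0(e)$ varies by an arbitrarily small amount, so the combinatorial data of which strand lies on which side of which ghost is constant there. Thus the $\cP$-morphism spaces behave, locally, exactly like those of a weighted KLR algebra with constant weight.

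First I would define the representation: to each pair $(\Bi;\vartheta)$ attach $\K[y_1,\dots,y_d]$, $d$ the number of points of $\Bi$, and let a diagram act by the composite of the elementary operators attached to its dots, crossings, and ghost-crossings by the very formulas of Proposition \ref{prop:action}. Two things must be checked: that this is independent of the isotopy class of the diagram, and that the local relations hold. For the first, decompose any admissible isotopy into moves each supported in a box meeting at most one feature; by the localness observation the verification reduces to the fixed-weight case. For the second, the computations of Proposition \ref{prop:action} — including the triple-point-with-loop calculation — go through verbatim in a neighborhood of the relevant feature, where the weighting is effectively constant.

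Next I would establish the basis. Fix $(\Bi_0;\vartheta_0)$ and $(\Bi_1;\vartheta_1)$ and, for each permutation $\pi$ realizing the change of order of labels, fix a diagram $b_\pi$ in which each pair of strands (resp.\ each strand and ghost) crosses at most once, always to the left of all ghosts where there is a choice; these and the dots generate the morphism space. Spanning by monomials-in-dots times the $b_\pi$ follows as in Theorem \ref{th:basis}: Definition \ref{def-wKLR}(2)--(3) rewrites any diagram with a repeated strand--strand or strand--ghost crossing as a sum of diagrams with strictly fewer crossings, and the triple-point moves of Definition \ref{def-wKLR}(4) show the difference of any two admissible representatives of a given $\pi$ has strictly fewer crossings; induction on crossing number finishes it. Linear independence follows from the representation just as in Theorem \ref{th:basis}: on rational functions $b_\pi$ acts as $\pi$ times a rational function plus a sum over $\sigma<\pi$ of $\sigma$ times a rational function (using the smash product rule $s_i\cdot f=f^{s_i}\cdot s_i$), so independence of the elements of $S_d$ as operators on $\K(y_1,\dots,y_d)$ forces independence of the $b_\pi$, hence of their products with dot monomials, over $\K[y_1,\dots,y_d]$.

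The one genuinely new point — and the main thing to be careful about — is the bookkeeping for ghosts whose distance from their strand now varies with height. I would first confirm that a generic $\cP$-morphism still has a well-defined diagram (no tangencies or triple intersections among strands and their moving ghosts), which holds because the defining conditions cut out an open dense subset of configurations, and that admissible representatives $b_\pi$ exist, i.e.\ that one can pull strands taut so each ghost is crossed at most once even when the ghost is not parallel to the strand shadowing it. Granting that, every step above is a localized instance of the fixed-weight argument, and no new relations or degeneracies appear.
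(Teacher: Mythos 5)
Your argument is correct and takes the same route the paper does: the paper's entire proof is the one-line assertion that ``the proofs of these facts are precisely the same as Proposition \ref{prop:action} and Theorem \ref{th:basis},'' and your proposal simply supplies the justification for that assertion by noting the locality of the relations and checking that the interpolated ghost positions do not disturb any step. The extra care you take about tangencies and the existence of admissible representatives $b_\pi$ is a reasonable elaboration of what the paper leaves implicit.
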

\begin{proof} We can define an action on a sum of polynomial rings by the same local rules as
  \ref{prop:action}; since the same local relations are used, the same
  proof carries through.  With this action in hand, we can use the
  same proof as Theorem
  \ref{th:basis}.
\end{proof}

We will often be interested in considering the sum of all morphism
spaces from loadings with one weighting $\vartheta$ to those with
another $\vartheta'$.  This sum is naturally a bimodule
$B^{\vartheta,\vartheta'}$ over $\walg^\vartheta$ and $\walg^{\vartheta'}$.

\subsection{Steadied quotients}
\label{sec:steadied-quotients}

In this subsection, we define a natural quotient of
$\walg^\vartheta_\nu$; while the algebraic 
motivation for this definition may not be immediately apparent, we believe it is well-motivated both by examples and
by geometry.  In fact, we recommend that the reader glance at the next
section on examples before reading the definition below. 

A {\bf charge} on the vertex set $I$ is a map $c\colon I\to \C_+$ where
\[\C_+=\{x\in \C\mid \text{either } \Im(x)>0 \text{ or } x\in
\R_{>0}.\}\] We always extend $c$ linearly to $\Z[I]$.
Such a charge induces a preorder $>_c$ on $\Z_{>0}[I]$, using the argument
of $c(\bd)$ 

\begin{definition}
  We call an indecomposable $\walg^\vartheta_\nu$-module is called {\bf unsteady} if it is
  isomorphic to a  summand of an induction $M_1\circ M_2$ where
  $\wt(M_1)>_c\wt(M_2)$.  
\end{definition}

In $\walg^\vartheta_\nu$, there is a natural 2-sided ideal $I_c$ generated by
all elements factoring through unsteady projectives (thought of as a
map of left modules $\walg^\vartheta_\nu\to \walg^\vartheta_\nu$).  Visually,
this corresponds to diagrams where in the middle of the diagram, there
is a horizontal slice whose the induced loading is $\Bi_1\circ\Bi_2$
where $|   \Bi_1|>_c|\Bi_2|$.

\begin{definition}
  The {\bf steadied} quotient $\walg^\vartheta_\nu(c)$ of
  $\walg^\vartheta_\nu$ is the quotient  $\walg^\vartheta_\nu/I_c$.  We let
 $B^{\vartheta,\vartheta'}(c)$ denote the compatible quotient of the
 bimodule $B^{\vartheta,\vartheta'}$.
\end{definition}

\subsection{Canonical deformations}
\label{sec:canon-deform}

The algebras $\walg^\vartheta$ have a canonical deformation.  For each
edge $e$ with head $j$ and tail $i$, we assign an alphabet of
variables $z_{e,a,b}$ for integers $0\leq a < d_i, 0\leq b < d_j$ such that
$ad_j+bd_i< d_jc_e=d_ic_{\bar e}$.  We then consider the weighted
KLR algebra over the ring $\K[\mathbf{z}_e]$ with $Q$-polynomials
given by \[\tQ_{e}(u,v)=Q_{e}(u,v)+\sum_{a,b}z_{e,a,b}u^av^b.\]  This
polynomial will be homogeneous if we endow $z_{e,a,b}$ with degree
$d_jc_e -ad_j-bd_i =d_ic_{\bar e} -ad_j-bd_i$.  Let $S=\K[\{z_{e,a,b}\}]$. In
the case where each edge has multiplicity 1 ($c_e=1$), then we only
have one variable per edge and $\tQ_{e}(u,v)=Q_{e}(u,v)+z_{e}$.  

\begin{proposition}
  This deformation is free (and thus flat) over $S$.
\end{proposition}
\begin{proof}
The proof of Theorem \ref{th:basis} works equally well over $S$,
showing that the diagrams $b_\pi$ give a free basis over $S[y_1,\dots,
y_m]$.  By multiplying by monomials, we easily obtain a free $S$-basis.
\end{proof}

Fix a field $K$, and a non-zero homomorphism $\chi\colon S\to
K$.  Fix a finite subset $M_i$ of $K$ for each $i\in I$.
\begin{definition}
  The graph $\Gamma_{\chi,M_\bullet}$ is the graph with underlying set
  $ \bigcup_{i\in I} \{i\}\times M_i\subset \Gamma\times K$. For
$q_1\in M_i,q_2\in M_j$, an edge $e\colon i\to j$ lifts to an edge $\check{e}$
from $(i,q_1)$ and
$(j,q_2)$ if and only if the polynomial satisfies
$\chi(\tilde{Q}_{e})(q_1,q_2)=0$.
\end{definition}
Note that the 
natural map $\sqcup_{i\in I} M\to \Gamma$ is a graph homomorphism.
We can naturally assign polynomials to this graph by 
\[Q_{\check{e}}(u,v):=\chi(\tilde{Q}_{ij})(u+q_1,v+q_2).\] Given a
weighting $\vartheta$ of $\Gamma$, we also 
weight $\Gamma_{\chi,M_\bullet}$ with $\check{\vartheta}_{\check{e}}=\vartheta_e$.  

\begin{example}
  Assume $\Gamma$ is an $e$-cycle, whose vertices we identify with
  $\Z/e\Z=\{0,\dots, e-1\}$ with an edge $i\to i+1$.  If send $z_e$ for the edge
  $e\colon e-1\to 0$
  to $-1$ and set $z_e$ for every other edge 
  of this graph to $0$, with $K$ any characteristic 0 field, and take
  $M=\Z$.  We thus find that we have an edge $(p,q) \to (p',q')$ if
  $p'\equiv p+1\mod e$ and $q'-q=\delta_{p',0}$.  This is equivalent
  to $q'e+p'=qe+p+1$. That is, the resulting graph $\Gamma_{\chi,M_\bullet}$ is isomorphic to
  $\Z$ with an edge $i\to i+1$, where we identify $\Z$ and $\Z\times
  \Z/e\Z$ by division with remainder by $e$.  
\end{example}
\begin{example}
  Let $\Gamma$ be any graph, and let $K$ any field, with each $z_e$
  sent to 0.   For any finite subset $M\subset K$, we can set
  $M_i=M$.  The resulting graph is just $\Gamma\times M$, with the map
  to $\Gamma$ being a trivial $\#M$-fold covering.
\end{example}
\begin{example}
  If $\Gamma$ has a non-symmetric Cartan matrix, then for each pair
  $i,j\in I$, we let
  $e_{ij}=\gcd(c_{ij},c_{ji}),
  f_{ij}=c_{ij}/\gcd(c_{ij},c_{ji})$.
  Consider the polynomials
  $Q_e(u,v)=(u^{f_{ij}}-v^{f_{ji}})^{e_{ij}}$, let $K$ be a field of
  characteristic coprime to each $d_i$, and let $M_i$ be the
  $p_i=\operatorname{lcm}(\{d_k\})/d_i$th roots of unity in $K$.  In this case, the
  graph structure is that $\zeta_1$ and $\zeta_2$ are connected by an
  edge if $\zeta_1^{f_{ij}}=\zeta_2^{f_{ji}}$.  That is, each preimage
  of $i$ is connected to preimages of $j$ by $p_j/f_{ji}=\operatorname{lcm}(\{d_k\})
  \gcd(c_{ij},c_{ji})/d_jc_{ji}$ preimages, along edges with
  multiplicity $e_{ij}$.  

Thus, $\Gamma_{\chi,M_\bullet}$ in this case is the standard branched
cover of a non-symmetric Cartan matrix by a symmetric one.  
\end{example}

We'd like to understand the specialization $\walg_\nu^\vartheta\otimes_SK$ at the
homomorphism $\chi$; while we don't have a general description of this
algebra, we can consider a natural completion of it.  

Let $I_k\subset
\walg_\nu\otimes_SK$ be the two-sided ideal in $\walg_\nu\otimes_SK$
generated by the products $\prod_{m\in 
  M}(y_i-m)^{k}$ for each $i$. 
 These are clearly nested, and
have trivial intersection for reasons of degree;  thus, we can
consider the completion $\widehat{\walg_\nu^\vartheta\otimes_SK}$ at this system
of ideals.  Note, that this depends in a very strong way on $M$, but
we will suppress this dependence from the notation.  On the other hand,
we can consider the weighted KLR algebra $\widehat{\dwalg^{\check{\vartheta}}}$  of the
graph $\Gamma \times M$ over the field $K$,  completed by the
two-sided ideals generated by $y_i^{k}$ for all $i$.
This is the same completion applied before, but with
$M=\{0\}$.  

The completion $\widehat{\walg_\nu^\vartheta\otimes_SK}$ has a natural
decomposition according to the topological generalized eigenvalues of the
operators $y_i$. That this, we can decompose each quotient
$\walg_\nu^\vartheta\otimes_SK/I_n$ according to these eigenvalues
since it is finite dimensional, and take the inverse limit of this
decomposition.  Note that these generalized eigenvalues must lie in
$M$, since the minimal polynomial of $y_i$ on
$\walg_\nu^\vartheta\otimes_SK/I_n$ divides $\prod_{m\in 
  M}(y_i-m)^{k}$. This decomposes the idempotents $e_{\Bi}$
corresponding to loadings as a sum of idempotents where we associate
an additional choice of $m\in M$ to each point in the loading.  Put
another way, consider the ways of lifting the loading in $\Gamma$ to
one in $\Gamma\times M$.  If $\check{\Bi}$ is such a loading, let
$\epsilon_{\check{\Bi}}$ denote the projection to its generalized
eigenspace (which is an element of the algebra by abstract Jordan
decomposition in each quotient).  

For any weighted KLR diagram for the graph $\Gamma\times
  M$, we have a ``projection'' where we apply the first projection to
  the labels of each strand; we can always isotope a KLR diagram so
  that this projection is a weighted KLR diagram as well (if we aren't careful,
  we might introduce tangencies).  Note that this result might not be
  independent of the isotopy.  

\begin{proposition}\label{completion}
There is an isomorphism $\widehat{\dwalg^{\check{\vartheta}}}\cong\widehat{\walg_\nu^\vartheta\otimes_SK}$ such that:
\begin{equation}
e_{\check{\Bi}}\mapsto \epsilon_{\check{\Bi}}\qquad 
y_i e_{\check{\Bi}}\mapsto  (y_i-m_i) \epsilon_{\check{\Bi}}\label{eq:4}
\end{equation}
For diagrams, it is easier to describe this map locally.  For most
diagrams with a single crossing and no dots, we simply pass to the projection, times
$\epsilon_{\check{\Bi}}$, except in cases where:
\begin{itemize}
\item At $y=a$, in the projection of $A$, there is a crossing where the $\ell$th strand (call its
    label $i$) crosses from left to right of
    a ghost haunting the $k$th strand for an edge $e\colon i\to
    j$ which doesn't lift to an edge $\check{e}\colon
    (i,m_\ell)\to (j,m_k)$. In this case, we multiply the projection at $y=a$
by   $\chi(\tilde{Q}_{e})(y_\ell+m_\ell,y_k+m_k)^{-1}$.  This exists because $\chi(\tilde{Q}_{e})(y_\ell+m_\ell,y_k+m_k)$ is a power
    series with non-zero constant term by assumption, and thus
    invertible.
\item At $y=a$, there is a crossing of two strands with labels
    $(i,m_k)$ and $(i,m_{k+1})$ with $m_k\neq m_{k+1}$.
We send the crossing to $y_{k+1}-y_{k}$ times the projection diagram plus the
diagram with the crossing opened.  That is:
\[ \tikz[baseline,scale=1.1,very thick]{\draw (-.5,-.5) -- node [below, scale=.8,at
  start]{$(i,m_k)$} (.5,.5);\draw (.5,-.5) -- node [below, scale=.8,at
  start]{$(i,m_{k+1})$} (-.5,.5);} \mapsto  \tikz[baseline,very thick]{\draw
  (-.5,-.5) --node [below, at
  start]{$i$} (.5,.5);\draw (.5,-.5) --  node[pos=.75,circle,fill=black,inner sep=2pt]{} node [below, at
  start]{$i$} (-.5,.5);} -\tikz[baseline,very thick]{\draw
  (-.5,-.5) -- node[pos=.75,circle,fill=black,inner sep=2pt]{} node [below, at
  start]{$i$} (.5,.5);\draw (.5,-.5) -- node [below, at
  start]{$i$} (-.5,.5);} + \tikz[baseline,very thick]{\draw (.5,-.5) -- node [below, at
  start]{$i$} (.5,.5);\draw (-.5,-.5) -- node [below, at
  start]{$i$} (-.5,.5);} \]
\end{itemize}
\end{proposition}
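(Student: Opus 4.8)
The plan is to write the isomorphism down directly: let $\Phi\colon \widehat{\dwalg^{\check\vartheta}}\to\widehat{\walg_\nu^\vartheta\otimes_SK}$ be the map defined on generators by \eqref{eq:4} together with the local rules on crossings given in the statement, and prove separately that $\Phi$ is a well-defined algebra homomorphism and that it is an isomorphism. The starting point is the eigenspace decomposition of the target already set up above: since the $y_i$ commute, each finite-dimensional quotient $\walg_\nu^\vartheta\otimes_SK/I_n$ splits into generalized eigenspaces for the $y_i$, the eigenvalues lie in $M$ (the minimal polynomial of $y_i$ divides $\prod_{m\in M}(y_i-m)^k$), and taking inverse limits produces the orthogonal idempotents $\epsilon_{\check\Bi}$, indexed by the lifts $\check\Bi$ of loadings in $B(\nu)$ to $\Gamma\times M$. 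The crucial combinatorial point is that the weighting is not changed, $\check\vartheta=\vartheta$, so strands and their ghosts sit in the same positions for $\Gamma\times M$ as for $\Gamma$; hence genericity of a loading and the partition into equivalence classes (Definition~\ref{def:equivalent}) are governed by the same hyperplane arrangement, and the set of lifts $\check\Bi$ contains a representative of every equivalence class of loading of $\Gamma\times M$ lying over $\nu$ (with some redundant repeats, costing at worst a Morita equivalence). So $\widehat{\dwalg^{\check\vartheta}}$ may be computed on this family of loadings, and we compare it blockwise with $\bigoplus_{\check\Bi,\check\Bj}\epsilon_{\check\Bi}\widehat{\walg_\nu^\vartheta\otimes_SK}\epsilon_{\check\Bj}$.

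\textbf{Well-definedness.} I extend $\Phi$ to an arbitrary wKLR diagram $A$ for $\Gamma\times M$ by the recipe in the statement: isotope $A$ so that its strandwise projection is again a wKLR diagram, take that projection capped by $\epsilon_{\check\Bi}$, and insert the two local corrections where needed -- the factor $\chi(\tilde Q_e)(y_\ell+m_\ell,y_k+m_k)^{-1}$, legitimate because this power series has nonzero constant term $\chi(\tilde Q_e)(m_\ell,m_k)$ exactly when the edge does not lift to $\Gamma\times M$, at such a projected strand--ghost crossing; and the ``separate, plus a difference of singly-dotted crossings'' substitution at a projected crossing of two like-labelled strands whose $M$-labels differ. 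The substance is that $\Phi$ respects the relations of Definition~\ref{def-wKLR} for $\Gamma\times M$. The dot-slides~(1) involve no $Q$-polynomial and pass to the projection unchanged, using for the exceptional crossings only the nilHecke commutations $\psi_k y_k=y_{k+1}\psi_k\pm 1$, $\psi_k^2=0$, valid on each eigenblock. A strand--ghost bigon~(2) along an edge $\check e$ matches literally: the definition $Q_{\check e}(u,v)=\chi(\tilde Q_e)(u+q_1,v+q_2)$ together with the shift $y_i\mapsto y_i-m_i$ turns the $\Gamma\times M$ relation into the $\walg^\vartheta$ relation; if instead the projected strand--ghost crossing is not covered by an edge of $\Gamma\times M$, the inserted inverse factor is exactly what cancels the $Q_e$ produced by the projection, leaving the trivial ``pull apart'' relation. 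Strand--strand bigons and triple points~(3)--(4) with equal $M$-labels again transfer verbatim; with distinct $M$-labels $m_k\neq m_{k+1}$ -- where the strands are genuinely differently labelled in $\Gamma\times M$ but have the same label after projection -- one uses that $y_k-y_{k+1}$ is invertible on the block (its generalized eigenvalue $m_k-m_{k+1}$ is nonzero) together with the nilHecke relations to check that the image of $(\check\psi_k)^2$ collapses to $\epsilon_{\check\Bi}$, that dots slide through $\Phi(\check\psi_k)$, and that the triple-point moves hold. I expect this last batch of local identities -- in particular reconciling the ``fake-edge'' inverse factors with the triple-point moves, and with the degree-zero loop relation should $\chi(\tilde Q_e)(m,m)$ vanish and create a loop at $(i,m)$ -- to be the main technical obstacle; but it is a finite list, each member reducing, after the substitution $y_i\mapsto y_i-m_i$, to a relation of $\walg^\vartheta$ plus bookkeeping inside the nilHecke subalgebra.

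\textbf{Isomorphism.} To conclude I compare permutation-type bases. By Theorem~\ref{th:basis} applied to the graph $\Gamma\times M$, the space $e_{\check\Bi}\widehat{\dwalg^{\check\vartheta}}e_{\check\Bj}$ is topologically free over $K[[\check y_1,\dots,\check y_m]]$ with basis the diagrams $\check b_\pi$, where $\pi$ runs over the permutations matching the $(I\times M)$-label sequence of $\check\Bj$ to that of $\check\Bi$. Applying the same theorem to $\Gamma$ and then splitting into eigenblocks -- using that $\psi_k$ intertwines the idempotent for $(\dots,m_k,m_{k+1},\dots)$ with the one for $(\dots,m_{k+1},m_k,\dots)$ -- we get that $\epsilon_{\check\Bi}\widehat{\walg_\nu^\vartheta\otimes_SK}\epsilon_{\check\Bj}$ is topologically free over $K[[y_1-m_1,\dots,y_m-m_m]]$ on the diagrams $b_\pi\epsilon_{\check\Bj}$ for \emph{exactly the same} set of $\pi$. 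The map $\Phi$ carries $\check y_i$ to $y_i-m_i$, identifying the coefficient rings, and carries $\check b_\pi$ to $b_\pi\epsilon_{\check\Bj}$ plus a $K[[y_i-m_i]]$-combination of $b_{\pi'}\epsilon_{\check\Bj}$ with $\pi'$ strictly below $\pi$ in the Bruhat order -- the shorter terms arising precisely from the ``opened crossing'' corrections -- so $\Phi$ is unitriangular on these bases, hence an isomorphism block by block, hence overall. (Equivalently: $\Phi$ is surjective because $\psi_k\epsilon_{\check\Bi}=(y_k-y_{k+1})^{-1}(\Phi(\check\psi_k)-\epsilon_{\check\Bi})$ lies in the image when $m_k\neq m_{k+1}$ and equals $\Phi(\check\psi_k)$ otherwise, and injective by the same linear-independence argument.) The isotopy-dependence noted before the statement is harmless, since $\Phi$ is determined by its values on the generators $e_{\check\Bi}$, $\check y_i$, $\check\psi_k$, and the diagram recipe is only a device for computing it.
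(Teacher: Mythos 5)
Your route is genuinely different from the paper's, and the difference matters. The paper does not check that the diagrammatic map respects the relations of Definition~\ref{def-wKLR} on the $\Gamma\times M$ side at all. Instead it exploits the faithful polynomial representation of Proposition~\ref{prop:action}: both completed algebras act faithfully on sums of completed polynomial rings, the formula~\eqref{eq:4} gives a direct identification of these representations, and one then only needs to verify that each prospective generator (the idempotents, the dots, and the three kinds of local crossings) acts through the identification by the same operator on polynomials. Faithfulness then promotes this generator-by-generator check to an injective algebra map with no relation verification needed, and the paper finishes by exhibiting an explicit two-sided inverse on generators. That is a substantially shorter path than what you attempt.

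The gap in your argument is precisely the step you flag as the ``main technical obstacle'' and then do not carry out. To make the well-definedness claim stick you would have to verify, for $\Gamma\times M$ and under your $\Phi$, the full list of bigon and triple-point relations in the mixed-label cases: a triple point among strands carrying, say, $(i,m_k)$, $(i,m_{k+1})$, $(j,m_{k+2})$ with $m_k\neq m_{k+1}$, where $\Phi$ replaces the $(i,m_k)$--$(i,m_{k+1})$ crossing by $(y_k-y_{k+1})\psi_k\epsilon + \epsilon$, produces on each side a sum of several terms in the nilHecke and $Q$-polynomial operators, and the asserted cancellation is a real computation, not bookkeeping; similarly reconciling the inserted $\chi(\tilde Q_e)(y_\ell+m_\ell,y_k+m_k)^{-1}$ factors at non-lifting crossings with the triple-point correction $\partial_{k,k+1}Q_e(y_j,y_k)$ requires an identity of power series you do not write down. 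You also correctly notice the degenerate possibility that $\chi(\tilde Q_e)(m,m)=0$ creates a weight-zero loop at $(i,m)$, which changes the form of $Q_{\check e}$ and the strand--strand relation, but you do not resolve it. Until these identities are checked, ``$\Phi$ is a homomorphism'' is unproven, and since your bijectivity argument (unitriangularity on the completed permutation bases, which is fine modulo extending Theorem~\ref{th:basis} to the completion) presupposes well-definedness, the whole proof is open at this point. If you want to keep your route, you should either do these local computations explicitly, or notice that you can replace them entirely by the paper's trick: check agreement of operators on the completed polynomial representation and invoke faithfulness. That observation is really the content of the proof.
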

\begin{proof}
  Much like in \cite{WebBKnote}, we identify these algebras by giving
  an isomorphism between their completed polynomial representations.

  The completion of the polynomial representation of
  $\widehat{\walg_\nu^\vartheta\otimes_SK}$ is a sum of completed
  polynomial rings $\oplus_{\check{\Bi}} K[[y_1-m_1,\dots, y_n-m_n]]
  \epsilon_{\check{\Bi}}$, so we can use \eqref{eq:4} as the
  definition of the isomorphism of this to the completed polynomial
  representation of $\widehat{\dwalg^{\check{\vartheta}}}$.  

Thus, we
  need only check that dotless diagrams act correctly.  In all the
  cases where a diagram is sent to its projection, the match between
  the actions is clear.  Now consider the case where there is a crossing where the $\ell$th strand (call its
    label $i$) crosses from left to right of
    a ghost for the $k$th strand and an edge $e\colon i\to
    j$ which doesn't lift to an edge $\check{e}\colon
    (i,m_\ell)\to (j,m_k)$; in this case, the action of the projection
    is by
    multiplication by $\chi(\tilde{Q}_{e})(y_\ell+m_\ell,y_k+m_k)$.
    Thus, $\chi(\tilde{Q}_{e})(y_\ell+m_\ell,y_k+m_k)^{-1}$ times this
    diagram acts by the identity map, as does the diagram for
    $\Gamma\times M$.

    Finally, consider the case where there is a crossing of two strands with labels
    $(i,m_k)$ and $(i,m_{k+1})$ with $m_k\neq m_{k+1}$.  The
    projection acts by the Demazure operator
    $\frac{s_j-1}{y_{j}-y_{j+1}}$.  Thus, \[ \tikz[baseline,very thick]{\draw
  (-.5,-.5) --node [below, at
  start]{$i$} (.5,.5);\draw (.5,-.5) --  node[pos=.75,circle,fill=black,inner sep=2pt]{} node [below, at
  start]{$i$} (-.5,.5);} -  \tikz[baseline,very thick]{\draw
  (-.5,-.5) -- node[pos=.75,circle,fill=black,inner sep=2pt]{} node [below, at
  start]{$i$} (.5,.5);\draw (.5,-.5) -- node [below, at
  start]{$i$} (-.5,.5);} + \tikz[baseline,very thick]{\draw (.5,-.5) -- node [below, at
  start]{$i$} (.5,.5);\draw (-.5,-.5) -- node [below, at
  start]{$i$} (-.5,.5);} \] acts by the switch map $s_j$, as does the
diagram for $\Gamma\times M$.

Thus we need only check that this map is invertible.  The inverse
applied to a diagram times $\epsilon_{\check{\Bi}}$ similarly goes to
the ``anti-projection'' but times
$\chi(\tilde{Q}_{e})(y_\ell+m_\ell,y_k+m_k)$ where there is an
appropriate crossing of a strand and a ghost, and when two
like-colored strands with different $m_{k}$ and $m_{k+1}$ cross, the inverse map
is given by
\[ \epsilon_{\check{\Bi}}\tikz[baseline=-5pt,scale=1.1,very thick]{\draw (-.5,-.5) -- node [below, at
  start]{$i$} (.5,.5);\draw (.5,-.5) -- node [below, at
  start]{$i$} (-.5,.5);} \mapsto (y_k-y_{k+1}+m_k-m_{k+1})^{-1}\Bigg(\tikz[baseline=-5pt,very thick]{\draw
  (-.5,-.5) --node [below, scale=.8,at
  start]{$(i,m_k)$} (.5,.5);\draw (.5,-.5) --  node [below, scale=.8,at
  start]{$(i,m_{k+1})$} (-.5,.5);} - \tikz[baseline=-5pt,very thick]{\draw (.5,-.5) -- node [below, scale=.8,at
  start]{$(i,m_k)$} (.5,.5);\draw (-.5,-.5) -- node [below, scale=.8,at
  start]{$(i,m_{k+1})$} (-.5,.5);} \Bigg).\qedhere\]
\end{proof}


Note that this also induces a map on the level of steadied quotients,
since the loading $\hat{\Bi}$ is unsteady if and only if $\Bi$ is, and
the idempotent $\epsilon_{\hat{\Bi}}$ is 0 in the steadied quotient if
$\Bi$ is.


\section{Relation to previous constructions}
\label{sec:examples}

The motivation for the definition of weighted KLR algebras was to give
a unifying framework to some seemingly disparate examples, as well as
providing a language for new ones.

As Corollary \ref{cor:tree} shows, we will encounter nothing new if we
consider the weighted KLR algebras for a tree; in particular, for
any Dynkin diagram, or extended Dynkin diagram of type other than
$\widehat A_n$, nothing interesting happens.  On the other hand, there are
some very interesting cases based on slightly less famous graphs.

\subsection{The Crawley-Boevey trick and categorical actions}
\label{sec:Crawley-Boevey}

The most important case for us is the graph produced by ``the
Crawley-Boevey trick;'' this was a construction which was originally
designed with the aim of thinking of Nakajima's quiver varieties,
which were originally defined using auxilliary ``shadow vertices,'' as
a space of usual representations of a pre-projective algebras.  

Given a graph $\Gamma$ and a function $w\colon I\to \Z_{\geq 0}$, we can define a new graph $\Gamma_w$ where we take the original graph $\Gamma$, add a new vertex $0$ and string in $w_i$ edges from $0$ to $i$.  More formally, $\Gamma_w$ has vertex set $I\cup \{0\}$ and edge set $\Omega \cup \{e_i^{1},\dots, e_i^{(w_i)}\}_{i\in I}$ with $t(e_i^{(k)})=0,h(e_i^{(k)}
)=i$.  We call the original edges of $\Gamma$ {\bf old edges}, and the
edges $e_*^{(*)}$ {\bf new edges}.  For simplicity, we always choose $c_{e_i^{(w_i)}}=c_{\bar e_i^{(w_i)}}=1$ and $Q_{e_i^{(k)}}(u,v)=u-v$. 

As we noted, this graph has previously appeared in the literature on Nakajima quiver varieties, since 
\begin{itemize} 
    \item there's a canonical bijection between representations of
      $\Gamma_w$ with $V_0\cong k$ and representations of $\Gamma$ together with a choice of map
      $\C^{w_i}\to V_i$, and 
   \item similarly, representations of the preprojective algebra of
     $\Gamma_w$ with $V_0\cong k$ are in canonical bijection with elements of the vector space Nakajima denotes $\mathbf{M}$ subject to the moment map conditions \cite[(2.5)]{Nak94}, and 
\item this representation is stable in the
      sense of Craw for the character which is the product of the determinants of the action on $V_i$'s for $i\in I$, and the  $-\sum_{i\in I}\dim V_i$-power of the determinant on $V_0$ if and only if it is stable as in \cite[3.5]{Nak94}.
\end{itemize}

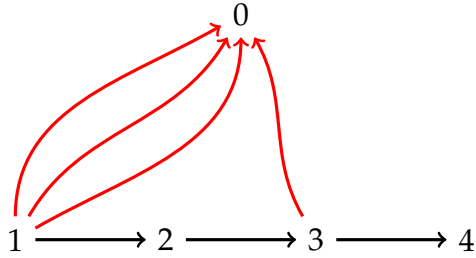
\begin{figure}[tbh]
  \centering
  \begin{tikzpicture}[very thick, ->, scale=2]
    \node (a) at (-2,0){$1$}; 
    \node (b) at (-1,0){$2$};
    \node (c) at (0,0){$3$};
    \node (d) at (1,0){$4$};
    \node (e) at (-.5,1.5){$0$};
\draw (a)--(b);
\draw (b)--(c);
\draw (c)--(d);
\draw[red] (a) to[out=90,in=-150] (e);
\draw[red] (a) to[out=60,in=-120] (e);
\draw[red] (c) to[out=120,in=-60] (e);
\draw[red] (a) to[out=30,in=-90] (e);
  \end{tikzpicture}
  \caption{The Crawley-Boevey quiver of $3\omega_1+\omega_3$ for $\mathfrak{sl}_5$.}
  \label{fig:crawley-boevey}
\end{figure}

This observation carries over into the algebras attached to these
quivers.  Given a highest weight $\la$ of the Kac-Moody Lie algebra
$\fg$ associated to $\Gamma$, we let $\Gamma_\la=\Gamma_w$ where $w(i)=\la(\al_i^\vee)$.

For any weighting $\vartheta$, call the {\bf reduced} quotient $\bar
\walg^\vartheta_{\tilde \nu}$ of the algebra $\walg^\vartheta_{\tilde \nu}$
for $\Gamma_w$ with weight $\tilde \nu=\nu+\al_0$ by the ideal
generated by all dots on the $0$-labelled strand. Consider the charge
$c$ which assigns $c(i)=-1+i$ for all old vertices and $i+\sum d_i$ to
$0$, and the reduced steadied quotient $\bar \walg^\vartheta_{\tilde
  \nu}(c)$.  When we relate this construction to the geometry of
quiver representations, this will
correspond to only acting by change of basis on the old vertices.  

Since the single strand with label $0$ in each diagram of $\bar
\walg^\vartheta_{\tilde \nu}$ plays a special role, we will represent
its ghosts using red ribbons like $\{\,\tikz[baseline]{\draw[wei]
  (0,-.05)--(0,.3);}\,\}$; this is suggestive of a relationship to the
tensor product algebras of \cite[\S 4]{Webmerged} which we will discuss shortly.

Assume that $\Gamma$ has no loops.  Recall that there is a 2-category
$\tU$, defined using the ring $\K$ and the polynomials $Q_{ij}$, which
categorifies the universal enveloping algebra of the associated
Kac-Moody algebra $\fg$.  We use the conventions established in our
previous papers \cite{Webcatq,Webmerged} for this category which (modulo
minor conventional differences) is that defined by Cautis and Lauda \cite{CaLa} building on work
of Rouquier \cite{Rou2KM} and Khovanov and Lauda \cite{KLIII}.  

\begin{theorem}\label{th:categorical-action}
  There is a categorical action of the Kac-Moody Lie algebra $\fg$ on
  the categories $\bigoplus_{\nu}\bar \walg^\vartheta_{\tilde
    \nu}(c)\pmmod$, with $\eF_i$ given by the induction functor
  $M\mapsto M\circ \walg^\vartheta_{\al_i}$, and $\eE_i$ by its left
  adjoint.  
\end{theorem}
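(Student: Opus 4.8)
The plan is to verify the axioms defining a categorical action of $\fg$ in the sense of Rouquier (equivalently Cautis--Lauda), paralleling the treatment of the tensor product algebras $T^{\bla}$ in \cite[\S 3--5]{Webmerged}. Concretely one must establish four things: (i) that $\eF_i = -\circ\walg^\vartheta_{\al_i}$ and its left adjoint $\eE_i$ are well-defined endofunctors of $\bigoplus_\nu \bar\walg^\vartheta_{\tilde\nu}(c)\pmmod$; (ii) that the natural transformations given by a dot on, and a crossing of, the newly induced strands satisfy the relations of the KLR algebra with parameters $Q_{ij}=\prod_{h(e)=i,\,t(e)=j}Q_e$, so that the $\eF_i$'s carry an action of the KLR part of $\tU$; (iii) that $(\eE_i,\eF_i)$ is biadjoint up to the expected grading shifts; and (iv) the $\mathfrak{sl}_2$-relation, that one of $\eE_i\eF_i$ and $\eF_i\eE_i$ is the other plus $|\langle\la-\nu,\al_i^\vee\rangle|$ grading-shifted copies of the identity, witnessed by an invertible $2$-morphism. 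By Rouquier's theorem these suffice. A tempting shortcut would be to prove weighting-independence of $\bar\walg^\vartheta_{\tilde\nu}(c)$ via the bimodules $B^{\vartheta,\vartheta'}$ of \S\ref{sec:twisted-algebra} and then quote \cite{Webmerged}; but since $\Gamma_\la$ is not a tree we have no such Morita equivalence in general, so a direct verification seems necessary.

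Step (i) holds because both the reduced ideal (dots on the $0$-strand) and the steadying ideal $I_c$ are preserved under $\iota_{\nu;\al_i}$ and under $e_{\tilde\nu;\al_i}(-)$: adding or deleting a rightmost $i$-labelled strand changes neither which horizontal slice exhibits unsteadiness nor the $0$-strand, just as in the remark following Proposition~\ref{completion}. Step (ii) is nearly formal: $\eF_{i_1}\cdots\eF_{i_n}$ is induction along $\walg^\vartheta_{\tilde\nu+\al_{i_1}+\cdots+\al_{i_n}}e_{\tilde\nu;\al_{i_1};\cdots;\al_{i_n}}$, and dots and crossings among the last $n$ strands give a homomorphism from the rank-$n$ KLR algebra into the endomorphisms of this bimodule; since $\Gamma$, hence the full subquiver of $\Gamma_\la$ on $I$, has no loops and these strands sit at the standard spacing $s$, the identification of $\walg^\vartheta_{B_s}$ with the ordinary KLR algebra shows that precisely the KLR relations with the stated $Q_{ij}$ hold. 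For step (iii) one uses that $\eF_i$ is induction along a bimodule that is free of finite rank on the left, so it has a right adjoint --- a grading shift of restriction $e_{\tilde\nu;\al_i}(-)$ --- and hence a left adjoint $\eE_i$, again a shift of restriction; the cups, caps and bubble relations witnessing biadjointness can be transported from \cite[\S 3]{Webmerged} with no essential change, since ghosts move rigidly with their strands and contribute nothing to these local identities.

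The crux is step (iv). Following \S\ref{sec:induct-restr} and \cite[2.18]{KLI}, one filters the bimodule $e_{\tilde\nu;\al_i}\walg^\vartheta_{\tilde\nu+\al_i}e_{\tilde\nu;\al_i}$ over $\bar\walg^\vartheta_{\tilde\nu}$ by the Mackey-type decomposition recording whether the lone $i$-strand entering from the bottom right passes all the way to the left --- yielding, up to a shift, $\eF_i\eE_i$ --- or is cut and recreated. The point is that after passing to the reduced steadied quotient for the charge $c$, which ranks $0$ heaviest and then $1,2,3,\dots$, every filtration layer in which that light $i$-strand would be forced across to the left of the heavy $0$-strand or of a heavier-coloured strand dies, while the reduced quotient discards the nilpotent tails supported at dots on the $0$-strand. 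A count of the layers that remain, against the weight-space dimensions of the relevant $\mathfrak{sl}_2$-string, should leave exactly $\langle\la-\nu,\al_i^\vee\rangle$ (or, when this is negative, its absolute value, with the roles of $\eE_i\eF_i$ and $\eF_i\eE_i$ exchanged) copies of the identity, with the shifts prescribed by the conventions of $\tU$, giving the required invertible $2$-morphism. For the weighting making $\walg^\vartheta$ agree with the tensor product algebra, as in the first Theorem of the introduction, this should recover the categorical action of \cite{Webmerged}, a useful consistency check on signs and shifts.

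The main obstacle will be step (iv): showing that precisely the expected Mackey layers survive the reduced steadied quotient, \emph{uniformly in the weighting} $\vartheta$. This is where the ``red strand'' analysis of \cite[\S 4--5]{Webmerged} has to be rewritten with the $0$-labelled strand and its red ghost ribbons playing the role of a red strand, and where one must confirm that the extra freedom of the $0$-ghosts --- which, unlike red strands, are detached from their strand, their positions dictated by $\vartheta$ and the loading --- never produces surviving terms beyond the expected ones.
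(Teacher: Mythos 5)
The paper does not take the direct Rouquier-axiom route you propose. Instead it constructs an auxiliary diagrammatic $2$-category $\mathcal{Y}_\vartheta$ whose $1$-morphisms are \emph{signed} loadings and whose $2$-morphisms are ``double weighted KLR diagrams'' (diagrams with both orientations, subject to the full Cautis--Lauda $\tU$ relations \cite[(2.2--3)]{Webmerged} together with the weighted KLR relations for the strand/ghost interactions). The technical core of the paper's proof is then two lemmata: (a) the Khovanov--Lauda-style spanning set $Z_\vartheta$ is a basis, which shows the $\tU$ relations imposed by fiat do not degenerate the morphism spaces; and (b) the natural map $\bar\walg^\vartheta(c)\to D\bar\walg^\vartheta(c)$ is a Morita equivalence. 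Given these, the theorem is immediate: $D\bar\walg^\vartheta(c)$-modules manifestly carry a $\mathcal{Y}_\vartheta$-action, and $\tU$ sits inside $\mathcal{Y}_\vartheta$ by drawing diagrams with widely-spaced strands on the far right. In other words, the biadjunctions, bubble relations, and $\mathfrak{sl}_2$-inversion relations that you would need to prove are built into the definition of $\mathcal{Y}_\vartheta$; the paper's hard work goes into the basis theorem and the Morita lemma, not into a Mackey filtration.

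This makes your step (iv) the genuine gap in your proposal, and you in fact identify it as such yourself. Your Mackey-filtration count of surviving layers after passing to the reduced steadied quotient is stated with ``should'' and left unverified; it requires showing uniformly in $\vartheta$ that exactly $|\langle\la-\nu,\al_i^\vee\rangle|$ layers survive, with the $0$-labelled strand and its ghost ribbons standing in for the red strands of \cite[\S 4--5]{Webmerged}, and confirming that the detachment of the $0$-ghosts from the strand introduces no spurious surviving terms. This is a substantial argument (it is roughly the route Kang--Kashiwara take for cyclotomic KLR algebras), and it is precisely the labor that the paper's auxiliary-$2$-category construction is designed to avoid. Your steps (i)--(iii) are plausible and broadly consistent with what the paper's Morita lemma and basis theorem deliver, and you correctly reject the ``shortcut'' of proving weighting-independence via the bimodules $B^{\vartheta,\vartheta'}$, since $\Gamma_\la$ is not a tree. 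But as written, the crux of the proof is not supplied, and the comparison with the paper shows exactly why the author sidestepped it.
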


 This is in principle the same proof as \cite[Thms. 4.25 \&
 4.28]{Webmerged}.  We define a ``doubled'' version of $
\bar \walg^\vartheta_{\tilde
    \nu}(c)$ analogous to the double cyclotomic quotient $DR^\la$,
  introduced in \cite[\S 3.1]{Webmerged}.  Much like $DR^\la$, the
  category of modules over  $
D\bar \walg^\vartheta_{\tilde
    \nu}(c)$ manifestly carries a $\tU$ action, but it is not {\it a
    priori} clear that it is ever non-zero.  However, we will prove
  that $D\bar \walg^\vartheta_{\tilde
    \nu}(c)$ and $\bar \walg^\vartheta_{\tilde
    \nu}(c)$ are Morita equivalent, allowing use to prove Theorem
  \ref{th:categorical-action}.

Consider the category $\mathcal{Y}'_\vartheta$ whose objects are {\bf signed loadings}, that is,
loadings where each point is marked with a $+$ or $-$, which we can
also represent as either an upward or downward arrow. We'll use
$i_{\pm}$ to represent the label of a point in a signed loading.

We let a {\bf blank double weighted KLR diagram} be a collection of curves
which are decorated with dots which are
oriented and match the up and down arrows on the source loading at
$y=0$ and the target at $y=1$, and are generic in the same sense as
weighted KLR diagrams.  These strands have ghosts
positioned $\vartheta_e$ units right of each strand (regardless of
orientation) labelled with the head of $e$; for purposes of weight
labeling we also need to include ghosts for the opposite orientation,
that is ghosts (which we will draw as dotted lines $\tikz[baseline=-3pt]{\draw[thick,dotted] (0,0)
  -- (.5,0); }$) $\vartheta_e$ units left of each strand labelled with
$t(e)$.  The diagrams are the same as those used in the
2-category $\tU$, except for the genericity conditions imposed by
ghosts.   Here is an example of such a diagram: 

\begin{equation}
  \begin{tikzpicture}[very thick,yscale=3,xscale=3.5]
    \draw[postaction={decorate,decoration={markings,
    mark=at position .5 with {\arrow[scale=1.3]{>}}}}] (0,0) to[out=90,in=-90] node[at start, below]{$i_+$} node[at
    end, above]{$i_+$}  (.5,1); 
    \draw[postaction={decorate,decoration={markings,
    mark=at position .45 with {\arrow[scale=1.3]{>}}}}] (0,1) to[out=-50,in=-130] node[at start, above]{$i_-$} node[at
    end, above]{$i_+$}  (2,1) ; 
    \draw[postaction={decorate,decoration={markings,
    mark=at position .5 with {\arrow[scale=1.3]{>}}}}] (.6,0) to[out=90,in=90] node[at start, below]{$j_+$} node[at
    end, below]{$j_-$}(1.5,0);
    \draw[postaction={decorate,decoration={markings,
    mark=at position .5 with {\arrow[scale=1.3]{<}}}}] (1.3,0) to [out=70,in=-90] node[at start, below]{$j_-$} node[at
    end, above]{$j_-$} (1.8,1);
    \draw[dashed] (.35,0) to[out=90,in=-90] (.85,1); 
    \draw[dashed] (.35,1) to[out=-50,in=-130] (2.35,1); 
    \draw[dashed] (.3,0) to[out=90,in=90] (1.1,0);
    \draw[dashed] (1,0) to [out=70,in=-90] (1.5,1);
\draw[wei]  (.92,0) to node[at start, below,red]{$\la_2$} node[at end, above,red]{$\la_2$} (.92,1);
\draw[wei]  (.17,0) to node[at start, below,red]{$\la_1$} node[at end, above,red]{$\la_1$} (.17,1);
  \end{tikzpicture}\label{fig:Y-diagram}  
\end{equation}
Some care is necessary when labeling the regions of the plane.  We let
a {\bf double weighted KLR diagram} be a blank DWKLRD with a labeling
of each region of the plane minus strands and ghosts labeled by a
weight of $\fg$.  Rather
than using the rules of \cite{KLIII} or \cite{Webmerged}, these must
be consistent with the rules\footnote{When the Cartan matrix is not
  invertible, we should be a bit careful about precisely what
  fundamental weights mean, but this is actually a red herring.  What we
  really want to assign to regions are functions $I\to \Z$, but it has
been conventionally handy to write these functions in the form
$\al_i^\vee(\mu)$ for some weight $\mu$.  Thus, pedants should
consider $\om_i$ to be the characteristic function of $i\in I$.} that 
\begin{equation*}
  \tikz[baseline,very thick]{
\draw[wei,postaction={decorate,decoration={markings,
    mark=at position .5 with {\arrow[scale=1.3]{<}}}}] (0,-.5) -- node[below,at start]{$\la$}  (0,.5);
\node at (-1,0) {$\mu$};
\node at (1,.05) {$\mu+\la$};
}\qquad \qquad 
  \tikz[baseline,very thick]{
\draw[postaction={decorate,decoration={markings,
    mark=at position .5 with {\arrow[scale=1.3]{<}}}}] (0,-.5) -- node[below,at start]{$i$}  (0,.5);
\node at (-1,0) {$\mu$};
\node at (1,.05) {$\mu-2\om_i$};
}
\end{equation*}
and for ghosts corresponding to an edge $e\colon i\to j$:
\begin{equation*}
  \tikz[baseline,very thick]{
\draw[postaction={decorate,decoration={markings,
    mark=at position .5 with {\arrow[scale=1.3]{<}}}}, dashed] (0,-.5) -- node[below,at start]{$e$}  (0,.5);
\node at (-1,0) {$\mu$};
\node at (1,.05) {$\mu+c_{\bar{e}}\om_i$};
}\qquad \qquad 
  \tikz[baseline,very thick]{
\draw[postaction={decorate,decoration={markings,
    mark=at position .5 with {\arrow[scale=1.3]{<}}}}, dotted] (0,-.5) -- node[below,at start]{$e$}  (0,.5);
\node at (-1,0) {$\mu$};
\node at (1,.05) {$\mu+c_{e}\om_j$};
}
\end{equation*}
As in \cite[\S 2]{Webmerged}, we let $\EuScript{L}$ denote the label of the
leftmost region, and similarly for $\EuScript{R}$ and the rightmost.  We refine the scalars
$t_{ij}=Q_{ij}(1,0)$ as follows: for an edge $e$ and node $i$, we let
\[ t_{i;e}=
\begin{cases}
  Q_e(1,0)& \text{if $i=h(e)$,}\\
1 & \text{otherwise.} 
\end{cases}
\qquad u_{i;e}=\begin{cases}
  Q_e(0,1)& \text{if $i=h(e)$,}\\
1 & \text{otherwise.} 
\end{cases}\]

We let $\mathcal{Y}_\theta$ be the 2-category with:
\begin{itemize}
\item objects given by weights of $\fg$.
\item 1-morphisms $\la\to \mu$ given by loadings with label $\EuScript{L}=\la,\EuScript{R} =\mu$.  Composition is the horizontal
  composition of loadings.  
\item 2-morphisms $\Bi\to \Bj$ given by double weighted KLR diagrams
  with $\Bi$ as bottom and $\Bj$ as top, modulo
  the relations \cite[(2.2-4)]{Webmerged}, the adjunction, infinite
  Grassmannian and bigon relations corresponding to Lauda's categorification of $\mathfrak{sl}_2$ and
  \begin{itemize}
  \item the bigon relation for differently color strands \cite [(2.5a-b)]{Webmerged} is replaced by 
\newseq
\begin{equation*}\subeqn
\label{through-strands}
\begin{tikzpicture}[baseline=8pt]
    \node at (0,0){\begin{tikzpicture} [scale=1.3] \node[scale=1.5] at
        (-.7,1){$\la$};
        \draw[postaction={decorate,decoration={markings, mark=at
            position .5 with {\arrow[scale=1.3]{<}}}},very thick]
        (0,0) to[out=90,in=-90] node[at start,below]{$i$} (1,1)
        to[out=90,in=-90] (0,2) ;
        \draw[postaction={decorate,decoration={markings, mark=at
            position .5 with {\arrow[scale=1.3]{>}}}},very thick]
        (1,0) to[out=90,in=-90] node[at start,below]{$j$} (0,1)
        to[out=90,in=-90] (1,2);
      \end{tikzpicture}};

    \node at (1.7,0) {$=$}; \node at (3.2,0){\begin{tikzpicture}
        [scale=1.3]

        \node[scale=1.5] at (2.4,1){$\la$};

        \draw[postaction={decorate,decoration={markings, mark=at
            position .5 with {\arrow[scale=1.3]{<}}}},very thick]
        (1,0) to[out=90,in=-90] node[at start,below]{$i$} (1,2);
        \draw[postaction={decorate,decoration={markings, mark=at
            position .5 with {\arrow[scale=1.3]{>}}}},very thick]
        (1.7,0) to[out=90,in=-90] node[at start,below]{$j$} (1.7,2);
      \end{tikzpicture}};
  \end{tikzpicture}
\begin{tikzpicture}[baseline=8pt]
  \node at (0,0){\begin{tikzpicture} [scale=1.3] \node[scale=1.5] at
      (-.7,1){$\la$}; \draw[postaction={decorate,decoration={markings,
          mark=at position .5 with {\arrow[scale=1.3]{>}}}},very
      thick] (0,0) to[out=90,in=-90] node[at start,below]{$i$} (1,1)
      to[out=90,in=-90] (0,2) ;
      \draw[postaction={decorate,decoration={markings, mark=at
          position .5 with {\arrow[scale=1.3]{<}}}},very thick] (1,0)
      to[out=90,in=-90] node[at start,below]{$j$} (0,1)
      to[out=90,in=-90] (1,2);
    \end{tikzpicture}};

  \node at (1.7,0) {$=$}; \node at (3.2,0){\begin{tikzpicture}
      [scale=1.3]

      \node[scale=1.5] at (2.4,1){$\la$};

      \draw[postaction={decorate,decoration={markings, mark=at
          position .5 with {\arrow[scale=1.3]{>}}}},very thick] (1,0)
      to[out=90,in=-90] node[at start,below]{$i$ }(1,2) ;
      \draw[postaction={decorate,decoration={markings, mark=at
          position .5 with {\arrow[scale=1.3]{<}}}},very thick]
      (1.7,0) to[out=90,in=-90] node[at start,below]{$j$} (1.7,2);
    \end{tikzpicture}};
\end{tikzpicture}
\end{equation*}

\begin{equation*}\subeqn\label{through-ghosts1}
\begin{tikzpicture}[baseline=8pt]
  \node at (0,0){\begin{tikzpicture} [scale=1.3] \node[scale=1.5] at
      (-.7,1){$\la$}; \draw[postaction={decorate,decoration={markings,
          mark=at position .5 with {\arrow[scale=1.3]{>}}}},very
      thick] (0,0) to[out=90,in=-90] node[at start,below]{$i$} (1,1)
      to[out=90,in=-90] (0,2) ; \draw[dashed,
      postaction={decorate,decoration={markings, mark=at position .5
          with {\arrow[scale=1.3]{<}}}},very thick] (1,0)
      to[out=90,in=-90] node[at start,below]{$e$} (0,1)
      to[out=90,in=-90] (1,2);
    \end{tikzpicture}};

  \node at (1.7,0) {$=$}; \node[scale=1.1] at (2.3,0) {$u_{i;e}$};
  \node at (3.8,0){\begin{tikzpicture} [scale=1.3]

      \node[scale=1.5] at (2.4,1){$\la$};

      \draw[postaction={decorate,decoration={markings, mark=at
          position .5 with {\arrow[scale=1.3]{>}}}},very thick] (1,0)
      to[out=90,in=-90] node[at start,below]{$i$ }(1,2) ;
      \draw[dashed, postaction={decorate,decoration={markings, mark=at
          position .5 with {\arrow[scale=1.3]{<}}}},very thick]
      (1.7,0) to[out=90,in=-90] node[at start,below]{$e$} (1.7,2);
    \end{tikzpicture}};
\end{tikzpicture}
\begin{tikzpicture}[baseline=8pt]
  \node at (0,0){\begin{tikzpicture} [scale=1.3] \node[scale=1.5] at
      (-.7,1){$\la$};
      \draw[dashed,postaction={decorate,decoration={markings, mark=at
          position .5 with {\arrow[scale=1.3]{<}}}},very thick] (0,0)
      to[out=90,in=-90] node[at start,below]{$e$} (1,1)
      to[out=90,in=-90] (0,2) ;
      \draw[postaction={decorate,decoration={markings, mark=at
          position .5 with {\arrow[scale=1.3]{>}}}},very thick] (1,0)
      to[out=90,in=-90] node[at start,below]{$j$} (0,1)
      to[out=90,in=-90] (1,2);
    \end{tikzpicture}};

  \node at (1.7,0) {$=$}; \node[scale=1.1] at (2.3,0) {$u_{j;e}$};
  \node at (3.8,0){\begin{tikzpicture} [scale=1.3]

      \node[scale=1.5] at (2.4,1){$\la$};

      \draw[dashed,postaction={decorate,decoration={markings, mark=at
          position .5 with {\arrow[scale=1.3]{<}}}},very thick] (1,0)
      to[out=90,in=-90] node[at start,below]{$e$} (1,2);
      \draw[postaction={decorate,decoration={markings, mark=at
          position .5 with {\arrow[scale=1.3]{>}}}},very thick]
      (1.7,0) to[out=90,in=-90] node[at start,below]{$j$} (1.7,2);
    \end{tikzpicture}};
\end{tikzpicture}
\end{equation*}
\begin{equation*}\subeqn\label{through-ghosts2}
\begin{tikzpicture}[baseline=8pt]
  \node at (0,0){\begin{tikzpicture} [scale=1.3] \node[scale=1.5] at
      (-.7,1){$\la$}; \draw[postaction={decorate,decoration={markings,
          mark=at position .5 with {\arrow[scale=1.3]{<}}}},very
      thick] (0,0) to[out=90,in=-90] node[at start,below]{$i$} (1,1)
      to[out=90,in=-90] (0,2) ; \draw[dashed,
      postaction={decorate,decoration={markings, mark=at position .5
          with {\arrow[scale=1.3]{>}}}},very thick] (1,0)
      to[out=90,in=-90] node[at start,below]{$e$} (0,1)
      to[out=90,in=-90] (1,2);
    \end{tikzpicture}};

  \node at (1.7,0) {$=$}; \node[scale=1.1] at (2.3,0) {$t_{i;e}$};
  \node at (3.8,0){\begin{tikzpicture} [scale=1.3]

      \node[scale=1.5] at (2.4,1){$\la$};

      \draw[postaction={decorate,decoration={markings, mark=at
          position .5 with {\arrow[scale=1.3]{<}}}},very thick] (1,0)
      to[out=90,in=-90] node[at start,below]{$i$ }(1,2) ;
      \draw[dashed, postaction={decorate,decoration={markings, mark=at
          position .5 with {\arrow[scale=1.3]{>}}}},very thick]
      (1.7,0) to[out=90,in=-90] node[at start,below]{$e$} (1.7,2);
    \end{tikzpicture}};
\end{tikzpicture}
\begin{tikzpicture}[baseline=8pt]
  \node at (0,0){\begin{tikzpicture} [scale=1.3] \node[scale=1.5] at
      (-.7,1){$\la$};
      \draw[dashed,postaction={decorate,decoration={markings, mark=at
          position .5 with {\arrow[scale=1.3]{>}}}},very thick] (0,0)
      to[out=90,in=-90] node[at start,below]{$e$} (1,1)
      to[out=90,in=-90] (0,2) ;
      \draw[postaction={decorate,decoration={markings, mark=at
          position .5 with {\arrow[scale=1.3]{<}}}},very thick] (1,0)
      to[out=90,in=-90] node[at start,below]{$j$} (0,1)
      to[out=90,in=-90] (1,2);
    \end{tikzpicture}};

  \node at (1.7,0) {$=$}; \node[scale=1.1] at (2.3,0) {$t_{j;e}$};
  \node at (3.8,0){\begin{tikzpicture} [scale=1.3]

      \node[scale=1.5] at (2.4,1){$\la$};

      \draw[dashed,postaction={decorate,decoration={markings, mark=at
          position .5 with {\arrow[scale=1.3]{>}}}},very thick] (1,0)
      to[out=90,in=-90] node[at start,below]{$e$ }(1,2) ;
      \draw[postaction={decorate,decoration={markings, mark=at
          position .5 with {\arrow[scale=1.3]{<}}}},very thick]
      (1.7,0) to[out=90,in=-90] node[at start,below]{$j$} (1.7,2);
    \end{tikzpicture}};
\end{tikzpicture}
\end{equation*}
\item the KLR relations
  \cite[(2.6a-g)]{Webmerged}  replaced with the weighted KLR relations
  of Definition \ref{def-wKLR}. 
  \end{itemize}
  In both cases, we ignore the dotted ghosts; these are only necessary
  to label the plane so that
  $\mathfrak{sl}_2$ relations function correctly.
\end{itemize}

Note that if the loadings have each pair of points at least $s$ units apart, both these changes in
relations become irrelevant, and we recover the relations of the
original category $\tU$.

Note that $\mathcal{Y}_\vartheta$ has
a pair of commuting left and right actions of $\tU$, given by placing
diagrams in $\tU$ (drawn on loadings with points more than $s$ units
apart) to
the far left or far right of a diagram in $\mathcal{Y}_\vartheta$.

The morphism spaces in $\mathcal{Y}_\vartheta$ have a natural spanning
set analogous to that for $\tU$
described by Khovanov and Lauda, which we'll denote $Z_\vartheta$.  Each vector in $Z_\vartheta$ is indexed by
matching of the points of the two loadings such that points in the
different loadings have the same sign or in the same loading have
different signs.  The diagram is gotten by choosing a way of stringing
together the matched points, placing an arbitrary number of dots at a
fixed point on each strand, and then multiplying at the right by a
monomial in the bubbles (which are far enough apart to avoid any
interaction with ghosts).
\begin{lemma}
  The set $Z_\vartheta$ is a basis.
\end{lemma}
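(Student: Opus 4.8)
The plan is to prove this exactly as one proves the corresponding basis theorem for the $2$-category $\tU$ in \cite{KLIII,CaLa}, inserting the modifications from the proof of Theorem \ref{th:basis} to handle the ghosts. There are two things to check: that $Z_\vartheta$ spans every morphism space $\Hom_{\mathcal{Y}_\vartheta}(\Bi,\Bj)$, and that it is linearly independent.

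Spanning is the more routine half, handled by a straightening algorithm on an arbitrary double weighted KLR diagram $D$. First, using the biadjunction together with the through-strand relations \eqref{through-strands}, \eqref{through-ghosts1}, \eqref{through-ghosts2}, one isotopes all cups and caps into a standard ``rainbow'' position, so that $D$ becomes a $\K$-linear combination of diagrams which, away from a fixed neighbourhood of the cups and caps, are honest weighted KLR diagrams in the sense of Definition \ref{def-wKLR}, decorated by a monomial in bubbles placed far to the right. On the weighted KLR part the argument of Theorem \ref{th:basis} applies verbatim: whenever two strands, or a strand and a ghost, cross twice, Definition \ref{def-wKLR}(2--3) rewrites $D$ as a sum of diagrams with strictly fewer crossings, and the triple-point relations of Definition \ref{def-wKLR}(3--4), together with the $\mathfrak{sl}_2$ triple-point relation inherited from $\tU$, show that any two crossing-minimal stringings of a fixed matching differ by such lower terms. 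Finally, dots are pushed to their chosen positions on each strand, and bubbles are slid to the far right and reduced---via the bubble-slide relations and the infinite Grassmannian relations of \cite{KLIII}---to a monomial in bubbles of a single sign. Since ghosts are rigidly attached to their strands, the genericity conditions never obstruct any of these moves, so the output is a $\K$-combination of elements of $Z_\vartheta$.

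For linear independence I would extend the faithful polynomial representation of Proposition \ref{prop:action} (and its $\mathcal{P}$-analogue) to a representation of $\mathcal{Y}_\vartheta$ on a sum of polynomial rings indexed by signed loadings, following Khovanov--Lauda: the new-edge and ghost crossings act by the rules of Proposition \ref{prop:action}, the $\mathfrak{sl}_2$ (same-edge) crossings and dots act as in \cite{KLIII}, and cups, caps and bubbles act by the unit and counit of the biadjunction and by the Schur-polynomial multiplications forced by the infinite Grassmannian relations. Exactly as in the proof of Theorem \ref{th:basis}, the operator attached to a $Z_\vartheta$-diagram with underlying matching $\pi$ equals the operator $\pi$ times a rational function in the dot variables, multiplied by the bubble monomial, plus terms supported on strictly shorter matchings; since the matchings act linearly independently over the ring generated by dots and bubbles, and the chosen bubble monomials form part of a basis of that ring (fundamental theorem of symmetric functions), the $Z_\vartheta$-operators are linearly independent. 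Combined with spanning, this proves the lemma.

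The main obstacle is the linear-independence step: one must assemble a representation on which enough of the structure is visible, and in particular verify that the bubble actions dictated by the infinite Grassmannian relations are mutually consistent with, and simultaneously detected alongside, the ghost-crossing actions---i.e. that the interaction between the $\mathfrak{sl}_2$-part of the relations and the ``action at a distance'' produces no collapse. Once the polynomial representation is in place this reduces to bookkeeping parallel to \cite[\S 3]{Webmerged} and \cite{KLIII}, but setting it up carefully (and checking that the straightening above is confluent, so that it witnesses no hidden relations among the $Z_\vartheta$) is where the real work lies.
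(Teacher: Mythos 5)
Your spanning argument matches the paper's (straightening via Definition~\ref{def-wKLR}, bubble slides, reduction as in Theorem~\ref{th:basis}). For linear independence you take a genuinely different route. The paper avoids building a polynomial representation of $\mathcal{Y}_\vartheta$ at all: given a purported nontrivial dependence among elements of $Z_\vartheta$, it pre- and post-composes with the morphism that drags all strands at least $s$ units apart. On such well-separated loadings the genericity conditions and ghost relations become vacuous, so the result lives in the ordinary 2-category $\tU$, and since every relation of $\mathcal{Y}_\vartheta$ remains a relation in $\tU$ after this operation, the image is still a dependence. Invoking the known basis theorem \cite[2.32]{Webmerged} for $\tU$ and examining a diagram with maximal crossing number (which appears with the same matching, cannot be cancelled by other terms, and hence must have had zero coefficient) yields the contradiction.

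Your alternative---extending the faithful polynomial representation of Proposition~\ref{prop:action} to all of $\mathcal{Y}_\vartheta$, with cups, caps, and bubbles acting by the biadjunction and Schur-polynomial multiplications dictated by the infinite Grassmannian relations---is in principle sound, and you correctly flag where the real work lies. But that work is substantial: you would have to track the weight-dependence of bubble values, verify the $\mathfrak{sl}_2$ relations in the presence of ghosts, and confirm that no collapse occurs, essentially redoing the analysis of \cite{KLIII,CaLa} from scratch in a new setting. The paper's argument buys all of this for free by treating the $\tU$ basis theorem as a black box; the only new observation needed is that pulling strands apart is compatible with the relations. Both routes are correct, but the paper's is considerably more economical, and it also delivers the injection of $\walg^\vartheta$ into the morphism space of $\mathcal{Y}_\vartheta$ as an immediate byproduct.
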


\begin{proof}
The proof that these relations span is very similar to that of Theorem
\ref{th:basis}: one can use the relations of Definition \ref{def-wKLR}
to remove any bigons, and show any two choices of the vectors in
$Z_\vartheta$ are the same, modulo diagrams with fewer crossings.

  Assume we have a non-trivial linear combination of diagrams in
  $Z_\vartheta$.  This must be gotten as a sum of the relations in the
  category as described earlier.  Now, attach the morphism that pulls
  all strands to the far right and separates them at least $s$ units
  from each other from each other to the top and bottom of the
  diagram.  The result of is a linear combination of morphisms in
  $\tU$.  Since every relation in $\mathcal{Y}_\vartheta$ remains a
  relation when a red line is dragged through it, or its ends are
  pulled further apart, the relations that we used to write this
  linear combination remain relations in $\tU$.  That is, the sum of
  diagrams we arrive at in $\tU$ is 0 as well.  However, we know by
  \cite [Thm. 4.10]{Webunfurl} that the analogous spanning set to
  $Z_\vartheta$ in $\tU$ is a basis, so when written in terms of these
  elements, it must be a trivial linear combination.

  Consider a diagram of $Z_\vartheta$ with a maximal number of
  crossings among those that appear in the linear combination. The
  diagram corresponding to the same matching (with some new dots)
  appears in our new linear combination, and no other diagram from the
  proposed basis could cancel it out.  Thus, it must have trivial
  coefficient in the original linear combination, contradicting the
  assumption that it did not.

  Thus, the set $Z_\vartheta$ is a basis; in particular, if we
  consider usual loadings as signed loadings with all signs negative,
  we get an injection of the weighted KLR algebra into the morphism
  space in $\mathcal{Y}$.
\end{proof}

Now, we apply a similar principle to have we have use many times in
\cite{Webmerged}; we call a signed loading {\bf unsteady}  like in the
unsigned case if it is horizontal composition of a purely black
loading with one containing all the red strands. We let $D\bar \walg^\vartheta(c)$ be the quotient of the
algebra spanned by double weighted KLR diagrams with $\EuScript{L}=0$
by the relations of the category $\mathcal{Y}_\vartheta$ and the ideal
generated by all unsteady signed loadings.

\begin{lemma}\label{lem:Morita}
  The natural map of algebras $\bar \walg^\vartheta(c) \to D\bar
  \walg^\vartheta(c)$ is a Morita equivalence. 
\end{lemma}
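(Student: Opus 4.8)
The plan is to realise $\bar\walg^\vartheta(c)$ as a full corner of $D\bar\walg^\vartheta(c)$. Set $e=\sum_{\Bi}e_{\Bi}$, the sum running over the signed loadings all of whose black arrows point downward. The natural map carries a weighted KLR diagram to the double weighted KLR diagram with the same (downward) strands, so it factors through the corner $e\,D\bar\walg^\vartheta(c)\,e$, and the claim reduces to two assertions: (i) the resulting map $\bar\walg^\vartheta(c)\to e\,D\bar\walg^\vartheta(c)\,e$ is an isomorphism, and (ii) $e$ is a full idempotent, i.e.\ $D\bar\walg^\vartheta(c)\,e\,D\bar\walg^\vartheta(c)=D\bar\walg^\vartheta(c)$. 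Granting both, $M\mapsto eM$ and $N\mapsto D\bar\walg^\vartheta(c)e\otimes_{\bar\walg^\vartheta(c)}N$ are mutually inverse equivalences.

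For (i): among purely downward loadings the region labels of $\mathcal{Y}_\vartheta$ are forced by $\EuScript{L}=0$, and no defining relation of $\mathcal{Y}_\vartheta$ that refers to them or to the dotted ghosts is active (the $\mathfrak{sl}_2$--type relations \eqref{through-strands}--\eqref{through-ghosts2} all involve differently oriented strands), so the defining relations restrict to exactly those of Definition \ref{def-wKLR} together with the vanishing of dots on the $0$-strand. By the lemma that $Z_\vartheta$ is a basis, specialised to downward loadings, the algebra of downward double weighted KLR diagrams with $\EuScript{L}=0$ modulo the $\mathcal{Y}_\vartheta$--relations is thus canonically the reduced weighted KLR algebra $\bar\walg^\vartheta$ of $\Gamma_w$. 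Finally one matches ideals: the ideal of $D\bar\walg^\vartheta(c)$ generated by unsteady signed loadings meets this corner precisely in the steadied ideal $I_c$, since a downward signed loading is unsteady exactly when its underlying unsigned loading is (the charge $c$ and the unsteadiness condition see only the underlying loading and the position of the red strand), and a downward diagram factoring through such a loading corresponds, under the identification, to a diagram factoring through an unsteady loading of $\Gamma_w$. Hence $\bar\walg^\vartheta(c)\xrightarrow{\ \sim\ }e\,D\bar\walg^\vartheta(c)\,e$.

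For (ii): I would prove, by induction on the number of upward strands of a signed loading $\Bi$, that $P_{\Bi}=D\bar\walg^\vartheta(c)e_{\Bi}$ is a direct summand of a finite sum of $P_{\Bj}$ with $\Bj$ downward. If $\Bi$ has an upward strand it is necessarily black, of some label $i\in I$; using the biadjunction cups and caps of $\tU$ inside $\mathcal{Y}_\vartheta$ (as in \cite{KLIII,Webmerged}) together with the $\mathfrak{sl}_2$ isomorphisms $\eE_i\eF_i\cong\eF_i\eE_i\oplus(\text{a direct sum of grading shifts of the identity})$ and its mirror, one exhibits $P_{\Bi}$ as a summand of a sum of projectives on signed loadings with strictly fewer upward strands, which is exactly the maneuver carried out repeatedly in \cite{Webmerged}. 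The charge $c$, which makes $0$ strictly the most dominant vertex, and the unsteady ideal are arranged precisely so that at each stage the pertinent weight pairs with $\al_i^\vee$ in the sign for which the absorbing triangle relation applies, the competing terms being loadings in which a block of black strands has been split off past the red strand, hence unsteady and $0$ in $D\bar\walg^\vartheta(c)$. Running the induction down to $\Bi$ downward shows that $e$ is full.

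The step I expect to be the real obstacle is the geometry inside (ii): one must isotope the cups, caps and slides that absorb an upward strand into legitimate double weighted KLR diagrams, avoiding tangencies and triple points with the proliferating ghosts, and check that the correction terms forced by Definition \ref{def-wKLR}(2)--(4) and by the through-ghost relations \eqref{through-ghosts1}--\eqref{through-ghosts2} are supported either on diagrams with fewer crossings or on unsteady loadings. As in the proofs of Theorem \ref{th:basis} and of the $Z_\vartheta$--basis lemma, I would dispatch this by first spreading all points of every loading at least $s$ units apart --- legitimate because Morita statements are insensitive to replacing loadings by equivalent, rescaled ones, and because at that separation every weighted relation reverts to the corresponding relation of $\tU$ --- then running the $\tU$--level argument of \cite{Webmerged} verbatim, and contracting afterward. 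With this reduction in hand, (ii) is formally identical to its analogue there.
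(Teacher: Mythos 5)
Your overall plan — realize $\bar\walg^\vartheta(c)$ as the corner $e\,D\bar\walg^\vartheta(c)\,e$ with $e$ the sum of downward idempotents, and then show $e$ is full — is the same two-part structure the paper uses. But both parts of your argument have real gaps.

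In part (i), showing $\bar\walg^\vartheta(c)\cong e\,D\bar\walg^\vartheta(c)\,e$ requires injectivity, i.e.\ that the corner $e(\text{unsteady ideal})e$ is contained in $I_c$. The nontrivial point is that a downward diagram in $D\bar\walg^\vartheta$ might factor through an unsteady \emph{signed} loading containing upward strands, and one must show it can be rewritten to factor through an unsteady \emph{downward} loading. Your sentence ``a downward diagram factoring through such a loading corresponds \dots\ to a diagram factoring through an unsteady loading of $\Gamma_w$'' only addresses the easy inclusion $I_c\subseteq e(\text{unsteady ideal})e$. The paper handles the hard direction by an isotopy argument (moving cups coming from the unsteadying block below the unsteady slice, reducing to an unsteadying by a single $\pm i, \mp i$ pair, then invoking the argument of \cite[3.23]{Webmerged}); your proof needs this step and doesn't supply it.

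In part (ii), the proposed reduction to the $\tU$-level argument by ``spreading all points of every loading at least $s$ units apart'' is not legitimate. Two loadings related by a uniform rescaling are in general \emph{not} equivalent in the sense of Definition \ref{def:equivalent}: a ghost that lies to the left of a strand in $\Bi$ can jump to its right after rescaling. Indeed, by the proposition that follows the discussion of $B_s$, separating all points by more than $s$ reproduces the \emph{original} KLR algebra, which is generally not Morita equivalent to $\walg^\vartheta$; so the rescaled algebra is a different object, and a fullness statement proved there does not transport back. The paper instead does the induction on positive signs and permutation length directly in $\mathcal{Y}_\vartheta$, using the through-ghost relations \eqref{through-strands}--\eqref{through-ghosts2} (the ``$\mathfrak{sl}_2$ isomorphism in disguise'' is the relation \cite[(2.3c)]{Webmerged}) to slide the offending $+$ point leftward past ghosts and differently-labelled strands without ever leaving the weighted setting. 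That is the argument you should run, with no detour through the unweighted $\tU$.
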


\begin{proof}
First, we must show that the morphism space in the quotient  $D\bar
  \walg^\vartheta(c)$ between two usual
loadings is the reduced steadied quotient of the weighted KLR
algebra.  This follows from a similar argument to
\cite[3.12]{Webmerged}.  As in \cite{Webmerged}, we call a signed
loading {\bf downward} if all its points have negative sign.   Consider any diagram
with downward top and bottom, and an unsteady loading at $y=\nicefrac
12$.  As in that proof, we can isotope the strands coming from the
unsteadying part of the loading so that they meet the line $y=\nicefrac
12$ again before meeting any part of the rest of the loading.  Now
isotope the diagram again, so that all but one of the resulting cups
is pushed below $y=\nicefrac
12$.  Now we see that our diagram is unsteadied by a loading beginning
with a $\pm i$ and then a $\mp i$.  Now, we can run the argument of
\cite[3.12]{Webmerged} to finish the proof.  This shows that the map
is injective.  

Now, in order to prove Morita equivalence, we need only prove that the
idempotent for any signed loading  $\Bi$ factors through downward loadings in
this quotient.  This is closely modeled on \cite[3.13]{Webmerged}.
We induct on the number of positive signs in $\Bi$, as well as the
length of the minimal permutation sending all positive signs to the
left and negative to the right.  If this permutation is the identity,
then the left-most point carries a positive sign, and without changing
the isomorphism type, we can pull it to the far left, so this loading
is trivial in $\EuScript{Y}_\vartheta$.  Thus, we must have a pair of
consecutive points where the leftward one carries a $-$ and the
rightward one carries a $+$.  We can move the rightward one to the
left through any ghosts or strands with
different labels using the relations
(\ref{through-strands}-\ref{through-ghosts2}).  If they carry the same label, then by the relation
\cite[(2.4c)]{Webmerged}, $e_{\Bi}$ factors through loadings 
where these points have switched (lowering the length of the
permutation) plus some number where they have been removed (lowering
the number of $+$'s).  By induction, this map is a Morita equivalence.
\end{proof}

\begin{proof}[Proof of Theorem \ref{th:categorical-action}]
The set of morphisms that factor through unsteady loadings is closed
under horizontal composition on the right with 1-morphisms in $\tU$; adding anything on the
right side of a diagram will not change the unsteady property.  Thus
  the category of projective modules over $D\bar{\walg}^\vartheta(c)$
  is a quotient of $\mathcal{Y}_\vartheta$ by a set of morphisms which
  are closed under horizontal composition on the right with 1-morphisms in $\tU$.  That is,
  $D\bar{\walg}^\vartheta(c)\operatorname{-pmod}$ carries a natural action of
 $\tU$, induced by horizontal composition. 

 By the
  Morita equivalence of Lemma \ref{lem:Morita}, the same is true of
  $\bar{\walg}^\vartheta(c)\operatorname{-pmod}$.  This action is induced by
  bimodules $\beta_u$ for $u\colon \mu\to \nu$ spanned by diagrams
  like those 
  drawn schematically as below:
\begin{equation}\label{DRu-schematic}
  \tikz[very thick,baseline]{\draw[postaction={decorate,decoration={markings,
    mark=at position .5 with {\arrow[scale=1.3]{<}}}}] (0,-1) to[out=90,in=-90] (-.5,1);
\draw[postaction={decorate,decoration={markings,
    mark=at position .5 with {\arrow[scale=1.3]{<}}}}] (.25,-1) to[out=90,in=-90] (0,1);
\draw[postaction={decorate,decoration={markings,
    mark=at position .5 with {\arrow[scale=1.3]{>}}}}]  (1,1) to[out=-90,in=-90] (2.5,1);
\draw[postaction={decorate,decoration={markings,
    mark=at position .5 with {\arrow[scale=1.3]{<}}}}]  (2.25,-1) to[out=90,in=-90] (3.5,1);
\draw[postaction={decorate,decoration={markings,
    mark=at position .5 with {\arrow[scale=1.3]{<}}}}]  (2.5,-1) to[out=90,in=-90] (-.25,1);
\draw[postaction={decorate,decoration={markings,
    mark=at position .5 with {\arrow[scale=1.3]{<}}}}]  (2.75,-1) to[out=90,in=-90] (1.25,1);
\node at (1.5, -.7){$\cdots$};
\node at (.5, .7){$\cdots$};
\node at (3, .7){$\cdots$};
\draw[decorate,decoration=brace,-] (3,-1.25) --
    node[below,midway]{$\bar{\walg}^\vartheta _\mu (c)$-action} (-.25,-1.25);
\draw[decorate,decoration=brace,-] (-.75,1.25) --
    node[above,midway]{$\bar{\walg}^\vartheta _\nu (c)$-action} (1.5,1.25);
\draw[decorate,decoration=brace,-] (2.25,1.25) --
    node[above,midway]{$u\colon \mu\to \nu$} (3.75,1.25);
  }
\end{equation} 
with all the relations of $\mathcal{Y}_\vartheta$ and of
$\bar{\walg}^\vartheta(c)$ imposed.
\end{proof}

Exactly as in \cite[Prop. 6.7]{Webmerged}, we have that:

\begin{proposition}
  The functor of tensor product with $B^{\vartheta,\vartheta'}(c)$
  commutes naturally with the action of $\tU$.
\end{proposition}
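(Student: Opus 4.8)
The plan is to realize all the functors in sight by diagrams and to observe that the two operations being compared -- interpolating the weighting, and adjoining a strand of $\tU$ off to one side -- take place in disjoint parts of the plane, so that they commute on the nose. Recall that $B^{\vartheta,\vartheta'}(c)$ is the compatible quotient of the $(\walg^{\vartheta},\walg^{\vartheta'})$-bimodule $B^{\vartheta,\vartheta'}$ of morphisms in $\mathcal{P}$ from $\vartheta$-loadings to $\vartheta'$-loadings; concretely it is spanned by (steady) weighted KLR diagrams in which the ghost attached to an edge $e$ sits at signed distance $a\vartheta'_e+(1-a)\vartheta_e$ from the strand it haunts at height $y=a$. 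By Theorem \ref{th:categorical-action} (using the Morita equivalence with $D\bar{\walg}^{\vartheta}(c)$), the functors $\eF_i$ and $\eE_i$ are given by the left action of $\tU$ on $\mathcal{Y}_\vartheta$, that is, by adjoining at the far right a diagram of $\tU$ drawn on loadings whose points are spaced more than $s$ apart; after enlarging $s$ so that $s>|\vartheta_e|$ and $s>|\vartheta'_e|$ for every edge $e$, this constant may be used simultaneously for both weightings. The whole statement thus becomes a comparison of tensor products of diagrammatic bimodules living in one common diagrammatic world.

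First I would produce the natural isomorphism for $\eF_i$. For a projective $M=\walg^{\vartheta'}e_{\Bj}$, a diagram in $B^{\vartheta,\vartheta'}(c)\otimes\eF_i(M)$ and a diagram in $\eF_i\big(B^{\vartheta,\vartheta'}(c)\otimes M\big)$ look the same: an $i$-labelled strand runs down the far right, and to its left sits an interpolation region among the remaining strands whose ghosts slide linearly from the $\vartheta'$-positions at the bottom to the $\vartheta$-positions at the top. The only apparent discrepancy is whether the ghosts of the far-right $i$-strand interpolate as well; but that strand and all of its ghosts lie more than $s$ to the right of every other strand and ghost in the diagram, so these ghosts never meet anything, and straightening them is a harmless isotopy. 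The resulting bijection on diagrams respects all the relations of $\mathcal{Y}_\vartheta$, since those relations are local and occur either entirely in the far-right $\tU$-region or entirely in the interpolation region; it therefore defines an isomorphism of $\walg^{\vartheta}$-bimodules, and naturality in $M$ is automatic because the identifying isotopy does not depend on $M$. The case of $\eE_i$ follows either by placing its defining cup and cap at the far right and repeating the argument, or by taking mates, using that $B^{\vartheta,\vartheta'}(c)\otimes-$ has a shifted adjoint $B^{\vartheta',\vartheta}(c)\otimes-$ obtained by reflecting diagrams through a horizontal line. Finally each generating $2$-morphism of $\tU$ -- dots, crossings, cups, caps, bubbles -- is again a local diagram in the far-right region, so the isomorphisms assemble into a morphism of $2$-representations of $\tU$; and everything above takes place before passing to the reduced/steadied quotients and descends to them because the steadying ideals and the compatible quotients $B^{\vartheta,\vartheta'}(c)$ were set up to be compatible with the induction and restriction functors that build the $\tU$-action.

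The step I expect to be the main obstacle is the geometric claim underlying all of this: that the far-right $\tU$-strands and their ghosts, whose positions do shift with height once the interpolation is in force, can always be kept more than $s$ away from the interpolation region and can be isotoped freely there without creating tangencies or triple points with it. This is exactly where the uniform bound $s>|\vartheta_e|,|\vartheta'_e|$ is used, together with the freedom -- established earlier in the discussion of equivalent loadings -- to translate and re-space loadings: pushing the $\tU$-part far enough to the right annihilates all interaction between the two regions, after which the identification of diagrams and its compatibility with the relations is routine. Beyond this point the proof is bookkeeping of precisely the same flavour as \cite[5.7]{Webmerged}.
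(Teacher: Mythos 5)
Your proposal is correct and takes the same approach the paper gestures at: the paper offers no argument of its own beyond the phrase ``Exactly as in [5.7]\cite{Webmerged}'', and what you have written is precisely the diagrammatic unpacking of that citation. The core point --- that $B^{\vartheta,\vartheta'}(c)$ is realized by interpolation diagrams whose ghost positions slide linearly with height, while the $\tU$-generators act by local diagrams pinned to the far right, and that a sufficiently large separation (uniform since one may take $s>\max(|\vartheta_e|,|\vartheta'_e|)$) makes the two regions non-interacting up to harmless isotopy --- is exactly the mechanism that makes \cite[5.7]{Webmerged} work, and your observation that the resulting isomorphisms are automatically compatible with dots, crossings, cups and caps (because those relations are local to the right-hand region) is what upgrades the commutativity from the $1$-morphism level $\eF_i,\eE_i$ to a genuine morphism of $\tU$-module categories.

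Two small points worth flagging, neither of which is a gap. First, for $\eE_i$ your ``take mates'' route quietly uses that $B^{\vartheta,\vartheta'}(c)\otimes-$ has a (shifted) biadjoint given by $B^{\vartheta',\vartheta}(c)\otimes-$; this is true and is most cleanly seen from the anti-automorphism $a\mapsto a^*$ reflecting diagrams through a horizontal line, but it is cleaner still to just place the cup/cap of $\eE_i$ in the far-right region as you suggest as the alternative, since then no adjunction bookkeeping is needed. Second, the descent to the steadied quotient deserves one sentence of justification rather than an appeal to ``compatibility by construction'': the unsteady ideal is generated by idempotents $e_{\Bi_1\circ\Bi_2}$ with $|\Bi_1|>_c|\Bi_2|$, a condition on the weight $\nu$-decomposition that is insensitive to whether the ghost spacings are those of $\vartheta$, of $\vartheta'$, or of the interpolating family, so $B^{\vartheta,\vartheta'}$ sends the unsteady ideal into the unsteady ideal and the whole argument passes to $B^{\vartheta,\vartheta'}(c)$. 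With those two remarks made explicit, your proof is complete.
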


\excise{
We'll show later that $\dalg^\vartheta$ is
free over $S$.  In particular, if we consider the specialization
$\dalg^\vartheta\otimes_{S}\K$ at the unique graded maximal ideal, we
simply recover the algebra $\alg^\vartheta$ for the usual choice of
$Q_{ij}$.

\excise{\begin{example}
  If we choose a trivial weighting\footnote{I know I said we would
    only consider generic weightings.  See the earlier discussion of hobgobblins.}, then the steadied quotient of
  $\alg^\la$ will just be the usual cyclotomic KLR algebra for the
  weight $\la$.  The algebra $\dalg^\la$ is the deformation of
  this obtained by changing the usual cyclotomic relation
  $e(\Bi)y_1^{\la^{i_1}}=0$ to a polynomial relation
  \[e(\Bi)\prod_{\la_i=\om_{i_1}} (y_1-z_i)=0.\]
This is a Galois extension of the universal cyclotomic quotient
discussed in \cite[\S 1]{WebCTP}, via the $\prod_iS_{\la^i}$ action
permuting the $z_i$'s that correspond to the same fundamental weight.
\end{example}

\begin{proposition}\label{prop:cyclo-flat}
  The algebra $\dalg^\la$ is free over $S$.
\end{proposition}
\begin{proof}
  It suffices to prove that $\dalg^\la$ is flat over $S$, and thus
  that its residue at every closed point has the same dimension as at
  the generic point.  But at every closed point, we obtain a
  categorical representation of $\mathfrak{sl}_e$ (with different
  polynomials $Q_*$) and thus, the Euler form on representations is
  always the Shapovalov form by \cite[1.10]{WebCTP}, and the dimension
  of the algebra can be written as a sum of Shapovalov products
  independent of the point.  Thus, we are done. 
\end{proof}}

    For a field $K$, 
consider a $K$-point $\chi\colon \K[z_1,\dots,
    z_\ell,h]\to K$.  Let $\dalg^\vartheta(\chi)$ denote the base
    change of $\dalg^\vartheta$ at this point.  
    \begin{definition}
      We let \[\Gamma_\chi = \{(i, \chi(z_j)+m\chi(h))\in \Z/e\Z\times K\mid
      m+r_j\equiv i\pmod e\}.\] This set can be given a graph
      structure where $(i,g)$ is adjacent to $(i+1,g-\chi(h))$.  This
      graph has an obvious induced weighting $\vartheta_\chi $.   Note
      that this graph structure is precisely that given in \cite[\S \ref{w-sec:canon-deform}]{WebwKLR}.

Let
      $\fg_\chi $ be the Kac-Moody algebra with this Dynkin diagram.  Let
      $\tU_\chi $ be the 2-category attached to this diagram for the
      polynomials $Q_e(u,v)=u-v$.  Let $\bla^\chi$ be the ordered $\ell$-tuple of highest
      weights given by the fundamental weight for $(r_j,\chi(z_j))$ for $j=1,\dots,\ell$.  
    \end{definition}
For example, if $K$ is the quotient field of
$\K[z_1,\dots,z_\ell,h]$ and $a$ the tautological point, then $\Gamma_\chi $ is a union of $\ell$
different infinite strings (i.e. $\fg_\chi \cong
(\mathfrak{sl}_\infty)^\ell$) and each element of $(r_j,a(z_i))$ is on
a different string.
This graph arises naturally since:
\begin{lemma}\label{lem:y-eigenvalue}
  If $a$ is an
  eigenvalue of $y_j$ on the image of $e_\Bi$ such that $i_j=i$, then
  $(a,i)\in \Gamma_\chi$.
\end{lemma}
\begin{proof}
  Fix a loading $\Bi$ and let $\Bi'$ be the loading where we have
  moved the $j$th point to the left of the closest ghost or black
  strand on its left, {\it or} its ghost moves to left of a black
  strand, whichever moves it less.  This makes it the $j'$th strand
  for $j'=j$ or $j-1$. Now consider the diagram which gives $\Bi$ at
  the top and bottom and $\Bi'$ at the horizontal slice $y=\nicefrac
  12$, with a minimal number of crossings.  This simply forms a single
  bigon, either between the $j$th strand, and the ghost or strand
  immediately to its left, or between the ghost of the $j$th strand
  and the strand to its left.  Note that for these purposes, we are
  considering the red lines to be ghosts.  If $F_{\Bi',j'}(x)$ is the
  minimal polynomial of $y_{j'}$ acting on the image of $e_{\Bi'}$,
  inserting $F_{\Bi',j'}(y_{j'})$ at $y=\nicefrac 12$ gives 0.  On the
  other hand, if we use the relations
  (\ref{strand-bigon}--\ref{ghost-bigon1}) or (\ref{d-ghost bigon}) to
  cancel the bigon, then we will either obtain
  \begin{enumerate}
  \item $F_{\Bi',j'}(y_{j})=0$ if the crossings where degree 0
  \item $(y_{j}-y_k+\chi(h)) F_{\Bi',j'}(y_{j})=0$ if the crossings
    where of the ghost of the $j$th strand over the $k$th and
    $i_k+1=i_j$.
  \item $-(y_j-y_k-\chi(h)) F_{\Bi',j'}(y_{j}) =0$ if the crossings
    where of the ghost of the $k$th strand over the $j$th and
    $i_k-1=i_j$.
  \item $(y_j-z_k) F_{\Bi',j'}(y_{j}) =0$ if the crossing was over the
    $k$th red strand, and $\la_k=\omega_{i_j}$.
  \end{enumerate}
  Obviously, in the case (1), the spectrum of $y_j$ on the image of
  $e_{\Bi}$ is a subset of that for $y_{j'}$ on the image of
  $e_{\Bi'}$; similarly, in case (4), the spectrum of $y_j$ on the
  image of $e_{\Bi}$ satisfies the same constraint, but may also
  contain $\chi(z_k)$.

  Now consider case (2), and let $a$ be an eigenvalue of $y_j$ on the
  image of $e_{\Bi}$. In this case, $(a-y_k+\chi(h))
  F_{\Bi',j'}(a)=0$. It may be that $F_{\Bi',j'}(a)=0$, and $a$ is in
  the spectrum of $y_{j'}$ on the image of $e_{\Bi'}$.  If not, then
  $(a-y_k+\chi(h))$ is not invertible, so $a+\chi(h)$ is in the
  spectrum of $y_k$ on $e_\Bi$ or $e_{\Bi'}$.  Similarly, in case (3)
  we obtain almost the same result, except with $a-\chi(h)$.

  Thus, if $a$ is an eigenvalue of $y_j$ on the image of $e_\Bi$ such
  that $i_j=i$, then $a=\chi(z_j)$ or there is a loading such that the
  sum of the positions of the black strands is smaller such that either:
  \begin{itemize}
  \item $a$ is an eigenvalue of $y_m$ on the image of $e_\Bi$ such
    that $i_m=i$ or
  \item $a\pm \chi(h)$ is an eigenvalue of $y_m$ on the image of
    $e_\Bi$ such that $i_m=i\pm 1$.
  \end{itemize}
  If the sum of the position of the black strands is small enough,
  some block 
  of them must be far enough left to unsteady the loading, so the image
  in the steadied quotient is trivial. This shows that if $a$ is an
  eigenvalue of $y_j$ on the image of $e_\Bi$ such that $i_j=i$, then
  $(a,i)\in \Gamma_\chi$.
\end{proof}

   Let $\eF_{(i,g)}M\subset \eF_iM$ and $\eE_{(i,g)}M\subset
   \eE_iM$ be the stable kernels of  the endomorphism $y-g$.  
These functors are obviously biadjoint.  

We can
   define a nilpotent natural transformation $y_\chi =(y-g)\colon
   \eF_{(i,g)}\to \eF_{(i,g)}$.  We can also define a natural
   transformation $\psi_\chi \colon \eF_{(i,g)}\eF_{(j,g')}\to
   \eF_{(j,g')}\eF_{(i,g)}$ as follows: 

   \begin{itemize}
   \item  If $i=j$ and $g= g'$ then we let $\psi_\chi $ be the restriction to
   $\eF_{(i,g)}\eF_{(j,g')}$ of $\psi$. 
\item   If $i=j$ and $g\neq g'$, then
   we let $\psi_\chi $ be the restriction to $\eF_{(i,g)}\eF_{(j,g')}$ of
   $\sigma=y_1\psi-\psi y_1$; note that $\sigma^2=1$.
\item If $i\neq j$, then consider $P_{(i,g),(j,g')}(y_1,y_2)$,  the product  over edges from
$i$ to $j$ that don't lift to an edge from $(i,g)$ to $(j,g')$ of
$\prod_e \dQ_e(y_1,y_2)$.  This is a polynomial that always acts
invertibly on $\eF_{(i,g)}\eF_{(j,g')}$  since $\dQ_e(g,g')\neq 0$ if
$e$ doesn't lift to $\Gamma_\chi $.   Furthermore, it satisfies \[P_{(i,g),(j,g')}(y_1,y_2) P_{(j,g'),(i,g)}(y_2,y_1) =\dQ_{i,j}(y_1,y_2)
/Q_{(i,g),(j,g')}(y_1-g,y-g').\] 
   \end{itemize}

We consider the natural transformation $\psi_\chi \colon \eF_{(i,g)}\eF_{(j,g')}\to
     \eF_{(i+1,g')}\eF_{(i,g)}$ given by the restriction of $P_{(i,g),(j,g')}^{-1}\psi$ to this
     summand.
  \begin{theorem}
The category $\dalg^\vartheta(\chi)\umod$ carries a categorical
action of  $\fg_\chi $ where the functor $\eF_{(i,g)}$ is the stable
kernel of
$y-g$  acting on $\eF_i$.   
  \end{theorem}
  \begin{proof}
Of course, all relations from $\tU_{\mathfrak{sl}_2}$ follow
immediately.  Thus, the main thing that we need to check is that the elements $y_\chi $ and
$\psi_\chi $ satisfy the KLR relations. This is easily confirmed by matching
these with the action on polynomial rings given in
\cite[3.12]{Rou2KM}, which is done in  \cite[\ref{w-completion}]{WebwKLR}.   Thus, the existence of the desired action
follows immediately from \cite[1.1]{CaLa}.
  \end{proof}

We'll be interested in the algebra $\alg^{\vartheta_\chi }(K)$, the 
steadied quotient of the weight KLR algebra with coefficients in the
field $K$ for the graph $\Gamma_\chi$ with the weighting pulled back
by the map $\Gamma_\chi\to \Gamma$.
We'll actually be interested in a different grading on this algebra
than the usual one.   In the grading we have described, crossings between strands and
ghosts having degree given by 1 if the label on the strand is the tail
of the edge, and 0 otherwise. 

In our new grading, for each edge in $\Gamma$ from $i$ to
$j$, we add 1 to the grading of a crossing where a strand colored $i$ passes left
through a $j$-colored strand, and subtract one from the opposite
crossing.

We note that this grading change does not change the graded category
of modules; there is a Morita equivalence given by a grading shifted
version of the diagonal bimodule. 

  \begin{proposition}
We have an isomorphism of graded algebras $\dalg^\vartheta(\chi)\cong \alg^{\vartheta_\chi}(K)$.
  \end{proposition}
  \begin{proof}
  This follows from the isomorphism between completed weighted KLR
  algebras in \cite[\ref{w-completion}]{WebwKLR}.  As noted there,
  this map also induces an isomorphism between completed steadied quotients, and
  obviously extends to the reduced case.  Thus, we need only check
  that the steadied quotients are not changed by completion.  For
  $\alg^{\vartheta_\chi}(K)$, this is clear from the fact that all
  $y_i$'s will act nilpotently on the reduced steadied quotient if all
  the polynomials $Q_{ij}$ have trivial constant term.  On the other
  hand, in  $\dalg^\vartheta(\chi)$, the $y_i$'s are not nilpotent,
  but only finitely many generalized eigenvalues for them can occur.
Lemma \ref{lem:y-eigenvalue} shows that these all lie in
$\Gamma_\chi$, so completion at these points leaves the steadied
quotient unchanged.
  \end{proof}

\begin{corollary}\label{cor:ell-strips}
  If $\Gamma_\chi $ consists of $\ell$ distinct copies of $A_\infty$, then
  $\dalg^{\vartheta}(\chi)$ is semi-simple.  In particular, this
  happens if $K$ is the quotient field of $\K[z_1,\dots,z_\ell,h]$
  and $\chi$ the tautological point.
\end{corollary}

}

\subsection{Relations to tensor product algebras}
\label{sec:relat-tens-prod}

Fix a list of highest weights $\bla=(\la_1,\dots, \la_\ell)$.
Choose any sequence of real numbers $\varpi_1 < \cdots < \varpi_\ell$,
and consider the weighting on $\Gamma_\la$ where all old edges have degree 0,
and there are $\al_i^\vee(\la_j)$ new edges with weight $\varpi_j$ connecting
$0$ to $i$.   We denote these edges $e_{i,j,1},\dots, e_{i,j,\al_i^\vee(\la_j)}$ Recall that in \cite[\S 4]{Webmerged}, the author defined algebras
$T^\bla$ and $\tilde T^\bla$  attached to the list $\bla$.

\begin{theorem}
  The algebra $\tilde T^\bla_{\la-\nu}$ is the reduced quotient $\bar \walg^\varpi_{\tilde \nu}$ of $\walg^\varpi_{\tilde \nu}$.
  The map replaces
  the ghosts of the $0$-labelled strand with red strands, decorated by
  the weights $\la_1$ through $\la_\ell$ if the $0$-labelled strand has
  no dots on it, and sends the diagram to 0 if there are any dots on
  the $0$-labelled strand.
\end{theorem}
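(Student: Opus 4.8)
The plan is to construct an explicit algebra homomorphism $\Phi\colon\walg^\varpi_{\tilde\nu}\to\tilde T^\bla_{\la-\nu}$ at the level of diagrams, show it factors through the reduced quotient, and then prove the induced map $\bar\Phi\colon\bar\walg^\varpi_{\tilde\nu}\to\tilde T^\bla_{\la-\nu}$ is an isomorphism by comparing bases. On a weighted KLR diagram for $\Gamma_\la$ of weight $\tilde\nu=\nu+\al_0$ (working, after the Fourier transform of Proposition \ref{orientation-reverse} if needed, with the orientation for which the $0$-strand carries the ghosts), $\Phi$ sends any diagram bearing a dot on the unique $0$-labelled strand to $0$; otherwise it erases the $0$-strand and reinterprets its ghosts as the red strands of $\tilde T^\bla$. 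Here one invokes Definition \ref{def:arbitrary}: the $\al_i^\vee(\la_j)$ parallel new edges of common weight $\varpi_j$ are first merged into one edge with $Q_{\operatorname{new}}(u,v)=(u-v)^{\al_i^\vee(\la_j)}$, so the $0$-strand has exactly $\ell$ (bundles of) ghosts, one per $\varpi_j$, and $\Phi$ labels the $j$th by $\la_j$. Before the relation check I would record the dictionary between equivalence classes of loadings of weight $\tilde\nu$ containing a single $0$-point and idempotents of $\tilde T^\bla_{\la-\nu}$: in the reduced quotient the $0$-strand is dotless, and by Definition \ref{def-wKLR}(3) a black strand slides freely past it whenever no red ghost lies between, so the position of the $0$-strand itself is immaterial and the equivalence class (Definition \ref{def:equivalent}) is recorded exactly by the relative order of the black strands together with the region each occupies among the $\ell+1$ regions cut out by the red ghosts --- precisely the data indexing idempotents of $\tilde T^\bla$.

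Next I would verify $\Phi$ respects relations. Relations of Definition \ref{def-wKLR} among black strands only, or between a black strand and a weight-$0$ (old-edge) ghost, transport verbatim to the KLR relations inside $\tilde T^\bla$ with $Q_{ij}=\prod_{h(e)=i,\,t(e)=j}Q_e$, exactly as in the proof that $\walg^\vartheta_{B_s}$ recovers the ordinary KLR algebra. The new cases are: (i) the Definition \ref{def-wKLR}(2) bigon of an $i$-strand with the red $\la_j$-ghost multiplies by the merged $Q(y_0,y_i)=(y_i-y_0)^{\al_i^\vee(\la_j)}$, hence after $y_0\mapsto 0$ by $y_i^{\al_i^\vee(\la_j)}$ --- the defining red--black bigon relation of $\tilde T^\bla$; and (ii) the Definition \ref{def-wKLR}(3) move pulling the red $\la_j$-ghost through a crossing of two $i$-strands, whose correction term $\partial_{k,k+1}Q(y_0,y_\bullet)$, with $Q(u,v)=(u-v)^{\al_i^\vee(\la_j)}$ reduced at $y_0=0$, must be matched term-by-term against the corresponding relation of $\tilde T^\bla$ from \cite[\S3]{Webmerged}. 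Since a dot never passes between strands of different colour, the two-sided ideal generated by dots on the $0$-strand goes to $0$, so $\Phi$ descends to $\bar\Phi$.

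Surjectivity of $\bar\Phi$ is immediate, as the generators of $\tilde T^\bla_{\la-\nu}$ (idempotents, black dots, black--black and black--red crossings) are $\Phi$-images of the obvious weighted KLR diagrams. For injectivity I would count ranks. By Theorem \ref{th:basis}, $e_\Bi\walg^\varpi_{\tilde\nu}e_\Bj$ is free over $\K[y_1,\dots,y_m]$ on the $b_\pi$; passing to the reduced quotient kills exactly the dots on the $0$-strand, so $e_\Bi\bar\walg^\varpi_{\tilde\nu}e_\Bj$ is free over the polynomial ring in the black dots, on the (images of the) $b_\pi$. On the other side, \cite[\S3]{Webmerged} gives that the matching space $e_{\Bi'}\tilde T^\bla_{\la-\nu}e_{\Bj'}$ is free over the same ring, with a basis indexed by the same set of label-preserving wirings of the black strands (the $0$-point being forced to itself, and the red strands being fixed). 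As $\bar\Phi$ is a surjective module map between free modules of equal finite rank over a commutative ring, it is an isomorphism on each such space, hence an algebra isomorphism $\bar\walg^\varpi_{\tilde\nu}\cong\tilde T^\bla_{\la-\nu}$.

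I expect the relation check of the second step to be the main obstacle, and within it the multiplicity bookkeeping: in $\bar\walg^\varpi_{\tilde\nu}$ a single red ghost is really the superposition of the ghosts of the $\al_i^\vee(\la_j)$ parallel new edges of weight $\varpi_j$ (plus further coincident ghosts, from other old nodes, that interact trivially with a given $i$-strand), so every red--black instance of Definition \ref{def-wKLR} must be run with the merged polynomial $(u-v)^{\al_i^\vee(\la_j)}$ before comparison with $\tilde T^\bla$; the triple-point correction for the red ghost is the delicate case. A secondary point needing care is the justification of the ``$0$-strand slides freely'' claim used for the loading dictionary --- that two loadings differing only in the placement of the $0$-strand among the black strands lying beyond all of its red ghosts are equivalent in the sense of Definition \ref{def:equivalent}.
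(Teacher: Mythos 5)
Your proposal is correct and takes essentially the same approach as the paper: define the map diagrammatically (kill dots on the $0$-strand, turn its ghosts into red strands), verify the bigon and triple-point relations against those of $\tilde T^\bla$ using the merged polynomial $(u-v)^{\al_i^\vee(\la_j)}$, and conclude by comparing the diagram bases. The paper's final step is marginally more direct---it observes that the map carries basis diagrams $b_\pi y^\alpha$ to basis diagrams rather than arguing via equality of ranks of free modules---and it leaves the Fourier-transform reorientation and the invocation of Definition~\ref{def:arbitrary} implicit, both of which you sensibly make explicit.
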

\begin{proof}
  All relations between black strands satisfy the KLR relations in
  both cases.  When we undo a bigon between the $i$-labelled $k$th
  strand and the $p$th $0$-labelled ghost (from the left) where the
  $m$th strand is $0$-labelled, we multiply by
  $(y_k-y_m)^{\al_i^\vee(\la_p)}$, which becomes $y_k
  ^{\al_i^\vee(\la_p)}$ after setting $y_m=0$.  Similarly, if a ghost
  passes through a crossing of the $k$th and $k+1$st strands, the
  correction term is the opened crossing
  times \[\partial_{k,k+1}((y_k-y_m)^{\al_i^\vee(\la_p)})=\frac{(y_k-y_m)^{\al_i^\vee(\la_p)}-(y_{k+1}-y_m)^{\al_i^\vee(\la_p)}}{y_{k}-y_{k+1}},\]
  which becomes
  $y_k^{\al_i^\vee(\la_p)}+y_k^{\al_i^\vee(\la_p)-1}y_{k+1}+\cdots +
  y_{k+1}^{\al_i^\vee(\la_p)}$ after setting $y_m=0$, which is exactly
  the relation expected from \cite[(4.1a)]{Webmerged}.  Finally, in all other
  triple points, there is no correction term in either set of
  relations.  This confirms all the relations of $\tilde T^\bla$.

  Thus, turning all ghosts into red strands gives a surjective map 
  $\bar \walg^\vartheta_{\tilde \nu} \to \tilde T^\bla_{\la-\nu}$.  Note that this map sends basis vectors to basis
  vectors for the diagram bases of these algebras, and thus is an
  isomorphism.
\end{proof}
\begin{theorem}
  The tensor product algebra $T^\bla_{\la-\nu}$ is the reduced steadied quotient
  of the weighted algebra $\walg^\varpi_{\tilde{\nu}}(c)$ for $\Gamma_w$.
  Similarly, the bimodule $B^{\vartheta,\vartheta'}(c)$ for two different
  tensor product weightings is exactly $\mathfrak{B}^{\si}$, where $\si$
  is the positive braid lift of the permutation sending the total
  order on new edges by weight in  $\vartheta$ to that induced by
  weight in $\vartheta'$. 
\end{theorem}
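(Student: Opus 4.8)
The plan is to leverage the previous theorem, which already identifies $\bar\walg^\varpi_{\tilde\nu}$ with $\tilde T^\bla_{\la-\nu}$, and to show that the steadied quotient construction on the weighted-KLR side matches the construction of $T^\bla$ from $\tilde T^\bla$ in \cite[\S 3]{Webmerged}. Recall that $T^\bla_{\la-\nu}$ is defined there as a quotient of $\tilde T^\bla_{\la-\nu}$ by the ``violating'' ideal: diagrams in which at some horizontal slice a black strand lies entirely to the left of all the red strands (equivalently, to the left of the $0$-labelled strand with which its ghosts interact). Under the identification of the previous theorem, this is precisely the condition that the loading at that slice decomposes as $\Bi_1\circ\Bi_2$ with $|\Bi_1|$ consisting of old-vertex labels and $|\Bi_2|$ containing the label $0$. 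So the first step is to check that, for the charge $c$ with $c(i)=-1+i$ on old vertices and $c(0)=i+\sum d_i$ (so that $0$ has the largest argument among all labels), an indecomposable reduced module is unsteady in the sense of Section \ref{sec:steadied-quotients} exactly when it is a summand of an induction $M_1\circ M_2$ with all of $M_1$ supported on old vertices and $M_2$ carrying the $0$-strand; this is just unwinding the definition of $>_c$ and the reduced quotient. Hence $I_c$ (intersected with the reduced quotient) is generated by exactly the same diagrams as the violating ideal of \cite{Webmerged}, giving $\bar\walg^\varpi_{\tilde\nu}(c)\cong T^\bla_{\la-\nu}$.

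Second, I would handle the bimodule statement. For two tensor-product weightings $\vartheta,\vartheta'$ that differ only by reordering the weights $\varpi_1<\cdots<\varpi_\ell$ attached to the new edges, the bimodule $B^{\vartheta,\vartheta'}$ from Section \ref{sec:twisted-algebra} is spanned by double (ordinary, not signed) weighted-KLR diagrams in which the ghosts of the $0$-strand migrate from their $\vartheta$-positions at $y=0$ to their $\vartheta'$-positions at $y=1$; under the red-strand substitution of the previous theorem, this is visibly the bimodule $\mathfrak B^\si$ of \cite{Webmerged}, where $\si$ is the positive-braid lift of the permutation of red strands taking the $\vartheta$-order to the $\vartheta'$-order (a braid rather than a permutation because the red strands must be kept from crossing and the crossings with black strands must be resolved on a consistent side). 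The key point here is to match the basis $Z_\vartheta$ (Lemma just proved) with the corresponding basis of $\mathfrak B^\si$; since the substitution is a bijection on diagram bases and is compatible with composition, it is automatically a bimodule isomorphism. Then one passes to steadied quotients: since the loading $\hat\Bi$ is unsteady iff $\Bi$ is (as noted after Proposition \ref{completion}, and more elementarily here because reordering red strands does not change which black strands lie to their left), the isomorphism descends to $B^{\vartheta,\vartheta'}(c)\cong\mathfrak B^\si$.

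The main obstacle I anticipate is bookkeeping around the positive-braid lift: one must check that the compatibility of the red-strand reordering with the $\tU$-actions and with composition genuinely produces $\mathfrak B^\si$ for the \emph{positive} braid $\si$ and not some other lift, i.e.\ that the only diagrams needed to wire the two orderings of red strands together are those in which red strands pass \emph{over} (or consistently to one side of) the black strands and do not recross each other. This amounts to verifying that, just as in the black-strand basis argument of Theorem \ref{th:basis}, any double weighted-KLR diagram can be reduced modulo the relations of $\mathcal Y_\vartheta$ to one in which the red ribbons realize exactly the positive braid $\si$, with the remaining data (black wiring, dots, bubbles) recorded by $Z_\vartheta$; this reduction uses Definition \ref{def-wKLR}(2--3) and the triple-point moves exactly as before. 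Everything else—the identification of relations, the matching of bases, the descent to steadied quotients—is routine once the correspondence with \cite{Webmerged} is set up, since those facts were already established there for $T^\bla$ and $\mathfrak B^\si$.
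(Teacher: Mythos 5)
Your proposal is essentially correct and, for the first claim, takes the same route as the paper: unwind the definition of $>_c$ for the chosen charge, observe that the only unsteady decompositions $\nu'+\nu''=\tilde\nu$ are those with $\nu'$ supported on old vertices and $\nu''$ carrying $\alpha_0$, and match $I_c$ (after reducing) with the violating ideal of \cite[\S 3]{Webmerged}. The paper's proof is quite terse and does not actually address the bimodule claim at all, so your second and third paragraphs supply detail the paper leaves to the reader; the plan of reducing a morphism in the fibered category to a positive-braid red wiring decorated by an element of $Z_\vartheta$, via the same bigon/triple-point reductions used for Theorem \ref{th:basis}, is the right way to carry this out.

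One small but backwards aside to fix: with $c(i)=-1+\sqrt{-1}$ on old vertices and $c(0)=\sqrt{-1}+\sum d_i$, the old vertices have argument $3\pi/4$ while $0$ has argument strictly less than $\pi/2$, so it is the old vertices, not $0$, that have the largest argument. This is what forces $\nu'>_c\nu''$ iff the $0$-component of $\nu'$ is zero. Your stated conclusion ($M_1$ on old vertices, $M_2$ carrying the $0$-strand) is correct and agrees with the paper, but the parenthetical justification as written would have given the opposite answer if used literally.
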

\begin{proof}
  Note that if $\nu'+\nu''=\tilde \nu$, then $\nu'>_c\nu''$ if
  and only if the $0$-component of $\nu'$ is 0 and that of $\nu''$ is
  1.  Thus, the unsteady ideal is generated by diagrams where a block
  of strands all labeled with old vertices are ``much further'' left
  than the 0-labelled strands.  This obviously corresponds to the
  violating ideal as defined in \cite[\S 4]{Webmerged}, so we have the
  desired isomorphism.
\end{proof}

In this case, we can apply the canonical deformation discussed in
Section \ref{sec:canon-deform}, which gives algebras like those
appearing in \cite[\S \ref{m-sec:univ-quant}]{Webmerged}.  Let us take
this deformation for the weighted KLR algebra of Crawley-Boevey
quiver, and set all coefficients $z_{e,a,b}=0$ for $e$ an old edge
(one from the original quiver).  We're left with the parameter $z_{e,0,0}$
for each new edge;  we'll abbreviate $z_{i,j,k} =-z_{e_{i,j,k},0,0}.$  This results in a deformation of the algebra
$T^\bla$, where the number of parameters $\{z_{i,j,k}\}$ is the number of new edges,
that is, $\rho^\vee(\la)$.  

We can easily describe how the relations of $T^\bla$ deform in this
case.  For each $i\in \Gamma$ and $j\in [1,\ell]$, let
$p_{i,j}(u)=(u-z_{i,j,1})\cdots (u-z_{i,j,\al_i^\vee(\la_j)})$.  The
relations
\cite[(\ref{m-red-triple-correction},\ref{m-cost})]{Webmerged} thus
deform to:\newseq
  \begin{equation*}\label{red-triple}\subeqn
    \begin{tikzpicture}[very thick,baseline=-2pt]
      \draw [wei] (0,-1) -- node[below,at start]{$\la_j$} (0,1); \draw(.5,-1)
      to[out=90,in=-30] node[below,at start]{$i$} (-.5,1); \draw(-.5,-1)
      to[out=30,in=-90] node[below,at start]{$i$} (.5,1);
    \end{tikzpicture}- \begin{tikzpicture}[very
      thick,baseline=-2pt] \draw [wei] (0,-1) --node[below,at start]{$\la_j$} (0,1);
      \draw(.5,-1) to[out=150,in=-90] node[below,at start]{$i$} (-.5,1);
      \draw(-.5,-1) to[out=90,in=-150] node[below,at start]{$i$} (.5,1);
    \end{tikzpicture}
    =\sum_{p=1}^{\la^i}\sum_{a+b=p-1}e_{\ell-p}(-z_{i,j,*}) \cdot \Bigg(\begin{tikzpicture}[very thick,baseline=-2pt]
      \draw [wei]  (0,-1) -- (0,1);
      \draw(.5,-1) to[out=90,in=-90] node[midway,circle,
      fill=black,inner sep=2pt,label=right:{$b$}]{} (.5,1);
      \draw(-.5,-1) to[out=90,in=-90] node[midway,circle,
      fill=black,inner sep=2pt,label=left:{$a$}]{} (-.5,1);
    \end{tikzpicture}\Bigg).
  \end{equation*}
  The RHS can alternately by written as $(p_{i,j}(y_{r+1})-p_{i,j}(y_r))/({y_{r+1}-y_r})$.
  \begin{equation*}\label{cost}\subeqn
  \begin{tikzpicture}[very thick,baseline=1.6cm]
    \draw (-2.8,0)  +(0,-1) .. controls (-1.2,0) ..  +(0,1) node[below,at start]{$i$};
       \draw[wei] (-1.2,0)  +(0,-1) .. controls (-2.8,0) ..  +(0,1) node[below,at start]{$\la_j$};
           \node at (-.3,0) {$=p_{i,j}$};  \node[scale=1.5] at (.5,0) {$\Bigg($}; \node[scale=1.5] at (3.5,0) {$\Bigg)$}; 
    \draw[wei] (2.8,0)  +(0,-1) -- +(0,1) node[below,at start]{$\la_j$};
       \draw (1.2,0)  +(0,-1) -- +(0,1) node[below,at start]{$i$};
       \fill (1.2,0) circle (3pt);
          \draw[wei] (-2.8,3)  +(0,-1) .. controls (-1.2,3) ..  +(0,1) node[below,at start]{$\la_j$};
  \draw (-1.2,3)  +(0,-1) .. controls (-2.8,3) ..  +(0,1) node[below,at start]{$i$};
           \node at (-.3,3) {$=p_{i,j}$};\node[scale=1.5] at (.5,3) {$\Bigg($}; \node[scale=1.5] at (3.5,3) {$\Bigg)$}; 
    \draw (2.8,3)  +(0,-1) -- +(0,1) node[below,at start]{$i$};
       \draw[wei] (1.2,3)  +(0,-1) -- +(0,1) node[below,at start]{$\la_j$};
       \fill (2.8,3) circle (3pt);
  \end{tikzpicture}
\end{equation*}

\subsection{Relation to quiver Schur algebras}
\label{sec:relat-quiv-schur}

When $\Gamma$ is a cycle with $n$ vertices, then we have some
particularly interesting behavior.  The choice of weightings (up to
equivalence) is 1-dimensional, since $H^1(\Gamma;\R)\cong \R$.
Weightings are distinguished by the sum of the weights over an
oriented cycle. 
We can identify $\Gamma=\Z/n\Z$, with an edge $i\to i+1$; we 
 let $\vartheta_e=k$, a constant.

  Choose $0<\epsilon\ll |k| \ll s$.  For each vector composition
  $\hat{\bmu}=\bmu^{(1)},\dots, \bmu^{(m)}$, we associate the
  following loading $\Bj(\hat{\bmu})$: take the
  residue sequence (as defined in \cite[(3)]{SWschur}) for this
  sequence, and for each entry of  the $j$th block of the residue sequence
  $p_1,\dots$, add a points at $js+\ell \epsilon$ labeled with
  $p_\ell$ (so, we assume that $\epsilon<|k|/\ell_{max}$).  Thus, for
  each piece of the vector composition, we have a cluster of points in
  the loading whose labels sum to that piece, and the clusters are
  very far apart.  Now take the idempotent mapping the loading to
  itself which on the like-labelled strands of each piece of the
  loading does the idempotent which acts on polynomials
  by projecting to symmetric polynomials.  Note that within each
  block, rearranging strands will result in isomorphic idempotents.  

  \begin{example}
    If $\hat{\bmu}= (1,1,2), (2,0,0)$ and $k>0$, the loading is 
\[\tikz[thick]{\draw (-6,0) -- node [pos=.1, fill=black,circle,inner
     sep=2pt,outer sep=2pt]{} node [pos=.1,above]{$1$} node [pos=.14, fill=black,circle,inner
     sep=2pt,outer sep=2pt]{} node [pos=.14,above]{$2$} node [pos=.18, fill=black,circle,inner
     sep=2pt,outer sep=2pt]{} node [pos=.18,above]{$3$}
node [pos=.22, fill=black,circle,inner
     sep=2pt,outer sep=2pt]{} node [pos=.22,above]{$3$}
node [pos=.7, fill=black,circle,inner
     sep=2pt,outer sep=2pt]{} node [pos=.7,above]{$1$} 
node [pos=.74, fill=black,circle,inner
     sep=2pt,outer sep=2pt]{} node [pos=.74,above]{$1$} 
node [pos=.3, fill=white,draw=black,circle,inner
     sep=2pt,outer sep=2pt]{}
node [pos=.34, fill=white,draw=black,circle,inner
     sep=2pt,outer sep=2pt]{}
node [pos=.38, fill=white,draw=black,circle,inner
     sep=2pt,outer sep=2pt]{}
node [pos=.42, fill=white,draw=black,circle,inner
     sep=2pt,outer sep=2pt]{}
node [pos=.9, fill=white,draw=black,circle,inner
     sep=2pt,outer sep=2pt]{}
node [pos=.94, fill=white,draw=black,circle,inner
     sep=2pt,outer sep=2pt]{}
  (6,0); }\]
where we
    represent ghosts by hollow circles.
  \end{example}

  There are some obvious idempotents acting on each of these loadings $\Bj(\hat{\bmu})$;
  let $e_{\Bj(\hat{\bmu})}'$ be the idempotent that acts on
  $\Bj(\hat{\bmu})$ by applying the idempotent $e_n$ projecting to
  symmetric polynomials to the like-labelled points in each cluster.
  Let $e_{QS}$ be the sum of the idempotents $e_{\Bj(\hat{\bmu})}'$.

\begin{theorem}\label{th:schur-1}
  The algebra $e_{QS}\walg^\vartheta_\nu e_{QS}$ is isomorphic to the quiver Schur
  algebra $A_\bd$ defined in \cite{SWschur}.  
\end{theorem}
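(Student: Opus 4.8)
The plan is to realise both algebras inside the same ring of polynomial operators and to match them generator by generator. By Proposition~\ref{prop:action}, $\walg^\vartheta_\nu$ acts faithfully on $\bigoplus_{B}\K[y_1,\dots,y_d]$, so its corner $e_{QS}\walg^\vartheta_\nu e_{QS}$ acts faithfully on $e_{QS}\bigl(\bigoplus_{B}\K[\mathbf y]\bigr)$; and since $e_n$ acts on the equally-labelled strands of each cluster of $\Bj(\hat\bmu)$ by the nilHecke symmetriser, this module is $\bigoplus_{\hat\bmu}\Lambda_{\hat\bmu}$, where $\Lambda_{\hat\bmu}$ is the ring of polynomials in the dots symmetric among equally-labelled strands inside each block of $\hat\bmu$. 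This is precisely the faithful polynomial representation of the quiver Schur algebra $A_\bd$ of \cite{SWschur} (here $\bd$ is the dimension vector with $\nu=\sum_i d_i\al_i$). So it suffices to build an algebra map $\Phi\colon A_\bd\to e_{QS}\walg^\vartheta_\nu e_{QS}$ intertwining the two actions on $\bigoplus_{\hat\bmu}\Lambda_{\hat\bmu}$: such a $\Phi$ is automatically injective, since postcomposing it with the faithful embedding $e_{QS}\walg^\vartheta_\nu e_{QS}\into\End\bigl(\bigoplus_{\hat\bmu}\Lambda_{\hat\bmu}\bigr)$ then recovers the faithful representation of $A_\bd$, and surjectivity will follow from a basis comparison.

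I would define $\Phi$ on the diagram generators of $A_\bd$. A thick strand coloured by the composition piece $\bmu^{(j)}$ in the $j$th slot goes to the cluster of thin strands of $\Bj(\hat\bmu)$ with the residue sequence of $\bmu^{(j)}$, equipped with the symmetriser $e_{\Bj(\hat\bmu)}'$; a symmetric function decorating a thick strand goes to the same symmetric function of the dots on its cluster. A merge of two neighbouring thick strands goes to the weighted KLR diagram that slides one cluster across the gap until it abuts the other and then applies the symmetriser of the merged cluster; a split is the reflection of a merge; and a crossing of two thick strands goes to the diagram carrying one whole cluster past another, with symmetrisers at top and bottom. The inequalities $\epsilon\ll|k|\ll s$ are exactly what is needed so that these motions cross precisely the strands and ghosts predicted by the thick-strand calculus and no others: inside a single cluster every ghost sits a distance $|k|$ away in empty space and never meets a strand, so a cluster behaves like a product of nilHecke algebras, whereas merging two clusters forces the thin strands of one through the ghosts of the other.

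Well-definedness of $\Phi$ will amount to checking that it carries each defining relation of $A_\bd$, in the presentation of \cite{SWschur}, to an identity in $e_{QS}\walg^\vartheta_\nu e_{QS}$; by faithfulness this can be verified on polynomials using Proposition~\ref{prop:action}. The polynomial relations and the associativity of merges and of splits are immediate. The real content is the square relation, that a split followed by the reverse merge equals multiplication by an Euler-class polynomial---a product of factors $Q_e(y_a,y_b)$ over pairs of strands $a,b$ in the two sub-clusters whose labels are the head and tail of an edge $e$ of the cycle: on the weighted KLR side this composite pushes a cluster out across the gap and pulls it straight back, so by Definition~\ref{def-wKLR}(2) each strand--ghost bigon created and undone contributes exactly the factor $Q_e$ of its pair, while no strand ever crosses another strand, so no divided difference appears and the product of these $Q_e$'s is precisely the Euler class. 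The relations governing how a thick crossing slides past a merge or a split are handled by the triple-point moves of Definition~\ref{def-wKLR}(3)--(4); this is where one must carry the correction terms $\partial_{k,k+1}Q_e$ through the symmetrisers, and I expect this bookkeeping to be the main obstacle. It is also here that the hypothesis enters essentially: since $\sum_e\vartheta_e=nk\neq 0$ the weighting is not cohomologous to $0$ (contrast Corollary~\ref{cor:tree} and the remark preceding it), so the ghosts genuinely interleave with the strands of neighbouring clusters and the factors $Q_e$ really appear, rather than collapsing to the ordinary KLR relations as they do when $nk=0$.

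Finally I would prove surjectivity. By Theorem~\ref{th:basis}, together with the standard identification of the symmetrised nilHecke corner $e_n\,\mathrm{NH}_n\,e_n$ with symmetric polynomials, each space $e_{\Bj(\hat\bmu)}'\walg^\vartheta_\nu e_{\Bj(\hat\bmu')}'$ is free over a ring of symmetric polynomials on a basis indexed by the same data---a pair of vector compositions, a minimal double-coset representative for their block structures, and a partially symmetric polynomial---that indexes the diagram basis of $A_\bd$ in \cite{SWschur}, and $\Phi$ carries the latter basis into the former modulo terms with strictly fewer crossings, exactly as in the spanning argument of Theorem~\ref{th:basis}. An injective map that sends a basis to such a triangular spanning set is bijective, so $\Phi$ is an isomorphism; and the construction makes clear that it preserves the gradings.
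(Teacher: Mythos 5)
Your overall architecture (realise both algebras as subalgebras of endomorphisms of the same sum of partially symmetric polynomial rings, define $\Phi$ on generators, verify relations against the faithful representation, close with a dimension or basis argument) is the same as the paper's. The paper's own proof is much terser: it declares the images of merge and split, cites \cite[(23)]{SWschur} for the fact that they act on polynomials as in \cite[3.4]{SWschur}, deduces injectivity, and gets surjectivity from a graded-dimension count because both sides count double cosets for the parabolic subgroups of $S_m$ attached to the vector compositions. Your basis/triangularity argument for surjectivity is a legitimate alternative to that dimension count.

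However, the prescription you give for the generators contains a genuine error, and it matters. You declare that ``a split is the reflection of a merge,'' and in your discussion of the square relation you assert that ``no strand ever crosses another strand.'' The paper instead prescribes the \emph{asymmetric} images: the split is the diagram crossing \emph{no} like-labelled strands, while the merge is the diagram crossing \emph{all} like-labelled pairs coming from the two merging clusters --- these are emphatically not reflections of one another, as Figure~\ref{QS-compare} shows. The asymmetry is forced already in the smallest example: two same-coloured strands, one block of size $2$ versus two blocks of size $1$. Writing $e_2=\psi y_2$ for the symmetriser, the paper's merge $e_2\cdot\psi\cdot 1=\psi$ acts as the Demazure operator $\partial$ and the paper's split $1\cdot 1\cdot e_2=e_2$ acts as the inclusion of symmetric polynomials, exactly matching \cite{SWschur}. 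Your merge (slide with no like-label crossings, then symmetrise) gives $e_2\cdot 1\cdot 1=e_2$ rather than $\psi$, and your split (reflection of a merge involving a crossing) gives $1\cdot\psi\cdot e_2=\psi^2 y_2=0$. Because $\Phi$ is supposed to be checked on the faithful polynomial representation, this is not a matter of choosing a different representative of the same corner-algebra element --- they genuinely act differently. The downstream consequence is that your verification of the ``square relation'' (split followed by merge equals an Euler class with no divided differences) is organised around the wrong picture: with the correct merge, like-labelled strands do cross, and the computation uses the nilHecke relations (Definition~\ref{def-wKLR}(1)) and the triple-point moves (Definition~\ref{def-wKLR}(4)) as much as the bigon/ghost relation (Definition~\ref{def-wKLR}(2)). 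Fixing the prescription to match the paper's is essential before the relation check can go through.
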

\begin{proof}
  This isomorphism sends
  the split of \cite{SWschur} to the analogous splitting of the
  idempotents we described  without crossing any like-labelled
    strands, and the merge to merging with crossing all pairs of
    like-labelled strands from the two merging pieces.   These are
    shown in Figure \ref{QS-compare}. It's easily
    checked that these act exactly as in  \cite[ 3.4]{SWschur}; in
    fact this is already shown in \cite[(23)]{SWschur}.  Thus,
    $A_\bd$ injects into this space, and the graded dimensions of the two
    algebras coincide, since the dimensions of the summands going between vector
    compositions $\hat{\bmu}$ and $\hat{\bmu}'$ both count double cosets for the
    subgroups of $S_m$ corresponding to the vector compositions.
\end{proof}

\begin{figure}
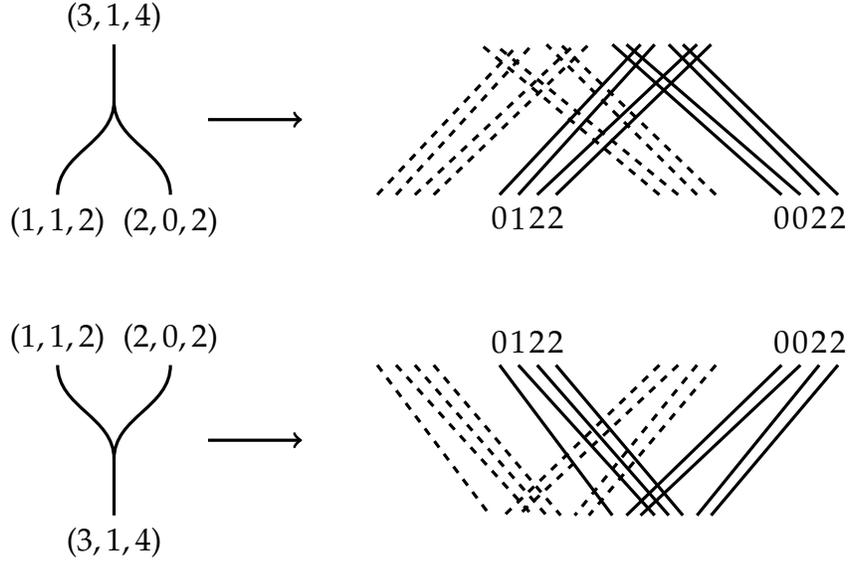

  \centering
  \[
\tikz[very thick,xscale=2.5,yscale=2.5]{
\draw (-1,0) node[below] {$(1,1,2)$} to [out=90,in=-90](-.7,.5)
(-.4,0) node[below] {$(2,0,2)$} to [out=90,in=-90] (-.7,.5)
(-.7,.5) -- (-.7,.8) node[above] {$(3,1,4)$};
\draw[->] (-.2,.4)--(.3,.4);  
\draw (1.35,0) -- node[below,at start]{$0$}(2.1,.8);
\draw (1.45,0) -- node[below,at start]{$1$} (2.175,.8);
\draw (1.55,0) -- node[below,at start]{$2$} (2.4,.8);
\draw (1.65,0) -- node[below,at start]{$2$} (2.475,.8);
\draw (2.85,0) -- node[below,at start]{$0$} (1.95,.8);
\draw (2.95,0) -- node[below,at start]{$0$} (2.025,.8);
\draw (3.05,0) -- node[below,at start]{$2$} (2.25,.8);
\draw (3.15,0) -- node[below,at start]{$2$} (2.325,.8);
\draw[dashed] (.7,0) -- (1.45,.8);
\draw[dashed] (.8,0) --  (1.525,.8);
\draw[dashed] (.9,0) -- (1.75,.8);
\draw[dashed] (1,0) -- (1.825,.8);
\draw[dashed] (2.2,0) -- (1.25,.8);
\draw[dashed] (2.3,0) -- (1.325,.8);
\draw[dashed] (2.4,0) -- (1.6,.8);
\draw[dashed] (2.5,0) -- (1.675,.8);
}\]

  \[
\tikz[very thick,xscale=2.5,yscale=2.5]{
\draw (-1,0) node[above] {$(1,1,2)$} to [out=-90,in=90](-.7,-.5)
(-.4,0) node[above] {$(2,0,2)$} to [out=-90,in=90] (-.7,-.5)
(-.7,-.5) -- (-.7,-.8) node[below] {$(3,1,4)$};
\draw[->] (-.2,-.4)--(.3,-.4);  
\draw (1.35,0) -- node[above,at start]{$0$}(1.95,-.8);
\draw (1.45,0) -- node[above,at start]{$1$} (2.175,-.8);
\draw (1.55,0) -- node[above,at start]{$2$} (2.25,-.8);
\draw (1.65,0) -- node[above,at start]{$2$} (2.325,-.8);
\draw (2.85,0) -- node[above,at start]{$0$} (2.025,-.8);
\draw (2.95,0) -- node[above,at start]{$0$} (2.1,-.8);
\draw (3.05,0) -- node[above,at start]{$2$} (2.4,-.8);
\draw (3.15,0) -- node[above,at start]{$2$} (2.475,-.8);
\draw[dashed] (.7,0) -- (1.3,-.8);
\draw[dashed] (.8,0) --  (1.525,-.8);
\draw[dashed] (.9,0) -- (1.6,-.8);
\draw[dashed] (1,0) -- (1.675,-.8);
\draw[dashed] (2.2,0) -- (1.375,-.8);
\draw[dashed] (2.3,0) -- (1.45,-.8);
\draw[dashed] (2.4,0) -- (1.75,-.8);
\draw[dashed] (2.5,0) -- (1.825,-.8);
}\]
  \caption{The comparison map with quiver Schur algebras}
\label{QS-compare}
\end{figure}
 
More generally, there are algebras, defined in \cite[\S 4]{SWschur}, which
mix together features of the quiver Schur algebras above with those of
the tensor product algebras.  These arise
from the Crawley-Boevey quiver $\Gamma_{\Bw}$ for the $n$-cycle and
some dimension vector $\Bw$.  As before, choose a weighting
$\vartheta$, and let $k$ be the sum of the
weights on the cycle. 

For each pair of new edges $e_1,e_2$, one can consider all the closed
paths which leave the CB vertex using $e_1$ and arrive using $e_2$.
If these connect to the same vertex in the cycle, there's a unique
such path which isn't self-intersecting (just the bigon), and
otherwise, there are two which go around the cycle in opposite
directions.  
We call a choice of $\vartheta$ {\bf
  well-separated} for a dimension vector $\bd$ if for any pair of
new edges, the absolute value of the weight assigned to any
non-self-intersection loop which starts with one and ends with the
other is greater than $k(\sum_{i\in I} d_i)$.  

In a
well-separated weighting, we can order the new edges according to
their weight unambiguously, since the weight of the two
non-intersecting paths have the same sign (otherwise, we might have
one positive, and one negative). 
We can consider the new edges in increasing order.  Each one connects
to a node in the cycle, to which we have associated a fundamental
weight. Thus, we
obtain a list of
fundamental weights $\bla=\{\la_1,\dots,\la_\ell\}$, where $\ell$ is the
total number of new edges, usually called the {\bf level} in this context.  Furthermore, to each list $\grave\mu=(\hat{\bmu}(0),
\hat{\bmu}(1),\dots,\hat{\bmu}(\ell))$ of vector compostions, we can
associate a loading as follows: we place a copy of the loading for
$\hat{\bmu}(i)$ and its idempotent $e'_{\hat{\bmu}(i) }$ (as constructed above) shifted by the position $b_i$
of the $i$th red strand.  That is, we place it on the real line just
right of the $i$th red strand.  

Let $e_{QS;\bla}$ be the sum of idempotents
attached to these loadings. 

\begin{theorem}\label{qSchur}
  If we choose $\vartheta$ well-separated, then the subalgebra $e_{QS;\bla}
  \bar \walg^\vartheta_\nu e_{QS;\bla}$ of the reduced quotient is
  the extended quiver Schur algebra $\tilde A^\bla_\bd$ associated  to
  $\bla$, and the subalgebra $e_{QS;\bla} \bar \walg^\vartheta_\nu(c) e_{QS;\bla}$ of
  reduced steadied quotient is isomorphic to $A^\bla_\bd$, and thus
  isomorphic to a cyclotomic $q$-Schur algebra. 
\end{theorem}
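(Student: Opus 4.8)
The plan is to fuse the argument of Theorem~\ref{th:schur-1} with that of the tensor-product theorems of Section~\ref{sec:relat-tens-prod}: the well-separated hypothesis is exactly what lets the ``black'' interactions organize into copies of the quiver Schur algebra while the ``red'' strands contribute the level-$\ell$ cyclotomic structure on top. First I would exhibit the would-be generators of $\tilde A^\bla_\bd$ inside $e_{QS;\bla}\bar\walg^\vartheta_\nu e_{QS;\bla}$. For the old vertices these are the splits and merges of like-labelled clusters drawn in Figure~\ref{QS-compare} --- splits wire a cluster together without crossing any pair of like-labelled strands, merges cross every such pair --- exactly as in Theorem~\ref{th:schur-1}; for the new (red) strands these are the crossings of a red strand past a cluster of black strands and past a black ghost, together with the elementary transpositions of adjacent red strands (when permitted). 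Because $\vartheta$ is well-separated, each cluster attached to $\hat{\bmu}(i)$ sits in the band between the $i$th and $(i+1)$st red ghost with room to spare, so that after pulling a diagram taut the only crossings that occur are between strands (or ghosts) that the generators above already account for; this is the role the well-separated condition plays, in place of the ``$s$ units apart'' condition used elsewhere in the paper.

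Second I would check these elements satisfy the defining relations of $\tilde A^\bla_\bd$ from \cite[\S4]{SWschur}. The relations involving only old strands are verified by the same computation as in Theorem~\ref{th:schur-1} --- they are already recorded in \cite[(23)]{SWschur} --- now carried out cluster by cluster. The relations involving red strands reduce, after passing to the reduced quotient (killing dots on the $0$-strand) and using $Q_{e_i^{(k)}}(u,v)=u-v$, to the relations of \cite[\S4]{SWschur} by precisely the manipulations (the bigon and triple-point moves of Definition~\ref{def-wKLR}) already performed in the proof that $\tilde T^\bla_{\la-\nu}\cong\bar\walg^\varpi_{\tilde\nu}$; the only new wrinkle is that a red ghost now passes a whole cluster rather than a single strand, and this is handled by the split/merge description of the cluster idempotents $e'_{\hat{\bmu}(i)}$. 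This yields a surjective algebra map $\tilde A^\bla_\bd\twoheadrightarrow e_{QS;\bla}\bar\walg^\vartheta_\nu e_{QS;\bla}$.

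Third, the map is an isomorphism by a graded-dimension count, exactly as in Theorem~\ref{th:schur-1}. The block of each side between two loadings has a basis indexed by a permutation of the new edges (matching red strands) together with, for the resulting matching of clusters, a double coset for the parabolic subgroups attached to the relevant vector compositions, decorated by an arbitrary monomial in the dots; on the wKLR side this is the restriction of the basis of Theorem~\ref{th:basis} by the idempotents $e'_{\hat{\bmu}(i)}$, and on the $\tilde A^\bla_\bd$ side it is the basis of \cite[\S4]{SWschur}. The two graded dimensions therefore agree and the surjection is an isomorphism.

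Finally, for the steadied quotient: as in the proof identifying $T^\bla_{\la-\nu}$ with the reduced steadied quotient, a decomposition $\tilde\nu=\nu'+\nu''$ has $\nu'>_c\nu''$ exactly when the $0$-component of $\nu'$ is $0$, so the unsteady ideal $I_c$ is generated by diagrams in which a block of old-labelled strands lies far to the left of all the red strands. Intersecting with $e_{QS;\bla}$ and passing to the reduced quotient, this is precisely the violating ideal cutting $A^\bla_\bd$ out of $\tilde A^\bla_\bd$ in \cite[\S4]{SWschur}; hence $e_{QS;\bla}\bar\walg^\vartheta_\nu(c)e_{QS;\bla}\cong A^\bla_\bd$, and $A^\bla_\bd$ is identified with a cyclotomic $q$-Schur algebra in \cite{SWschur}. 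I expect the main obstacle to be the bookkeeping in the second and third steps: making ``well-separated'' precise enough that every diagram genuinely can be isotoped into a standard form in which the interactions among red strands, red ghosts, black clusters, and black ghosts all occur in the expected local pictures --- i.e.\ checking that no unanticipated relation occurs ``at a distance.''
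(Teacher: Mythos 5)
Your proposal is correct and follows essentially the same route as the paper's own (very terse) proof: identify the split/merge generators with the diagrams of Figure~\ref{QS-compare}, verify the relations by matching the actions on (symmetric) polynomials as in Theorem~\ref{th:schur-1}, establish the isomorphism by a graded-dimension (double-coset) count, and observe that the unsteady ideal corresponds to the ideal killing idempotents with $\hat\mu(0)\neq 0$, i.e.\ the violating ideal of \cite{SWschur}. You have simply expanded the paper's one-paragraph argument, including the role of well-separatedness in isotoping diagrams into standard local pictures, which the paper leaves implicit.
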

\begin{proof}
The first isomorphism is exactly as in Theorem
\ref{th:schur-1}; we simply note that the action of these operators on
the appropriate symmetric polynomials exactly match those of $\tilde
A^\bla_\bd$.

The steadied quotient exactly kills all idempotents where
$\hat{\mu}(0)\neq 0$, and thus coincides with the cyclotomic quotient.
\end{proof}

In fact, both these inclusions of subalgebras induce Morita
equivalences, but we omit a proof of this fact; the construction of a
cellular basis in \cite[\S 3]{WebRou} shows that no simple
representation is killed by this idempotent.
It is more natural to consider this in the context of a general weighting of an affine
quiver, which is probably the most interesting and powerful
application of the theory developed here; we develop this further
in \cite{WebRou}.

  \section{The geometry of quivers}
  \label{sec:geometry-quivers}

Throughout this section, we assume that $\Gamma$ is a
  multiplicity-free quiver; that is, we assume that $c_e=1$ and $Q_e(u,v)=u-v$ for all
  oriented edges, though we do allow multiple edges between the same pair of
  vertices.  Furthermore, for simplicity, we'll assume   throughout this section that $\operatorname{char}(\K)=0$.

  \subsection{Loaded flag spaces}
  \label{sec:loaded-flag-spaces}

If $\nu=\sum d_i\al_i$, we let $V_i=\C^{d_i}$, $V=\oplus_i
  V_i$ and let \[E_\nu= \bigoplus_{e\in
    \Omega}\Hom(V_{t(e)},V_{h(e)}).\] This vector space has a natural
  action of $G_\nu=\prod_{i\in I}GL(V_i)$ by pre- and
  post-composition.

The vector $\bd=(d_i)_{i\in \Gamma}$
is called the {\bf dimension vector}, and we will freely identify
$\Z^\Gamma$ with the root lattice $X(\Gamma)$ by sending $\bd\mapsto \nu=\sum d_i\al_i$.

Let $\Bi$ be a loading.
\begin{definition}
  We let an $\Bi$-loaded flag on $V$ be a flag of $I$-homogeneous
  subspaces $F_a\subset V$ for each real number $a$ such that
  $F_b\subset F_a$ for $b\leq a$, and $\dim F_a=\sum_{b\leq a}
  \Bi(b)$.  Even though this filtration is indexed by real numbers,
  only finitely many different spaces appear; the dimension vector can only change at points in the
  support of the loading, by adding the simple root labeling that
  point to the dimension vector.  Let $\Fl_\Bi$ denote the space of $\Bi$-loaded flags.
\end{definition}

The relationship of these flags to the loadings we discussed earlier
(justifying the name) is as follows:  we can imagine the space $F_a$
as being attached to the dots left of $x=a$.  We read from left to
right, and each time we pass a dot with label $i$, we increase the
size of the space in the flag in $V_i$.

Each loaded flag $F_\bullet$ has a corresponding {\bf unloading},
which is the complete flag of spaces appearing as $F_a$ for $a\in \R$,
indexed by dimension as usual.

\begin{definition}
  For $\Bi$ a loading with $|\Bi|=\nu$,
  let \[X_{\Bi}=\{(f,F_\bullet)\in E_{\nu}\times \Fl_\Bi |
  f_e(F_a)\subset F_{a-\vartheta_e} \}\]  be the space of $\Bi$-loaded
  flags and compatible representations.  Let $p\colon X_{\Bi}\to
  E_\nu$ be the map forgetting the flags, and
  let \[Z=\bigsqcup_{\Bi,\Bj\in B(\nu)}X_{\Bi}\times_{E_\nu}X_{\Bj}.\]
\end{definition}
We can also interpret compatibility visually in terms of loadings:
rather than require that $F_a$ be preserved by $f_e$, we require that
the piece of $V_i$ corresponding to a dot at $x=a-\vartheta_e$ can
only be hit under the map $f_e$ by the pieces corresponding to dots
right of the corresponding ghost, that with $x\geq a$.  Put
differently, the piece of the filtration $F_a$ corresponding to dots
left of $a$ must land under $f_e$ in the span of pieces for dots whose
ghosts are left of $x=a$.   

\begin{example}
  For $\Bj(\hat{\bmu})$ with $k>0$, as defined in Section \ref{sec:relat-quiv-schur},
  the map $f_e$ for each edge $e\colon i\to i+1$ must send the
  $F_{js}$ space
  associated to the first $j$ parts of the vector composition to the
  space $F_{(j-1)s}$ for the $j-1$ pieces, since we have specifically
  set things up so that a dot in the $j$th piece is to the left of the
  ghosts attached to the $j$th piece, and those to the right, and
  right of the ghosts for the $j-1$st piece, and those to the left.
  Note that this is closely related to the flag spaces considered in
  \cite{SWschur}, where arbitrary strongly preserved flags were
  considered, but the flags we consider here come with a refinement to
  complete flags.  While this may seem extraneous, it makes the
  convolution algebras much easier to deal with.

If $k<0$, then the picture is quite different.  Now, each dot for the
$j$th piece is right of the dots in the $j$th piece (and those to the
left), so our conditions just say that $f_e(F_{js})\subset F_{js}$, so
this flag is weakly preserved.  
\end{example}

\begin{example}
  If we consider a Crawley-Boevey quiver $\Gamma_\Bw$, with the weight
  on all old edges being 0, then the result is that the flag $F_a$
  must be preserved in the usual sense by all the maps associated to
  old edges.  Furthermore, the map $f_e$ along a new edge is constrained
  to be 0 on $F_{\vartheta_e}$. That is, we are only allowed to use
  one of the new edges on pieces of the flag corresponding to dots
  coming right of the corresponding red line (in the usual pictures
  discussed in Section \ref{sec:relat-tens-prod}.
\end{example}

If $\vartheta_e=0$, then these are simply {\bf quiver flag varieties},
as used by Lusztig \cite{Lus91}, and considered by many other authors since.
In particular, we can define a collection of objects in the
$G_\nu$-equivariant derived category of $E_\nu$ generalizing those
considered by Lusztig, by considering the pushforwards
\[Y_\Bi:=p_*\K_{X_{\Bi}}[\Bu(\Bi)]\] where \[\Bu(\Bi)=\dim
X_\Bi/G_\nu=\#\{ (e,a,b)\mid \Bi(a)=t(e),\Bi(b)=h(e), a-b\geq
\vartheta_e\}-\sum_{i\in I}\frac{|\Bi|_i(|\Bi|_i+1)}{2}.\]  
Since $p$
is proper, if $\K$ is characteristic 0, then these sheaves will be a sum of
shifts of simple perverse sheaves; this can fail when the
characteristic is positive and small.  In favorable cases, where we
obtain parity vanishing results, the
summands of these sheaves will be parity sheaves in the sense of
Juteau, Mautner and Williamson \cite{JMW}.  This is the case when
$\Gamma$ is of finite or affine type A, but seems to be unknown in
general; see \cite{Makpar} for a more detailed discussion of parity
sheaves on $E_{\Bv}$.  

In the case of a tensor product weighting, these spaces and sheaves
have been studied by Li \cite{Litensor}. In the affine case, closely
related spaces were considered in \cite{SWschur}; as long as the
weights on new edges are well separated, the sheaves $Y_\Bi$ have the
same simple summands as the pushforwards from the spaces
$\mathfrak{Q}(\hat{\bmu})$.  This definition of the spaces $X_\Bi$ has
motivated in large part by the desire to unify these examples and put
them in a more general context.

\subsection{An Ext-algebra calculation}
\label{sec:an-ext-algebra}
Consider the tautological line bundle
$\mathscr{L}_i$ given by the quotient of the $i$-dimensional space of
the flag by the $i-1$st. 
The cohomology ring
  $H^*_{G_\nu}(\Fl_\Bi)$ is a polynomial ring, in variables that can be
  identified with the equivariant Chern classes $\mathscr{L}_k $.

Given two loadings $\Bi$ and $\Bj$ and a permutation $\si$, we have a natural correspondence
\[\becircled{X}_{\Bi;\Bj}^{\tau}=\{(f,\{F_\bullet\},f',\{F_\bullet'\})\in X_\Bi\times
X_\Bj| r(V_*,V_*')= \tau\} \qquad X^{\tau}_{\Bi;\Bj}=\overline{\becircled{X}_{\Bi;\Bj}^{\tau}}\] where $r(-,-)$ is the usual relative
position between the unloadings of these flags.  This space is
non-empty if and only if the unloadings of  $\Bi$ and $\Bj$ are permuted to each other by $\tau$. 

We let $H^{BM,G_{\bd}}_*(-)$
denote the equivariant Borel-Moore homology of a space with coefficients in $\K$, as discussed
in \cite[\S 1]{VV}; for any proper map $p\colon X\to Y$, the Borel-Moore
homology $H^{BM}(X\times_Y X)$ 
carries a convolution algebra structure, defined in \cite[2.7]{CG97};
in  \cite[8.6]{CG97}, it's proven that this is isomorphic to the Ext
algebra $\Ext^\bullet
  (p_*\K_X,p_*\K_X)$, and this result is easily extended to the
  equivariant case.
\begin{theorem}\label{thm:R-ext}
    We have isomorphisms of dg-algebras \[\Ext^\bullet_{G_\nu}
  \Big(\bigoplus_{\Bi\in B(\nu)}Y_\Bi,\bigoplus_{\Bi\in
    B(\nu)}Y_\Bi \Big)\cong H^{BM,G_{\nu}}_*(Z )\cong
  \walg^\vartheta_\nu\] where the RHS has trivial differential.  The
 right hand isomorphism sends \[e_{\Bi}b_{1}e_{\Bj}\mapsto
 [X^{1}_{\Bi;
\Bj}]\qquad e_{\Bi}\psi_ke_{\Bj}\mapsto
 [X^{s_k}_{\Bi;
\Bj}]\qquad y_k\to c_1(\mathscr{L}_k).\]
This map intertwines Verdier duality and the duality $a\mapsto a^*$ on $\walg^\vartheta_\nu$.
\end{theorem}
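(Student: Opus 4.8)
The plan is to take the left-hand isomorphism essentially for free and concentrate on the right-hand one, following the strategy of Varagnolo--Vasserot \cite{VV}. Since $p\colon\bigsqcup_{\Bi}X_{\Bi}\to E_\nu$ is proper and each $X_\Bi$ is smooth, the Chriss--Ginzburg identification \cite[8.6]{CG97}, extended to the $G_\nu$-equivariant setting exactly as in \cite[\S 1]{VV}, gives $\Ext^\bullet_{G_\nu}(\bigoplus Y_\Bi,\bigoplus Y_\Bi)\cong H^{BM,G_\nu}_*(Z)$ as graded algebras, with the differential vanishing because $p$ is proper; note that semisimplicity of $Y$, which can fail in small characteristic, is not used here. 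For the second isomorphism I would first define a map on generators by $e_\Bi\mapsto[X^1_{\Bi;\Bi}]$, $\psi_k\mapsto[X^{s_k}_{\Bi;\Bj}]$ and $y_k\mapsto c_1(\mathscr L_k)$, and then check that the relations of Definition \ref{def-wKLR} hold in the convolution algebra $H^{BM,G_\nu}_*(Z)$.

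The relation check is the geometric heart of the proof. Every relation of Definition \ref{def-wKLR} is local --- it equates two diagrams differing only in a bigon or a triple point --- so, using compatibility of convolution with the ``stacking'' maps $\iota_{\nu';\nu''}$, it reduces to an intersection-theoretic computation in a single fibre product $X_\Bi\times_{E_\nu}X_\Bj$ with $|\nu|\leq 3$. These are the same Euler-class and Demazure-operator computations as in \cite{VV} and \cite[8.6]{CG97}, with one structural change: because the incidence condition cutting out $X_\Bi$ is $f_e(F_a)\subset F_{a-\vartheta_e}$, the locus where a correspondence fails to be transverse is where a strand meets the \emph{ghost} of another strand (at the position shifted by $\vartheta_e$) rather than that strand itself. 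Since throughout this section $Q_e(u,v)=u-v$, the excess-intersection contribution at such a locus is precisely the Euler class $c_1(\mathscr L_k)-c_1(\mathscr L_j)$, which matches the factor $Q_e(y_k,y_j)$ appearing in the bigon and triple-point relations; the dot relations and the nilHecke behaviour of same-coloured crossings are unchanged, and the multi-term correction attached to a weight-$0$ loop comes out of the standard identity for $\partial_k\partial_{k+1}\partial_k$ in the nilHecke algebra.

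To see that the resulting algebra map is an isomorphism I would argue by a filtration. Stratify $X_\Bi\times_{E_\nu}X_\Bj$ by the locally closed pieces $\becircled{X}^{\tau}_{\Bi;\Bj}$ indexed by the relative position $\tau$ of the unloadings; each such piece is an iterated affine-space bundle over a partial-flag bundle, so its equivariant Borel--Moore homology is free over $H^*_{G_\nu}(\mathrm{pt})$ and concentrated in a single parity. Ordering the strata by closure, the localization long exact sequences therefore split, giving a filtration on $H^{BM,G_\nu}_*(X_\Bi\times_{E_\nu}X_\Bj)$ with associated graded $\bigoplus_\tau H^{BM,G_\nu}_*(\becircled{X}^{\tau}_{\Bi;\Bj})$, in which $[X^\tau_{\Bi;\Bj}]$ is a free generator of the $\tau$-summand over the subring generated by the $c_1(\mathscr L_k)$. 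On the algebraic side, Theorem \ref{th:basis} exhibits $e_\Bi\walg^\vartheta e_\Bj$ as free over $\K[y_1,\dots,y_m]$ with one basis vector $b_\tau$ for each permutation $\tau$ matching the unloadings, and $b_\tau$ maps to $[X^\tau_{\Bi;\Bj}]$ plus classes supported on deeper strata; hence the map is filtered and an isomorphism on associated graded, so it is an isomorphism.

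Finally, the shift $\Bu(\Bi)$ is normalized so that $Y_\Bi$ is Verdier self-dual (properness of $p$ and smoothness of $X_\Bi$), so Verdier duality induces an anti-automorphism of the Ext-algebra; under the convolution description this is exactly the anti-automorphism of $H^{BM,G_\nu}_*(Z)$ induced by the involution of $Z$ that swaps the two factors of each $X_\Bi\times_{E_\nu}X_\Bj$ and interchanges $\Bi\leftrightarrow\Bj$. This involution sends $[X^\tau_{\Bi;\Bj}]$ to $[X^{\tau^{-1}}_{\Bj;\Bi}]$ and fixes the $c_1(\mathscr L_k)$, so on generators it reads $e_\Bi\mapsto e_\Bi$, $\psi_k\mapsto\psi_k$, $y_k\mapsto y_k$ --- precisely the anti-automorphism $a\mapsto a^*$ given by reflecting diagrams in a horizontal line. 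I expect the main obstacle to be the relation check of the second paragraph: although each relation is ``merely'' a local Euler-class computation, tracking which intersection loci in the fibre products become non-transverse once the ghosts are displaced by $\vartheta$, and extracting the precise correction terms (especially the three-term triple-point relation attached to a weight-$0$ loop), is where the real work lies.
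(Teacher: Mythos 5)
Your proposal correctly identifies the overall architecture — left isomorphism from Chriss--Ginzburg extended to the equivariant setting, right isomorphism via convolution classes, isomorphism by a triangularity/filtration argument on fundamental classes indexed by relative position — but you take a genuinely different route at the key step of establishing that the generator assignment extends to an algebra map.

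You propose to verify the relations of Definition~\ref{def-wKLR} directly as identities in the convolution algebra $H^{BM,G_\nu}_*(Z)$, computing each bigon and triple-point relation as an excess-intersection calculation. The paper does something more economical: it proves two lemmas first, namely (i) the Ext-algebra is formal and acts \emph{faithfully} on $\bigoplus_\Bi H^*_{G_\nu}(X_\Bi)$, identified with a sum of polynomial rings, and (ii) the classes $[X^\sigma_{\Bi;\Bj}]$ form a free $H^*_{G_\nu}(\Fl_\Bi)$-basis. Because $\walg^\vartheta_\nu$ also acts faithfully on the same sum of polynomial rings (Proposition~\ref{prop:action}), the paper need only check that the convolution classes $[X^1_{\Bi;\Bj}]$, $[X^{s_k}_{\Bi;\Bj}]$ and the Chern classes $c_1(\mathscr L_k)$ act on polynomials by the same pullback/Euler-class/Demazure/permutation operators as $e_\Bi b_1 e_\Bj$, $e_\Bi\psi_k e_\Bj$, $y_k$; then the wKLR relations hold automatically in the convolution algebra by faithfulness, and injectivity of the resulting algebra map is also free. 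This sidesteps the most painful computation your plan faces — the three-term correction at a triple point involving a weight-$0$ loop — which is a substantially more delicate identity than the ordinary nilHecke relation $\partial_k\partial_{k+1}\partial_k=\partial_{k+1}\partial_k\partial_{k+1}$ your sketch gestures at, and would be genuinely unpleasant to check by excess intersection.

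There is also a small gap in your surjectivity paragraph: you assert that $b_\tau\mapsto[X^\tau_{\Bi;\Bj}]$ plus classes on deeper strata, but this requires showing that the iterated fibre product corresponding to a chosen factorization of $b_\tau$ maps birationally onto $\becircled{X}^\tau_{\Bi;\Bj}$. The paper proves this by exhibiting, for each point of the open stratum, the intermediate loaded flags and verifying (using that the pairwise distances between strands in $b_\tau$ may be taken to vary monotonically) that they are compatible with the representation. This compatibility is exactly where the ghost geometry enters and is not automatic; you should not leave it implicit. Your treatment of the duality statement, by Verdier self-duality of the $Y_\Bi$ and the swap involution on $Z$, is reasonable and the paper does not spell it out either.
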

\begin{remark}
  If the characteristic of $\K$ is positive, then this result is still true
  as an isomorphism of algebras, but it seems unlikely that the dg or $A_\infty$ structure on
  the left hand side is formal.  
\end{remark}
Recall that replacing an object by another in which precisely the same
indecomposable summands occur preserves the graded Morita class of the
Ext-algebra.  Thus, if we replace $\oplus Y_\Bi$ by the sum of all
IC-sheaves whose shifts appear as summands of $Y_\Bi$ for some $\Bi$,
we obtain that:
\begin{corollary}\label{cor:positive-grading}
  The algebra $\walg^\vartheta$ is graded Morita equivalent to a non-negatively
  graded algebra, with semisimple degree 0 subalgebra.  That is, there is a projective generator $G$ of
  $\walg^\vartheta\mmod$ with no negative degree endomorphisms, and
 all degree 0 endomorphisms spanned by projection to the different summands.   We can choose
  this generator so that if $P$ is a
  graded projective that occurs as a summand in $\walg^\vartheta
  e_\Bi$ for some $\Bi$ such that no shift of $P$ does, then $P$ is a
  summand of $G$.
\end{corollary}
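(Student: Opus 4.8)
The plan is to read the corollary off from the Ext-algebra description of Theorem \ref{thm:R-ext}. Write $Y=\bigoplus_{\Bi\in B(\nu)}Y_\Bi$, so $\walg^\vartheta_\nu\cong\Ext^\bullet_{G_\nu}(Y,Y)$, and let $\langle Y\rangle$ be the additive subcategory of $D^b_{G_\nu}(E_\nu)$ of finite direct sums of grading shifts of summands of $Y$. The Yoneda-type functor $h_Y=\Ext^\bullet_{G_\nu}(Y,-)$ is a graded-additive equivalence from $\langle Y\rangle$ onto the category of finitely generated graded projective $\walg^\vartheta_\nu$-modules, carrying $Y_\Bi$ to $\walg^\vartheta_\nu e_\Bi$, and satisfying $\End_{\walg^\vartheta_\nu}(h_Y(N))\cong\Ext^\bullet_{G_\nu}(N,N)$ for every $N\in\langle Y\rangle$. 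This is the precise form of the remark preceding the corollary: replacing an object by one with the same indecomposable summands (up to shift, ignoring multiplicities) does not change the graded Morita class of the Ext-algebra.

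First I would bring in semisimplicity. Because $\operatorname{char}(\K)=0$ and each $p\colon X_\Bi\to E_\nu$ is proper, the equivariant decomposition theorem — already invoked when the $Y_\Bi$ were defined — shows that each $Y_\Bi$ is a finite direct sum of grading shifts of simple $G_\nu$-equivariant perverse sheaves. Let $L_1,\dots,L_r$ be the distinct such simple perverse sheaves occurring in some $Y_\Bi$, taken as genuine objects of the perverse heart (not counting shifts); then $r<\infty$ since $B(\nu)$ is finite, each $L_j$ lies in $\langle Y\rangle$, and $Y'=\bigoplus_j L_j$ has the same indecomposable summands as $Y$ up to shift.

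Next I would check that $E_\nu:=\Ext^\bullet_{G_\nu}(Y',Y')$ is non-negatively graded. For $i<0$ we have $\Ext^i(L_j,L_k)=\Hom(L_j,L_k[i])=0$, since for the perverse $t$-structure $L_j$ lies in $D^{\leq 0}$ while $L_k[i]$ lies in $D^{\geq -i}\subseteq D^{\geq 1}$; and $\Ext^0(L_j,L_k)=\Hom_{\mathrm{Perv}}(L_j,L_k)$ is $0$ for $j\neq k$ and a division ring for $j=k$. Hence $E_\nu$ is concentrated in degrees $\geq 0$ with semisimple degree-zero part. Taking $G=\bigoplus_\nu\bigoplus_j h_Y(L_j)$, the functor $h_Y$ exhibits $G$ as a projective generator of $\walg^\vartheta\mmod$ (every indecomposable graded projective is a shift of some $h_Y(L_j)$) with $\End_{\walg^\vartheta}(G)\cong\bigoplus_\nu E_\nu$ non-negatively graded; equivalently, $\walg^\vartheta$ is graded Morita equivalent to the non-negatively graded algebra $\bigoplus_\nu E_\nu$. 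Finally, the grading shift of each summand $h_Y(L_j)$ of $G$ is ours to choose, so if an indecomposable projective $P$ occurs as a summand of some $\walg^\vartheta e_\Bi$ and no nontrivial shift of $P$ occurs as a summand of any $\walg^\vartheta e_\Bj$, then the $L_j$ with $h_Y(L_j)$ a shift of $P$ occurs in $Y$ only in the shift matching $P$, and we simply declare that summand of $G$ to equal $P$; the decomposable case reduces to this.

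I do not anticipate a genuine obstacle here; the one thing demanding care is the bookkeeping of grading shifts — both in identifying "same indecomposable summands up to shift" with "graded Morita equivalent Ext-algebras" and in using the resulting freedom to normalize $G$ as in the last sentence. Everything else follows formally from Theorem \ref{thm:R-ext}, the characteristic-zero semisimplicity of the $Y_\Bi$, and the vanishing of negative $\Ext$-groups between perverse sheaves.
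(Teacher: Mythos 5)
Your proof is correct and follows the same route as the paper: both replace $\bigoplus Y_\Bi$ with the direct sum $Y'$ of the distinct simple (IC) perverse sheaves occurring as shifted summands, appeal to the fact that this preserves the graded Morita class of the Ext-algebra, and read off non-negativity from the vanishing of negative Exts between perverse sheaves. You have simply spelled out the details (semisimplicity via the decomposition theorem, the $t$-structure argument for $\Ext^{<0}=0$, and the shift-normalization of the generator) that the paper leaves implicit in the one-sentence remark preceding the corollary.
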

Note that it is easy to find examples where this fails if $\K$ has
characteristic $p$.  Such an example for $\mathfrak{\widehat{sl}}_2$
is discussed in \cite[5.7]{WebCB}; Williamson \cite{WilJames} has shown that examples
exist for KLR algebras in finite type A for any prime $p$.  As we see
in \cite{WebCB,WebRou}, this property is key for proving a
relationship between categorifications and canonical bases, along the
same lines as \cite{VV}.

We now turn to the proof of Theorem \ref{thm:R-ext}, which we will
prove via a series of lemmata.  As we noted in the
proof of \cite[3.11]{SWschur}, we have an equivariant map
\[X_\Bi\times_{E_{\nu}}X_\Bj\to \Fl_{\Bi}\times \Fl_{\Bj},\] projecting to the second factor.   This map is is an affine
bundle over each $G_\nu$-orbit.
These orbits are in turn homotopic to $G_\nu/T_\nu$, letting $T_\nu$ be a
maximal torus in $G_\nu$. Thus
$X_\Bi\times_{E_{\nu}}X_\Bj$ is a union of finitely
many spaces each with even and equivariantly formal Borel-Moore homology,
so the same is true of $X_\Bi\times_{E_{\nu}}X_\Bj$.

\begin{lemma}
  The Ext-algebra $E=\Ext^\bullet_{G_\nu}
  \Big(\bigoplus_{\Bi\in B(\bd)}Y_\Bi,\bigoplus_{\Bi\in B(\bd)}Y_\Bi
  \Big)$ is formal and acts faithfully on
  \[\bigoplus_{\Bi\in B(\nu)}H^*_{G_\nu}(X_\Bi)\cong
  \bigoplus_{\Bi\in B(\nu)}H^*_{G_\nu}(\Fl_\Bi).\]
\end{lemma}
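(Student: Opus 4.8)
The plan is to prove the two assertions in turn: formality of $E$ will follow from purity, and faithfulness of its action from equivariant formality via torus localization.

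For formality, note first that each $\Fl_\Bi$ is a product of partial flag varieties, hence a smooth projective homogeneous $G_\nu$-space, and $X_\Bi$ is the total space of the $G_\nu$-equivariant sub-bundle of the trivial bundle $E_\nu\times\Fl_\Bi$ whose fibre over a flag $F_\bullet$ is the linear subspace $\{f\in E_\nu\mid f_e(F_a)\subset F_{a-\vartheta_e}\}$; in particular $X_\Bi$ is smooth and $p\colon X_\Bi\to E_\nu$ is proper. When $\operatorname{char}\K=0$ the decomposition theorem shows each $Y_\Bi$ is a direct sum of shifts of simple perverse sheaves, so $\bigoplus Y_\Bi$ is pure; concretely, on an $\Fq$-form $\Ext^k_{G_\nu}(Y_\Bj,Y_\Bi)$ is pure of weight $k$. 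Since $E\cong H^{BM,G_\nu}_*(Z)$ by the equivariant version of \cite[8.6]{CG97} and this is concentrated in even degrees (as established above), we can run the standard ``purity implies formality'' argument exactly as in \cite{VV}: in a dg-model of $R\Hom(\bigoplus Y_\Bi,\bigoplus Y_\Bi)$ carrying both the cohomological grading and the weight grading, the transferred $A_\infty$ products $m_n$ for $n\geq 3$ preserve weight but shift cohomological degree by $2-n\neq 0$, whereas on cohomology weight equals cohomological degree; hence all higher products vanish and $E$ is formal. (In positive characteristic one would instead need the $Y_\Bi$ to be parity complexes, which holds e.g.\ in finite and affine type $A$ by \cite{JMW}.)

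For the faithful action, observe that since $X_\Bi\to\Fl_\Bi$ is a vector bundle the Thom isomorphism gives $H^*_{G_\nu}(X_\Bi)\cong H^*_{G_\nu}(\Fl_\Bi)$, which Poincaré duality identifies up to a shift with $H^{BM,G_\nu}_*(\Fl_\Bi)$; and $\Fl_\Bi=p^{-1}(0)$, since $f=0$ satisfies every condition $f_e(F_a)\subset F_{a-\vartheta_e}$. Under $E\cong H^{BM,G_\nu}_*(Z)$ this is precisely the module on which the convolution algebra $H^{BM,G_\nu}_*(Z)$ acts through the correspondences $X_\Bi\times_{E_\nu}X_\Bj\subset X_\Bi\times X_\Bj$ in the sense of \cite{CG97}, so we are in the Springer-theoretic template of \emph{op.\ cit.} To see the action is faithful, restrict from $G_\nu$ to its maximal torus $T_\nu$; equivariant formality of all spaces in play makes the restriction maps $H^*_{T_\nu}(-)\to H^*_{T_\nu}((-)^{T_\nu})$ injective, so it suffices to check faithfulness after localizing $H^*_{T_\nu}(\mathrm{pt})$ at its generic point. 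After localization the convolution algebra becomes a transparent generalized-matrix algebra indexed by the torus-fixed points, acting faithfully on the localized fixed-point cohomology essentially by the definition of the convolution product (as in \cite[Ch.~7--8]{CG97} and \cite{VV}); taking $W$-invariants recovers faithfulness of the $G_\nu$-equivariant action.

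I expect the last step to be the main obstacle, namely faithfulness when the $T_\nu$-fixed loci inside the $X_\Bi$ are not isolated, which happens as soon as $\Gamma$ has a loop (then $T_\nu$ acts on $E_\nu$ with vanishing weights, so ``diagonal'' directions in the linear fibres of $X_\Bi\to\Fl_\Bi$ are fixed). I would handle this by enlarging the torus to $T_\nu\times\cs$ with the extra $\cs$ scaling $E_\nu$ by the weights $\vartheta_e$ as in the introduction, which isolates the fixed points away from loops of weight $0$ (whose contribution only introduces harmless polynomial variables, matching the polynomial representation of Proposition \ref{prop:action}), or alternatively by first deforming to a generic integral weighting, under which all the constructions of Section \ref{sec:depend-choice-load} are insensitive up to Morita equivalence, and then specializing. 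By contrast, the formality statement should be comparatively routine once the decomposition theorem and the odd-degree vanishing of $H^{BM,G_\nu}_*(Z)$ are in hand.
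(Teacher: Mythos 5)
Your proof is correct in outline, but it diverges from the paper's in two noteworthy ways, one to your credit and one where you lose generality.

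For formality you run the purity argument: use the decomposition theorem to conclude each $Y_\Bi$ is pure, then observe that in a dg-model carrying the weight grading the transferred higher $A_\infty$-products must vanish because they preserve weight while shifting cohomological degree. This is a perfectly good argument (and is the Varagnolo--Vasserot route), but it commits you to characteristic zero, or to knowing the $Y_\Bi$ are parity complexes. The paper instead derives formality and faithfulness from a single source: after restricting from $G_\nu$ to $T_\nu$ and then to $T_\nu$-fixed points, the convolution algebra becomes a matrix algebra over the formal commutative ring $\bigoplus_\Bi H^*_{T_\nu}(X_\Bi^{T_\nu})$, and a commutative ladder of injections (using only that $V=H^*(BT_\nu)$ is finite free over $U=H^*(BG_\nu)$ and that the relevant Borel-Moore homologies are even and equivariantly formal, which the paper established in the preceding paragraph via the orbit stratification) propagates both formality and faithfulness back to $E$. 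That argument never invokes the decomposition theorem and hence holds over any coefficient ring for which the even/formal input has been verified, which is the generality the lemma is actually stated in. So you have proved a correct but strictly weaker statement on the formality side, and should not drop the matrix-algebra mechanism if you want the full claim.

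On faithfulness you are genuinely close to the paper's argument: both restrict to a maximal torus, pass to fixed points, and use the matrix-algebra picture there. Your extra step of localizing $H^*_{T_\nu}(\mathrm{pt})$ at its generic point is harmless but not needed; the paper gets injectivity directly from evenness and equivariant formality, which make the localization maps injective before any field of fractions is taken. You are also right to worry about whether the $T_\nu$-fixed loci of $X_\Bi\times_{E_\nu}X_\Bj$ are isolated when $\Gamma$ has a loop: for a weight-$0$ loop at $i$ the fiber of $X_\Bi\to\Fl_\Bi$ contains a $T_\nu$-fixed line for each diagonal entry, so the fixed locus is positive-dimensional and the paper's flat assertion ``the fixed points of $T_\nu$ are isolated'' needs qualification in that case. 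Your proposed repair of enlarging the torus by the $\vartheta$-weighted $\cs$ does not by itself cut down a weight-$0$ loop (as you yourself note), so this residual gap is not closed by your sketch either; but flagging it explicitly is a real improvement over the paper's phrasing, and the fixed locus being a vector bundle over isolated flag fixed points is still enough for the even/formal hypotheses, so the ladder argument survives with ``matrix algebra'' replaced by ``matrix algebra over a polynomial ring.''
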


  \begin{proof}
By a result of Lunts \cite[6.2]{LuntsDG} based on work of Kaledin,
formality is unchanged by extending base field, so it suffices to prove
this formality for $\K$ a single characteristic 0 field.   If $\K=\C$,
then the algebra $ H^{BM,G_{\nu}}_*(Z )$ has a Hodge structure.   The subset of
$Z$ where we fix the relative position of the two flags, and the
Schubert cell the left flag lies is an iterated affine bundle, and
thus isomorphic to affine space.  Since
$Z$ is a union of finitely many algebraic cells, the Hodge structure
on $H^{BM,G_{\nu}}_*(Z )$ is pure.  All $A_\infty$ operations are
compatible with this Hodge structure, so purity implies that they are
homogeneous of degree 0 in the homological grading.  On the other
hand, the $A_\infty$ operation $m_k$ is homogenous of degree $2-k$,
implying that it is 0 unless $k=2$, so this $A_\infty$ structure is formal.

   The proof of faithfulness is essentially the same as \cite[4.7]{SWschur}.
 Let $U=H^*(BG_\nu)$ and $V=H^*(BT_\nu)$.  The restriction functor
    $\operatorname{Rest}^{G_\nu}_{T_\nu}$ on equivariant derived
    categories and the
    inclusion
    $\iota_{\Bi,\Bj}:(X_\Bi\times_{E_{\nu}}X_\Bj)^{T_\nu\times
      T_\nu}\hookrightarrow X_\Bi\times_{E_{\nu}}X_\Bj$ induce a
    commutative diagram
 \[\tikz[->,thick]{\matrix[row sep=3mm,column
      sep=10mm,ampersand replacement=\&]{ \& \node (b) {$
          \Hom_{U}(H^*_{G_\nu}(X_{\Bi} ),H^*_{G_\nu} (X_{\Bj}))$};\\
        \node(a) {$H^{BM,G_\nu}_*(X_{\Bi}\times_{E_\nu}X_{\Bj})$}; \&\\
        \& \node (g) {$
          \Hom_{V}(V\otimes_{U} H^*_{G_\nu}(X_{\Bi} ),V\otimes_{U} H^*_{G_\nu} (X_{\Bj} ))$};\\
        \node(c) {$H^{BM,T_\nu\times
            T_\nu}_*(X_{\Bi}\times_{E_\nu}X_{\Bj})$}; \&\\ \&
        \node(d){$
          \Hom_{V}(H^*_{T_\nu} (X_{\Bi}),H^*_{T_\nu} (X_{\Bj}))$};\\
        \node(e) {$H^{BM,T_\nu\times
            T_\nu}_*((X_\Bi\times_{E_{\nu}}X_\Bj)^{T_\nu\times
            T_\nu})$}; \&\\ \& \node(f){$
          \Hom_{V}(H^*_{T_\nu}(X_\Bi^{T_\nu}),H^*_{T_\nu} (X_\Bj^{T_\nu})) $};\\
      }; \draw (a)-- node [above,midway]{$\star-$} (b); \draw (c)--
      node [above,midway]{$\star-$} (d); \draw (e)-- node
      [above,midway]{$\star-$} (f); \draw (a)-- node
      [above,left]{$\operatorname{Rest}^{G_\nu}_{T_\nu}$} (c); \draw
      (c)--node [left,midway]{$\iota^*_{\Bi,\Bj}
        (\iota_{\Bi,\Bj})_*\iota^*_{\Bi,\Bj}$} (e); \draw (b)-- node
      [right,midway]{$\operatorname{id}_V\otimes -$} (g); \draw (g)--
      node [above,midway,rotate=-90]{$\sim$} (d); \draw (d)-- node
      [right,midway]{$\iota^*_{\Bj}\circ -\circ (\iota_\Bi)_*$}(f);
    }\]
   The composition of the two vertical lines are both injective, since
   $V$ is a free module of finite rank over $U$ and the Borel-Moore
   homology of every space that appears is even and equivariantly
   formal. Furthermore, the bottom rung of the ladder is injective.  Thus, any class $a\in
    H^{BM,G_\nu}_*(X_{\Bi}\times_{E_\nu}X_{\Bj})$ which the top action
    kills is also killed by the map from the northwest corner to the
    southeast.  This map is injective, so we are done.
  \end{proof}

  \begin{lemma}
    The non-zero classes $[X^{\sigma}_{\Bi,\Bj}]$ are a
    basis of $H^{BM,G_{\nu}}_*\big(X_\Bi\times_{E_\nu}X_\Bj\big)$
    over $H^*_{G_\nu}(\Fl_{\Bi})$.
  \end{lemma}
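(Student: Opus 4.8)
The plan is to run the standard cellular-fibration argument for convolution algebras (compare \cite[\S 5.5]{CG97} and the analogous computations in \cite[\S 4]{SWschur} and \cite{VV}), stratifying $X_\Bi\times_{E_\nu}X_\Bj$ by relative position. Two preliminary observations are useful. First, $X_\Bi$ is a $G_\nu$-equivariant vector bundle over $\Fl_\Bi$ — the fibre over a loaded flag is the linear space of representations $f$ compatible with it — so $H^*_{G_\nu}(X_\Bi)\cong H^*_{G_\nu}(\Fl_\Bi)$; and since the dimension vector of a loaded flag jumps only by one-dimensional pieces, $\Fl_\Bi$ is a product of complete flag varieties for the groups $GL(V_i)$, so $H^*_{G_\nu}(\Fl_\Bi)$ is the polynomial ring on the classes $c_1(\mathscr{L}_k)$. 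Second, the module structure in the lemma is the one obtained by pulling back cohomology along the first projection $X_\Bi\times_{E_\nu}X_\Bj\to X_\Bi\to\Fl_\Bi$ and capping.

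First I would fix a total order on the relative positions $\tau$ refining the closure order on the locally closed strata $\becircled{X}^\tau_{\Bi;\Bj}$, so that each $X^\tau_{\Bi;\Bj}=\overline{\becircled{X}^\tau_{\Bi;\Bj}}$ is the union of $\becircled{X}^\tau_{\Bi;\Bj}$ with strata earlier in the order. Next I would analyse one stratum. The projection $\becircled{X}^\tau_{\Bi;\Bj}\to\Fl_\Bi\times\Fl_\Bj$ has image the single $G_\nu$-orbit $O_\tau$ of pairs of flags in relative position $\tau$, and over $O_\tau$ it is a $G_\nu$-equivariant vector bundle (fibre the representations compatible with \emph{both} flags, of constant rank by homogeneity); composing with the first projection, $O_\tau\to\Fl_\Bi$ is a Zariski-locally trivial fibration whose fibre — the $I$-homogeneous complete flags in fixed relative position $\tau$ to a fixed one — is an affine space (a coordinate slice of a Schubert cell). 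Hence $\becircled{X}^\tau_{\Bi;\Bj}\to\Fl_\Bi$ induces an isomorphism on $G_\nu$-equivariant Borel–Moore homology up to a degree shift, so $H^{BM,G_\nu}_*(\becircled{X}^\tau_{\Bi;\Bj})$ is a free $H^*_{G_\nu}(\Fl_\Bi)$-module of rank one, generated by the fundamental class $[\becircled{X}^\tau_{\Bi;\Bj}]$ (which exists since $\becircled{X}^\tau_{\Bi;\Bj}$, a vector bundle over a homogeneous space, is irreducible).

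Then I would assemble the strata. As established in the paragraph preceding the lemma, every stratum and the total space $X_\Bi\times_{E_\nu}X_\Bj$ have even, equivariantly formal Borel–Moore homology, so the long exact sequences of $G_\nu$-equivariant Borel–Moore homology attached to the closed–open decompositions of the stratification split into short exact sequences of free $H^*_{G_\nu}(\Fl_\Bi)$-modules. Inducting along the order, for each $\tau$ the sequence
\[0\to H^{BM,G_\nu}_*\big(X^\tau_{\Bi;\Bj}\setminus\becircled{X}^\tau_{\Bi;\Bj}\big)\to H^{BM,G_\nu}_*\big(X^\tau_{\Bi;\Bj}\big)\to H^{BM,G_\nu}_*\big(\becircled{X}^\tau_{\Bi;\Bj}\big)\to 0\]
has outer terms free with bases $\{[X^\sigma_{\Bi;\Bj}]\}_{\sigma<\tau}$ (by induction) and $[\becircled{X}^\tau_{\Bi;\Bj}]$ (by the previous step); as the fundamental class $[X^\tau_{\Bi;\Bj}]$ restricts on the dense open stratum to $[\becircled{X}^\tau_{\Bi;\Bj}]$, it lifts the generator, so $\{[X^\sigma_{\Bi;\Bj}]\}_{\sigma\le\tau}$ is a free $H^*_{G_\nu}(\Fl_\Bi)$-basis of $H^{BM,G_\nu}_*(X^\tau_{\Bi;\Bj})$; taking $\tau$ maximal proves the lemma. (The index set — relative positions $\tau$ with $\becircled{X}^\tau_{\Bi;\Bj}\neq\varnothing$, i.e.\ permutations relating the unloadings of $\Bi$ and $\Bj$ — is exactly the one indexing the permutation basis $b_\pi$ of Theorem~\ref{th:basis}, as it must be.)

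The substantive point, and the one I would spend the most care on, is the middle step: checking that the relative-position stratum $O_\tau$ genuinely fibres over $\Fl_\Bi$ with affine-space fibres, that the compatibility condition $f_e(F_a)\subset F_{a-\vartheta_e}$ for \emph{both} flags cuts out an honest vector subbundle over $O_\tau$, and keeping the degree shifts straight so that the grading on $H^{BM,G_\nu}_*(X_\Bi\times_{E_\nu}X_\Bj)$ matches the intended one on $\walg^\vartheta_\nu$ (this is exactly where the codimensions of the $X^\tau_{\Bi;\Bj}$, hence the degrees of $\psi_k$ and $y_k$, appear). Granting evenness and equivariant formality — which the preceding paragraph supplies — the homological bookkeeping in the assembly step is routine.
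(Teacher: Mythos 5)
Your overall strategy is the same as the paper's: stratify $X_\Bi\times_{E_\nu}X_\Bj$ by relative position, show each open stratum is an iterated affine bundle over $\Fl_\Bi$ (hence free of rank one, generated by the fundamental class, in even degree), then run the long exact sequences of the stratification, which split by evenness. The analysis of the individual strata is correct and matches the paper's.

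There is, however, a genuine gap in how you assemble the strata. Your inductive statement is that $\{[X^\sigma_{\Bi;\Bj}]\}_{\sigma\le\tau}$ is a basis of $H^{BM,G_\nu}_*(X^\tau_{\Bi;\Bj})$, and you write the short exact sequence for $(X^\tau,\becircled{X}^\tau)$ claiming the closed complement $X^\tau\setminus\becircled{X}^\tau$ has basis $\{[X^\sigma]\}_{\sigma<\tau}$. This is not right: $X^\tau=\overline{\becircled{X}^\tau}$ is the union of $\becircled{X}^\tau$ with the strata below $\tau$ in the \emph{closure} order, not with \emph{all} strata earlier in your chosen total order. If $\sigma$ and $\tau$ are incomparable in closure order, $\becircled{X}^\sigma\not\subset X^\tau$, so $[X^\sigma]$ is not even a class in $H^{BM,G_\nu}_*(X^\tau)$, and your claimed basis of the closed complement is wrong. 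The same issue hits the final step: ``taking $\tau$ maximal proves the lemma'' presumes $X^{w_0}_{\Bi;\Bj}$ is the whole fiber product, which requires $X_\Bi\times_{E_\nu}X_\Bj$ to be irreducible; since the space of representations compatible with a pair of flags can jump in dimension as the pair degenerates, this irreducibility is not automatic. The correct inductive object — and the one the paper uses — is the closed set $\bigcup_{\sigma\preceq\tau}X^\sigma_{\Bi;\Bj}$, whose open stratum is $\becircled{X}^\tau$ with closed complement exactly $\bigcup_{\sigma\prec\tau}X^\sigma$ (since the total order refines closure order); for the maximal $\tau$ this union is the whole fiber product by construction, with no irreducibility assumption. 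Replacing $X^\tau$ by $\bigcup_{\sigma\preceq\tau}X^\sigma$ throughout the assembly step repairs the argument and recovers the paper's proof.

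One more small remark: the paper derives the evenness and the splitting of the long exact sequences \emph{inside} the induction (each $\becircled{X}^\tau$ has free, even, equivariant Borel--Moore homology, so evenness propagates up the union), whereas you appeal to the evenness/equivariant formality statement proved for the total space in the paragraph preceding the lemma. Either route works once the inductive set is corrected, but the paper's is the more self-contained of the two.
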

  \begin{proof}
   Pick a total order on permutations refining Bruhat order; our inductive
    statement is that $[X^{\sigma}_{\Bi,\Bj}]$ for $\si\leq \tau$ is
    a basis of $H^{BM,G_{\bd}}_*(\cup_{\si\preceq \tau}X_{\Bi;\Bj}^{\si})$.  If
    $\tau=1$, then $X_{\Bi;\Bj}^{1}$ is an affine bundle over
    $\Fl_\Bi=\Fl_{\Bj}$, since the left and right flags must agree.
    Thus, its equivariant Borel-Moore homology is freely generated over
    $H^*(\Fl_{\Bi})$ by $[X^{1}_{\Bi,\Bj}]$.

Now, by induction, let $\tau'$ be maximal w.r.t $\tau'\prec \tau$.
Then we have long exact sequence
\[\cdots \longrightarrow H^{BM,G_\bd}_i(\cup_{\si\preceq
  \tau'}X_{\Bi;\Bj}^{\si})\to H^{BM,G_\bd}_i(\cup_{\si\preceq
  \tau}X_{\Bi;\Bj}^{\si})\to
H^{BM,G_\bd}_i(\becircled{X}_{\Bi;\Bj}^{\tau})\to \cdots\] The space
$\becircled{X}_{\Bi;\Bj}^{\tau} $ is an affine bundle over the space
in $\Fl_{\Bi}\times \Fl_{\Bj}$ with relative position $\tau$, since
being compatible with two fixed flags is a linear condition on matrix
coefficients of quiver representations, and all fibers are conjugate under
the action of $G_\nu$. This space is, in turn, an affine bundle over $\Fl_{\Bi}$
since the space of flags of relative position exactly $\tau$ to a
fixed flag is an affine space. Thus, the equivariant Borel-Moore homology $ 
H^{BM,G_\bd}_i(\becircled{X}_{\Bi;\Bj}^{\tau})$ is
free of rank 1 over $H^*(BG_{\Bi})$ if the unloading of $\Bi$ is sent
to the unloading of $\Bj$ by $\tau$, and rank 0
otherwise (since the space is empty). Furthermore, it is generated by the fundamental class of
$\becircled{X}_{\Bi;\Bj}^{\tau}$ and in particular all lies in even
degree.  This shows that, by induction, all groups appearing in the
above sequence vanish in odd degree, so the long exact sequence splits into a sum
of short exact sequences.

Thus, any subset of $ H^{BM,G_\bd}_i(\cup_{\si\preceq
  \tau}X_{\Bi;\Bj}^{\si})$ consisting of a basis of
$H^{BM,G_\bd}_i(\cup_{\si\preceq \tau'}X_{\Bi;\Bj}^{\si})$ and an
element projecting to $[\becircled{X}_{\Bi;\Bj}^{\tau}]$ (if that
space is non-empty) is a basis of $ H^{BM,G_\bd}_i(\cup_{\si\preceq
  \tau}X_{\Bi;\Bj}^{\si})$.  Since $[{X}_{\Bi;\Bj}^{\tau}]$ projects to
$[\becircled{X}_{\Bi;\Bj}^{\tau}]$ if that class is non-zero, and is
itself 0 otherwise, induction yields the desired fact.
  \end{proof}

\begin{proof}[Proof of Theorem~\ref{thm:R-ext}]
First, the left hand isomorphism is an immediate consequence of \cite[8.6.7]{CG97}.

Now we wish to confirm that the action of the classes
$[X^{1}_{\Bi;\Bj}]$ and $[X^{s_k}_{\Bi;\Bj}]$ act on
\[\bigoplus_{\Bi\in B(\nu)}H^*_{G_\nu}(X_\Bi)\cong \bigoplus_{\Bi\in
  B(\nu)}\K[y_1,\dots,y_d]\] in the same way as $e_{\Bi}b_1e_{\Bj}$ and $e_{\Bi}\psi_ke_{\Bj}$.

\begin{itemize}
\item If going from $\Bi$ to $\Bj$ passes a strand from right of a ghost to left of it, then $X^{1}_{\Bi;\Bj}\cong X_{\Bj}$: any  $\Bj$-loaded flag is easily modified to be a $\Bi$-loaded flag using reindexing.  Thus, the desired convolution is just the pull-back map for the inclusion $X_{\Bj}\to X_{\Bi}$ in cohomology, which sends Chern classes to Chern classes, and induces the identity map on $\C[y_1,\cdots,y_n]$. 
\item If going from $\Bi$ to $\Bj$ passes the $j$th strand from left of a ghost for $e$ of the $k$th strand to right, then symmetrically $X^{1}_{\Bi;\Bj}\cong X_{\Bi}$.  Thus, the desired convolution is the pushforward by the inclusion $X_{\Bi}\to X_{\Bj}$, which on the level of cohomology rings multiplies by the Euler class of the normal bundle for the inclusion, which is $\Hom(\mathscr{L}_j,\mathscr{L}_k)$, whose Euler class is $y_k-y_j=Q_e(y_k,y_j)$.
\end{itemize}
This deals with all crossings of strands and ghosts.  We now need only consider the case where no ghosts separate the $k$ and $k+1$st strands, and we apply $\psi_k$.
\begin{itemize}
\item If $k$th and $k+1$st strands have different labels, then $X^{s_k}_{\Bi;\Bj}$ is the graph of an isomorphism between the sets of loadings $X_{\Bi}$ and $X_{\Bj}$; there is a unique $\Bj$-loaded flag which agrees with a given $\Bi$-loaded flag at all jumps but the $k$th.  The only effect of this isomorphism is that it reindexes the line bundles of interest to us via the permutation $s_k$; hence this is also the effect on Chern classes.
\item If the $k$th and $k+1$st strands have the same labels, we can take $\Bi=\Bj$. Let $W$ be the subvariety of $X_{\Bi}$ where all loops of weight $0$ send the $k+1$st step of the flag to the $k-1$st, and let $\mathscr{L}_{k;k+1}$ be the rank 2 vector bundle on $W$ given by the $k+1$st  step of the flag modulo the $k-1$st.  The space $X^{s_k}_{\Bi;\Bi}$ is the projectivization over $W$ of the vector bundle $\mathscr{L}_{k;k+1}$.  Thus, if $i\colon W\to X_{\Bi}$ is the inclusion, and $p\colon X^{s_k}_{\Bi;\Bi}\to W$ the projection, then $[X^{s_k}_{\Bi;\Bi}]=i_*p_*p^*i^*$.  The two pullbacks just act as the identity; the pushforward $p_*$ acts as Demazure operator in the variables $y_k$ and $y_{k+1}$, and the pushforward $i_*$ multiplies by the Euler class of the normal bundle, which is 1 if there is no loop of degree 0, and $\prod_eQ_e(y_k,y_{k+1})$ where $e$ ranges over such loops otherwise.  Applying Definition \ref{def:arbitrary}, we see that this matches the action of Proposition \ref{prop:action}.
\end{itemize}
This shows that we have an injective algebra map $a\colon
e_{\Bi}\walg^\vartheta_\nu e_{\Bj}\to
H^{BM,G_{\nu}}_*\big(X_\Bi\times_{E_\nu}X_\Bj\big)$.  Finally, we need
to confirm that this map is surjective.

We let $e_{\Bi}x_\sigma e_{\Bj}=[X^{\sigma}_{\Bi,\Bj}]$ if the word in simple
roots attached to $\Bj$ is the permutation by $\sigma$ of that for
$\Bi$ and 0
otherwise.

Now, consider a factorization of $b_\tau$ into pieces where there is
only one crossing of two strands or of a strand and a ghost. The image
$a(b_\tau)$ of this diagram is the convolution of all the classes
attached to these diagrams, which are each of the form
$[X^{s_k}_{-,-}]$ or $[X^{1}_{-,-}].$ That is, there is sequences $t_m\in \{1\}\cup \{s_1,\dots, s_n\}$ and $\Bi^{(m)}$ such that $b_\tau=e_{\Bi}b_{t_1}e_{\Bi^{(1)}}b_{t_2}\cdots e_{\Bi^{(\ell-1)}}b_{t_\ell}e_{\Bj}$ In particular we obtain a reduced
decomposition $\tau=t_{k_1}\cdots t_{k_\ell}$. Now, consider an element
$(f,F_\bullet, F_\bullet')\in \becircled{X}^{\tau}_{\Bi,\Bj}$. Consider the unique flag which has relative position $t_{k_1}\cdots t_{k_h}$
to the unloading of the left flag and $t_{k_{\ell}}\cdots t_{k_{h+1}}$
to the unloading of the right.
Let $F^h_\bullet$ be the unique $\Bi^{(h)}$-loaded flag 
whose unloading is the complete flag we have just described.
\begin{lemma} 
The $\Bi^{(h)}$-loaded flag $F^h_\bullet$ is compatible
  with the representation $f_*$. 
\end{lemma}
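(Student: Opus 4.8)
The plan is to induct along the factorization, handling one elementary piece $t_h$ at a time. Write $G^h_\bullet$ for the complete flag underlying $F^h_\bullet$, so $G^0_\bullet$ and $G^\ell_\bullet$ are the unloadings of $F_\bullet$ and $F'_\bullet$; since the loading $\Bi^{(h)}$ is fixed, compatibility of $F^h_\bullet$ with $f_*$ is really a condition on $G^h_\bullet$ relative to the positions recorded by $\Bi^{(h)}$. Because $\tau=t_{k_1}\cdots t_{k_\ell}$ is reduced, standard flag combinatorics (equivalently, the birationality of Bott--Samelson resolutions) shows that $G^0_\bullet,\dots,G^\ell_\bullet$ is the unique chain of complete flags in which $G^{h-1}_\bullet$ has relative position $\leq t_h$ to $G^h_\bullet$, while the relative position of $G^h_\bullet$ to $G^\ell_\bullet$ is the reduced product $t_{h+1}\cdots t_\ell$: consecutive flags either coincide (when $t_h=1$) or agree at every step but one (when $t_h=s_{i_h}$), and in the latter case the one differing step $j$ satisfies $G^{h-1}_{j-1}=G^h_{j-1}$ and $G^{h-1}_{j+1}=G^h_{j+1}$. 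Since $F^0_\bullet=F_\bullet$ is compatible by hypothesis, it suffices to show that compatibility of $F^{h-1}_\bullet$ forces that of $F^h_\bullet$; after the reduction above this is the same local analysis of a single step of a flag that appeared in passing from $\becircled{X}_{\Bi;\Bj}^{\tau'}$ to $\becircled{X}_{\Bi;\Bj}^{\tau}$ earlier, and breaks into the same three exceptional cases.

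When $t_h=s_{i_h}$ and the $k$th and $(k{+}1)$st strands of $\Bi^{(h-1)}$ carry different labels, the computation above identifies $X^{s_k}_{\Bi^{(h-1)};\Bi^{(h)}}$ with the graph of an isomorphism $X_{\Bi^{(h-1)}}\xrightarrow{\sim}X_{\Bi^{(h)}}$ taking an $\Bi^{(h-1)}$-loaded flag to the unique $\Bi^{(h)}$-loaded flag agreeing with it away from the $k$th jump; applying this to $F^{h-1}_\bullet$ yields a compatible flag in $X_{\Bi^{(h)}}$, and comparing its relative positions to the unloadings of $F_\bullet$ and $F'_\bullet$ shows it is $F^h_\bullet$. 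When instead the two strands are like-labelled, by some node $i$, we take $\Bi^{(h-1)}=\Bi^{(h)}$; compatibility of $F^{h-1}_\bullet$ puts it on the subvariety $W$ where every weight-$0$ loop at $i$ carries the $(k{+}1)$st step into the $(k{-}1)$st, and $F^h_\bullet$, which differs from $F^{h-1}_\bullet$ only by the choice of line in the rank-$2$ quotient $G_{k+1}/G_{k-1}$, is then compatible too: such a loop annihilates this quotient so its condition is automatic, and for every other edge the relevant condition involves only $G_{k-1}$ and $G_{k+1}$, which are unchanged, because no strand or ghost lies strictly between the two points in this elementary piece.

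The case $t_h=1$ is the main obstacle. Now $G^{h-1}_\bullet=G^h_\bullet$ and only the loading moves: a single strand, say the $k$th, labelled $i$, slides past the single ghost of an edge $e$ attached to the $m$th strand, labelled $h(e)$. Only the compatibility condition for $f_e$ can change, and it is genuinely strengthened if the slide goes in the ``tightening'' direction. The plan is to show such slides do not occur in our factorization: the convention defining $b_\tau$, that in each $\psi_k$ one crosses to the left of every available ghost, should force each elementary slide to be of the loosening type, for which $X^{1}_{\Bi^{(h-1)};\Bi^{(h)}}\cong X_{\Bi^{(h-1)}}$ sits inside $X_{\Bi^{(h)}}$; granting this, every $\Bi^{(h-1)}$-compatible flag is $\Bi^{(h)}$-compatible and the induction closes. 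Making this precise is the delicate point, and where I expect most of the work: it amounts to tracking, for each edge, which of the $k$th and $(k{+}1)$st strands (or which of their ghosts) is transported, and on which side of the other strand it passes, throughout the left-handed resolution of the crossing; should a tightening slide nevertheless appear, one would instead argue that the stronger condition it imposes is already implied by the compatibility of $F^{h-1}_\bullet$ with the whole representation $f_*$, using that the underlying complete flag has not changed. With the lemma established, the Bott--Samelson correspondence maps birationally onto $X^{\tau}_{\Bi;\Bj}$ and is an isomorphism over the open locus $\becircled{X}_{\Bi;\Bj}^{\tau}$, so $a(b_\tau)$ equals $[X^{\tau}_{\Bi;\Bj}]$ modulo classes supported on lower strata; combined with the spanning lemma and the injectivity of $a$ already proved, this gives the surjectivity needed for Theorem~\ref{thm:R-ext}.
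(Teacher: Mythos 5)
Your inductive, one-elementary-move-at-a-time approach diverges substantially from the paper's argument, and the divergence is where the gap lies. You correctly identify that the step $t_h=1$ (a strand sliding past a ghost) is the crux, and you honestly flag that you cannot close it: you propose to show that only ``loosening'' slides occur, but this is false in general. The loading $\Bj$ can perfectly well place a strand on the \emph{tighter} side of a ghost relative to $\Bi$, so the straight-line homotopy must include tightening slides. Your fallback --- that the stronger condition ``is already implied by the compatibility of $F^{h-1}_\bullet$ with $f_*$, using that the underlying complete flag has not changed'' --- does not work either: compatibility is a condition on the complete flag \emph{and} the loading, and compatibility with the looser loading $\Bi^{(h-1)}$ does not imply the stronger constraint imposed by $\Bi^{(h)}$. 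The missing ingredient, which your induction never invokes, is compatibility with the \emph{target} flag $F'_\bullet$.

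The paper's proof is global and much shorter. Since the point lies in $\becircled X^\tau_{\Bi;\Bj}$, the two flags $F_\bullet$ and $F'_\bullet$ are in relative position exactly $\tau$, so one can pick a single basis $\{v_m\}$ simultaneously adapted to both; each $v_m$ then carries two real coordinates $w_\Bi(v_m)$ and $w_\Bj(v_m)$. Compatibility of $f$ with \emph{both} end flags says that if $v_k$ appears in $f_e(v_m)$ then $w_\Bi(v_m)-w_\Bi(v_k)\geq\vartheta_e$ \emph{and} $w_\Bj(v_m)-w_\Bj(v_k)\geq\vartheta_e$. The defining property of $b_\tau$ is that, up to isotopy, the distance between any fixed pair of strands is monotone in $y$; hence $w_{\Bi^{(h)}}(v_m)-w_{\Bi^{(h)}}(v_k)$ lies between the two endpoint values, both of which are $\geq\vartheta_e$, so the inequality holds at every intermediate slice and $F^h_\bullet$ is compatible. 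This kills all three of your cases at once, with no case analysis and no need to control which side of each ghost a given slide falls on. If you want to keep an inductive scaffolding, you must at least carry along the constraint from $F'_\bullet$ in the inductive hypothesis (e.g., track compatibility with both the forward loading $\Bi^{(h)}$ and the backward relative-position data to $\Bj$), but at that point you have essentially reconstructed the paper's global argument in more cumbersome form.

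A smaller issue: in your like-labelled $s_k$ case you assert that compatibility of $F^{h-1}_\bullet$ places it on $W$ (where weight-$0$ loops carry $G_{k+1}$ into $G_{k-1}$). Compatibility only gives $f_e(G_{k+1})\subset G_{k+1}$ for such a loop, not the stronger containment $f_e(G_{k+1})\subset G_{k-1}$; this step would also need justification. The paper's basis argument sidesteps this entirely.
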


\begin{proof} 
Without loss of generality, we can assume that both
  $F_\bullet$ and $F_{\bullet}'$ are coordinate flags for a single
  basis, which is in bijection with the points in the loadings $\Bi$
  and $\Bj$; we let $w_i$ and $w_j$ be the accompanying positions on
  the real line. By the compatibility with $F_\bullet$ and
  $F_\bullet'$, the image $f_e(v_m)$ is in the span of $v_k$ with
  $w_i(v_k) \leq w_i(v_m)-\vartheta_e$ and $w_j(v_k) \leq
  w_j(v_m)-\vartheta_e$.

One of the essential characteristics of $b_\tau$ is that up to isotopy, we can assume that the distance between any pair of strands monotonically increases or decreases, so we may assume that the distance between the weights associated to $v_k$ and $v_m$ in $\Bi^{(h)}$ are strictly between that for $\Bi$ and $\Bj$.  Thus, the same inequalities hold for every slice, and we are done.
\end{proof}

Thus, we see that the map from the fiber product \[q\colon
X^{t_1}_{\Bi;\Bi^{(1)}}
\times_{X_\Bi^{(1)}}X^{t_2}_{\Bi^{(1)};\Bi^{(2)}} \times
_{X_\Bi^{(2)}}\cdots \times_{X_\Bi^{(\ell-1)}}
X^{t_\ell}_{\Bi^{(\ell-1)};\Bj}\to X^{\tau}_{\Bi;\Bj}\] must map
bijectively over $\becircled{X}^\tau_{\Bi;\Bj}$; at each intermediate point, we have a single unique choice for the $\Bi^{(h)}$-loaded flag compatible with $f_*$, which is, of course, $F^h_*$.

Thus, we have that \[a(b_\tau)=q_*[ X^{t_1}_{\Bi;\Bi^{(1)}} \times_{X_\Bi^{(1)}}\cdots \times_{X_\Bi^{(\ell-1)}} X^{t_\ell}_{\Bi^{(\ell-1)};\Bj}]=[X^\tau_{\Bi;\Bj}]+\sum_{\tau<\tau} r_\tau(y_1,\dots, y_n) [X^\tau_{\Bi;\Bj}] .\]

Thus, the matrix of the map $a$ written in terms of the basis of $e_\Bi \walg^\vartheta
e_{\Bj}$ given by $b_\tau$'s and that for the Borel-Moore homology $H^{BM,G_{\nu}}_*\big(X_\Bi\times_{E_{\nu}}X_\Bj\big)$ given by $[X^\tau_{\Bi;\Bj}]$'s is upper-triangular with 1's on the diagonal and thus an isomorphism.
\end{proof}

Put another way:
\begin{corollary}
  There is a fully faithful additive functor $\gamma\colon
  \walg^\vartheta_\nu\pmmod\to D(E_\nu /G_\nu)$ sending $[\walg^\vartheta_\nu
  e_{\Bi}]\mapsto Y_\Bi$.
\end{corollary}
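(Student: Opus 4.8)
The plan is to obtain this directly from Theorem~\ref{thm:R-ext} by a standard tilting-type argument. Write $Y=\bigoplus_{\Bi\in B(\nu)}Y_\Bi\in D(E_\nu/G_\nu)$. What matters from Theorem~\ref{thm:R-ext} is not just an abstract isomorphism but a \emph{graded algebra} isomorphism $\Ext^\bullet_{G_\nu}(Y,Y)\cong\walg^\vartheta_\nu$ carrying the projector onto the summand $Y_\Bi$ to the idempotent $e_\Bi$, hence identifying $\Ext^\bullet_{G_\nu}(Y_\Bj,Y_\Bi)$ with $e_\Bi\walg^\vartheta_\nu e_\Bj$ compatibly with composition, and intertwining Verdier duality with the anti-automorphism $a\mapsto a^{*}$. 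The corollary is essentially a repackaging of this.

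Concretely, I would first construct $\gamma$ on the full additive subcategory $\mathcal{A}\subseteq\walg^\vartheta_\nu\pmmod$ of finite direct sums of the projectives $\walg^\vartheta_\nu e_\Bi$. Set $\gamma(\walg^\vartheta_\nu e_\Bi)=Y_\Bi$. A morphism $\varphi\colon\walg^\vartheta_\nu e_\Bj\to\walg^\vartheta_\nu e_\Bi$ is right multiplication by an element of $e_\Bj\walg^\vartheta_\nu e_\Bi$; applying $*$ moves it into $e_\Bi\walg^\vartheta_\nu e_\Bj\cong\Ext^\bullet_{G_\nu}(Y_\Bj,Y_\Bi)$, and I let $\gamma(\varphi)$ be the corresponding morphism $Y_\Bj\to Y_\Bi$ (here $\walg^\vartheta_\nu\pmmod$ carries its internal grading, which $\gamma$ matches with the cohomological shift in $D(E_\nu/G_\nu)$, exactly as in Theorem~\ref{thm:R-ext}). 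Since $*$ reverses composition and the $\Ext$-algebra isomorphism is an algebra isomorphism, one checks $\gamma(\psi\circ\varphi)=\gamma(\psi)\circ\gamma(\varphi)$, so $\gamma$ is a covariant additive functor $\mathcal{A}\to D(E_\nu/G_\nu)$, and it is fully faithful on $\mathcal{A}$ because on each morphism space it is a composite of bijections.

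It then remains to extend $\gamma$ from $\mathcal{A}$ to all of $\walg^\vartheta_\nu\pmmod$. Since $1=\sum_{\Bi\in B(\nu)}e_\Bi$, every finitely generated projective $\walg^\vartheta_\nu$-module is a summand of an object of $\mathcal{A}$, so $\walg^\vartheta_\nu\pmmod$ is the idempotent completion of $\mathcal{A}$; as $D(E_\nu/G_\nu)$ (or its natural dg-enhancement) has split idempotents, $\gamma$ extends, uniquely up to isomorphism, to a fully faithful additive functor on the idempotent completion, with $\gamma(\walg^\vartheta_\nu e_\Bi)=Y_\Bi$ by construction. Equivalently, one may phrase the argument as an instance of dg-Morita theory: the thick subcategory of $D(E_\nu/G_\nu)$ generated by $Y$ is equivalent, via $R\Hom(Y,-)$, to the perfect derived category of $\walg^\vartheta_\nu$, inside which $\walg^\vartheta_\nu\pmmod$ sits as the complexes concentrated in degree $0$.

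I expect the only real subtlety to be one that is already resolved inside Theorem~\ref{thm:R-ext}: one needs the $\Ext$-algebra isomorphism to be compatible with composition and with the idempotent decomposition (so that it yields a functor, not merely a family of Hom-space isomorphisms) and with $*$/Verdier duality (so that the variance and the left-versus-right sidedness come out right, producing the left projective $\walg^\vartheta_\nu e_\Bi$ rather than its transpose). Granting those compatibilities, the remaining steps — functoriality, full faithfulness on $\mathcal{A}$, and idempotent completion — are formal.
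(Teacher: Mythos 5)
Your argument is correct and is exactly the standard unpacking that the paper's ``Put another way'' silently invokes: Theorem~\ref{thm:R-ext} already gives the graded algebra isomorphism $\Ext^\bullet_{G_\nu}(Y,Y)\cong\walg^\vartheta_\nu$ compatibly with the idempotent decomposition and with $*$/Verdier duality, and the passage to the fully faithful functor on $\walg^\vartheta_\nu\pmmod$ is the formal Morita/idempotent-completion step you describe. The care you take with $*$ to fix the variance issue (so that one lands on $\walg^\vartheta_\nu e_\Bi$ rather than $e_\Bi\walg^\vartheta_\nu$) is the one genuine subtlety, and you handle it correctly.
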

If $\Gamma$ is produced by the Crawley-Boevey trick on another graph,
we let $G_\nu'$ be the subgroup of $G$ which only acts on old
vertices.  This is a codimension 1 subgroup.  

If we let $Y_\Bi'$ be the pullback of $Y_\Bi$ from $E_\nu/G_\nu$ to
$E_\nu/G_\nu'$.  Repeating the proof of Theorem \ref{thm:R-ext} in
this context, we arrive at almost the same result, except that we have
killed the Chern class of any line bundle attached to a representation
which is trivial restricted to $G_\nu'$, that is the dot on the unique
strand labeled with $\al_0$.  That is:
\begin{corollary}
  We have isomorphisms of dg-algebras \[\Ext^\bullet_{G_\nu}
  \Big(\bigoplus_{\Bi\in B(\nu)}Y_\Bi',\bigoplus_{\Bi\in
    B(\nu)}Y_\Bi' \Big)\cong \bigoplus_{\Bi,\Bj\in B(\nu)}H^{BM,G_{\nu}'}_*\big(X_\Bi\times_{E_{\nu}}X_\Bj\big)\cong
  \bar \walg^\vartheta_\nu\] where the RHS has trivial differential.   In
  particular, if we choose a tensor product weighting, we have an
  isomorphism \[\Ext^\bullet_{G_\nu}
  \Big(\bigoplus_{\Bi\in B(\nu)}Y_\Bi',\bigoplus_{\Bi\in
    B(\nu)}Y_\Bi' \Big)\cong
  \tilde T^\bla_{\la-\nu}\]
\end{corollary}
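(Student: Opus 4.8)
The plan is to rerun the proof of Theorem~\ref{thm:R-ext} essentially verbatim, with the group $G_\nu$ replaced throughout by the codimension-one subgroup $G_\nu'$ that fixes the one-dimensional shadow space $V_0\cong\C$ pointwise. Since $G_\nu/G_\nu'\cong GL(V_0)\cong\C^\times$, the map of quotient stacks $E_\nu/G_\nu'\to E_\nu/G_\nu$ is base change along $\mathrm{pt}\to B(\C^\times)$, and $Y'_\Bi$ is by construction the pullback of $Y_\Bi$ along it. The map $p\colon X_\Bi\to E_\nu$ is proper and equivariant for both groups, so \cite[8.6.7]{CG97} still identifies the $\Ext$-algebra of the $Y'_\Bi$ with the $G_\nu'$-equivariant Borel--Moore homology of the fibre product $\bigsqcup_{\Bi,\Bj}X_\Bi\times_{E_\nu}X_\Bj$; this is the left-hand isomorphism. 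The torus-localization argument of the first lemma above goes through unchanged, because $T_\nu$ is still a maximal torus of $G_\nu'$, $H^*(BT_\nu)$ is still a finite free module over $H^*(BG_\nu')$, and the spaces $X_\Bi\times_{E_\nu}X_\Bj$ remain $G_\nu'$-equivariantly formal with vanishing odd homology; this gives formality of the $\Ext$-algebra and a faithful action on $\bigoplus_\Bi H^*_{G_\nu'}(X_\Bi)$.

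The first genuinely new point is the polynomial representation. At the unique $0$-labelled point of an $\Bi$-loaded flag the tautological one-step $\mathscr{L}_{k_0}$ is $F'/F\cong V_0$, a constant line bundle on which $G_\nu$ acts through the scalar character $GL(V_0)=\C^\times$; its restriction to $G_\nu'$ is equivariantly trivial, so $c_1^{G_\nu'}(\mathscr{L}_{k_0})=0$, whereas $c_1^{G_\nu}(\mathscr{L}_{k_0})$ is exactly the generator of $H^*(B(\C^\times))\subset H^*(BG_\nu)$ killed under the base change. Thus $H^*_{G_\nu'}(\Fl_\Bi)=\K[y_1,\dots,y_d]/(y_{k_0})$, where $k_0$ is the step belonging to the $0$-labelled point. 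With this identification the two lemmas above on bases and faithfulness transcribe word for word with $G_\nu'$ in place of $G_\nu$: the nonzero $[X^\sigma_{\Bi;\Bj}]$ are a basis of $H^{BM,G_{\nu}'}_*(X_\Bi\times_{E_\nu}X_\Bj)$ over $H^*_{G_\nu'}(\Fl_\Bi)$, by the same affine-bundle induction on Bruhat order, and this convolution algebra acts faithfully on $\bigoplus_\Bi H^*_{G_\nu'}(X_\Bi)$.

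Next I would set up the comparison map exactly as in the proof of Theorem~\ref{thm:R-ext}. The same local pull-back / push-forward analysis shows that $[X^1_{\Bi;\Bj}]$, $[X^{s_k}_{\Bi;\Bj}]$ and the Chern classes act on $\bigoplus_\Bi H^*_{G_\nu'}(\Fl_\Bi)$ by the formulas of Proposition~\ref{prop:action}, the only difference being that the dot on the $0$-labelled strand now acts by $c_1^{G_\nu'}(\mathscr{L}_{k_0})=0$. Hence the action of $\walg^\vartheta_\nu$ factors through the two-sided ideal generated by dots on the $0$-labelled strand, i.e.\ through the reduced quotient $\bar\walg^\vartheta_\nu$ of \S\ref{sec:Crawley-Boevey}, and (using faithfulness of the convolution action, as in Theorem~\ref{thm:R-ext}) this upgrades to an algebra map $a\colon\bar\walg^\vartheta_\nu\to\bigoplus_{\Bi,\Bj}H^{BM,G_{\nu}'}_*(X_\Bi\times_{E_\nu}X_\Bj)$. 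To see $a$ is an isomorphism I would rerun the triangularity argument verbatim: factor each $b_\tau$ into elementary one-crossing pieces, invoke the lemma that the intermediate loaded flags $F^h_\bullet$ remain compatible with the representation $f_*$, and conclude $a(b_\tau)=[X^\tau_{\Bi;\Bj}]+\sum_{\tau'<\tau}r_{\tau'}(y_1,\dots,y_d)\,[X^{\tau'}_{\Bi;\Bj}]$. By Theorem~\ref{th:basis} together with the definition of the reduced quotient, the diagrams $b_\tau$ with no dot on the $0$-labelled strand are a basis of $e_\Bi\bar\walg^\vartheta_\nu e_\Bj$ over the polynomial ring in the remaining dots, and the $[X^\tau_{\Bi;\Bj}]$ are the matching basis of the Borel--Moore homology; the triangular relation then forces $a$ to be bijective.

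Finally, the ``in particular'' clause is immediate: for a tensor product weighting $\varpi$, the theorem of \S\ref{sec:relat-tens-prod} identifies $\tilde T^\bla_{\la-\nu}$ with the reduced quotient $\bar\walg^\varpi_{\tilde\nu}$, so the chain of isomorphisms above specializes to $\Ext^\bullet_{G_\nu}(\bigoplus_{\Bi\in B(\nu)}Y'_\Bi,\bigoplus_{\Bi\in B(\nu)}Y'_\Bi)\cong\tilde T^\bla_{\la-\nu}$. I expect the only delicate point in the whole argument to be the bookkeeping for the polynomial representation --- verifying that passing from $G_\nu$ to $G_\nu'$ kills precisely the single Chern class $c_1(\mathscr{L}_{k_0})$ and nothing more, so that the convolution algebra is exactly $\bar\walg^\vartheta_\nu$ and not a further degeneration. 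Everything else is a faithful transcription of the argument for Theorem~\ref{thm:R-ext}.
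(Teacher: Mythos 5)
Your proposal is correct and follows the same route the paper takes: the paper's ``proof'' consists of the two sentences before the corollary, instructing the reader to rerun the argument for Theorem~\ref{thm:R-ext} with $G_\nu$ replaced by the codimension-one subgroup $G_\nu'$, and observing that the only change is that the equivariant Chern class of the line bundle coming from $V_0$ --- i.e.\ the dot on the unique $0$-labelled strand --- becomes zero.

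One small slip to fix: you assert that ``$T_\nu$ is still a maximal torus of $G_\nu'$.'' It is not, since $T_\nu\not\subset G_\nu'$; the maximal torus of $G_\nu'$ is $T_\nu'=T_\nu\cap G_\nu'$, and the localization ladder should be written with $T_\nu'$ in place of $T_\nu$ (and $H^*(BT_\nu')$ in place of $H^*(BT_\nu)$). Since $H^*(BT_\nu')$ is finite free over $H^*(BG_\nu')$ and the fixed-point sets are unchanged, the rest of your formality and faithfulness argument goes through verbatim. Everything else --- the identification $H^*_{G_\nu'}(\Fl_\Bi)\cong\K[y_1,\dots,y_d]/(y_{k_0})$, the factoring of the polynomial action through $\bar\walg^\vartheta_\nu$, the triangularity of $a$ on the $b_\tau$ basis, and the specialization to $\tilde T^\bla_{\la-\nu}$ for a tensor product weighting --- matches the intended argument.
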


This result naturally extends to the bimodule
$B^{\vartheta,\vartheta'}$ defined earlier.  The proof is so similar
to that of Theorem \ref{thm:R-ext} that we leave it to the reader:

\begin{theorem}\label{B-convolution}
  For two weightings $\vartheta_1,\vartheta_2$, we have an isomorphism
  of dg-modules:
 \[\Ext^\bullet_{G_\nu}
  \Big(\bigoplus_{\Bi\in B^1(\nu)}Y_\Bi^1,\bigoplus_{\Bj\in
    B^2(\nu)}Y_\Bj^2 \Big)\cong
  B^{\vartheta_1,\vartheta_2}_\nu\] where the left and right algebra
  actions are matched using the isomorphism of Theorem \ref{thm:R-ext}. 
\end{theorem}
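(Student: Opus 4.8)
The plan is to run the proof of Theorem~\ref{thm:R-ext} essentially verbatim, now carrying two weightings simultaneously. Apply the equivariant form of \cite[8.6.7]{CG97} to the proper map $p$ from the disjoint union $\big(\bigsqcup_{\Bi\in B^1(\nu)}X_\Bi^1\big)\sqcup\big(\bigsqcup_{\Bj\in B^2(\nu)}X_\Bj^2\big)$ to $E_\nu$, where $X_\Bi^1$ is built using $\vartheta_1$ and $X_\Bj^2$ using $\vartheta_2$; each of these spaces is still an affine bundle over a union of $G_\nu$-orbits on a product of flag varieties, so it has even, equivariantly formal Borel--Moore homology. The endomorphism dg-algebra of $p_*\K$ then decomposes into four blocks; the two diagonal blocks are $\walg^{\vartheta_1}_\nu$ and $\walg^{\vartheta_2}_\nu$ by Theorem~\ref{thm:R-ext}, and the claim becomes that the off-diagonal block $\bigoplus_{\Bi,\Bj}H^{BM,G_\nu}_*(X_\Bi^1\times_{E_\nu}X_\Bj^2)$ is isomorphic to $B^{\vartheta_1,\vartheta_2}_\nu$ as a bimodule over these two algebras.

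The three ingredients of the proof of Theorem~\ref{thm:R-ext} each transfer with no essential change. The $T_\nu$-fixed-point localization argument shows that this $\Ext$-module is formal and acts faithfully on $\bigoplus_\Bi H^*_{G_\nu}(X_\Bi^1)\oplus\bigoplus_\Bj H^*_{G_\nu}(X_\Bj^2)$, since at no point did that argument require the two sides to share a weighting. The basis argument then produces a basis $\{[X^\sigma_{\Bi;\Bj}]\}$ of $H^{BM,G_\nu}_*(X_\Bi^1\times_{E_\nu}X_\Bj^2)$ over $H^*_{G_\nu}(\Fl_\Bi)$, indexed by the relative positions $\sigma$ of the unloadings, because $\becircled{X}^\sigma_{\Bi;\Bj}$ is again an affine bundle over the relevant $G_\nu$-orbit: being compatible with two fixed flags is a linear condition on $f$ regardless of whether the two compatibilities are read using $\vartheta_1$ or using $\vartheta_2$. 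Finally, the case analysis computing how $[X^1_{\Bi;\Bj}]$ and $[X^{s_k}_{\Bi;\Bj}]$ act on polynomials is word-for-word the same; the only new feature is that a strand may pass one of its ghosts between the bottom loading and the top loading because the \emph{weighting} interpolated rather than because a point moved, and in exactly that case the geometry contributes the Euler-class factor $y_k-y_j=Q_e(y_k,y_j)$, matching the local relations defining $B^{\vartheta_1,\vartheta_2}$ in Section~\ref{sec:twisted-algebra}. This gives an injective module map $a\colon B^{\vartheta_1,\vartheta_2}_\nu\to\bigoplus_{\Bi,\Bj}H^{BM,G_\nu}_*(X_\Bi^1\times_{E_\nu}X_\Bj^2)$.

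Surjectivity of $a$ is then the same upper-triangularity argument. Fix a dotless basis diagram $b_\tau$ of $B^{\vartheta_1,\vartheta_2}_\nu$ corresponding to a permutation $\tau$, factor it into elementary pieces each with a single crossing of two strands or of a strand and a ghost, and read off a reduced word $\tau=t_{k_1}\cdots t_{k_\ell}$ together with intermediate loadings $\Bi^{(h)}$, which now carry interpolated weightings $a_h\vartheta_2+(1-a_h)\vartheta_1$. After isotoping $b_\tau$ so that the distance between any two strands is monotone in $y$, the analogue of the last lemma holds: for a point of $\becircled{X}^\tau_{\Bi;\Bj}$ the unique $\Bi^{(h)}$-loaded flag with the prescribed relative positions to the two given flags is automatically compatible with $f$ for the interpolated weighting, since the defining inequalities hold at both ends and hence, by monotonicity, at every intermediate slice. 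Therefore $a(b_\tau)=[X^\tau_{\Bi;\Bj}]+\sum_{\tau'<\tau}r_{\tau'}(y_1,\dots,y_d)\,[X^{\tau'}_{\Bi;\Bj}]$, so the matrix of $a$ is upper-triangular with $1$'s on the diagonal, and $a$ is an isomorphism. Compatibility with the two algebra actions is immediate from the block structure in the first paragraph.

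The main obstacle is concentrated in this last step: one must verify that the monotone-distance normal form for diagrams and the uniqueness and $f$-compatibility of the intermediate loaded flags survive when source and target carry genuinely different weightings, so that the fibre products over the interpolating spaces $X_{\Bi^{(h)}}$ do not degenerate. Everything else is a routine transcription of the proof of Theorem~\ref{thm:R-ext}.
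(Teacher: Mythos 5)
The paper itself gives no proof; it says only ``The proof is so similar to that of Theorem \ref{thm:R-ext} that we leave it to the reader,'' so what you have produced is precisely the argument the author is gesturing at, and the overall structure --- the block decomposition of the convolution algebra on the disjoint union, the formality/faithfulness via torus localization, the Schubert-type basis $[X^\sigma_{\Bi;\Bj}]$, the polynomial-action computation with the new ``ghost crossing by interpolation'' case, and the upper-triangularity argument for surjectivity --- is correct and is the intended adaptation.

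There is, however, one small but genuine gap in the surjectivity step, and it is exactly at the point you yourself flag as the main obstacle. In the lemma asserting that the intermediate $\Bi^{(h)}$-loaded flag $F^h_\bullet$ is compatible with $f$, you justify the intermediate inequality $w_h(v_m)-w_h(v_k)\geq a_h\vartheta_{2,e}+(1-a_h)\vartheta_{1,e}$ ``by monotonicity.'' Monotonicity of the strand separations is not enough here, precisely because the ghost offset is no longer constant: if $d(a)$ is the separation at slice $a$ and $\theta(a)=(1-a)\vartheta_{1,e}+a\vartheta_{2,e}$, then $d$ monotone and $\theta$ linear give no control over the sign of $d-\theta$ in the interior even if it is nonnegative at both endpoints (take $d_0=\vartheta_{1,e}=1$, $d_1=\vartheta_{2,e}=10$, and let $d$ linger near $1$ until $a$ is close to $1$). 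The fix is to choose the isotopy of $b_\tau$ so that all strand positions, and hence all pairwise separations $d(a)$, are \emph{affine} in $a$; then $d-\theta$ is affine, nonnegative at $a=0,1$, and therefore nonnegative on all of $[0,1]$ by convexity. This is the correct replacement for the paper's ``strictly between'' argument in the constant-offset setting, and with that one adjustment your proof is complete.
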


\begin{remark}
  Theorems \ref{thm:R-ext} and \ref{B-convolution} can be
  extended to the canonical deformations of Section
  \ref{sec:canon-deform} by letting $\mathbb{G}_m^{E(\Gamma)}$ act in
  the natural way on $E$ with each copy of $\mathbb{G}_m$ acting with
  weight 1 on the map along one edge and trivially on all others.
  Considering the equivariant Borel-Moore homology in place of usual
  BM homology gives the deformed algebra $\dwalg^\vartheta$, with the
  deformation parameters corresponding to the cohomology of $B
  \mathbb{G}_m^{E(\Gamma)}$.
\end{remark}

\subsection{Monoidal structure}
\label{sec:monoidal-structure}

Recall that the derived categories $\oplus_\nu D(E_\nu/G_\nu)$ carry
the Lusztig monoidal structure defined by convolution.  If
$\nu=\nu'+\nu''$, and we let $V_i=V_i'\oplus V_i''$ be $I$-graded
vector spaces of the appropriate dimension, we consider
\[E_{\nu';\nu''}\cong 
E_{\nu'}\oplus E_{\nu''} \oplus\bigoplus_{e\in \Omega} \Hom(V''_{t(e)},V'_{h(e)})\]
with the obvious action of \[G_{\nu';\nu''}=\{g\in G_{\nu}|
g(V_i')=V_i'\}.\]  We have the usual convolution diagram 
\[\tikz[very thick,->]{\matrix[row sep=11mm,column sep=16mm,ampersand
    replacement=\&]{
\& \node (a) {$E_{\nu';\nu''}/G_{\nu';\nu''}$}; \&  \\
\node (b) {$E_{\nu'}/G_{\nu'}$};\& \node (c) {$E_{\nu}/G_{\nu}$}; \&  \node (d) {$E_{\nu''}/G_{\nu''}$}; \\
};
\draw (a)--(b) node[above left,midway]{$\pi_s$};
\draw (a)--(c)  node[left,midway]{$\pi_t$};
\draw (a)--(d)  node[ above right,midway]{$\pi_q$};
}\]
We can view $E_{\nu';\nu''}/G_{\nu';\nu''}$ as the moduli space of
short exact sequence with submodule of dimension $\nu'$ and quotient
of $\nu''$.  The projections $\pi_*$ are remembering only the first,
second or third term of the short exact sequence.
The {\bf convolution} of sheaves $\mathcal{F}_1\in
D(E_{\nu'}/G_{\nu'}), \mathcal{F}_2\in
D(E_{\nu''}/G_{\nu''})$ is defined to be 
\[\mathcal{F}_1 \star \mathcal{F}_2:=
(\pi_t)_*(\pi_s^*\mathcal{F}_1\otimes \pi_t^*\mathcal{F}_2)[-\langle
\nu'',\nu'\rangle]\]

\begin{proposition}\label{pro:monoidal}
  The functor $\gamma \colon \walg^\vartheta_\nu\pmmod\to
  D(E_{\nu}/G_{\nu})$ is monoidal, i.e. \[\gamma(P_1\circ P_2)\cong
  \gamma(P_1)\star \gamma(P_2).\]
\end{proposition}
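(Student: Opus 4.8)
The plan is to deduce this from a fibre‑product description of the loaded flag space $X_{\Bi\circ\Bj}$ together with proper base change, exactly in the spirit of Lusztig's and Varagnolo--Vasserot's treatment of the ordinary convolution algebras (and as carried out by Li \cite{Litensor} in the tensor‑product case). Since $\gamma$ is additive and fully faithful, and both $\circ$ and $\star$ are additive bifunctors, it suffices to produce, for each pair of loadings $\Bi,\Bj$ with $|\Bi|=\nu'$ and $|\Bj|=\nu''$, a natural isomorphism $Y_{\Bi\circ\Bj}\cong Y_\Bi\star Y_\Bj$ --- recall $\gamma(\walg^\vartheta_\nu e_{\Bi\circ\Bj})=Y_{\Bi\circ\Bj}$ and $\walg^\vartheta_\nu e_{\Bi\circ\Bj}=(\walg^\vartheta_{\nu'}e_\Bi)\circ(\walg^\vartheta_{\nu''}e_\Bj)$ --- and then to check compatibility with morphisms and with the associativity and unit constraints. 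Here I take $\Bi\circ\Bj$ to have all its $\Bi$‑points at positions $\le 0$ and all its $\Bj$‑points at positions $\ge s$.

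First I would set up the geometry. In the convolution diagram, $E_{\nu';\nu''}/G_{\nu';\nu''}$ is the moduli stack of short exact sequences $0\to M'\to M\to M''\to 0$ of $\Gamma$‑representations of dimensions $(\nu',\nu'')$, with $\pi_s,\pi_t,\pi_q$ remembering $M',M,M''$. Let $\widetilde X$ be the stack of such sequences equipped with an $\Bi$‑loaded flag $G'_\bullet$ on $M'$ compatible with the representation and a $\Bj$‑loaded flag $G''_\bullet$ on $M''$ compatible with it. Forgetting the flags gives $\rho\colon\widetilde X\to E_{\nu';\nu''}/G_{\nu';\nu''}$, forgetting the extension gives a map $\widetilde X\to (X_\Bi\times X_\Bj)/(G_{\nu'}\times G_{\nu''})$, and together these exhibit $\widetilde X$ as the fibre product of $p_\Bi\times p_\Bj$ along $\pi_s\times\pi_q$. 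The crux is an isomorphism of stacks $\theta\colon\widetilde X\to X_{\Bi\circ\Bj}/G_\nu$: send $(0\to M'\to M\to M''\to 0,G'_\bullet,G''_\bullet)$ to $M$ with the loaded flag $F_a=G'_a$ for $a\le 0$, $F_a=M'$ for $0<a<s$, and $F_a=$ the preimage in $M$ of $G''_a$ for $a\ge s$. The only thing to verify is that $F_\bullet$ is compatible with $f$, and this reduces to checking that the $\nu'$‑dimensional step $F_0=M'$ is a subrepresentation --- which is exactly where $s>|\vartheta_e|$ enters: if $\vartheta_e\ge 0$ then $F_{-\vartheta_e}\subseteq F_0$ already, while if $\vartheta_e<0$ then $-\vartheta_e\in(0,s)$, an interval on which $F_\bullet$ is constant, so $F_{-\vartheta_e}=F_0$; either way $f_e(F_0)\subseteq F_{-\vartheta_e}\subseteq F_0$. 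The inverse of $\theta$ sends $(f,F_\bullet)$ to the short exact sequence determined by its $\nu'$‑dimensional step (a subrepresentation by the same argument) together with the induced loaded flags, and under $\theta$ the composite $\pi_t\circ\rho$ becomes $p_{\Bi\circ\Bj}$.

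Granting this, the computation is routine. Proper base change applied to the above Cartesian square (using that $p_\Bi$ and $p_\Bj$ are proper) gives $\rho_*\K_{\widetilde X}\cong(\pi_s^*Y_\Bi\otimes\pi_q^*Y_\Bj)[-\Bu(\Bi)-\Bu(\Bj)]$; pushing forward by $\pi_t$ and using $\theta$ then yields $Y_{\Bi\circ\Bj}\cong(\pi_t)_*(\pi_s^*Y_\Bi\otimes\pi_q^*Y_\Bj)[\Bu(\Bi\circ\Bj)-\Bu(\Bi)-\Bu(\Bj)]$. Comparing with the definition of $\star$, it remains to verify the shift identity $\Bu(\Bi\circ\Bj)=\Bu(\Bi)+\Bu(\Bj)-\langle\nu'',\nu'\rangle$. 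This follows by splitting each of the two sums defining $\Bu$ according to whether the relevant points lie in the $\Bi$‑block or the $\Bj$‑block: a $\Bj$‑point $a$ and an $\Bi$‑point $b$ always have $a-b\ge s>\vartheta_e$, whereas an $\Bi$‑point $a$ and a $\Bj$‑point $b$ never do, so the cross terms contribute precisely $\sum_e|\Bj|_{t(e)}|\Bi|_{h(e)}-\sum_i|\Bi|_i|\Bj|_i$, which is $-\langle\nu'',\nu'\rangle$ in the multiplicity‑free setting.

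For naturality it is enough, since $\gamma$ is fully faithful, to check that $\theta$ intertwines the correspondence classes $[X^\tau_{-;-}]$ --- in particular the images of $\psi_k$ and $y_k$ described in Theorem~\ref{thm:R-ext} --- with their analogues acting through the convolution diagram; this is transparent since $\theta$ respects relative position of unloadings and the tautological line bundles $\mathscr{L}_k$. Compatibility with associators and units is then formal. I expect the only genuinely delicate point to be the construction of $\theta$: carrying out the identification $G_\nu$‑equivariantly (equivalently, at the level of stacks) and the observation that the middle step of an $(\Bi\circ\Bj)$‑loaded flag is automatically a subrepresentation, together with the (entirely mechanical) bookkeeping of cohomological shifts. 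Everything else is the standard convolution‑algebra argument.
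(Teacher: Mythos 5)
Your argument is essentially the paper's proof: both reduce to $P_1=\walg^\vartheta_{\nu'}e_\Bi$, $P_2=\walg^\vartheta_{\nu''}e_\Bj$, identify $X_{\Bi\circ\Bj}/G_\nu$ with the fibre product over the convolution diagram (the paper's \eqref{eq:2} is your $\theta$), use that the middle step of an $(\Bi\circ\Bj)$-loaded flag is automatically a subrepresentation because $s>|\vartheta_e|$, and invoke the shift identity $\Bu(\Bi\circ\Bj)=\Bu(\Bi)+\Bu(\Bj)-\langle\nu'',\nu'\rangle$. You spell out more detail than the paper does --- in particular the verification of the shift identity and the $G_\nu$- vs.\ $G_{\nu';\nu''}$-equivariance bookkeeping, both of which the paper asserts without proof --- but the route is the same.
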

\begin{proof}
  We need only check this for $P_1= \walg^\vartheta_{\nu'}e_{\Bi},P_2=
  \walg^\vartheta_{\nu''}e_{\Bj}$ since every projective is a summand of
  one of these.  In this case, $P_1\circ
  P_2=\walg^\vartheta_{\nu}e_{\Bi\circ \Bj}$.  On the other hand,
  \[\pi_{s}^*Y_{\Bi}=\tilde
  p_*^s\K_{X_{\Bi}\times_{E_{\nu'}}E_{\nu';\nu''}}[\Bu(\Bi)]\qquad
    \pi_{q}^*Y_{\Bj}=\tilde
    p_*^q\K_{X_{\Bj}\times_{E_{\nu''}}E_{\nu';\nu''}}[\Bu(\Bi)]\] 
where $\tilde p_*^s$ and $\tilde p_*^q$ are base changes of the map
      $p$ by $\pi_s$ and $\pi_q$.  Note that when $\Bi$ and $\Bj$ are separated far enough that no
ghost from one is entangled in the other, the subspace $F_a$ for $a$
between $\Bi$ and $\Bj$ on the real line is a subrepresentation.
Thus we have an isomorphism
\begin{equation}
(X_{\Bi}\times X_{\Bj})\times_{E_{\nu'}\times
        E_{\nu''}}E_{\nu';\nu''}/G_{\nu';\nu''}\cong X_{\Bi\circ
        \Bj}/G_{\nu};\label{eq:2}
    \end{equation}
the
difference in groups is that on left side we fix a particular subspace
and assume $F_a=\oplus V_i'$ and only act with the stabilizer of this
subspace, whereas on the right side, we sweep through all possible
subspaces. These quotients are the same since all $I$-graded
subspaces of the same dimension vector are conjugate under $G_\nu$.

 By definition, $Y_\Bi\star Y_{\Bj}$ is the
      shifted pushforward from the LHS of \eqref{eq:2}, and $Y_{\Bi\circ \Bj}$ is the
      shifted pushforward from the RHS.
Thus we have that $Y_\Bi\star Y_{\Bj}\cong Y_{\Bi\circ \Bj}$ where the
equality of shifts follows from the formula
\[\Bu(\Bi\circ \Bj)=\Bu(\Bi)+\Bu(\Bj)- \langle|\Bj|,|\Bi| \rangle.\]
Furthermore, the self-Exts of $Y_\Bi\star Y_{\Bj}$ induced by those of
$Y_\Bi$ and $Y_\Bj$ are exactly intertwined with the image of
$\iota_{\nu';\nu''}$, which shows that this functor is monoidal on
morphisms as well.  Thus, we have obtained the desired result.
\end{proof}

There is also a left adjoint to $\star$, which we denote
$\Res_{\nu';\nu''}$, given by 
\[\Res_{\nu';\nu''}\mathcal{F}:=
(\pi_s\times \pi_q)_! \pi_t^!\mathcal{F}[\langle
\nu'',\nu'\rangle]\]
\begin{proposition}\label{prop:co-monoidal}
  The functor $\gamma \colon \walg^\vartheta_\nu\pmmod\to
  D(E_{\nu}/G_{\nu})$ is intertwines restriction
  functors, that is \[(\gamma\boxtimes \gamma) (\Res_{\nu';\nu''}P)\cong
 \Res_{\nu';\nu''} \gamma(P).\]
\end{proposition}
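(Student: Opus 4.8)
The plan is to prove this by a direct computation running parallel to the proof of Proposition~\ref{pro:monoidal}, comparing both sides against one explicit description. Since every projective is a summand of a sum of modules $\walg^\vartheta_\nu e_\Bi$ and all four functors are additive, it suffices to take $P=\walg^\vartheta_\nu e_\Bi$, so that $\gamma(P)=Y_\Bi$. I will show that both $(\gamma\boxtimes\gamma)\big(\Res^\nu_{\nu';\nu''}\walg^\vartheta_\nu e_\Bi\big)$ and $\Res_{\nu';\nu''}Y_\Bi$ are isomorphic, with matching grading shifts, to $\bigoplus_S Y_{\Bi'_S}\boxtimes Y_{\Bi''_S}$, where $S$ runs over the ways of distributing the points of the loading $\Bi$ into a left block $\Bi'_S$ with $|\Bi'_S|=\nu'$ and a right block $\Bi''_S$ with $|\Bi''_S|=\nu''$ (after first replacing $\Bi$ by an equivalent loading in which the two blocks are far enough apart, each block keeping the relative positions of its points).

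On the module side, $\Res^\nu_{\nu';\nu''}(\walg^\vartheta_\nu e_\Bi)=e_{\nu';\nu''}\walg^\vartheta_\nu e_\Bi$ carries a filtration, as a $\walg^\vartheta_{\nu'}\otimes\walg^\vartheta_{\nu''}$-module, indexed by the distributions $S$ and built from the permutation basis of Theorem~\ref{th:basis} exactly as in the Mackey-type argument of \cite[2.18]{KLI} that underlies the bialgebra-structure theorem: the associated graded piece for $S$ is $\walg^\vartheta_{\nu'}e_{\Bi'_S}\boxtimes\walg^\vartheta_{\nu''}e_{\Bi''_S}$ up to a shift by the corresponding inner product. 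Since this is a filtration of a projective module by projectives it splits, giving $e_{\nu';\nu''}\walg^\vartheta_\nu e_\Bi\cong\bigoplus_S\walg^\vartheta_{\nu'}e_{\Bi'_S}\boxtimes\walg^\vartheta_{\nu''}e_{\Bi''_S}$, to which we apply $\gamma\boxtimes\gamma$. On the geometric side I would use the formula $\Res_{\nu';\nu''}\mathcal F=(\pi_s\times\pi_q)_!\,\pi_t^!\mathcal F[\langle\nu'',\nu'\rangle]$ and base change along the Cartesian square pulling the proper map $p\colon X_\Bi\to E_\nu$ back along $\pi_t$, expressing $\Res_{\nu';\nu''}Y_\Bi$ as a shifted pushforward from the space of triples $(f,F_\bullet,F')$ with $F_\bullet$ an $f$-compatible $\Bi$-loaded flag and $F'$ an $f$-stable subspace of dimension $\nu'$. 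Stratifying this by the relative position of $F'$ to the unloading of $F_\bullet$, just as the correspondences $\becircled{X}^\tau_{\Bi;\Bj}$ stratify by relative position in Theorem~\ref{thm:R-ext}, each stratum is an affine-space fibration over a product $X_{\Bi'_S}\times X_{\Bi''_S}$ for one of the distributions $S$ — the ``cut'' $F'=F_a$ being a genuine subrepresentation precisely because we separated the blocks, exactly as in the isomorphism~\eqref{eq:2} in the proof of Proposition~\ref{pro:monoidal}. As in the lemmata preceding Theorem~\ref{thm:R-ext}, all the Borel--Moore homology groups in sight are even and equivariantly formal, so the spectral sequence of the stratification degenerates and $\Res_{\nu';\nu''}Y_\Bi\cong\bigoplus_S Y_{\Bi'_S}\boxtimes Y_{\Bi''_S}$ up to shift; the shifts agree with those on the module side by the identity $\Bu(\Bi'\circ\Bi'')=\Bu(\Bi')+\Bu(\Bi'')-\langle|\Bi''|,|\Bi'|\rangle$ used in Proposition~\ref{pro:monoidal} together with the $[\langle\nu'',\nu'\rangle]$ in the definition of $\Res$. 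Finally one checks, as in the last paragraph of the proof of Proposition~\ref{pro:monoidal}, that these isomorphisms intertwine the induced self-Ext actions, so the identification holds as modules over $\walg^\vartheta_{\nu'}\otimes\walg^\vartheta_{\nu''}$, not merely objectwise.

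The step I expect to be the real work is the geometric one: identifying $\pi_t^! Y_\Bi$ correctly (the non-smoothness of $\pi_t$ means one must argue through the stratification rather than naively, as a shift of a $*$-pullback) and verifying that the stratification spectral sequence degenerates — this is exactly where the parity/evenness input of Theorem~\ref{thm:R-ext} is essential, and where the argument would fail in small positive characteristic. A conceivable shortcut for the qualitative point that $\Res_{\nu';\nu''}Y_\Bi$ lies in the additive span of the $Y_{\Bi'}\boxtimes Y_{\Bi''}$ (after which the isomorphism follows formally by Yoneda from the monoidality of $\gamma$, its full faithfulness, and the shifted biadjunction of $\Ind$ and $\Res$ on $\walg^\vartheta_\nu\pmmod$) is to first Verdier-dualize using the compatibility of $\gamma$ with $a\mapsto a^*$ from Theorem~\ref{thm:R-ext}; but this still appears to require essentially the same degeneration input, so I would simply carry out the direct computation.
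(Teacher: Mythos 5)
Your proposal is correct but takes a genuinely different route from the paper, and the comparison is instructive. The paper's proof is two short steps: first, since $\Res_{\nu';\nu''}$ on sheaves and $\Res$ on modules are (up to shift) left adjoint to the convolution and induction functors already intertwined by $\gamma$ in Proposition~\ref{pro:monoidal}, full faithfulness of $\gamma$ gives a Yoneda-type argument showing the restriction functors are automatically intertwined as soon as one knows $\Res_{\nu';\nu''}\gamma(P)$ lies in the triangulated subcategory generated by the essential image of $\gamma\boxtimes\gamma$; second, that membership is checked by exactly your relative-position stratification of $X_\Bi\times E_{\nu';\nu''}$, each stratum an affine bundle over a product $X_{\Bi'}\times X_{\Bi''}$, giving $\Res_{\nu';\nu''}Y_\Bi$ as an \emph{iterated cone} of shifts of $Y_{\Bi'}\boxtimes Y_{\Bi''}$ --- and that is where the paper stops. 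Your proof instead identifies both sides with an explicit common model, which forces you to add three ingredients the paper never needs: the module-side Mackey filtration and its splitting (your phrase ``filtration of a projective module by projectives'' should really lead with the projectivity of the \emph{subquotients}, which is what drives the splitting and only a posteriori shows the module itself is projective), the degeneration of the geometric spectral sequence, and a final check that the resulting objectwise isomorphisms intertwine the $\walg^\vartheta_{\nu'}\otimes\walg^\vartheta_{\nu''}$-actions. Most importantly, your closing diagnosis --- that the adjunction ``shortcut'' still requires the same degeneration input --- is the one place your analysis is off: comparing $\Hom(-,Z)$ against a generating set of the triangulated subcategory shows the natural comparison map $(\gamma\boxtimes\gamma)\Res(P)\to\Res_{\nu';\nu''}\gamma(P)$ is an isomorphism already at the iterated-cone level, so no parity or decomposition-theorem input is needed on that route; this is precisely what the paper's approach buys. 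Your route remains valid (the strata are affine bundles over products of loaded flag spaces, whose Borel--Moore homology is even, so the long exact sequences do split), but it is longer and asks for hypotheses the statement does not actually need.
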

\begin{proof}
  Since these functors are left adjoint to functors intertwined by
  $\gamma$, they just be intertwined if $\Res_{\nu';\nu''} \gamma(P)$
  is in the subcategory generated by the image of $\gamma(P)$.

  As before, we need only consider the base where $P=Re_{\Bi}$.  In
  this case, $\pi_t^!Y_{\Bi}=\tilde p_*\K_{X_\Bi\times
    E_{\nu';\nu''}}$.  We filter the fiber product $X_\Bi\times
    E_{\nu';\nu''}$ according the relative position of the subspace
    $V_i'$ and the $\Bi$-loaded flag (i.e. by the Schubert cell $V_i'$
    lands in for the Schubert stratification relative to the flag).
    Each such relative position corresponds to dividing the points in
    the loading into two sets: those where the dimension of the
    intersection of $F_a$ with $V_i'$ jumps and those where it does
    not.   This gives loadings $\Bi'$ and $\Bi''$.  The subset of the fiber product $X_\Bi\times
    E_{\nu';\nu''}$ with fixed relative position is an affine bundle
    over the product $X_{\Bi'}\times X_{\Bi'}$ where the first term is
    the loaded flag induced on $V_i'$ by intersecting with $F_a$, and
    the second is that induced on $V_i''$ by taking the images of the
    $F_a$'s.  This shows that   $\Res_{\nu';\nu''} \gamma(P)$ is an
    iterated cone of shifts of the objects $\gamma(P')\boxtimes
    \gamma(P'')$.  This completes the proof.
\end{proof}

On the level of Grothendieck groups, these propositions show that the
structures we have seen on $K$ are also typical for categories of
sheaves on representations of quivers.  

\begin{proposition}\label{pro:sheaf-bialgebra}
  The sum of Grothendieck groups $\oplus_\nu K(D(E_\nu/G_\nu))$ inherits a
  twisted bialgebra structure with product and coproduct
\[[\cM][\cN]=[\cM\star \cN]\qquad \Delta([\cM])=\left[\sum_{\nu'+\nu''=\nu}\Res_{\nu';\nu''}\cM\right],\] and the functor $\gamma$ induces a map
  of twisted bialgebras.
\end{proposition}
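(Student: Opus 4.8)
The plan is to establish the twisted bialgebra axioms on $K:=\bigoplus_\nu K(D(E_\nu/G_\nu))$ directly, by the geometric bookkeeping underlying Lusztig's construction \cite{Lus91}, and then to deduce that $\gamma$ is a bialgebra morphism from Propositions \ref{pro:monoidal} and \ref{prop:co-monoidal}. First I would note that $\star$ and $\Res_{\nu';\nu''}$ are built from pullbacks and (proper) pushforwards between the quotient stacks, hence are exact triangulated functors and descend to Grothendieck groups; combined with the K\"unneth isomorphism $K(D((E_{\nu'}/G_{\nu'})\times(E_{\nu''}/G_{\nu''})))\cong K(D(E_{\nu'}/G_{\nu'}))\otimes K(D(E_{\nu''}/G_{\nu''}))$ (evident for these quotient stacks), this produces a product $[\cM][\cN]=[\cM\star\cN]$ and a coproduct $\Delta$ on $K$. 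Associativity of $\star$ and coassociativity of $\Delta$ amount to the identification of the iterated correspondence stacks as fibre products (as in \cite{CG97,Lus91}), together with the fact that the shifts $[-\langle\nu'',\nu'\rangle]$ add correctly because $\langle\cdot,\cdot\rangle$ is bilinear. The counit $\eps$ is the projection onto the $\nu=0$ summand $K(D(\mathrm{pt}))$, and the unit is $[\K_{\mathrm{pt}}]$.

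The real content is the compatibility $\Delta([\cM][\cN])=\Delta([\cM])\,\Delta([\cN])$ for the twisted product on $K\otimes K$. Given $\nu=\nu_1+\nu_2$ with $\cM\in D(E_{\nu_1}/G_{\nu_1})$, $\cN\in D(E_{\nu_2}/G_{\nu_2})$ and a decomposition $\nu=\mu'+\mu''$, I would compute $\Res_{\mu';\mu''}(\cM\star\cN)$ by stratifying the convolution stack $E_{\mu';\mu''}/G_{\mu';\mu''}$ according to the relative position of the distinguished subspace of dimension $\mu'$ with respect to the middle subrepresentation of dimension $\nu_1$ occurring in $\star$ — equivalently, by the dimension vectors of its intersection with, and its image in, $E_{\nu_2}$. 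Each stratum is an iterated affine bundle over a product of the convolution stacks computing a piece of $\Res_{\alpha';\beta'}\cM$ and a piece of $\Res_{\alpha'';\beta''}\cN$, and base change along the stratification assembles the pushforward into precisely the $q$-shifted sum of terms $\Res_{\alpha';\beta'}\cM\star\Res_{\alpha'';\beta''}\cN$ predicted by the twisted multiplication. This is the geometric incarnation of the bimodule filtration of $\walg^\vartheta_\nu$ used in the proof of the Grothendieck group theorem (following \cite[2.18]{KLI}), and the accumulated shifts reproduce the power $q^{\deg\cdot\deg}$ by the same bilinear-form identities as in \cite[3.2]{KLI}. I expect this Mackey-type stratification — and the verification that its shifts match the twist — to be the only nontrivial step, and even it is a routine adaptation of standard Hall-algebra geometry.

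Finally, to see that $\gamma$ is a morphism of twisted bialgebras: $\gamma$ is additive and carries $\walg^\vartheta_\nu\pmmod$ into $D(E_\nu/G_\nu)$, so it induces a $\Z[I]$-graded map $K^0(\walg^\vartheta_\nu)\to K(D(E_\nu/G_\nu))$. Proposition \ref{pro:monoidal} gives $\gamma(P_1\circ P_2)\cong\gamma(P_1)\star\gamma(P_2)$, so $\gamma$ respects products, and Proposition \ref{prop:co-monoidal} gives $(\gamma\boxtimes\gamma)(\Res_{\nu';\nu''}P)\cong\Res_{\nu';\nu''}\gamma(P)$, so it respects coproducts. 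The twists on the two copies of $K\otimes K$ agree because the geometric shift $-\langle\nu'',\nu'\rangle$ already built into $\star$ and $\Res$ is precisely the inner product governing the twisted product on the algebraic side — this is exactly the match recorded in the proof of Proposition \ref{pro:monoidal}. Since $\gamma$ visibly sends the unit to the unit and intertwines the two counits (each killing the summands with $\nu\neq0$), it is a map of twisted bialgebras, as claimed.
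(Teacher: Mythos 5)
Your proposal is correct and follows essentially the same route as the paper: the compatibility of product and coproduct via base change over the fiber product $B=E_{\nu';\nu''}\times_{E_\nu}E_{\mu';\mu''}$ (the Mackey-type stratification by intersection dimension), and the fact that $\gamma$ respects both structures by Propositions \ref{pro:monoidal} and \ref{prop:co-monoidal}. The extra bookkeeping you supply (unit, counit, associativity, K\"unneth) is left implicit in the paper but does not change the argument.
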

\begin{proof}
The fact that $\gamma$ induces a map that commutes with the
multiplication and comultiplication follows from Propositions
\ref{pro:monoidal} and \ref{prop:co-monoidal}.

  The commutation of product and coproduct follows from the base change
  formula for pushforwards and pullbacks.  Choosing
  $\nu',\nu'',\mu',\mu''$ such that $\nu'+\nu''=\nu=\mu'+\mu''$, we
  wish to consider $\Res_{\mu',\mu''}(\cM'\star \cM'')$.  Let $\pi_*$
  denote the projection maps from $E_{\nu';\nu''}$ as before and
  $\kappa_*$ the corresponding maps from $E_{\mu';\mu''}$ and
  $B=E_{\nu';\nu''}\times_{E_\nu} E_{\mu';\mu''}$.  Then we have a
  diagram with the interior square Cartesian:
\[\tikz[very thick,->]{\matrix[row sep=11mm,column sep=7mm,ampersand
    replacement=\&]{
\&\& \node (a) {$B/G_\nu$}; \&  \&\\
\&\node (b) {$E_{\nu';\nu''}/G_{\nu}$};\& 
\&  \node (c) {$E_{\mu';\mu''}/G_{\nu}$};\& \\
\node (d) {$E_{\nu'}/G_{\nu'}\times E_{\nu''}/ G_{\nu'}$};\& \&\node
(e) {$E_\nu/G_\nu$};\&\&\node (f) {$E_{\mu'}/G_{\mu'}\times E_{\mu''}/ G_{\mu'}$};\\
};
\draw (a)--(b) node[above left,midway]{$\tilde \kappa_t$};
\draw (a)--(c)  node[above right,midway]{$\tilde \pi_t$};
\draw (b)--(d)  node[ above left,midway]{$\pi_s\times \pi_q$};
\draw (b)--(e) node[below left,midway]{$\pi_t$};
\draw (c)--(e)  node[below right,midway]{$\kappa_t$};
\draw (c)--(f)  node[ above right,midway]{$\kappa_s\times \kappa_q$};
}\]
Thus, we have that 
\begin{align*}
  \Res_{\mu',\mu''}(\cM'\star \cM'')&= (\kappa_s\times \kappa_q)_!
  \kappa_t^!(\pi_t)_*(\pi_s\times \pi_q)^*(\cM'\star \cM'')\\ 
& = (\kappa_s\times \kappa_q)_!  (\tilde \pi_t)_*\tilde \kappa_t^!
  (\pi_s\times \pi_q)^*(\cM'\star \cM'')
\end{align*}
The variety $B$ can be stratified into subsets $B_\tau$ according to the dimension $\tau$ of the
intersection between the subrepresentations of dimension $\nu'$ and
$\mu'$.  Intersection with the other subrepresentation induces subs of
dimension $\tau$ in 
$\pi_s\tilde \kappa_t$ and $\kappa_s\tilde \pi_t$, and taking its
image induces a subs of dimension $\mu'-\tau$ in  $\pi_s\tilde
\kappa_t$ and dimension $\nu'-\tau$ in $\kappa_s\tilde \pi_t$.  Let
$\tau'=\nu''+\mu''-\nu+\tau$.  The
map from $B_\tau$ to the fiber product of $E_{\tau;\nu'-\tau}\times
E_{\mu'-\tau; \tau'}$ with $E_{\tau;\mu'-\tau}\times
E_{\nu'-\tau; \tau'}$ over $E_{\tau}\times E_{\mu'-\tau}\times
E_{\nu'-\tau}\times E_{\tau'}$ is an affine bundle of dimension
$\langle \tau+\tau',\mu'+\nu'-2\tau\rangle$.  Thus, 
\[\Delta_{\mu',\mu''}([\cM']\star[\cM''])=
\sum_{\tau}\Delta_{\tau;\nu'-\tau}([\cM'])\star \Delta_{\mu'-\tau;\tau'}[\cM''].\qedhere \]
\end{proof}

\subsection{Hall algebras}
\label{sec:hall-algebras}

While the previous section interpreted the weighted KLR algebras in
terms of characteristic 0 geometry, we can also consider the geometry
of quivers over a field of characteristic $p$.  The varieties $E_\nu,
X_\Bi$ and the algebraic group $G_\nu$ are all defined as $\Z$-schemes
whose base change to $\C$ are the varieties considered in the previous
sections.  After base change to $\Fqb$ for $q$ a prime
power, we can use the same pushforwards to define $\ell$-adic sheaves
$Y_\Bi$, which we denote with the same symbol as the corresponding
sheaves over $\C$; in this section, we will always consider sheaves on
varieties over $\Fqb$, so there is no danger of
confusion. By the usual comparison theorems in \'etale geometry (for
example, \cite[6.1.9]{BBD}), the
Ext-algebra of the sum of these sheaves is $\walg^\vartheta_\nu$, just as
it is for sheaves over $\C$.

The sheaves $Y_\Bi$ have a unique mixed structure which is pure of
weight 0.  As always, the pushforward by a proper map of the constant
sheaf with it canonical weight 0 mixed structure is again pure of
weight 0. If we apply the shift in the derived category without
changing the action of Frobenius, we will change the weight, but we
can apply a Tate twist to return to weight 0.  We will always take
this mixed structure.  In this section, the functor $\gamma$ will land
in this category, not its characteristic 0 analogue.  
The proof of Propositions \ref{pro:monoidal} and
\ref{pro:sheaf-bialgebra} carry over without change.

The reader might thus justly wonder what is achieved by introducing
this more difficult formalism.  Our primary motivation is a better
understanding of the Grothendieck group $K$.  
Recall that for any finite field $\mathbb{F}_q$, there is a {\bf Hall
  algebra} $\mathcal{H}_{\Gamma;q}$ of representations of $\Gamma$,
the space of all $\K$-valued function on the set of isomorphism
classes of quiver representations over $\mathbb{F}_q$.
We refer to the notes of Schiffmann  \cite{SchiffHall} for basic facts and definitions of
Hall algebras, but our Hall algebra will have the opposite product and coproduct
from Schiffmann's for compatibility with our diagrammatic formulation.
In essence, this is because our conventions are adapted to writing
short exact sequences with arrows pointing left to right (as any
right-thinking person would).

Attached to any mixed
complex of sheaves $\cM$ over an extension $\K$ of $\Q_\ell$ on
$E_\nu$, we have a function $\func_\cM\colon E_\nu (\Fq)\to \K$
sending $e\in E$ to the supertrace of the Frobenius morphism acting on
the stalk at that point:
\[\func_\cM(e):=\sum_{i\in \Z}(-1)^i\Tr(\operatorname{Fr}\mid
H^i(\cM_e)).\]  If we let $\sheafK$ denote the Grothendieck group of
the category of pure weight 0 shifts of perverse sheaves over $\K$,
then $\func_\cM\colon \sheafK\to \mathcal{H}_{\Gamma;q}$.

\begin{proposition}\label{pro:func-sheaf}
  The map $\func_\cM\colon \sheafK\to \mathcal{H}_{\Gamma;q}$ is a map
  of bialgebras.
\end{proposition}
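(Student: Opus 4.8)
The plan is to deduce this from the Grothendieck function--sheaf dictionary applied to precisely the convolution diagrams that appear in Propositions~\ref{pro:monoidal}, \ref{prop:co-monoidal} and~\ref{pro:sheaf-bialgebra}. Since $\func_\cM$ is additive on distinguished triangles it is well defined on $\sheafK$, and to see that it is a morphism of twisted bialgebras it suffices to treat multiplication, comultiplication, unit and counit one at a time. The inputs are the standard facts that for a morphism $f\colon X\to Y$ of Artin stacks of finite type over $\Fq$ one has $\func_{f_!\cF}(y)=\sum_{x\in f^{-1}(y)(\Fq)}\func_\cF(x)$, where the $\Fq$-points of a stack are counted with their groupoid weights $1/|\Aut|$ --- which is exactly what makes $\func_\cF$ a function on isomorphism classes, i.e.\ an element of $\mathcal{H}_{\Gamma;q}$ --- together with $\func_{f^*\cF}=f^*\func_\cF$, $\func_{\cF\otimes\cN}=\func_\cF\cdot\func_\cN$, and the observation that a cohomological shift $[n]$ (with the Tate twist that restores weight $0$, as fixed at the start of this section) rescales $\func$ by $(-1)^n$ times a fixed half-integer power of $q$. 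Base change, the decomposition theorem, and the Lefschetz trace formula all hold for the stacks $E_\nu/G_\nu$ and $E_{\nu';\nu''}/G_{\nu';\nu''}$ in play, so this formalism is available.

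For multiplication, apply the dictionary to $\cM_1\star\cM_2=(\pi_t)_*\bigl(\pi_s^*\cM_1\otimes\pi_q^*\cM_2\bigr)[-\langle\nu'',\nu'\rangle]$. The map $\pi_t$ is proper --- its fibres are quiver Grassmannians of subrepresentations, closed inside ordinary Grassmannians --- so $(\pi_t)_*$ agrees with $(\pi_t)_!$ on trace functions. The $\Fq$-points of the fibre $\pi_t^{-1}(L)$ over a representation $L$ of dimension vector $\nu'+\nu''$ are exactly the subrepresentations $N\subseteq L$ with $\dim N=\nu'$ and $\dim(L/N)=\nu''$, and $\pi_s,\pi_q$ read off the isomorphism classes of $N$ and $L/N$. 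Hence $\func_{\cM_1\star\cM_2}(L)$ equals the scalar coming from the shift times the weighted sum $\sum_{N\subseteq L}\func_{\cM_1}([N])\,\func_{\cM_2}([L/N])$; once the $1/|\Aut|$ weights are unwound this is precisely Schiffmann's twisted Hall product in the opposite convention --- submodule with quotient on the right, matching the left-to-right short exact sequences used throughout --- and the shift $[-\langle\nu'',\nu'\rangle]$ installs exactly the Euler-form twist already present in the product on $\sheafK$. So $\func$ is multiplicative.

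For comultiplication, apply the dictionary to $\Res_{\nu';\nu''}\cM=(\pi_s\times\pi_q)_!\,\pi_t^!\cM[\langle\nu'',\nu'\rangle]$. The trace formula for $(\pi_s\times\pi_q)_!$ requires no properness, and the fibre of $\pi_s\times\pi_q$ over a pair of isomorphism classes $([M],[N])$ is the groupoid of short exact sequences $0\to M\to L\to N\to 0$; summing the trace function of $\pi_t^!\cM$ over it counts these extensions and so yields the Hall comultiplication $\sum_{\nu'+\nu''=\nu}(\func\otimes\func)(\Res_{\nu';\nu''}\cM)$. The subtle point is the occurrence of $\pi_t^!$ in place of $\pi_t^*$: since $\pi_t$ is proper one has $\pi_t^!=\mathbb{D}\,\pi_t^*\,\mathbb{D}$, and since $\gamma$ intertwines Verdier duality with the anti-automorphism $a\mapsto a^*$ (Theorem~\ref{thm:R-ext}), the complexes in its image are Verdier self-dual up to a Tate twist; tracking that twist shows that on trace functions $\pi_t^!$ differs from $\pi_t^*$ by exactly the factor that cancels the shift $[\langle\nu'',\nu'\rangle]$, the relevant relative dimensions being governed by the Euler form. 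This again matches the twist in the comultiplication on $\sheafK$, so $\func$ is comultiplicative.

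Finally, the unit of $\sheafK$ is the class of the constant sheaf on $E_0$ (a point), whose trace function is the indicator of the zero representation --- the unit of $\mathcal{H}_{\Gamma;q}$ --- while the counit of $\sheafK$, projection onto the $\nu=0$ summand, is carried by $\func$ to the Hall counit, evaluation at the zero representation. I expect the one genuinely laborious ingredient to be the normalization bookkeeping flagged above: verifying that the cohomological shifts and Tate twists built into $\star$ and $\Res$, together with the stacky $1/|\Aut|$ point counts, reproduce Schiffmann's twisted multiplication and comultiplication with no stray sign or power of $q$. This is the classical point count underpinning the geometric realization of Hall algebras (see \cite{SchiffHall}), and the auxiliary fact it rests on --- that $\pi_t^!$ acts as the claimed Tate twist of $\pi_t^*$ on the self-dual complexes occurring here --- is extracted by chasing dimensions through the Cartesian squares set up in Proposition~\ref{pro:sheaf-bialgebra}.
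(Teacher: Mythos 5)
Your proof is an honest and detailed expansion of the paper's one-sentence proof ("This follows instantly from the Grothendieck trace formula"), and it takes exactly the same approach: run the function--sheaf dictionary through the convolution diagrams defining $\star$ and $\Res$, using properness of $\pi_t$ for the product, smoothness of $\pi_s\times\pi_q$ for the coproduct, and bookkeeping the shifts and Tate twists against the Euler-form twists that Schiffmann builds into the Hall product. You identify the right ingredients, and you are right that the point count over $\Fq$ with the groupoid $1/|\Aut|$ weights is what converts the pushforward-over-fibres formula into the Hall product.

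One point to tighten: your treatment of $\pi_t^!$ leans on the observation that complexes in the image of $\gamma$ are Verdier self-dual (up to Tate twist). That is true of the $Y_\Bi$ and their summands, but the proposition is stated for all of $\sheafK$ --- the full Grothendieck group of pure weight-$0$ shifted perverse sheaves --- and a general pure weight-$0$ perverse sheaf (e.g.\ an IC sheaf of a nontrivial local system) is not self-dual. The cleaner route is to factor $\pi_t$ as the smooth proper Grassmannian bundle $E_\nu/G_{\nu';\nu''}\to E_\nu/G_\nu$ followed by the closed embedding of the vector sub-bundle $E_{\nu';\nu''}\hookrightarrow E_\nu$; for the smooth piece $\pi^!$ is a Tate twist of $\pi^*$ unconditionally, while for the closed embedding the comparison between $\iota^!$ and $\iota^*$ is where the genuine content of the dictionary lies. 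In the application the paper actually makes of this proposition, only the image of $\gamma$ matters, so your argument carries the weight it needs to; but as a proof of the proposition as literally stated it should be rephrased in a way that does not invoke self-duality.
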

\begin{proof}
  This follows instantly from the Grothendieck trace formula.
\end{proof}
While the definition of these functions may sound awfully abstruse, for geometrically natural sheaves, these functions are quite
explicit.  Of greatest importance to us is that
\begin{proposition}
$\displaystyle \func_{Y_\Bi}(e)=q^{\nicefrac{\Bu(\Bi)}{2}}\cdot \#\{x\in
X_{\Bi}(\mathbb{F}_q)\mid p(x)=e\}$
\end{proposition}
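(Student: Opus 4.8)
The plan is to reduce the computation of $\func_{Y_\Bi}(e)$ to a fibrewise point count via proper base change and the Grothendieck--Lefschetz trace formula. First I would record that $p\colon X_\Bi\to E_\nu$ is proper: by construction $X_\Bi$ is a closed subvariety of $E_\nu\times\Fl_\Bi$ and $\Fl_\Bi$, being a space of flags of subspaces of $V$, is a projective variety, so $p$ is the restriction of the proper second projection $E_\nu\times\Fl_\Bi\to E_\nu$. Consequently $Rp_!\cong Rp_*$, and each fibre $p^{-1}(e)$ is a projective $\Fq$-variety with $p^{-1}(e)(\Fq)=\{x\in X_\Bi(\Fq)\mid p(x)=e\}$ by definition of $X_\Bi$.

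Next I would unwind $\func_{Y_\Bi}$. Recall $Y_\Bi=p_*\K_{X_\Bi}[\Bu(\Bi)]$, carrying the mixed structure of weight $0$, which by the recipe described just before this section is the canonical weight-$0$ mixed structure on the proper pushforward $p_*\K_{X_\Bi}$, shifted by $[\Bu(\Bi)]$ and then corrected by a Tate twist of $\tfrac12\Bu(\Bi)$ to restore weight $0$. Both the shift and the twist commute with the restriction $i_e^*$ to a point $e\in E_\nu(\Fq)$, and proper base change gives $i_e^*\bigl(p_*\K_{X_\Bi}\bigr)\cong R\Gamma\bigl(p^{-1}(e)_{\Fqb},\K\bigr)$ as a complex with Frobenius action. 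Feeding this into $\func_{Y_\Bi}(e)=\sum_i(-1)^i\Tr\bigl(\operatorname{Fr}\mid H^i(Y_{\Bi,e})\bigr)$, the cohomological shift re-indexes the alternating sum and the Tate twist rescales the Frobenius eigenvalues by the corresponding power of $q$, so that
\[
\func_{Y_\Bi}(e)=q^{\Bu(\Bi)/2}\sum_{j}(-1)^j\,\Tr\bigl(\operatorname{Fr}\mid H^j(p^{-1}(e)_{\Fqb},\K)\bigr).
\]

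Finally, since $p^{-1}(e)$ is a proper $\Fq$-variety, the Grothendieck trace formula identifies the remaining alternating sum with $\#p^{-1}(e)(\Fq)$; equivalently one can invoke directly the relative form $\func_{Rp_!\cF}(e)=\sum_{x\in p^{-1}(e)(\Fq)}\func_{\cF}(x)$ of the trace formula with $\cF=\K_{X_\Bi}$ and use $Rp_!=Rp_*$ together with $\func_{\K_{X_\Bi}}\equiv 1$. This yields $\func_{Y_\Bi}(e)=q^{\Bu(\Bi)/2}\cdot\#\{x\in X_\Bi(\Fq)\mid p(x)=e\}$. The only point requiring care is the bookkeeping of the shift $[\Bu(\Bi)]$ and the half Tate twist that define the weight-$0$ structure on $Y_\Bi$: this is exactly what produces the normalising factor $q^{\Bu(\Bi)/2}$, and one must check that the sign conventions in the definition of $\func$ are arranged so that no extra sign survives. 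Everything else is the standard sheaf--function dictionary.
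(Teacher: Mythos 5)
Your argument is correct and is exactly the paper's, just fully spelled out: the paper's proof simply says it follows from the Grothendieck trace formula with the Tate twist producing the $q^{\Bu(\Bi)/2}$ normalization, and your proof fills in the standard details (properness of $p$, proper base change, unwinding the shift/twist, then the trace formula). The only caveat worth keeping in mind — and you flagged it yourself — is the sign/power bookkeeping from the shift $[\Bu(\Bi)]$ and the half Tate twist, which is a matter of conventions for geometric vs.\ arithmetic Frobenius and the direction of the twist; this does not change the substance of the argument.
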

\begin{proof}
  This follows immediately from the Grothendieck trace formula; the
  factor of $q^{\nicefrac{\Bu(\Bi)}{2}}$ comes from the necessary
  Tate twist.
\end{proof}
Combining these propositions, we obtain the relationship between the
Grothendieck group $K^\vartheta$ and the Hall algebra.

\begin{proposition}
  There is natural map of Hopf algebras (in the braided category of
  $\Z[I]$-graded abelian groups) from $h_q\colon K^\vartheta\to
  \mathcal{H}_{\Gamma;q}$.  The induced map $\prod_{q^n}
  h_{q^n}\colon K^\vartheta\to
  \prod_{n\geq 1}\mathcal{H}_{\Gamma;q^n}$ is injective.
\end{proposition}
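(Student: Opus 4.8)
The plan is to build $h_q$ as a composite of two homomorphisms that have already been produced, and then to deduce injectivity of the infinite product from a triangularity property of Frobenius trace functions.

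First I would construct $h_q$. Working over $\Fqb$, the fully faithful additive functor $\gamma$ produced just after Theorem~\ref{thm:R-ext} lands in the category of pure weight $0$ shifts of perverse sheaves with coefficients in $\K$, so on Grothendieck groups it induces a homomorphism $\gamma_*\colon K^\vartheta\to \sheafK$ of $\Z[q^{\pm 1}]$-modules, with $q$ recording grading shift on the left and the corresponding shift-and-Tate-twist on the right. By Propositions~\ref{pro:monoidal}, \ref{prop:co-monoidal} and \ref{pro:sheaf-bialgebra}, $\gamma_*$ is a morphism of twisted bialgebras, in fact of Hopf algebras in the braided category of $\Z[I]$-graded abelian groups. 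Composing with the Frobenius trace map $\func\colon \sheafK\to \mathcal{H}_{\Gamma;q}$, a bialgebra morphism by Proposition~\ref{pro:func-sheaf}, gives $h_q=\func\circ\gamma_*$; since a bialgebra morphism between Hopf algebras automatically respects antipodes, $h_q$ is a morphism of Hopf algebras. The construction is evidently compatible with the base changes $\Fq\hookrightarrow\mathbb{F}_{q^n}$, which produces $\prod_n h_{q^n}$.

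Next I would show $\gamma_*$ is injective. Since $\operatorname{char}(\K)=0$, each $Y_\Bi$ is semisimple, so its indecomposable summands are shifts of intersection cohomology sheaves $\operatorname{IC}_S$ of $G_\nu$-orbit closures with irreducible local system coefficients, all defined over $\Fq$. Because $\gamma$ is fully faithful and everything in sight is pure, hence formal, $\gamma$ identifies the indecomposable projective $\walg^\vartheta_\nu$-modules (up to grading shift) with the set of $\operatorname{IC}$ sheaves occurring as a summand of some $Y_\Bi$; this is essentially the content of Corollary~\ref{cor:positive-grading}. Hence $\gamma_*$ carries a $\Z[q^{\pm1}]$-basis of $K^\vartheta$ onto a subset, up to shift, of the $\operatorname{IC}$-sheaf basis of $\sheafK$, so it is a split injection. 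It then suffices to prove that the Frobenius trace maps over all $\mathbb{F}_{q^n}$, taken together, are injective on the $\Z[q^{\pm1}]$-span of the $\operatorname{IC}$ sheaves appearing in the $Y_\Bi$ (or, just as well, on all of $\sheafK$).

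This last step is the crux, and I would carry it out by a triangularity argument with respect to the closure order on orbits. Fix a total order on pairs $(S,\cL)$ of an orbit and an irreducible local system refining the closure order. Then $\func_{\operatorname{IC}_S}$ is supported on $\overline S$; on $S$ itself it is the trace function of a shift-and-twist of $\cL$, so at a point $e\in S(\mathbb{F}_{q^n})$ it equals $(-1)^{\dim S}q^{-n\dim S/2}\Tr(\operatorname{Fr}^n\mid\cL_{\bar e})$, which is nonzero for $n$ in an arithmetic progression because the Frobenius eigenvalues on the finite-monodromy local system $\cL$ are roots of unity; and on the deeper strata in $\overline S$ it has strictly smaller $q$-weight, by the support condition characterizing $\operatorname{IC}_S$. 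Given a putative nonzero kernel element $\sum_S a_S(q)[\operatorname{IC}_S]$, pick $S_0$ maximal with $a_{S_0}\neq 0$ and evaluate the Frobenius trace over $\mathbb{F}_{q^n}$ at a point of $S_0$: every other surviving term vanishes there, leaving $a_{S_0}(q^n)\cdot(\text{nonzero scalar})=0$ for infinitely many $n$, which forces the Laurent polynomial $a_{S_0}$ to vanish, a contradiction. I expect the real obstacle to be pinning down the support-and-weight estimate for $\func_{\operatorname{IC}_S}$ precisely enough to run the induction, and matching the grading variable $q$ with the arithmetic specializations $q^n$; assembling the propositions already in hand then completes the proof.
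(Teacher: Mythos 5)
Your construction of $h_q$ as $\func\circ\gamma_*$ and the bialgebra reasoning agree with the paper's, and your explicit observation that $\gamma_*$ is a split injection onto a span of IC-sheaf classes (via Corollary~\ref{cor:positive-grading}) makes precise a step the paper leaves implicit when it passes from a nonzero kernel element to ``a pair of pure complexes which are not isomorphic.'' The paper also begins by reducing to $\K=\Qlb$, which you tacitly assume by working with $\ell$-adic sheaves and should state. The genuine divergence is in the final injectivity step: the paper simply cites Laumon's Th\'eor\`eme~1.1.2, which says a pure complex over $\Fq$ is determined up to isomorphism by the system of its Frobenius trace functions over all $\mathbb{F}_{q^n}$, whereas you run a hands-on triangularity induction along the closure order on $G_\nu$-orbits. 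Your route is self-contained, but two points need shoring up. First, picking $S_0$ maximal with $a_{S_0}\neq 0$ and evaluating at one point of $S_0(\mathbb{F}_{q^n})$ cancels the contributions of the other orbits, but if $S_0$ carries several non-isomorphic $G_\nu$-equivariant irreducible local systems $\cL$, the surviving terms $a_{(S_0,\cL)}(q^n)\,\Tr(\operatorname{Fr}^n\mid\cL_{\bar e})$ are not separated by a single evaluation; you need a linear-independence-of-characters argument over the component group of the stabilizer, varying $e$ and $n$ (a Chebotarev-type density input), or an inner induction over local systems. Second, there is a rationality wrinkle: orbits and, more seriously, the local systems may only be defined over a finite extension of $\Fq$, so one should either work with Frobenius-stable isomorphism classes or replace $q$ by a suitable power before running the induction. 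Laumon's theorem packages both of these; once you address them explicitly your argument is a correct and more elementary replacement for that citation.
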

\begin{proof}
Since all these properties descend automatically to any subfield, and
hold for all algebraically closed fields of characteristic 0 if they
hold for one, we may assume that $\K=\mathbb{\bar Q}_\ell$ for some
prime $\ell$ coprime to $p$.

The map $h_q$ is the composition of that induced by $\gamma$ and
$\func_*$.  This is a map of bialgebras by Propositions
\ref{pro:sheaf-bialgebra} and \ref{pro:func-sheaf}. If we have a
non-zero element of the kernel, it corresponds to a non-zero linear
combination of pure complexes, and thus a pair of pure complexes which
are not isomorphic, but give the same function for infinitely many
powers of the same prime $p$;  this is impossible by
\cite[Th\'eor\`eme 1.1.2]{Lau} 
\end{proof}

This theorem, in particular, connects together the categorification
theorem for $U_q(\mathfrak{n})$ given by Khovanov and Lauda
\cite[3.18]{KLI}, and the result of Ringel giving an isomorphism
between $U_q(\mathfrak{n})$ and the composition subalgebra of the Hall
algebra \cite{Ringel} by giving a canonical isomorphism between
$K^0_q(R_\nu)$ and the composition algebra in $\mathcal{H}_{\Gamma;q}$
without passing through quantum groups.  This picture could easily
worked out by an expert from the paper of Varagnolo and Vasserot
\cite{VV}, but we know of nowhere where it was written explicitly. 

This relation to the Hall algebra gives a concrete approach to
computing the Grothendieck group of weighted KLR algebras.  For
example, when $\Gamma$ is affine, we obtain an isomorphism between
$K^0_q(\walg^\vartheta)$ for $k>0$ with the subalgebra of the Hall
algebra with nilpotent support considered by Vasserot and Varagnolo,
amongst others \cite{MR1722955}. 

 \bibliography{../gen}
\bibliographystyle{amsalpha}

\end{document}